\theoremstyle{definition}
 \newtheorem{dfn}{Definition}[section]
 \newtheorem{remark}[dfn]{Remark}
\theoremstyle{plain}
 \newtheorem{thm}{Theorem}[section]
 \newtheorem{prop}[thm]{Proposition}
 \newtheorem{lem}[thm]{Lemma}
 \newtheorem{cor}[thm]{Corollary}
\newtheorem{assumption}[thm]{Assumption}
\numberwithin{equation}{section}
\newcommand{\bu}{{\mathbf u}}
\newcommand{\bv}{{\mathbf v}}
\newcommand{\bw}{{\mathbf w}}
\newcommand{\bff}{{\mathbf f}}
\newcommand{\bG}{{\mathbf G}}
\newcommand{\bg}{{\mathbf g}}
\newcommand{\bh}{{\mathbf h}}
\newcommand{\bT}{{\mathbf T}}
\newcommand{\bU}{{\mathbf U}}
\newcommand{\BC}{\mathbb C}
\newcommand{\BM}{\mathbb M}
\newcommand{\BN}{\mathbb N}
\newcommand{\BR}{\mathbb R}
\newcommand{\BZ}{\mathbb Z}
\newcommand{\CA}{{\mathcal A}}
\newcommand{\CD}{{\mathcal D}}
\newcommand{\CF}{{\mathcal F}}
\newcommand{\CL}{{\mathcal L}}
\newcommand{\CM}{{\mathcal M}}
\newcommand{\CP}{{\mathcal P}}
\newcommand{\CR}{{\mathcal R}}
\newcommand{\CH}{{\mathcal H}}
\newcommand{\CS}{{\mathcal S}}
\newcommand{\CT}{{\mathcal T}}
\DeclareMathOperator{\dv}{\mathrm{div}}
\DeclareMathOperator{\supp}{\mathrm{supp}}
\newcommand{\pd}{\partial}
\newcommand{\HS}{\BR^N_+}
\title{Maximal $L_1$ regularity for the linearized compressible Navier-Stokes equations }
\author{Jou-Chun Kuo
\thanks{Graduate School of Fundamental Science and Engineering, \enskip Waseda University, 
3-4-5 Ohkubo Shinjuku-ku, Tokyo, 169-8555, Japan.
\endgraf 	
e-mail address: kuojouchun@asagi.waseda.jp
\endgraf
2010 Mathematics Subject Classification. \quad  Primary: 35Q30; Secondary: 76N10.
\endgraf
Key words and phrases. Compressible Stokes equations, continuous analytic semigroup, 
maximal $L_1$ regularity
\endgraf
This work was partially supported by JST SPRING, Grant Number JPMJSP2128.
}}
\date{}
\begin{document}
\maketitle

\begin{abstract}
In this paper, we consider the linearized compressible Navier-Stokes equations with non-slip boundary conditions in the 
half space $\HS$. We prove the generation of a continuous analytic semigroup associated with this compressible Stokes 
system with non-slip boundary conditions in the half space $\HS$ and its $L_1$ in time maximal regularity.  We choose
the Besov space $\CH^s_{q,r} = B^{s+1}_{q,r}(\HS)\times B^s_{q,r}(\HS)^N$ as an underlying space, 
where  $1 < q < \infty$, $1\leq r < \infty$,  and $-1+1/q < s < 1/q$. We prove the generation of a continuous analytic
semigroup $\{T(t)\}_{t\geq 0}$ on $\CH^s_{q,r}$, and show that its generator admits maximal $L_1$ regularity. Our approach is to prove the existence of the resolvent in 
$\CH^s_{q,1}$ and some new estimates for the resolvent by using 
$B^{s+1}_{q,1}(\HS) \times B^{s\pm\sigma}_{q,1}(\HS)$
norms for some small $\sigma > 0$ satisfying the condition $-1+1/q < s-\sigma < s < s+\sigma < 1/q$.
\end{abstract}
\section{Introduction}
Let $\HS:=\{x=(x',x_N)\in \BR^N \mid x'\in \BR^{N-1}, \, x_N>0\}$, $N\ge2$,
be the half space. In this paper, we consider the following linear system:
\begin{equation}
\label{Eq:Linear}
\left\{\begin{aligned}
\pd_t \rho + \gamma\dv\bu & = 0 &\quad &\text{in $\HS\times(0, \infty)$}, \\
\pd_t\bu - \alpha\Delta\bu - \beta\nabla\dv\bu + \gamma\nabla\rho
& = 0 &\quad &\text{in $\HS\times(0, \infty)$},\\
\bu & = 0 &\quad &\text{on $\pd\HS\times(0, \infty)$},\\
(\rho,\bu)(0,x) & = (\rho_0,\bu_0) &\quad &\text{in $\HS$}.
\end{aligned}\right.
\end{equation} 
Here, $\rho$ and $\bu=(u_1,\cdots,u_N)$ are unknown functions, 
while the initial datum $(\rho_0,\bu_0)$ is assumed to be given.
Moreover, the coefficients $\alpha$, $\beta$, and $\gamma$ are assumed to be 
constants such that $\alpha > 0$, $\alpha+\beta>0$ and $\gamma>0$. 
 The aim of this paper  is to show  the generation of a continuous analytic semigroup 
associated with equations \eqref{Eq:Linear} and its $L_1$ in time maximal regularity property
 in some  Besov spaces. \par
The system \eqref{Eq:Linear} is the linearized system of the compressible Navier-Stokes equations with homogeneous Dirichlet boundary conditions:
\begin{equation}
\label{Eq:Nonlinear}
\left\{\begin{aligned}
\pd_t\varrho + \dv(\varrho\bv)&= 0&\quad&\text{in $\HS\times(0,  \infty)$}, \\
\varrho(\pd_t\bv+(\bv \cdot \nabla)\bv)-\mu\Delta\bv-(\mu+\nu)\nabla\dv \bv+\nabla P(\varrho)&=0&\quad&\text{in $\HS\times(0,  \infty)$},\\
\bv&=0&\quad&\text{on $\pd\HS\times(0,  \infty)$},\\
(\varrho,\bv)(0,x)&=(\varrho_0,\bv_0)&\quad&\text{in $\HS$},
\end{aligned}\right.
\end{equation} 
where $\varrho$ and $\bv$ describe the unknown density and 
the velocity field of the compressible viscous field, respectively, while
the initial datum $(\varrho_0,\bv_0)$ is a pair of given functions.
The coefficients $\mu$ and $\nu$ are assumed to satisfy the ellipticity conditions $\mu>0$ and $\mu+\nu>0$. 
In addition, the pressure of the fluid $P$ is a given smooth function with respect to $\varrho$, 
which is assumed to satisfy the stability condition $P'(\rho_*)>0$. 
Here, $\rho_*$ stands for the reference density that is a positive constant,
and the initial density $\varrho_0$ is given as a perturbation from $\rho_*$. 
As discussed in \cite[Sect. 8]{ES13}, the coefficients $\alpha$, $\beta$, and $\gamma$ are 
defined by $\alpha=\mu/\rho_*$, $\beta=\nu/\rho_*$, and $\gamma=\sqrt{P'(\rho_*)}$, respectively. 
Clearly, $\alpha$, $\beta$, and $\gamma$ satisfy the aforementioned given conditions. \par
There are a lot of results concerning the compressible Navier-Stokes equations \eqref{Eq:Nonlinear}.
Let us briefly summarize the results. 
Mathematical studies on the compressible Navier-Stokes equations started with 
the uniqueness results in a bounded domain by Graffi \cite{G53}, whose result is extended by Serrin \cite{S59}
in the sense that there is no assumption on the equation of state of the fluid. 
In the studies \cite{G53} and \cite{S59}, the fluid occupies a bounded domain surrounded by a smooth boundary.
A local in time existence theorem in H\"older continuous spaces was first proved by
Nash \cite{N62} and Itaya~\cite{I71,I76a}, independently, for the whole space case.
As for the boundary value problem case, Tani~\cite{T77} proved a local in time existence theorem 
in a similar setting provided that a (bounded or unbounded) domain $\Omega$ has a smooth boundary.
In Sobolev-Slobodetskii spaces, the local existence was shown by Solonnikov \cite{Sol},
see also the work due to Danchin \cite{D10} for an improvement of Solonnikov's result.
Matsumura and Nishida \cite{MN80} made a breakthrough in proving a unique global-in-time solution 
for the initial value problem of the compressible Navier-Stokes equations 
for the multidimensional case.
More precisely, Matsumura and Nishida~\cite{MN80} investigated the system with 
heat-conductive effects in $\BR^3$ and proved the global existence theorem with the aid of
a local existence theorem together with a priori estimates for the solution.
In particular, the a priori estimates were established by a combination of 
the linear spectral theory and the $L_2$-energy method.
They also succeeded to prove the global existence result in the half space and 
exterior domains cases with sufficiently small given data, see \cite{MN83}.
We here mention that the rate of convergence (as $t\to\infty$) of the solution 
to the system, which is constructed in~\cite{MN83}, is established in \cite{KS99} for the half space case
and \cite{KK05} for the exterior domains cases provided that the initial data are close to 
the constant equilibrium state. We also refer to a recent work due to Shibata and Enomoto \cite{SE18}
as well as Shibata \cite{S22} for some refinement of~\cite{KS99} in the sense that
the class of initial data $(\varrho_0-\rho_*,\bv)$ may be weakened. Notice that the approach of Shibata and Enomoto \cite{SE18} and Shibata \cite{S22} are completely different from
Kawashita's argument \cite{Ka02}, where Kawashita \cite{Ka02} also required less regularity on the initial conditions, in contrast to \cite{MN80}. \par
In the aforementioned works, the proof of the global existence theorem was mainly based on
the $L_2$-energy method (excluded the contribution due to Shibata \cite{S22}), 
but another approach was established by Ströhmer \cite{St90}.
His idea was to rewrite the system in Lagrangian coordinates, which is often said to be Lagrangian transformation.
Thanks to this reformulation, the convection term in the density equation, namely $\varrho\cdot\nabla\bv$, 
may be dropped off, so that the transformed system becomes the evolution equation of parabolic type,
and he used the semigroup theory. On the basis of a different approach,  
Mucha and Zaj\polhk aczkowski \cite{MZ04} applied $L_p$-energy estimates to show 
the global existence theorem in the $L_p$ in time and $L_q$ in space framework. 
Recently, maximal $L_p$ regularity approach was developed by Enomoto and Shibata \cite{ES13}, 
which extended the result of Mucha and Zaj\polhk aczkowski \cite{MZ04} in the sense that
it was allowed to construct global strong solution in the $L_p$ in time and $L_q$ in space framework.
We emphasize that, on page 418 in \cite{MZ04}, it was declared that there is no possibility to obtain a global existence theorem in the $L_p$-framework whenever we investigate the system 
in Eulerian coordinates, but this was wrong if the domain $\Omega$ is a bounded smooth domain. 
In fact, Kotschote \cite{K14} constructed global strong $L_p$-solutions in Eulerian coordinates, 
without making use of transformation to Lagrangian coordinates.
For a list of relevant references of studies of the local or global existence theorem
 (for classical or strong solutions), the readers may consult \cite[Section 2]{SE18} and references therein. \par
Recall that the Jacobian of Lagrange transformation is given by $I + \int_0^t\nabla\bu(\tau,\xi)d\tau$,
where $\bu(\tau,\xi)$ stands for the velocity field of a fluid particle at time $t$
which was located in $\xi$ at initial time $t=0$. Hence, to obtain the global existence theorem 
with the aid of Lagrangian transformation, it is always crucial to get a control of
$\int_0^t \nabla\bu(\tau,\xi)d\tau$ in a suitable norm. In particular, it is necessary to 
find a small constant $c>0$ such that
\begin{equation}
\label{bound-nabla-u}
\bigg\lVert\int_0^t\nabla\bu(\tau,\xi)d\tau\bigg\rVert_{L_\infty}\le c,
\end{equation}
which ensures that Lagrangian transformation is invertible. If the estimate \eqref{bound-nabla-u} 
is stemmed from an $L_p$ in time estimate for $\bu$ with $1 < p < \infty$, then 
we may only expect to have a $t$-dependent bound, if $\Omega$ is unbounded.
Of course, as we mentioned before, it is still possible to prove the global existence theorem
even if the constant $c$ appearing in \eqref{bound-nabla-u} depends on $t$, 
but the proof becomes more involved. \par
Recently, Danchin and Tolksdorf \cite{DT22} 
proved maximal $L_1$ regularity estimate for $\bu$, which implies that one may find
a $t$-independent constant $c$ such that \eqref{bound-nabla-u} is valid.
Here, they studied the system in the $L_1$ in time and $B^s_{p,1}$ in space framework,
where $p$ and $s$ are taken such that $1<p<\infty$ and $s=-1+N/p$. Their function space
is similar to the spaces used in \cite{CD10,D00}, but it was not necessary to consider
homogeneous Besov spaces in~\cite{DT22}, since it is well-known that  homogeneous Besov spaces 
$\dot B^s_{p,1}(\Omega)$ coincide with inhomogeneous Besov spaces $B^s_{p,1}(\Omega)$ 
if $-1+1/p<s<1/p$ and if the domain $\Omega$ is bounded of class $C^1$, see \cite[Remark 2.2.1]{DM15}.
The essential assumption in \cite{DT22} is that the fluid domain is bounded, which is required to prove
their extension version of Da Prato-Grisvard theory \cite{DG75}. 
 \par
We want to consider the viscous compressible 
fluid flow in general domains, which is described in \eqref{Eq:Nonlinear} 
when the fluid domain is $\BR^N_+$. 
 The $L_p$-$L_q$, $1 < p, q < \infty$, 
 maximal regularity theorem for \eqref{Eq:Nonlinear} 
was constructed in the paper due to Enomoto-Shibata \cite{ES13}, but following this paper, 
we want to construct the maximal $L_1$-$B^s_{p,1}$ regularity theory for equations
\eqref{Eq:Nonlinear}, where $1 < p < \infty$ and $-1 +1/p < s < 1/p$ for  the Stokes equations 
 and $1 < p < \infty$ and $-1+N/p \leq s < 1/p$ for the Navier-Stokes equations. \par
 \par
We want to study the viscous barotropic compressible fluid flow in an unbounded domain
in an $L_1$ in time maximal regularity framework.  Because, $L_1$ in time maximal regularity is 
the best framework to use  the Lagrange
transformation to solve the nonlinear problem. As a first step, in this paper 
we establish the maximal $L_1$-$B^s_{p,1}(\BR^N_+)$ regularity theorem
in the half space $\BR^N_+$ with $1 < p < \infty$ and $-1+1/p < s < 1/p$ 
for equations \eqref{Eq:Linear}, which is the model problem.  
The local well-posedness of the
nonlinear problem \eqref{Eq:Nonlinear} is treated in another paper \cite{KSprep}.
Although there are several contributions toward 
this topic \cite{St90,ES13,K14,DT22}, we intend to study the problem in the half space
within inhomogeneous Besov spaces setting. \par
 Before stating our main results, 
we introduce basic spaces in our paper as follows. 
Let $1 < q < \infty$, $-1+1/q < s < 1/q$, $1 \leq r < \infty$, $\mu \in \BR$, and $\Omega \in \{\BR^N, \HS\}$. 
Let $B^\mu_{q,r}(\Omega)$ denote standard
Besov spaces on $\Omega$. Let  
\begin{equation}\label{fundspace:1}
\begin{aligned}
\CH^s_{q, r}(\Omega) & = B^{s+1}_{q, r}(\Omega) \times B^s_{q,r}(\Omega)^N, \\
\CD^s_{q,r}(\BR^N)&= B^{s+1}_{q,r}(\BR^N)\times B^{s+2}_{q,r}(\BR^N)^N, \\
\CD^s_{q,r}(\HS)&= \{(\rho, \bv) \in B^{s+1}_{q,r}(\HS) \times B^{s+2}_{q,r}(\HS)^N \mid \bv|_{\pd\HS}=0\}, \\
\|(f, \bg)&\|_{\CH^s_{q,r}(\Omega)} = \|f\|_{B^{s+1}_{q,r}(\Omega )} + \|\bg\|_{B^s_{q,r}(\Omega)}, \\
\|(f, \bg)&\|_{\CD^s_{q,r}(\Omega)} = \|f\|_{B^{s+1}_{q,r}(\Omega)} + \|\bg\|_{B^{s+2}_{q,r}(\Omega)}.
\end{aligned}\end{equation} 
In addition, we introduce the operator $\CA^s_{q,r}$ corresponding to equations \eqref{Eq:Linear} 
which is defined by setting 
\begin{equation}\label{Def:A} \CA^s_{q,r}(\rho, \bv)  
 = (\gamma \dv \bv,  -\alpha\Delta \bv - \beta\nabla\dv\bv + \gamma\nabla\rho)
\quad\text{for $(\rho, \bv) \in \CD^s_{q,r}(\HS)$}.
\end{equation}
Using $\CA^s_{q,r}$, equations \eqref{Eq:Linear} are written as
\begin{equation}\label{semigroup:1}
\pd_t (\rho, \bu) + \CA^s_{q,r} (\rho, \bu) = (0, 0) \quad\text{for $t > 0$}, 
\quad (\rho, \bu)|_{t=0} = (\rho_0, \bu_0) \in \CH^s_{q,r}(\HS)
\end{equation}
for $ (\rho, \bu)$ with 
$$(\rho, \bu) \in C^0[(0, \infty), \CH^s_{q,r}(\HS)  \cap C^1((0, \infty), \CH^s_{q,r}(\HS)) \cap C^0((0, \infty), 
\CD^s_{q,r}(\HS){\color{red}{]}}.$$
\par
Our main results of this paper read as follows. 
\begin{thm}
\label{thm:0}
Let $1 < q < \infty$, $-1+1/q < s < 1/q$, and $1 \leq r < \infty$.  
Then, the operator $\CA^s_{q,r}$ generates a continuous analytic
 semigroup $\{T(t)\}_{t\geq0}$ on $\CH^s_{q,r}(\HS)$. \par
Moreover, 
	there exists a large $\omega_0\geq1$ such that,  
	for any $\omega\geq\omega_0$ and $(\rho_0,\bu_0) \in \CH^s_{q,1}(\HS)$, 
	\begin{align*}
		&\int_{0}^{\infty}e^{-\omega t}(\|\pd_tT(t)(\rho_0,\bu_0)\|_{\CH^s_{q,1}(\HS)}+\|T(t)(\rho_0,\bu_0)\|_{\CD^s_{q,1}(\HS)})dt
\leq C \|(\rho_0,\bu_0)\|_{\CH^s_{q,1}(\HS)}.
	\end{align*}
\end{thm}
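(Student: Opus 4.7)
The overall strategy is to establish sectorial resolvent estimates for $\lambda I + \CA^s_{q,r}$ on a sector of the form $\{\lambda\in\BC : |\arg\lambda|\leq\pi-\epsilon,\ |\lambda|\geq\gamma_0\}$, to construct the analytic semigroup $\{T(t)\}_{t\geq 0}$ via the Dunford integral, and to extract the $L_1$ in time maximal regularity on $\CH^s_{q,1}$ by a Da Prato--Grisvard type argument combined with the shifted-norm resolvent estimates announced in the abstract. The structural obstacle is that the density equation is first-order transport: in the operator domain $\CD^s_{q,r}(\HS)$ the density component has the same Besov smoothness $B^{s+1}_{q,r}(\HS)$ as in the underlying space $\CH^s_{q,r}(\HS)$. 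Thus the couple $(\CH^s_{q,r},\CD^s_{q,r})$ is trivial in the first factor, the classical Da Prato--Grisvard theorem does not apply as is, and this is what forces the use of shifted Besov norms $B^{s\pm\sigma}_{q,1}$ in the velocity component.

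For the resolvent problem $(\lambda I + \CA^s_{q,r})(\rho,\bu) = (f,\bg)$ I would first eliminate $\rho$ via $\rho = \lambda^{-1}(f - \gamma\dv\bu)$, which reduces the system to the Lam\'e-type problem
\[
\lambda\bu - \alpha\Delta\bu - (\beta + \gamma^{2}/\lambda)\nabla\dv\bu = \bg - \gamma\lambda^{-1}\nabla f, \qquad \bu|_{\pd\HS}=0.
\]
For $\gamma_0$ large the effective bulk coefficient $\beta + \gamma^{2}/\lambda$ is a small perturbation of the admissible $\beta$, so this is a standard parameter-elliptic half-space Lam\'e problem. Applying the partial Fourier transform in the tangential variable $x'$ reduces it to an ODE on the half-line in $x_N$ with Dirichlet data at $x_N=0$, whose solution is a sum of exponentials in the two characteristic roots of the Lam\'e symbol, matched to the boundary condition. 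Besov bounds then follow by combining Mikhlin-type tangential multiplier theorems on $\BR^{N-1}$ with direct kernel estimates in $x_N$, the range $-1+1/q<s<1/q$ being used exactly so that traces of the $B^s_{q,r}$ piece need not be considered.

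The technical novelty is to prove, alongside the ordinary resolvent bound, resolvent estimates in the mixed norms $B^{s+1}_{q,1}(\HS)\times B^{s\pm\sigma}_{q,1}(\HS)$ for a small $\sigma>0$ with $-1+1/q<s-\sigma<s+\sigma<1/q$. These shifted estimates come naturally from the explicit solution symbols, since a fractional tangential derivative trades for a controlled power of $|\lambda|$ inside the sector. With the full family of resolvent bounds, the Dunford integral $T(t) = (2\pi i)^{-1}\int_\Gamma e^{\lambda t}(\lambda I + \CA^s_{q,r})^{-1}\,\d\lambda$ defines a continuous analytic semigroup on $\CH^s_{q,r}(\HS)$ for every $1\leq r<\infty$, establishing the first assertion.

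For the $L_1$ in time bound on $\CH^s_{q,1}$, I would use the real interpolation identity
\[
\CH^s_{q,1}(\HS) = \bigl(\CH^{s-\sigma}_{q,1}(\HS),\,\CH^{s+\sigma}_{q,1}(\HS)\bigr)_{1/2,1},
\]
which holds componentwise by the standard formula for real interpolation of inhomogeneous Besov spaces. Combined with the two shifted resolvent estimates and the Danchin--Tolksdorf \cite{DT22} extension of Da Prato--Grisvard adapted to this shifted setting, one obtains the bound on $\int_0^\infty e^{-\gamma t}\|T(t)(f,\bg)\|_{\CD^s_{q,1}}\,\d t$. The hardest step is the derivation of the shifted resolvent estimates with sharp $|\lambda|$-dependence, since the gain of two derivatives in the velocity component must interpolate with the \emph{absence} of gain in the density component; this interpolation is only numerically feasible because the perturbation $\gamma^{2}/\lambda$ in the effective viscosity is small for $|\lambda|\geq\gamma_0$, which is exactly the role played by the spectral shift $\gamma_0$ appearing in the statement.
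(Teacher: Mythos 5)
Your reduction of the resolvent problem to a Lam\'e system, the explicit half-space Fourier analysis, and the idea of proving resolvent bounds in shifted Besov norms $B^{s\pm\sigma}_{q,1}$ all match the paper's approach. The final step, however, contains two substantive inaccuracies.

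First, you propose to invoke ``the Danchin--Tolksdorf extension of Da Prato--Grisvard adapted to this shifted setting.'' The paper makes a point of \emph{not} doing this: as explained in the introduction, the Danchin--Tolksdorf version of the Da Prato--Grisvard theory requires the fluid domain to be bounded, and the half space is unbounded, so that machinery is unavailable here. In its place the paper proves a short, self-contained real-interpolation lemma (Proposition \ref{prop-real-interpolation}): if $\|T(t)f\|_Y\leq Ce^{\gamma t}t^{-1+\sigma_0}\|f\|_{X_0}$ and $\|T(t)f\|_Y\leq Ce^{\gamma t}t^{-1-\sigma_1}\|f\|_{X_1}$ for suitable $\sigma_0,\sigma_1$, then $\int_0^\infty e^{-\gamma t}\|T(t)f\|_Y\,\d t\leq C\|f\|_{(X_0,X_1)_{\theta,1}}$, where the key ingredient is the identity $(\ell^{1-\sigma_0}_\infty,\ell^{1+\sigma_1}_\infty)_{\theta,1}=\ell^1_1$. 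The two pointwise-in-time decay bounds are produced directly from the Dunford integral over $\Gamma+\gamma$, the slower decay coming from the estimate on $\CS^1(\lambda)$ and the faster one after an integration by parts in $\lambda$ using the estimate on $\pd_\lambda\CS^1(\lambda)$.

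Second, the interpolation identity $\CH^s_{q,1}=(\CH^{s-\sigma}_{q,1},\CH^{s+\sigma}_{q,1})_{1/2,1}$ shifts \emph{both} the density and the velocity Besov scale, but the shifted resolvent bounds are only available with the velocity component shifted. For the pieces of the solution operator that do not enjoy a genuine two-derivative gain in $\bu$ (the density operator $\CR(\lambda)$ and the remainder velocity operator $\CS^2(\lambda)$), one has $\|\pd_\lambda\CR(\lambda)(f,\bg)\|\leq C|\lambda|^{-2}\|(f,\bg)\|_{\CH^s_{q,r}}$, and this does \emph{not} imply $\leq C|\lambda|^{-(1-\sigma/2)}\|(f,\bg)\|_{\CH^{s-\sigma}_{q,r}}$, since $\CH^{s-\sigma}_{q,r}$ is a strictly weaker norm. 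The paper resolves this by splitting $T(t)=(T_3,\,T_1+T_2)$ according to the decomposition $\CS=\CS^1+\CS^2$ and $\CR$: only $T_1$, which depends on $\bg$ alone, is interpolated between the shifted velocity spaces $B^{s\pm\sigma}_{q,1}$; the other two pieces are handled with the trivial couple $(\CH^s_{q,1},\CH^s_{q,1})_{1/2,1}=\CH^s_{q,1}$, their $O(|\lambda|^{-1})$ and $O(|\lambda|^{-2})$ resolvent decay being converted into the required $t^{-1\pm\sigma/2}$ bounds by absorbing a power of $\lambda_0$, using that $|\lambda|\geq\lambda_0$ on the contour. This asymmetric treatment of the components is essential and is exactly what makes the argument close; a uniform shift of $\CH^s_{q,1}$ as you propose would leave the density piece unestimated.
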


To prove Theorem \ref{thm:0},  we consider 
the following resolvent problem:
\begin{equation}
\label{Eq:Resolvent}
\left\{\begin{aligned}
\lambda\rho + \gamma\dv\bu&=f &\quad &\text{in $\HS$}, \\
\lambda\bu - \alpha\Delta\bu - \beta\nabla\dv\bu + \gamma\nabla\rho
& = \bg &\quad &\text{in $\HS$},\\
\bu & = 0 &\quad &\text{on $\pd\HS$},
\end{aligned}\right.	
\end{equation} 
for $\lambda \in \Lambda_{\epsilon, \nu_0}$.  Here, $\Lambda_{\epsilon, \nu_0}$ 
is a subset of $\BC$ defined as follows:
\begin{equation}\label{sector:1}\begin{aligned}
\Sigma_{\epsilon} &= \{\lambda \in \BC \setminus \{0\} \mid 
|\arg \lambda| \leq   \pi-\epsilon,\}, \\
K_\epsilon &= \Bigl\{\lambda \in \BC \mid \Bigl({\rm Re}\,\lambda+ \frac{\gamma^2}{\alpha+\beta}
+\epsilon\Bigr)^2 + ({\rm Im}\, \lambda)^2 \geq \Bigl(\frac{\gamma^2}{\alpha+\beta} + \epsilon\Bigr)^2
\Bigr\}, \\
\Lambda_{\epsilon, \nu_0} &= K_\epsilon \cap \Sigma_\epsilon \cap \{\lambda \in \BC \mid |\lambda| \geq \nu_0\}.
\end{aligned}\end{equation}
\begin{remark} If one considers the inhomogeneous problem:
\begin{equation}\label{semigroup:?}
\pd_t (\rho, \bu) + \CA^s_{q,r}(\rho, \bu) = (F, \bG) \quad\text{for $t > 0$}, 
\quad (\rho, \bu)|_{t=0} = (0, 0) \in \CH^s_{q,r}(\HS),
\end{equation}
 then one may infer from Theorem \ref{thm:0} that there holds
\begin{equation}\label{ss1}
\int_{0}^{\infty}e^{-\omega t}(\|\pd_t(\rho,\bu)\|_{\CH^s_{q,1}(\HS)}+\|\CA^s_{q,1}(\rho,\bu)\|_{\CH^s_{q,1}(\HS)})dt\leq C\int_{0}^{\infty}e^{-\omega t}\|(F,\bG)\|_{\CH^s_{q,1}(\HS)}dt.
\end{equation}
This estimate follows from the Duhamel principle and the estimate for the semigroup, that is, by virtue of the Duhamel principle, the solution $(\rho, \mathbf u)$ to the inhomogeneous problem  is given by $$(\rho, \mathbf u)=\int_{0}^{t}T(t-s)(F, \mathbf G)(s)ds.
$$
Then, 
$$\|(\rho,\mathbf u)\|_{\mathcal D^s_{q,1}(\mathbb R^N_+)}\leq C \int_{0}^{t}\|T(t-s)(F, \mathbf G)(s)\|_{\mathcal D^s_{q,1}(\mathbb R^N_+)}ds.$$
Therefore, by Fubini's theorem, change of variables (let $t-s=\ell$) and Theorem 1.1, we have
\begin{align*}
\int_{0}^{\infty}e^{-\omega t}\|(\rho,\mathbf u)\|_{\mathcal D^s_{q,1}(\mathbb R^N_+)}dt&\leq C \int_{0}^{\infty}(\int_{0}^{t}e^{-\omega t}\|T(t-s)(F, \mathbf G)(s)\|_{\mathcal D^s_{q,1}(\mathbb R^N_+)}ds)dt\\
&\leq C \int_{0}^{\infty}(\int_{s}^{\infty}e^{-\omega t}\|T(t-s)(F, \mathbf G)(s)\|_{\mathcal D^s_{q,1}(\mathbb R^N_+)}dt)ds\\
&\leq C \int_{0}^{\infty}e^{-\omega s}(\int_{0}^{\infty}e^{-\omega \ell}\|T(\ell)(F, \mathbf G)(s)\|_{\mathcal D^s_{q,1}(\mathbb R^N_+)}d\ell)ds\\
&\leq C \int_{0}^{\infty}e^{-\omega s}\|(F, \mathbf G)(s)\|_{\mathcal H^s_{q,1}(\mathbb R^N_+)}ds.
\end{align*}
We first prove the maximal $L_1$  regularity for $\pd_t T(t)(\rho_0, \bu_0)$, and then
by \eqref{semigroup:1}, the $L_1$ estimate of $\CA^s_{q,1}T(t)(\rho_0, \bu_0)$ follows, which is 
$$\| e^{-\omega t} \CA^s_{q,1} T(\cdot)(\rho_0, \bu_0)\|_{L_1((0,\infty),\CH^s_{q,1}(\HS))}\leq C\|e^{-\omega t}\pd_tT(\cdot)(\rho_0, \bu_0)\|_{L_1((0,\infty),\CH^s_{q,1}(\HS))}.
$$
Thus, we have the standard maximal $L_1$ regularity, which read as
\begin{equation}\label{smr1}
\int_{0}^{\infty}e^{-\omega t}(\|\pd_tT(t)(\rho_0,\bu_0)\|_{\CH^s_{q,1}(\HS)}+\|\CA^s_{q,1}T(t)(\rho_0,\bu_0)\|_{\CH^s_{q,1}(\HS)})dt\leq C\|(\rho_0,\bu_0)\|_{\CH^s_{q,1}(\HS)}.
\end{equation}
\end{remark}
Theorem \ref{thm:0} may be proved by real interpolation theorem with the help of 
the following theorem. 
\begin{thm}
\label{thm:1}
Let $1 < q < \infty$, $1 \leq r < \infty$, $-1+1/q < s < 1/q$, 
and $\epsilon \in (0, \pi/2)$. Then, 
there exists  a large constant $\omega > 0$  
 such that 
 for every $\lambda \in \Lambda_{\epsilon, \omega}$ and
$(f, \bg) \in \CH^s_{q,r}(\HS)$, there exists a unique solution  
$(\rho, \bu) \in \CD^s_{q,r}(\HS)$ to \eqref{Eq:Resolvent} satisfying
\begin{align}
\label{Est:Resolvent 1}
\|\lambda(\rho, \bu)\|_{\CH^s_{q,r}(\HS)} + \|\bu\|_{B^{s+2}_{q,r}(\HS)} 
&\leq C\|(f, \bg)\|_{\CH^s_{q,r}(\HS)}
\end{align}
Moreover, let $\sigma$ be a small positive number such that
$-1+1/q < s-\sigma < s < s+\sigma < 1/q$.  Then, 
there exist $\bu_1$, $\bu_2 \in B^{s+2}_{q,r}(\HS)^N$ such that $\bu=\bu_1 + \bu_2$ and 
for any $\lambda \in \Lambda_{\epsilon, \gamma}$ there hold
\begin{equation}
	\label{Est:Resolvent 2}\begin{aligned}
\|\bu_1\|_{B^{s+2}_{q,r}(\HS)} & \leq C|\lambda|^{-\frac{\sigma}{2}}\|\bg\|_{B^{s+\sigma}_{q,r}(\HS)}, \\
\|\pd_\lambda 
\bu_1\|_{B^{s+2}_{q, r}(\HS)} & \leq C|\lambda|^{-(1-\frac{\sigma}{2})}\|\bg\|_{B^{s-\sigma}_{q, r}(\HS)}
\end{aligned}\end{equation}
for any $\bg \in C^\infty_0(\HS)^N$ as well as 
\begin{equation}\label{Est:Resolvent 2}\begin{aligned}
\|(\rho, \bu_2)\|_{\CD^s_{q,r}(\HS)}
& \leq C|\lambda|^{-1}\|(f, \bg)\|_{\CH^s_{q,r}(\HS)}, \\
\|\pd_\lambda (\rho, \bu_2)\|_{\CD^s_{q,r}(\HS)}
& \leq C|\lambda|^{-2}\|(f, \bg)\|_{\CH^s_{q,r}(\HS)}
\end{aligned}\end{equation}
for any $(f, \bg) \in \CH^s_{q,r}(\HS)$.
\end{thm}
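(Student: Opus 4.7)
The plan is to reduce \eqref{Eq:Resolvent} to a single resolvent problem for $\bu$, solve it by a whole-space plus boundary-corrector scheme, and then recover $\rho$ from the first equation. First I would use the density equation to eliminate $\rho=\lambda^{-1}(f-\gamma\dv\bu)$ and substitute into the momentum equation, obtaining
$$
\lambda\bu-\alpha\Delta\bu-\Bigl(\beta+\frac{\gamma^2}{\lambda}\Bigr)\nabla\dv\bu=\bg-\frac{\gamma}{\lambda}\nabla f,\qquad \bu|_{\pd\HS}=0.
$$
The set $K_\epsilon$ in \eqref{sector:1} is chosen so that $\beta+\gamma^2/\lambda$ stays uniformly in a sector off the negative real axis, which keeps the system elliptic and the characteristic roots well separated. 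Once $\bu$ is produced, the bound $\|\rho\|_{B^{s+1}_{q,r}(\HS)}\le C|\lambda|^{-1}(\|f\|_{B^{s+1}_{q,r}(\HS)}+\|\dv\bu\|_{B^{s+1}_{q,r}(\HS)})$ gives the $\rho$-part of \eqref{Est:Resolvent 1}.

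Next I would solve the reduced equation in two stages. First, extend the right-hand side to $\BR^N$ and construct a whole-space solution $\bU$ by Fourier-multiplier analysis: decomposing into potential and solenoidal parts yields two scalar resolvent problems whose symbols are bounded multipliers on the inhomogeneous Besov scale by a Mikhlin-type theorem, and this delivers the full parabolic $|\lambda|^{-1}$-scaling. Second, correct the trace $\bh:=-\bU|_{\pd\HS}$ by solving the homogeneous half-space resolvent problem with Dirichlet data $\bh$. A partial Fourier transform in the tangential variables $x'$ reduces this to an ODE system in $x_N$ for each frequency $\xi'$ with two characteristic roots $A(\xi')=|\xi'|$ and $B(\xi',\lambda)\sim\sqrt{|\xi'|^2+\lambda/(\alpha+\beta+\gamma^2/\lambda)}$, and the corrector can be written as a linear combination of the modes $e^{-Ax_N}$ and $e^{-Bx_N}$ driven by a $2\times 2$ symbol $M(\xi',\lambda)$ acting on $\hat\bh$.

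The decomposition $\bu=\bu_1+\bu_2$ and the two pairs of estimates in \eqref{Est:Resolvent 2} arise from separating these two modes. The $e^{-Bx_N}$-boundary layer lives at scale $|\lambda|^{-1/2}$ and, together with the whole-space contribution $\bU$, carries the full parabolic scaling $|\lambda|^{-1}$ in $B^{s+2}_{q,r}(\HS)$; I would collect it into $\bu_2$. The $e^{-Ax_N}$-mode, by contrast, is a unit-scale boundary layer whose $B^{s+2}_{q,r}$-norm is controlled only by the trace of the data on $\pd\HS$ with no direct $\lambda$-gain, which is precisely the obstruction to $|\lambda|^{-1}$-decay; placing it into $\bu_1$ and interpolating tangentially yields the $|\lambda|^{-\sigma/2}$ gain at the price of a $\sigma$-loss in the Besov index of $\bg$. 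The $\pd_\lambda$-estimates follow by differentiating $M$ and $B$ in $\lambda$: each differentiation produces an extra factor of $|\lambda|^{-1}$ and a mild two-derivative loss in $\xi'$ which, on the $\bu_1$-part, is paid for by working with $\bg\in B^{s-\sigma}_{q,r}(\HS)$. Uniqueness in $\CD^s_{q,r}(\HS)$ is then immediate by applying \eqref{Est:Resolvent 1} to the difference of two solutions.

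The main obstacle will be the symbol analysis of $M(\xi',\lambda)$ and $\pd_\lambda M(\xi',\lambda)$ in the \emph{inhomogeneous} Besov scale when $s$ is close to the endpoints $-1+1/q$ or $1/q$. A direct Mikhlin bound is sharp only in the regime where $|\xi'|$ and $|\lambda|^{1/2}$ are comparable; when $|\xi'|\ll|\lambda|^{1/2}$ the $A$-branch refuses to give the full $|\lambda|^{-1}$-decay, which is exactly what forces the splitting and the $\pm\sigma$ shift. I expect most of the labour to consist in verifying these symbol estimates dyadically in $\xi'$ via a Littlewood--Paley decomposition, patching the tangential Fourier-multiplier bounds with the $x_N$-integration against $e^{-Ax_N}$ and $e^{-Bx_N}$, and carefully tracking the powers of $|\lambda|$ that survive each piece.
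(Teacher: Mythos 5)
Your high-level scheme — eliminate $\rho$ to get a complex Lam\'e problem, solve it by a whole-space multiplier solution plus a half-space boundary corrector, then recover $\rho$ from $\rho=\lambda^{-1}(f-\gamma\dv\bu)$ — is the same as the paper's. The difficulty is in your proposed decomposition $\bu=\bu_1+\bu_2$, which rests on an incorrect picture of the characteristic roots and would not produce the estimates \eqref{Est:Resolvent 2}.

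You write that the tangentially-Fourier-transformed ODE system in $x_N$ has roots $A(\xi')=|\xi'|$ and $B(\xi',\lambda)\sim\sqrt{|\xi'|^2+\lambda/(\alpha+\beta+\gamma^2/\lambda)}$, with the $e^{-Ax_N}$-mode being a $\lambda$-independent, unit-scale boundary layer. That is the structure of the \emph{incompressible} Stokes half-space problem, where the pressure is harmonic. It does not hold here. For the compressible Lam\'e operator $\lambda-\alpha\Delta-\eta_\lambda\nabla\dv$, both factored scalar operators carry a $\lambda$-shift: the solenoidal part gives $B=\sqrt{\alpha^{-1}\lambda+|\xi'|^2}$ and the potential part gives $A=\sqrt{p(\lambda)+|\xi'|^2}$ with $p(\lambda)=\lambda/(\alpha+\eta_\lambda)$. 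On $\Lambda_{\epsilon,\gamma}$ both $A$ and $B$ are comparable to $|\lambda|^{1/2}+|\xi'|$ (this is Lemma \ref{lem:3.3} and Lemma \ref{lem:3.4}), so $e^{-Ax_N}$ and $e^{-Bx_N}$ are \emph{both} parabolic boundary layers. Splitting by mode therefore does not isolate a piece with a genuine $|\lambda|^{-1}$ gain from one without: the $L_q$ bound on $\nabla^2$ of either modal contribution is uniformly bounded in $\lambda$ but does not decay. The same issue undercuts your claim that the whole-space solve "delivers the full parabolic $|\lambda|^{-1}$-scaling": the generic estimate $\|\nabla^2\CS^0(\lambda)\bg\|_{B^s_{q,r}}\le C\|\bg\|_{B^s_{q,r}}$ gives no $\lambda$-decay of $\|\bu\|_{B^{s+2}_{q,r}}$.

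The decomposition the paper actually uses is algebraic rather than modal. The rational prefactors, e.g.\ $\eta_\lambda/K$ and $(\beta\lambda+\gamma^2)/((\alpha+\beta)\lambda+\gamma^2)$, are expanded to first order in $\lambda^{-1}$: $\eta_\lambda/K=\beta/K_1+\lambda^{-1}K_2(\lambda)$ with $K_1=(\alpha+\beta)A+\alpha B$, and analogously for the others (see \eqref{div:6.1}). Both boundary-layer kernels $e^{-Ax_N}$, $e^{-Bx_N}$ and the difference quotient $\CM(x_N)$ then appear in \emph{both} $\CT^b_1$ and $\CT^b_2$; the piece $\CT^b_2$ is the one carrying an explicit $\lambda^{-1}$ factor and hence the full $|\lambda|^{-1}$ and $|\lambda|^{-2}$ gains, while $\CT^b_1$ only has the weaker $|\lambda|^{-\sigma/2}$ and $|\lambda|^{-(1-\sigma/2)}$ gains, which are not visible at the level of a single multiplier estimate. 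Recovering them requires proving $L_q\to L_q$, $H^1_q\to H^1_q$, and $H^1_q\to L_q$ bounds for the operator and its $x'$-dual, then combining real and complex interpolation with duality (this is Assumption \ref{assump:4.1}/\ref{assump:4.2} and Theorem \ref{spectralthm:1}); the fractional exponents $\sigma/2$, $1-\sigma/2$ come out of that interpolation, not from a dyadic symbol analysis. Your outline would need to be reorganized around the coefficient expansion and this interpolation scheme in order to produce \eqref{Est:Resolvent 2}.
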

\begin{remark} The conditions $1 < q < \infty$, $1 \leq r < \infty$ and $-1+1/q < s < 1/q$
assure that  $C^\infty_0(\Omega)$ is a dense subset of 
$B^s_{q,r}(\Omega)$ for $\Omega \in \{\BR^N, \HS\}$. This fact 
is an important point for our analysis in this paper. For a proof of this fact, refer to 
\cite[Theorems 2.9.3 and 2.10.3]{Tbook2}. 
\end{remark}
The rest of this  paper is unfold as follows. In the next section, we recall the notation of functional spaces. Then, in Sect. \ref{sec:3}, we prove boundedness properties of integral operators that will appear in the solution formula for \eqref{Eq:Resolvent} given in Sect. \ref{sec:4}. In Section \ref{sec:5}, Theorem \ref{thm:1} will be proved in the $\BR^N$ case, and in Sect. \ref{sec:6}, 
Theorem \ref{thm:1} is proved in the half space case. Finally, in Sect. \ref{sec:7}, 
we shall prove Theorem \ref{thm:0}. 
\section{Preliminaries}
\subsection{Notation}
Let us fix the symbols in this paper. Let $\BR$, $\BN$, and $\BC$ be the set of all real, natural, 
complex numbers, respectively, while let $\BZ$ be the set of all integers. Moreover, 
$\mathbb K$ stands for either $\BR$ or $\BC$. 
Set $\BN_0 := \BN \cup \{0\}$. \par
For $N\in \BN$ and a Banach space $X$, let $\CS(\BR^N;X)$ be the Schwartz class of 
$X$-valued rapidly decreasing functions on $\BR^N$. We denote $\mathcal{S'}(\BR^N;X)$ by the space of $X$-valued tempered distributions, which means the set of all 
continuous linear mappings from $\CS(\BR^N)$ to  $X$.  For $N\in \BN$, 
we define the Fourier transform $f \mapsto \CF [f]$ from $\CS(\BR^N;X)$ onto itself and its inverse  as
\begin{equation}
\CF [f](\xi):=\int_{\BR^{N}}f(x) e^{-ix\cdot\xi }\,dx, \qquad 
\CF_{\xi}^{-1} [g] (x) :=\frac{1}{(2\pi)^N}\int_{\BR^{N}}g(\xi) e^{ix\cdot\xi }\,d\xi,
\end{equation}
respectively. In addition, we define the partial Fourier transform
$\CF'[f(\,\cdot\,, x_N)] = \hat f(\xi', x_N)$ 
and partial inverse Fourier transform $\CF^{-1}_{\xi'}$ by
\begin{align*}
\CF'[f(\,\cdot\,, x_N)] (\xi') &:= \hat f(\xi', x_N) = \int_{\BR^{N-1}} f(x', x_N) e^{-ix'\cdot\xi'} \,dx',\\
\CF^{-1}_{\xi'}[g (\,\cdot\,, x_N)](x') &:= \frac{1}{(2\pi)^{N-1}}\int_{\BR^{N-1}}
g(\xi', x_N) e^{ix'\cdot\xi'}\,d\xi',
\end{align*}
where we have set $x'=(x_1,\cdots,x_{N-1}) \in \BR^{N-1}$ and $\xi' = (\xi_1, \cdots, \xi_{N-1})\in\BR^{N-1}$.
For $N \ge 2$, we set $(\bff, \bg)_{\BR^N_+} = \int_{\BR^N_+} \bff (x) \cdot \bg (x) \,dx$ 
for $N$-vector functions $\bff$ and $\bg$ on $\BR^N_+$, 
where we will write $(\bff, \bg) = (\bff, \bg)_{\BR^N_+}$ for short if there is no confusion.
For a Banach space $X$, $\|\cdot\|_X$ denotes its
norm.  For Banach spaces $X$ and $Y$, $X\times Y$ denotes the product of $X$ and $Y$,
that is $X\times Y = \{(x, y) \mid x \in X, \enskip y \in Y\}$, while  $\|\cdot\|_{X\times Y}$ denotes
its norm. 
$X \hookrightarrow Y$ means that $X$ is continuously imbedded into $Y$, that is $X \subset Y$
and $\|x\|_Y \leq C\|x\|_X$ with some constant $C$. For any interpolation couple $(X, Y)$
of Banach spaces $X$ and $Y$, the operations $(X, Y) \to (X, Y)_{\theta, p}$ and 
$(X, Y) \to (X, Y)_{[\theta]}$ are called the real interpolation functor for each $\theta \in (0, 1)$ and $
p \in [1, \infty]$ and the complex interpolation functor for each $\theta \in (0,1)$, respectively.
By $C > 0$ we will often denote a generic constant that does not depend on the quantities at stake.
For  differentiation with respect to space variables $x=(x_1, \ldots, x_N)$,
$D^\delta f :=\pd_x^\delta f = \pd^{|\delta|}f/{\pd x_1^{\delta_1} \cdots}$ $\pd x_N^{\delta_N}
$ for multi-index $\delta=(\delta_1, \ldots, \delta_N)$ with  $|\delta| = \delta_1+\cdots + \delta_N$. 
For the notational simplicity, we write $\nabla f = \{ \pd_x^\delta f \mid|\delta|=1\}$, $\nabla^2 f
= \{\pd_x^\delta f\mid |\delta|=2\}$, $\bar\nabla f=(f, \nabla f)$, and $\bar\nabla^2 f = 
(f, \nabla f, \nabla^2 f)$. 

\subsection{Function spaces on $\BR^N$}
Let us recall the definitions of Bessel potential spaces and inhomogeneous Besov spaces. In the following, let $s \in \BN$ and $p\in(1,\infty)$.
Bessel potential spaces $H^s_p(\BR^N)$ are defined as the set of all $f\in\CS' (\BR^N)$ such that 
$\lVert f \rVert_{H^s_p(\BR^N)} < \infty$, where the norm $\lVert \,\cdot\, \rVert_{H^s_p(\BR^N)}$ is defined by
\begin{equation}
\|f\|_{H^s_p(\BR^N)}:=\left\|\CF^{-1}_\xi\left[(1+|\xi|^2)^\frac{s}{2}\CF [f](\xi)\right]\right\|_{L_p(\BR^N)}.
\end{equation}
It is well-known that, if $s=m\in \BN_0$, then $H^s_p(\BR^N)$ coincides with the classical Sobolev space $W^m_p (\BR^N)$,
see, e.g., \cite[Theorem 3.7]{AFbook}. \par
To define inhomogeneous Besov spaces, we need to introduce Littlewood-Paley decomposition.
 Let $\phi \in \CS (\BR^N)$ with $\supp \phi = \{ \xi \in \BR^N \mid 1 \slash 2 \le \lvert \xi \rvert \le 2 \}$ such that
$\sum_{k \in \BZ} \phi (2^{- k} \xi) = 1$ for all $\xi \in \BR^N \setminus \{0\}$. 
Then, define
\begin{equation}\label{little:1}
\phi_k := \CF^{- 1}_\xi [\phi (2^{- k} \xi)], \quad k \in \BZ, \qquad \CF[\psi]= 
1 - \sum_{k \in \BN} \phi (2^{- k} \xi).
\end{equation}
For $1 \le p, q \le \infty$ and $s \in \BR$ we denote
\begin{equation}\label{pld:2}
\lVert f \rVert_{B^s_{p, q} (\BR^N)} := 
\left\{\begin{aligned}
& \lVert \psi * f \rVert_{L_p (\BR^N)} + \bigg(\sum_{k \in \BN} \Big(2^{s k} \lVert \phi_k * f \rVert_{L_p (\BR^N)} \Big)^q \bigg)^{\frac{1}{q}}
& \quad & \text{if $1\le q < \infty$}, \\
& \lVert \psi * f \rVert_{L_p (\BR^N)} + \sup_{k \in \BN} \Big(2^{s k} \lVert \phi_k * f \rVert_{L_p (\BR^N)} \Big)
& \quad & \text{if $q=\infty$}.
\end{aligned}\right.
\end{equation}
Here, $f \, * \, g$ means the convolution between $f$ and $g$.
Then inhomogeneous Besov spaces $B^s_{p, q} (\BR^N)$ are defined as the sets of all 
$f \in \CS' (\BR^N)$ such that $\lVert f \rVert_{B^s_{p, q} (\BR^N)} < \infty$. \par
It is well-known that $B^s_{p,q}(\BR^N)$ may be \textit{characterized} by means of real interpolation.
In fact, for $-\infty<s_0<s_1<\infty$, $1<p<\infty$, $1\le q\le\infty$, and $0<\theta<1$, it follows that
\begin{equation}
\label{interpolation}
B^{\theta s_0+(1-\theta)s_1}_{p,q}(\BR^N)=\left(H^{s_0}_p(\BR^N),H^{s_1}_p(\BR^N)\right)_{\theta,q},
\end{equation}
cf.  \cite[Theorem 8]{M73},  \cite[Theorem 2.4.2]{Tbook}.
\subsection{Function spaces on $\HS$}
Let $\CD'(\HS)$ be the collection of all complex-valued distributions on $\HS$.
Let $s\in\BR$, $p\in(1,\infty)$, and $q\in[1,\infty]$. Then for any $X\in\{H^s_p,B^s_{p,q}\}$,
the space $X(\HS)$ is the collection of all $f\in\CD'(\HS)$ such that
there exists a function 
$g \in X(\BR^N)$ with $g|_{\HS}=f$.
 Moreover, the norm of $f\in X(\HS)$ is given by
\begin{equation}
\lVert f \rVert_{X(\HS)} = \inf \lVert g \rVert_{X(\BR^N)},
\end{equation}
where the infimum is taken over all $g \in X(\BR^N)$ such that its restriction 
$g \vert_{\HS}$ coincides in $\CD' (\HS)$ with $f$. We also define
\begin{equation}
X_0 (\HS) := \{f \in X (\BR^N) \mid \supp f \subset \overline{\HS}\}.
\end{equation}
Clearly, we always have $X_0(\HS)\hookrightarrow X(\HS)$.  \par
According to \cite[Section 2.9]{Tbook2}, for $s\in\BR$, $p\in(1,\infty)$, and $q\in[1,\infty)$,
we have the following density result:
\begin{equation}
\label{desity}
X_0 (\HS)=\overline{C^\infty_0(\HS)}^{\lVert \, \cdot \, \rVert_{X(\BR^N)}}.
\end{equation} 
Here, $X(\HS)$ and $X_0(\HS)$ may coincide if one restricts $s$ such that $-1+1/p<s<1/p$.
\begin{prop}
\label{prop:2.1}
Let $1<p<\infty$, $1\le q  < \infty$, and $-1+1/p<s<1/p$. Then $H^s_p(\HS)=H^s_{p,0}(\HS)$ as well as $B^s_{p,q}(\HS)=B^s_{p,q,0}(\HS)$.
\end{prop}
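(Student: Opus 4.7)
The plan is to show that the natural restriction map $R\colon X_0(\HS)\to X(\HS)$, $g\mapsto g|_{\HS}$, is a bijective bounded linear map. The inclusion $X_0(\HS)\hookrightarrow X(\HS)$, i.e.\ boundedness of $R$ with norm at most one, is immediate from the definition of the $X(\HS)$-norm as an infimum over extensions, since $g$ itself is one admissible extension of $g|_{\HS}$. Once bijectivity is established, norm equivalence follows from the open mapping theorem, yielding the desired identification.

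For injectivity I would exploit the lower bound $s>-1+1/p$. Suppose $g\in X_0(\HS)$ satisfies $R g=0$ in $\CD'(\HS)$. Then $\supp g\subset\partial\HS=\{x_N=0\}$, and by the classical structure theorem for distributions supported on a hyperplane one has the finite representation $g=\sum_{j=0}^{J} T_j(x')\otimes\delta^{(j)}(x_N)$ with $T_j\in\CS'(\BR^{N-1})$. A Littlewood--Paley or direct Fourier computation shows that the tensor product $T_j\otimes\delta^{(j)}(x_N)$ belongs to $B^s_{p,q}(\BR^N)$ (resp.\ $H^s_p(\BR^N)$) only when $s<-1+1/p-j$, because $\delta^{(j)}\in B^\sigma_{p,\infty}(\BR)$ requires $\sigma<-j-1/p'$. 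Since our assumption $s>-1+1/p$ excludes every $j\geq 0$, each $T_j$ must vanish, hence $g=0$.

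For surjectivity I would construct a bounded right-inverse via zero extension. Define $E_0\colon X(\HS)\to X(\BR^N)$ by $(E_0 f)(x)=f(x)$ for $x_N>0$ and $(E_0 f)(x)=0$ for $x_N\leq 0$. Granting that $E_0$ is bounded, we have $\supp E_0 f\subset\overline{\HS}$, so $E_0 f\in X_0(\HS)$, and clearly $R E_0 f=f$. For the boundedness of $E_0$ in the range $0\leq s<1/p$ I would show that the characteristic function $\chi_{\HS}$ is a pointwise multiplier on $X(\BR^N)$, using the intrinsic characterization of $B^s_{p,q}$ and $H^s_p$ via first-order differences $\|h|^{-s}\|f(\cdot+h)-f(\cdot)\|_{L_p}\|_{L_q(dh/|h|^N)}$: the condition $sp<1$ ensures that the boundary does not contribute to the finite-difference seminorm. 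For $-1+1/p<s<0$ I would argue by duality: the pairing $\langle\cdot,\cdot\rangle_{\HS}$ identifies $X_0(\HS)^{*}$ with a space of the form $X^{-s}_{p',q'}(\HS)$ and vice versa, and since $0<-s<1/p'$ the already-treated positive-exponent case applied to $p'$ dualizes to give the claim for negative $s$.

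The principal obstacle is the boundedness of the zero extension in Step (surjectivity); in particular, establishing the pointwise-multiplier property of $\chi_{\HS}$ on the Littlewood--Paley side requires careful control of the frequency pieces $\phi_k*(\chi_{\HS}\tilde f)$ near $\partial\HS$ and is essentially the content of \cite[\S 2.9--2.10]{Tbook2}; in practice I would invoke those sections rather than reproduce the multiplier estimates, and then chain together the three ingredients (trivial boundedness of $R$, injectivity from $s>-1+1/p$, and surjectivity via $E_0$ from $s<1/p$) together with the open mapping theorem to conclude.
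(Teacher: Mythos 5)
The paper supplies no proof of this proposition at all; it is treated as a known fact, with the underlying reference being the earlier Remark's citation of \cite[Theorems 2.9.3 and 2.10.3]{Tbook2} and the later pointer to \cite[Sections 2.9--2.10]{Tbook2}. Your outline --- $R$ trivially bounded, injectivity from the structure theorem for distributions supported on $\pd\HS$ (using $s>-1+1/p$), surjectivity from boundedness of the zero extension $E_0$ (using $s<1/p$), with the key multiplier estimate ultimately delegated to Triebel --- is a faithful unpacking of what lies behind that citation, so in substance you and the paper lean on the same source.

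There is, however, a genuine gap in one step as you have written it: the duality argument you propose for the range $-1+1/p<s<0$. You want to dualize the positive-exponent case for $p'$ to obtain the negative-exponent case for $p$. This works for the Bessel potential spaces $H^s_p$, which are reflexive for $1<p<\infty$ and for which Proposition~\ref{prop-duality}(1)--(2) gives precisely the required pairing. But for the Besov scale with $q\in\{1,\infty\}$ --- and $q=1$ is exactly the case this paper cares about --- the spaces are not reflexive: one has $(B^{s}_{p,1,0}(\HS))^{*}=B^{-s}_{p',\infty}(\HS)$, but $(B^{-s}_{p',\infty}(\HS))^{*}$ is strictly larger than $B^{s}_{p,1}(\HS)$, so equality of the two dual spaces does not transfer back to the predual level. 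The standard repair, consistent with the tools the paper already lays out, is to run the duality argument only on the $H^s_p$ scale to obtain $H^s_p(\HS)=H^s_{p,0}(\HS)$ for all $-1+1/p<s<1/p$, and then transfer to Besov spaces via the real interpolation identity \eqref{interpolation-half} with $s_0,s_1$ chosen inside the admissible interval; interpolating the bounded zero extension $E_0\colon H^{s_i}_p(\HS)\to H^{s_i}_{p,0}(\HS)$ gives the $B^s_{p,q}$ statement for every $1\le q\le\infty$. A smaller remark: once $E_0$ is known bounded and the density \eqref{desity} of $C^\infty_0(\HS)$ in $X_0(\HS)$ is granted, injectivity of $R$ is automatic --- for $g_n\in C^\infty_0(\HS)$ with $g_n\to g$ in $X(\BR^N)$ one has $E_0(g_n|_{\HS})=g_n\to g$ while $E_0(g_n|_{\HS})\to E_0(Rg)$, so $g=E_0(Rg)$ and $Rg=0$ forces $g=0$ --- which bypasses the structure-theorem computation you proposed for that step.
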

Finally, let us mention duality results. If one considers function spaces on $\BR^N$,
then it follows that $(H^s_p(\BR^N))'=H^{-s}_{p'}(\BR^N)$ and $(B^s_{p,q}(\BR^N))'=B^{-s}_{p',q'}(\BR^N)$
for all $s\in\BR$, $p\in(1,\infty)$, and $q\in[1,\infty)$, where $p'$ and $q'$ stand for
the H\"older conjugate of $p$ and $q$, respectively. Indeed, these proofs may be found in 
 \cite[Sect. 6]{M73}, \cite[Theorem 2.11.2]{Tbook}.
However, if one considers function spaces on $\HS$, one has to pay attention to discuss the dual of 
function spaces due to the existence of the boundary $\pd\HS$. Let us summarize the duality results 
and real interpolation functors for the half space case. 
\begin{prop}
\label{prop-duality}
Let $p\in(1,\infty)$. Then the following assertions are valid.
\begin{itemize}
\item[$(1)$]~ For $s \in \BR$, there holds 
\begin{equation}
(H^s_{p,0} (\HS))' = H^{- s}_{p'} (\HS) .
\end{equation}
\item[$(2)$]~ For  $- \infty < s \le 1 \slash p$, there holds
\begin{equation}
(H^s_p (\HS))' = H^{- s}_{p', 0} (\HS) .
\end{equation}
\item[$(3)$]~ For $-\infty<s_0<s_1<\infty$, $1<p<\infty$, $1\le q\le\infty$, and $0<\theta<1$, there 
holds 
\begin{equation}
\label{interpolation-half}
B^{\theta s_0+(1-\theta)s_1}_{p,q}(\HS)=\left(H^{s_0}_p(\HS),H^{s_1}_p(\HS)\right)_{\theta,q}.
\end{equation}
\end{itemize}	
\end{prop}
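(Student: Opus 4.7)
The plan is to establish the three assertions by lifting the well-known $\BR^N$ identities to $\HS$ through the subspace/quotient duality, with (2) obtained from (1) by reflexivity and (3) reduced to \eqref{interpolation} via a common bounded extension operator.

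For (1), I view $H^s_{p,0}(\HS)$ as a closed subspace of $H^s_p(\BR^N)$ and $H^{-s}_{p'}(\HS)$ as, by definition, the quotient of $H^{-s}_{p'}(\BR^N)$ by the kernel of the restriction map $g \mapsto g|_{\HS}$. The Hahn--Banach subspace--quotient duality combined with the $\BR^N$ identification $(H^s_p(\BR^N))' = H^{-s}_{p'}(\BR^N)$ gives
\[
(H^s_{p,0}(\HS))' = H^{-s}_{p'}(\BR^N) / (H^s_{p,0}(\HS))^\perp.
\]
It then suffices to identify the annihilator $(H^s_{p,0}(\HS))^\perp$ with $\{g \in H^{-s}_{p'}(\BR^N) : g|_{\HS} = 0 \text{ in } \CD'(\HS)\}$. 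One inclusion is immediate by testing against $\varphi \in C^\infty_0(\HS)$; the converse relies on the density \eqref{desity} of $C^\infty_0(\HS)$ in $H^s_{p,0}(\HS)$. The quotient norm then coincides automatically with the infimum-of-extensions norm defining $\|\cdot\|_{H^{-s}_{p'}(\HS)}$.

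For (2), I apply (1) with $(-s, p')$ in place of $(s, p)$ to obtain $(H^{-s}_{p',0}(\HS))' = H^s_p(\HS)$. Since $H^{-s}_{p',0}(\HS)$ is a closed subspace of the reflexive space $H^{-s}_{p'}(\BR^N)$, it is itself reflexive, and dualizing once more yields the identification $(H^s_p(\HS))' = H^{-s}_{p',0}(\HS)$. The threshold $s \leq 1/p$ enters when rendering this identification intrinsic: for such $s$, elements of $H^s_p(\HS)$ carry no boundary trace and $H^{-s}_{p',0}(\HS)$ contains no genuine boundary distributions on $\pd\HS$ (since $\delta_{\pd\HS} \notin H^{-s}_{p'}(\BR^N)$ unless $s > 1/p$). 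Consequently, the natural pairing $\langle f, g\rangle := \langle \tilde f, g\rangle_{\BR^N}$, obtained via any $H^s_p(\BR^N)$-extension $\tilde f$ of $f$, is unambiguously defined, and the abstract reflexivity duality realizes this concrete pairing.

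For (3), the decisive tool is a universal extension operator $\mathfrak{E}$ of Rychkov--Stein type which is bounded from $X(\HS)$ to $X(\BR^N)$ simultaneously for every $X$ in the Bessel and Besov scales under consideration, and which satisfies $R\,\mathfrak{E} = \mathrm{Id}$ on $X(\HS)$, where $R$ denotes the restriction. Then $(\mathfrak{E}, R)$ is a retraction--coretraction pair for both the couple $(H^{s_0}_p(\HS), H^{s_1}_p(\HS))$ and its $\BR^N$ counterpart, and the standard interpolation principle for retraction--coretraction pairs combined with \eqref{interpolation} yields
\[
(H^{s_0}_p(\HS), H^{s_1}_p(\HS))_{\theta,q}
= R\bigl((H^{s_0}_p(\BR^N), H^{s_1}_p(\BR^N))_{\theta,q}\bigr)
= R(B^s_{p,q}(\BR^N)) = B^s_{p,q}(\HS),
\]
with equivalence of norms, where $s = \theta s_0 + (1-\theta) s_1$. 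The principal conceptual obstacle is (2): one must verify that the sharp threshold $s = 1/p$ is exactly what rules out boundary traces on one side and boundary distributions on the other, so that the abstract reflexivity duality matches the intrinsic pairing; the chief technical input for (3) is invoking the universality of a Rychkov-type extension across the full range of smoothness indices involved, while (1) is essentially formal once \eqref{desity} is accepted.
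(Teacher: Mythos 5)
The paper itself settles this proposition with a one-line citation to Triebel and Muramatu, so there is no in-paper argument to compare against; you are filling in an actual proof, which is legitimate. For (1), the subspace/quotient duality is exactly the right mechanism: $(H^s_{p,0}(\HS))' \cong H^{-s}_{p'}(\BR^N)/(H^s_{p,0}(\HS))^\perp$, the annihilator is identified with $\{g : g|_{\HS}=0\}$ by \eqref{desity} on one side and the definition of distributional restriction on the other, and the quotient norm is tautologically the infimum-over-extensions norm defining $H^{-s}_{p'}(\HS)$. For (3), the retraction--coretraction argument in tandem with \eqref{interpolation} is the standard route; for $\HS$ a Seeley-type reflection extension already suffices, so invoking the full generality of Rychkov is not even necessary.

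For (2), however, the proof you give does not match the explanation you attach to it. Dualizing (1) once more via reflexivity of $H^{-s}_{p',0}(\HS)$ is valid and yields the identity, but as written it produces $(H^s_p(\HS))' = H^{-s}_{p',0}(\HS)$ for \emph{every} $s\in\BR$; the hypothesis $s \le 1/p$ never enters your chain of implications. The paragraph about traces and about $\delta_{\pd\HS}$, which you flag as ``the principal conceptual obstacle,'' does not plug into any step of the argument. In particular, the claim that the threshold $s\le 1/p$ is what makes the pairing $\langle f,g\rangle := \langle \tilde f, g\rangle_{\BR^N}$ unambiguous is not correct: if $\tilde f_1, \tilde f_2$ are two extensions of $f$, then $h := \tilde f_1 - \tilde f_2$ satisfies $h|_{\HS}=0$, hence $\supp h \subset \{x_N\le0\}$, and by the density result \eqref{desity} applied to the lower half-space $h$ is an $H^s_p(\BR^N)$-limit of test functions supported strictly in $\{x_N<0\}$, each of which pairs to zero against any $g$ with $\supp g\subset\overline{\HS}$. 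So the pairing is well defined for every $s$, including $s>1/p$. You should either delete that paragraph or trace the genuine source of the restriction in the cited reference; as it stands, the explanation is an intuition presented as a proof step, and it is inconsistent with what your own reflexivity argument delivers.
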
 
\begin{proof} For proofs of $(1)$ and $(2)$, refer to \cite[Section 2.10]{Tbook2}, and 
for a proof of $(3)$, refer to  \cite[Theorem 8, Theorem 11]{M73}, 
\cite[Theorem 1.2.4]{Tbook2}.
\end{proof}

\subsection{Class of multipliers}
Let $U$ be a domain in $\BC$. Let $m (\lambda,\xi')$
be a function defined on $U\times(\BR^{N-1}\setminus\{0\})$ which is holomorphic in $\lambda \in U$ and 
infinitely many times differentiable with respect to $\xi' \in \BR^{N-1}\setminus\{0\}$. 
If there exists a real number $\kappa$ such that for any multi-index $\delta'\in\BN^{N-1}_0$ and $(\lambda, \xi') \in \Sigma_{\epsilon, \lambda_0} \times (\BR^{N-1}\setminus\{0\})$ there hold the estimate
\begin{equation}
\left|D^{\delta'}_{\xi'}m(\lambda, \xi')\right| \leq C_{\delta'}\left(|\lambda|^{1/2}
+|\xi'|\right)^{\kappa-|\delta'|}
\end{equation}
for some constant $C_{\delta'}$ depending on $\delta'$, then $m(\lambda,\xi')$ is called a multiplier of order $\kappa$ with type $\BM_\kappa(U)$.\par
Obviously, for any $m_i \in \BM_{\kappa_i}(U)$ ($i=1,2$), we see that $m_1m_2 \in \BM_{\kappa_1+\kappa_2}(U)$.  Notice that 
$|\xi'|^2 \in \BM_2(\BC)$ and $\xi_j \in \BM_1(\BC)$, but any functions of $|\xi'|$ is usually not in $\BM_{\kappa}(U)$ for any
$\kappa$ and $U$.
%
\subsection{Interpolation of small $\ell^p$ spaces of vector-valued sequences}
Let $X$ be a  Banach space, and $(a_\nu)_{\nu=-\infty}^\infty$ be a sequence in $X$. For $s\in\BR$, the norm $\lVert \,\cdot\, \rVert_{\ell^s_q(X)}$ is defined by
\begin{align*}
	\|(a_\nu)\|_{\ell^s_q(X)}=\left\{\begin{aligned}
		&\left(\sum_{\nu=-\infty}^{\infty}(2^{\nu s}\|a_\nu\|_X)^q\right)^{\frac{1}{q}}   &\quad &\text{($1\leq q<\infty$)}, \\
		&\sup_{\nu \in \BZ} 2^{\nu s}\|a_\nu\|_X
		 &\quad& \text{($q=\infty$)}, 
	\end{aligned}\right.
\end{align*}
where \begin{align*}
	\ell^s_q(X)=\{(a_\nu)_{\nu=-\infty}^\infty\mid  \|a_\nu\|_{\ell^s_q(X)}<\infty\}.
\end{align*}
\begin{thm}\cite[Theorem 5.6.1]{BLbook}.
	Assume that  $1 \leq q_0\leq \infty$, $1 \leq q_1\leq \infty$ and that $s_0\neq s_1$. 
Then we have, for all $1 \leq   q\leq \infty$
	$$\left(\ell^{s_0}_{q_0}(X),\ell^{s_1}_{q_1}(X)\right)_{\theta,q}=\ell^s_q(X)$$
	where $s=(1-\theta)s_0+\theta s_1$.
\end{thm}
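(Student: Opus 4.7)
The statement is the standard interpolation identity for weighted vector-valued sequence spaces, cited from Bergh--Löfström. Since it is quoted with a reference, my plan is not to present a full original proof but to describe the classical $K$-functional argument one uses to verify it.

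First, I would assume without loss of generality that $s_0 < s_1$ and work with the $K$-method characterization
$$\|a\|_{(\ell^{s_0}_{q_0}(X),\ell^{s_1}_{q_1}(X))_{\theta,q}} \sim \left(\int_0^\infty (t^{-\theta} K(t,a))^q\,\frac{dt}{t}\right)^{1/q},$$
with the obvious modification when $q=\infty$. The key observation is that both component norms are decoupled across indices $\nu\in\BZ$, which makes the optimal splitting essentially pointwise in $\nu$: for a given $t>0$, choose the threshold index $\nu_t := \lfloor \log_2 t / (s_1-s_0)\rfloor$ and split $a = b^{(t)} + c^{(t)}$ by letting $b^{(t)}_\nu = a_\nu$ for $\nu \leq \nu_t$ and $c^{(t)}_\nu = a_\nu$ for $\nu > \nu_t$. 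This provides the upper bound
$$K(t,a) \leq \Big(\sum_{\nu\leq \nu_t} 2^{\nu s_0 q_0}\|a_\nu\|_X^{q_0}\Big)^{1/q_0} + t\Big(\sum_{\nu>\nu_t} 2^{\nu s_1 q_1}\|a_\nu\|_X^{q_1}\Big)^{1/q_1}.$$

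Next I would insert this bound into the $L^q(\mathbb{R}_+,dt/t)$-integral, apply Fubini and the elementary identity $\int_0^\infty t^{-\theta q}\min(A, tB)^q\,dt/t \sim A^{q(1-\theta)}B^{\theta q}$, together with Hardy-type inequalities for weighted sums, to reach $\|a\|_{(\cdot,\cdot)_{\theta,q}}\leq C\|a\|_{\ell^s_q(X)}$ with $s=(1-\theta)s_0+\theta s_1$. For the reverse inequality, I would test on sequences supported at a single index $\mu$: for $\tilde a_\nu = a_\mu\delta_{\nu\mu}$ one has $K(t,\tilde a)=\min(2^{\mu s_0}, t\,2^{\mu s_1})\|a_\mu\|_X$, whence $\|\tilde a\|_{\theta,q} \sim 2^{\mu s}\|a_\mu\|_X$; summing over $\mu$ via the triangle inequality of the interpolation norm in the equivalent $J$-functional description, or dually, would give $\|a\|_{\ell^s_q(X)}\leq C\|a\|_{(\cdot,\cdot)_{\theta,q}}$.

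The main obstacle in the general argument is treating the endpoint cases $q_0=\infty$ or $q_1=\infty$, where the sums are replaced by suprema and the Hardy-type estimates degenerate, and the case $q_0\neq q_1$ where the reverse inequality is the non-trivial direction: the direct ``test on a single index'' approach only yields the bound for each $\mu$ separately, so one really needs the equivalence of $K$- and $J$-methods, or an explicit fundamental decomposition of $a$ into sequences concentrated on blocks of indices, to pass from pointwise-in-$\mu$ information to the $\ell^q$-summation. Since in the present paper this theorem is invoked only as a black-box tool for the Besov-space machinery, I would simply cite \cite[Theorem 5.6.1]{BLbook} and not reproduce the full argument.
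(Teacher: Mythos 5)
The paper itself offers no proof of this statement: it is imported verbatim from Bergh--L\"ofstr\"om \cite[Theorem 5.6.1]{BLbook} as a black-box tool, used only in the proof of Proposition \ref{prop-real-interpolation}. Your ultimate conclusion --- to cite it rather than reprove it --- therefore matches the paper exactly, and your $K$-functional sketch is the standard route. Your identification of the genuine obstacles (the endpoint parameters $q_i=\infty$ and the reverse inequality when $q_0\neq q_1$) is accurate.

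Two small remarks on the sketch itself. First, the balancing threshold for $s_0<s_1$ should solve $2^{\nu s_0}=t\,2^{\nu s_1}$, which gives $\nu_t\sim -\log_2 t/(s_1-s_0)$, and the index assignment should place the large-$\nu$ tail (where $2^{\nu s_0}$ is the smaller weight) into the $\ell^{s_0}_{q_0}$-piece; as written, both the sign of $\nu_t$ and the direction of the split are reversed, though these are exactly the kind of bookkeeping details one expects to fix when turning a sketch into a proof. Second, a cleaner way to handle the difficulties you flag is the squeeze argument: since $\ell^{s_i}_{1}(X)\hookrightarrow\ell^{s_i}_{q_i}(X)\hookrightarrow\ell^{s_i}_{\infty}(X)$ for any $q_i\in[1,\infty]$, monotonicity of the real-interpolation functor reduces everything to computing the two endpoint couples $(\ell^{s_0}_1,\ell^{s_1}_1)_{\theta,q}$ and $(\ell^{s_0}_\infty,\ell^{s_1}_\infty)_{\theta,q}$, whose $K$-functionals are the explicit expressions $\sum_\nu\min(2^{\nu s_0},t\,2^{\nu s_1})\|a_\nu\|_X$ and $\sup_\nu\min(2^{\nu s_0},t\,2^{\nu s_1})\|a_\nu\|_X$ respectively. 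Verifying that both interpolate to $\ell^s_q(X)$ by inserting these into the $L_q(dt/t)$ integral and using your identity $\int_0^\infty t^{-\theta q}\min(A,tB)^q\,dt/t\sim A^{(1-\theta)q}B^{\theta q}$ avoids any appeal to the $J$-method or to Hardy inequalities, and makes the independence of the result from $q_0,q_1$ transparent.
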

\section{Technical tools} \label{sec:3}
	We know the following three lemmas due to Enomoto-Shibata \cite[Lemma 3.1]{ES13}.
\begin{lem}\label{lem:3.1} Let $0 < \epsilon < \pi/2$ and $\nu_0 > 0$. 
Let $\Sigma_\epsilon$ and $\Lambda_{\epsilon, \nu_0}$ be the sets defined in
\eqref{sector:1}. Then, we have the following 
assertions.
\begin{itemize}
\item[$(1)$] 
For any $\lambda \in \Sigma_\epsilon$ and $\xi \in \BR^N$, there holds
$$|\alpha^{-1}\lambda + |\xi|^2| \geq (\sin(\epsilon/2))(\alpha^{-1}|\lambda| + |\xi|^2).$$
\item[$(2)$] Let $p(\lambda) = (\alpha+\eta_\lambda)^{-1}\lambda$, where 
$\eta_\lambda = \beta + \gamma^2\lambda^{-1} $. 
For any $\nu_0>0$ there exist constants $\epsilon' \in (0, \pi/2)$ and $c_1>0$ depending 
solely on $\epsilon$ and $\nu_0$ such that for any $\lambda \in \Lambda_{\epsilon, \nu_0}$ 
and $\xi\in\BR^N$, there hold
$$|\arg p(\lambda)| \leq \pi-\epsilon', \quad 
|p(\lambda)+|\xi|^2| \geq c_1(|\lambda| + |\xi|^2).$$
\item[$(3)$] There exists a constant $c_2>0$ depending solely on $\alpha$, $\beta$ and $\epsilon$
such that for any $\lambda \in \Sigma_\epsilon$ there holds $|\alpha + \eta_\lambda| \geq c_2$.
\end{itemize}
\end{lem}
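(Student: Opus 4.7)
The plan is to prove the three items in the order (1), (3), (2). Items (1) and (3) are two instances of the same elementary sector inequality, whereas (2) is the delicate part and requires geometric information about how the $K_\epsilon$-region rules out the zeros of $p(\lambda) + t$ for $t \geq 0$.

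For (1), I would parametrize $\lambda = |\lambda| e^{i\theta}$ with $|\theta| \leq \pi - \epsilon$ and form the difference of squared moduli
\[
|\alpha^{-1}\lambda + |\xi|^2|^2 - \sin^2(\epsilon/2)(\alpha^{-1}|\lambda| + |\xi|^2)^2 = (1 - \sin^2(\epsilon/2))(\alpha^{-2}|\lambda|^2 + |\xi|^4) + 2\alpha^{-1}|\lambda||\xi|^2(\cos\theta - \sin^2(\epsilon/2)).
\]
Applying AM--GM to the first bracket and then the identity $1 - 2\sin^2(\epsilon/2) = \cos\epsilon$ together with the bound $\cos\theta \geq \cos(\pi - \epsilon) = -\cos\epsilon$, the above difference is at least $2\alpha^{-1}|\lambda||\xi|^2(\cos\theta + \cos\epsilon) \geq 0$, which proves (1). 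Item (3) then follows by observing that the map $\lambda \mapsto \gamma^2/\lambda$ preserves $\Sigma_\epsilon$ (its argument changes sign and the sector is symmetric about the real axis), so applying the same inequality with $\gamma^2/\lambda$ in the role of the sectorial variable and $\alpha+\beta$ in the role of the non-negative shift produces $|\alpha + \eta_\lambda| = |(\alpha+\beta) + \gamma^2/\lambda| \geq \sin(\epsilon/2)(\gamma^2/|\lambda| + \alpha+\beta) \geq \sin(\epsilon/2)(\alpha+\beta) =: c_2$.

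For (2), I would write $p(\lambda) = \lambda^2/Q(\lambda)$ with $Q(\lambda) = (\alpha+\beta)\lambda + \gamma^2$. The excluded disk defining $K_\epsilon$ has center $-\gamma^2/(\alpha+\beta) - \epsilon$ and radius $\gamma^2/(\alpha+\beta) + \epsilon$, so by the reverse triangle inequality $|\lambda + \gamma^2/(\alpha+\beta)| \geq \gamma^2/(\alpha+\beta)$ for $\lambda \in K_\epsilon$, which gives $|Q(\lambda)| \geq \gamma^2$. Combined with the trivial upper bound $|Q(\lambda)| \leq (\alpha+\beta+\gamma^2/\nu_0)|\lambda|$ on $\{|\lambda| \geq \nu_0\}$, this yields the modulus bound $|p(\lambda)| \geq (\alpha+\beta+\gamma^2/\nu_0)^{-1}|\lambda|$. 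For the argument bound $|\arg p(\lambda)| \leq \pi - \omega$ I would argue by exclusion: if $p(\lambda) = -t$ for some $t \geq 0$, then $\lambda^2 + t(\alpha+\beta)\lambda + t\gamma^2 = 0$. The case $t = 0$ is ruled out by $|\lambda| \geq \nu_0$; for $t \geq 4\gamma^2/(\alpha+\beta)^2$ both roots are real and negative, hence not in $\Sigma_\epsilon$; and for $0 < t < 4\gamma^2/(\alpha+\beta)^2$ the complex-conjugate roots satisfy $|\lambda+\gamma^2/(\alpha+\beta)|^2 = \gamma^4/(\alpha+\beta)^2$ (by a direct algebraic identity using $|\lambda|^2 = t\gamma^2$ and $\mathrm{Re}\,\lambda = -t(\alpha+\beta)/2$) with $\mathrm{Re}(\lambda + \gamma^2/(\alpha+\beta)) < \gamma^2/(\alpha+\beta)$, and then the short calculation $|\lambda + \gamma^2/(\alpha+\beta) + \epsilon|^2 - (\gamma^2/(\alpha+\beta) + \epsilon)^2 = -\epsilon t(\alpha+\beta) < 0$ shows such $\lambda$ strictly violates the $K_\epsilon$-inequality. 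Hence $p(\Lambda_{\epsilon,\nu_0})$ avoids the closed negative real axis, and a compactness argument on $\{\nu_0 \leq |\lambda| \leq R\} \cap \Lambda_{\epsilon,\nu_0}$ combined with the asymptotics $p(\lambda) \sim \lambda/(\alpha+\beta)$ as $|\lambda| \to \infty$ extracts a uniform $\omega \in (0, \epsilon]$. A final application of (1) to $p(\lambda)$ in the sector of opening $\pi - \omega$ then yields $|p(\lambda) + |\xi|^2| \geq \sin(\omega/2)(|p(\lambda)| + |\xi|^2) \geq c_1(|\lambda| + |\xi|^2)$.

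The main obstacle is the uniform sectorial bound on $\arg p(\lambda)$ in (2): pointwise non-vanishing of $p(\lambda) + t$ on $\Lambda_{\epsilon,\nu_0}$ is easy, but upgrading it to a uniform angle requires both the compactness/asymptotic argument and the strict inequality in the negative-discriminant case, which in turn relies on the positive buffer $\epsilon$ built into the definition of $K_\epsilon$; without that buffer the bad roots would lie exactly on the boundary of $K_\epsilon$ and one could not extract a uniform $\omega$.
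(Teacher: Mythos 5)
The paper itself gives no proof of this lemma: it is cited directly from Enomoto--Shibata \cite[Lemma~3.1]{ES13} and not rederived. So there is nothing in the paper's text to compare your argument against; what you have produced is a self-contained replacement for the citation.

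Your proof is correct. Items (1) and (3) are instances of the standard sector inequality $|z + t| \geq \sin(\epsilon/2)(|z| + t)$ for $z \in \Sigma_\epsilon$ and $t \geq 0$; the computation via $1 - 2\sin^2(\epsilon/2) = \cos\epsilon$ and $\cos\theta \geq -\cos\epsilon$ on $|\theta| \leq \pi - \epsilon$ is exactly right, and the observation that $\gamma^2/\lambda$ stays in $\Sigma_\epsilon$ converts this directly into (3) with the explicit constant $c_2 = (\alpha+\beta)\sin(\epsilon/2)$. The core of (2) is your algebraic identity: when $p(\lambda) = -t$ with $0 < t < 4\gamma^2/(\alpha+\beta)^2$ the (complex-conjugate) roots satisfy $|\lambda|^2 = t\gamma^2$ and ${\rm Re}\,\lambda = -t(\alpha+\beta)/2$, which forces $|\lambda + \gamma^2/(\alpha+\beta)| = \gamma^2/(\alpha+\beta)$ and then
$|\lambda + \gamma^2/(\alpha+\beta) + \epsilon|^2 - (\gamma^2/(\alpha+\beta) + \epsilon)^2 = -\epsilon\, t\,(\alpha+\beta) < 0$,
so the root lies strictly inside the disk carved out by $K_\epsilon$. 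Together with the two degenerate cases ($t = 0$ killed by $|\lambda| \geq \nu_0$; $t \geq 4\gamma^2/(\alpha+\beta)^2$ giving real negative roots outside $\Sigma_\epsilon$) this shows $p(\Lambda_{\epsilon,\nu_0})$ misses $(-\infty, 0]$, and your point that the $\epsilon$-buffer in $K_\epsilon$ is precisely what makes this exclusion strict (rather than boundary-touching) is the right diagnosis.

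The one soft spot, which you flag yourself, is the compactness-plus-asymptotics step used to upgrade pointwise exclusion to a uniform $\omega$. It is valid: on $\{\nu_0 \leq |\lambda| \leq R\} \cap \Lambda_{\epsilon,\nu_0}$ the map $\lambda \mapsto \arg p(\lambda)$ is continuous with values in $(-\pi, \pi)$, hence attains a maximum modulus strictly below $\pi$, and $p(\lambda) \sim \lambda/(\alpha+\beta)$ gives $|\arg p(\lambda)| \leq \pi - \epsilon/2$ for $|\lambda|$ large. Since $c_1$, $\omega$ are never needed explicitly, this non-constructive step is harmless. A more quantitative route, in the spirit of the rest of the paper, would be to turn the strict gap $-\epsilon\,t\,(\alpha+\beta)$ into a lower bound for ${\rm dist}(p(\lambda), (-\infty, 0])/|p(\lambda)|$ directly, but that is a refinement rather than a repair.
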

By Lemma \ref{lem:3.1}, we have the following multiplier estimates which is used to estimate 
solution formulas in $\BR^N$.
\begin{lem}\label{lem:3.2}
 Let $0 < \epsilon < \pi/2$, $\nu_0 > 0$ and $s \in \BR$.  
Let $\Sigma_\epsilon$ and $\Lambda_{\epsilon, \nu_0}$ be the sets defined in
\eqref{sector:1}. 
Then, for any $\delta \in \BN_0^N$
there hold
$$|D^{\delta}_\xi (\alpha^{-1}\lambda + |\xi|^2)^s| \leq C_\delta(|\lambda|^{1/2}+|\xi|)^{s-|\delta|}
$$
for any $(\lambda, \xi) \in \Sigma_\epsilon \times (\BR^N/\{0\})$ as well as
$$|D^{\delta}_\xi (p(\lambda) + |\xi|^2)^s| \leq C_\delta(|\lambda|^{1/2}+|\xi|)^{s-|\delta|}
$$
for any $(\lambda, \xi) \in \Lambda_{\epsilon, \nu_0} \times (\BR^N/\{0\})$,
where $p(\lambda) = (\alpha+\eta_\lambda)^{-1}\lambda$ and 
$\eta_\lambda = \beta+\gamma^2\lambda^{-1}$.
\end{lem}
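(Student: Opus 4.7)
The plan is to reduce both estimates to a single abstract claim: for any $w$ in a sector of $\BC$ avoiding the non-positive real axis (uniformly), the function $\xi\mapsto(w+|\xi|^2)^s$ satisfies the symbol estimate
$$|D^\alpha_\xi (w+|\xi|^2)^s|\le C_\alpha (|w|^{1/2}+|\xi|)^{2s-|\alpha|},$$
and then to substitute $w=\alpha^{-1}\lambda$ (for $\lambda\in\Sigma_\epsilon$) and $w=p(\lambda)$ (for $\lambda\in\Lambda_{\epsilon,\nu_0}$), checking that the two substitutions indeed satisfy the geometric hypothesis and that $|w|^{1/2}\sim |\lambda|^{1/2}$ in each case.

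First I would verify the size comparisons. For $w=\alpha^{-1}\lambda$ with $\lambda\in\Sigma_\epsilon$, Lemma~\ref{lem:3.1}(1) gives $|w+|\xi|^2|\ge c(|w|+|\xi|^2)$, and clearly $|w|^{1/2}=\alpha^{-1/2}|\lambda|^{1/2}\sim|\lambda|^{1/2}$. For $w=p(\lambda)=(\alpha+\eta_\lambda)^{-1}\lambda$ with $\lambda\in\Lambda_{\epsilon,\nu_0}$, Lemma~\ref{lem:3.1}(2) gives $|w+|\xi|^2|\ge c_1(|\lambda|+|\xi|^2)$, while Lemma~\ref{lem:3.1}(3) together with the upper bound $|\alpha+\eta_\lambda|\le \alpha+|\beta|+\gamma^2/\nu_0$ (coming from $|\lambda|\ge\nu_0$) yields $|p(\lambda)|\sim|\lambda|$; hence $|w|^{1/2}+|\xi|\sim |\lambda|^{1/2}+|\xi|$.

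Next I would prove the abstract symbol bound by induction on $|\alpha|$, combined with the chain rule (or equivalently Faà di Bruno applied to $z\mapsto z^s$ and $\xi\mapsto w+|\xi|^2$). The induction step shows that
$$D^\alpha_\xi(w+|\xi|^2)^s=\sum_{k=\lceil|\alpha|/2\rceil}^{|\alpha|} (w+|\xi|^2)^{s-k}\,P_{\alpha,k}(\xi),$$
where $P_{\alpha,k}$ is a polynomial in $\xi$ homogeneous of degree $2k-|\alpha|$ with coefficients depending only on $s$ and $\alpha$. Given the lower bound from Step~1, one has $|w+|\xi|^2|^{s-k}\le C(|w|^{1/2}+|\xi|)^{2(s-k)}$ (using $|w|+|\xi|^2\sim(|w|^{1/2}+|\xi|)^2$, valid in either direction depending on the sign of $s-k$), and trivially $|P_{\alpha,k}(\xi)|\le C(|w|^{1/2}+|\xi|)^{2k-|\alpha|}$. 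Summing over $k$ gives exactly the desired bound $C_\alpha(|w|^{1/2}+|\xi|)^{2s-|\alpha|}$, which after inserting the two choices of $w$ delivers both estimates claimed in the lemma (the exponent matching the order-$2s$ scaling one expects from the class $\BM_{2s}$).

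The only delicate point is book-keeping in the induction: checking that each $\partial_{\xi_j}$ either raises $k$ by one while multiplying $P_{\alpha,k}$ by $2\xi_j$ (thereby raising the polynomial degree by $1$) or differentiates $P_{\alpha,k}$ (lowering its degree by $1$), so that in all cases $2k-|\alpha|$ increases by $1$ in the first subcase and is preserved in the second—keeping the invariant $\deg P_{\alpha,k}=2k-|\alpha|$. This combinatorics is the main routine obstacle, but it is a standard symbol-calculus exercise; the rest is a direct application of Lemma~\ref{lem:3.1}.
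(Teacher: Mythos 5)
Your proof is correct. The paper does not actually supply a proof of Lemma~3.2 --- it is stated as an immediate consequence of Lemma~3.1 --- so there is nothing to compare against line by line, but the machinery you use (Fa\`a di Bruno/Bell's formula, together with the lower bounds $|\alpha^{-1}\lambda+|\xi|^2|\gtrsim|\lambda|+|\xi|^2$ and $|p(\lambda)+|\xi|^2|\gtrsim|\lambda|+|\xi|^2$ from Lemma~3.1, and the comparability $|p(\lambda)|\sim|\lambda|$ on $\Lambda_{\epsilon,\nu_0}$ from Lemma~3.1(3) plus $|\lambda|\ge\nu_0$) is precisely the argument the authors spell out for the analogous Lemma~3.4, so you are following the intended route. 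Your bookkeeping of the induction ($D^\alpha_\xi(w+|\xi|^2)^s=\sum_k(w+|\xi|^2)^{s-k}P_{\alpha,k}$ with $\deg P_{\alpha,k}=2k-|\alpha|$) is accurate.

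One thing you should flag explicitly rather than only hint at: the exponent in the printed lemma, $(|\lambda|^{1/2}+|\xi|)^{s-|\alpha|}$, is a misprint; the correct exponent is $2s-|\alpha|$, which is what your argument produces. This is confirmed by the paper's own uses of the lemma: just before \eqref{pr:3.0} it is invoked with $s=-1$ to conclude $(\lambda\alpha^{-1}+|\xi|^2)^{-1}\in\BM_{-2}$, which requires the order to be $2s=-2$, not $s=-1$; likewise the invocation ``Lemma~3.2 with $s=1$'' in the proof of Lemma~6.1 should read either ``Lemma~3.3 with $s=1$'' or ``Lemma~3.2 with $s=1/2$''. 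Compare also with Lemma~3.3, which states $|D^{\alpha'}_{\xi'}M^s|\le C(|\lambda|^{1/2}+|\xi'|)^{s-|\alpha'|}$ for $M\in\{A,B,K\}$; there $M$ itself is of order $1$, so $M^s$ has order $s$, consistent with the exponent $s-|\alpha'|$, whereas in Lemma~3.2 the base $\alpha^{-1}\lambda+|\xi|^2$ has order $2$, so $(\cdot)^s$ has order $2s$.
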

Set 
\begin{equation}\label{halfsymbol:3.1}\begin{aligned} 
	 A= \sqrt{p(\lambda) + |\xi'|^2}, 
	\quad B&=\sqrt{\alpha^{-1}\lambda + |\xi'|^2},\quad K=(\alpha+\eta_\lambda)A + \alpha B, \\
	\CM(x_N)& = \frac{e^{-Ax_N} - e^{-Bx_N}}{A-B}. 
\end{aligned}\end{equation}
These symbols appear in the solution formula \eqref{sol:form5} below.
We know the following multiplier's  estimates.
\begin{lem}\label{lem:3.3} Let $0 < \epsilon < \pi/2$, $\nu_0 > 0$ and $s \in \BR$.  
Then, for any multi-index $\delta' \in \BN_0^{N-1}$ there hold
$$|D^{\delta'}_{\xi'} M^s| \leq C_{\delta'}(|\lambda|^{1/2} + |\xi'|)^{s-|\delta'|}
$$
for any $(\lambda, \xi' ) \in \Lambda_{\epsilon, \nu_0} \times(\BR^{N-1}/\{0\})$, 
where $M \in \{A, B, K\}$. 
\end{lem}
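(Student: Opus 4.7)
The plan is to handle the three choices of $M$ separately. For $M = A$ and $M = B$, I write $A^s = (p(\lambda)+|\xi'|^2)^{s/2}$ and $B^s = (\alpha^{-1}\lambda + |\xi'|^2)^{s/2}$. The proof of Lemma \ref{lem:3.2} is purely symbolic, depending only on the sector estimates of Lemma \ref{lem:3.1}(1)(2) and on the chain rule, both insensitive to the ambient dimension; applying it in the variable $\xi' \in \BR^{N-1}$ with exponent $s/2$ yields $|D^{\alpha'}_{\xi'}A^s|,\ |D^{\alpha'}_{\xi'}B^s| \le C_{\alpha'}(|\lambda|^{1/2}+|\xi'|)^{s-|\alpha'|}$ at once.

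For $M = K = (\alpha+\eta_\lambda)A + \alpha B$ the estimate splits into an easy upper bound on derivatives of $K$ and a more delicate non-vanishing lower bound on $K$ itself. The upper bound follows from the previous step with $s=1$, together with the uniform bound $|\alpha+\eta_\lambda| \le \alpha+\beta+\gamma^2/\nu_0$ on $\Lambda_{\epsilon,\nu_0}$, giving $|D^{\alpha'}_{\xi'}K| \le C_{\alpha'}(|\lambda|^{1/2}+|\xi'|)^{1-|\alpha'|}$. Granted the lower bound $|K| \ge c(|\lambda|^{1/2}+|\xi'|)$ discussed below, Fa\`a di Bruno's formula applied inductively to $K^s$, with all intermediate powers $K^{s-k}$ two-sidedly controlled, propagates the bound to arbitrary real $s$.

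The principal difficulty is therefore the lower bound $|K| \ge c(|\lambda|^{1/2}+|\xi'|)$. My approach uses the algebraic identity
\[
K \cdot [(\alpha+\eta_\lambda)A - \alpha B] = (\alpha+\eta_\lambda)^2 A^2 - \alpha^2 B^2 = \eta_\lambda\lambda + \eta_\lambda(2\alpha+\eta_\lambda)|\xi'|^2,
\]
which follows from $(\alpha+\eta_\lambda)p(\lambda) = \lambda$. The disk-exterior condition $\lambda \in K_\epsilon$ in \eqref{sector:1} is tailored so that $\alpha+\eta_\lambda = (\alpha+\beta) + \gamma^2/\lambda$ lies in a right half-plane with $\mathrm{Re}(\alpha+\eta_\lambda) \ge c(\epsilon) > 0$; this is exactly the role of the center $-\gamma^2/(\alpha+\beta) - \epsilon$ of the excluded disk, as a direct M\"obius-transform calculation confirms. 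Combined with $\lambda \in \Sigma_\epsilon$, a short case split on whether $|\lambda|$ or $|\xi'|^2$ dominates gives $|\eta_\lambda\lambda + \eta_\lambda(2\alpha+\eta_\lambda)|\xi'|^2| \ge c(|\lambda| + |\xi'|^2)$. Since $|(\alpha+\eta_\lambda)A - \alpha B| \le C(|\lambda|^{1/2}+|\xi'|)$ by Lemma \ref{lem:3.2}, dividing yields the desired bound $|K| \ge c'(|\lambda|^{1/2}+|\xi'|)$, completing the proof.
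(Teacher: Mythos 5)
Your treatment of $M=A$ and $M=B$ is sound (modulo the paper's typo in Lemma \ref{lem:3.2}, whose right-hand side should read $(|\lambda|^{1/2}+|\xi|)^{2s-|\alpha|}$: with that correction, substituting exponent $s/2$ produces exactly the claimed bound, and the dimension-agnostic observation is legitimate). The Fa\`a di Bruno reduction of the $M=K$ case to an upper bound on $D^{\alpha'}K$ plus a two-sided bound on $|K|$ is also the right framework, and the upper bound is correctly dispatched. Your M\"obius computation for $\mathrm{Re}(\alpha+\eta_\lambda)\ge c(\epsilon)>0$ on $K_\epsilon$ is correct as well.

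The gap is in the numerator estimate. You claim
$$\bigl|\eta_\lambda\lambda + \eta_\lambda(2\alpha+\eta_\lambda)|\xi'|^2\bigr| \ge c\bigl(|\lambda|+|\xi'|^2\bigr),$$
but this is \emph{false} in the admissible parameter range $\alpha+\beta>0$, $\gamma>0$: the paper permits $\beta=0$. Take $\beta=0$, $\xi'=0$, and $\lambda=R$ real positive, $R\to\infty$ (which lies in $\Lambda_{\epsilon,\nu_0}$ for $R\ge\nu_0$). Then $\eta_\lambda=\gamma^2/R$, so $\eta_\lambda\lambda=\gamma^2$ is constant and the second term vanishes; the left side is $\gamma^2$ while the right side is $cR\to\infty$. (More generally, $\eta_\lambda\lambda=\beta\lambda+\gamma^2$, and when $\beta$ is zero or small this term does not scale with $|\lambda|$.) The fact that $|K|\sim 2\sqrt{\alpha\lambda}$ nevertheless holds in this regime is recovered only because the \emph{denominator} $(\alpha+\eta_\lambda)A-\alpha B\sim \gamma^2/(2\sqrt{\alpha\lambda})\to 0$ at exactly the compensating rate; but you only bound that denominator from \emph{above}, so the quotient argument produces no usable lower bound once the numerator estimate is gone. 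The division identity is therefore too fragile a route to $|K|\gtrsim(|\lambda|^{1/2}+|\xi'|)$.

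The robust argument (and the one the paper itself uses for the closely analogous $K_1=(\alpha+\beta)A+\alpha B$ in the proof of Lemma \ref{lem:6.1}) is sectoral: show both summands $(\alpha+\eta_\lambda)A$ and $\alpha B$ lie in a fixed sector $\{|\arg z|\le \tfrac{\pi}{2}-\delta\}$ and each has modulus $\ge c(|\lambda|^{1/2}+|\xi'|)$, so that the sum cannot experience cancellation. For $\alpha B$ this is Lemma \ref{lem:3.1}(1); for $(\alpha+\eta_\lambda)A$, note that $|\arg A|\le(\pi-\omega)/2$ by Lemma \ref{lem:3.1}(2), and your own M\"obius computation supplies $\mathrm{Re}(\alpha+\eta_\lambda)\ge c(\epsilon)>0$ together with the obvious upper bound $|\alpha+\eta_\lambda|\le\alpha+\beta+\gamma^2/\nu_0$, which pins $\arg(\alpha+\eta_\lambda)$ strictly inside $(-\pi/2,\pi/2)$ and gives $|(\alpha+\eta_\lambda)A|\gtrsim(|\lambda|^{1/2}+|\xi'|)$. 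Replacing the division step with this geometric argument closes the gap.
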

Using Lemma \ref{lem:3.3} we have the following lemma.
\begin{lem}\label{lem:3.4} 
Let $0 < \epsilon < \pi/2$, $\nu_0 > 0$, $s \in \BR$, and $x_N > 0$.
Then, for any multi-index $\delta' \in \BN_0^{N-1}$ and $\lambda \in \Lambda_{\epsilon, \nu_0}$,
there hold
\begin{align}
|D^{\delta'}_{\xi'} e^{- Mx_N}| \leq C_{\delta'}(|\lambda|^{1/2} + |\xi'|)^{-|\delta'|}
e^{-c(|\lambda|^{1/2}
+|\xi'|)x_N}, \label{lem.3.4.1}\\
|D^{\delta'}_{\xi'} (B \CM(x_N))| \leq C_{\delta'}(|\lambda|^{1/2} + |\xi'|)^{-|\delta'|}
e^{-c(|\lambda|^{1/2}+|\xi'|)x_N} \label{lem.3.4.2}
\end{align}
with some positive constant $c$, where $M \in \{A, B\}$.
\end{lem}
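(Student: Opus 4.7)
The plan is to reduce both estimates to two key ingredients: the fact that $A$ and $B$ lie in a sector away from the negative real axis (so their real parts control their moduli), and the multiplier estimates of Lemma \ref{lem:3.3}. The main obstacle is the apparent singularity $1/(A-B)$ in the definition of $\CM(x_N)$, which I will remove by an integral representation before differentiating.

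First, I would establish the pointwise lower bound
$$\mathrm{Re}\,M \geq c(|\lambda|^{1/2} + |\xi'|) \quad \text{for } M \in \{A, B\}.$$
Since $\alpha^{-1}\lambda + |\xi'|^2 \in \Sigma_{\epsilon}$ for $\lambda \in \Sigma_\epsilon$ by Lemma \ref{lem:3.1}(1), the principal square root $B$ lies in the sector $|\arg B| \leq (\pi-\epsilon)/2$, which gives $\mathrm{Re}\,B \geq \sin(\epsilon/2)|B|$. Combined with $|B|^2 \geq \sin(\epsilon/2)(\alpha^{-1}|\lambda|+|\xi'|^2)$ from Lemma \ref{lem:3.1}(1), this yields $\mathrm{Re}\,B \geq c(|\lambda|^{1/2}+|\xi'|)$. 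The analogous bound for $A$ follows from Lemma \ref{lem:3.1}(2) and (3).

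For \eqref{lem.3.4.1}, I would apply the Faà di Bruno formula to $e^{-Mx_N}$ in the variable $\xi'$, obtaining a sum of terms of the form
$$e^{-Mx_N} (-x_N)^k \prod_{j=1}^{k} D^{\beta_j}_{\xi'} M, \qquad \sum_{j=1}^k \beta_j = \alpha'.$$
By Lemma \ref{lem:3.3} each factor $D^{\beta_j}_{\xi'} M$ is bounded by $C(|\lambda|^{1/2}+|\xi'|)^{1-|\beta_j|}$, so the product is $O((|\lambda|^{1/2}+|\xi'|)^{k-|\alpha'|})$. Using $|e^{-Mx_N}| = e^{-\mathrm{Re}(M)x_N} \leq e^{-c(|\lambda|^{1/2}+|\xi'|)x_N}$ and absorbing the polynomial factor $x_N^k (|\lambda|^{1/2}+|\xi'|)^k$ into half of the exponential via $s^k e^{-s/2} \leq C_k$, the estimate \eqref{lem.3.4.1} follows.

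For \eqref{lem.3.4.2}, the key step is the identity
$$\CM(x_N) = \frac{e^{-Ax_N}-e^{-Bx_N}}{A-B} = -x_N\int_0^1 e^{-((1-t)A + tB)x_N}\,dt,$$
obtained by computing $\frac{d}{dt}e^{-((1-t)A+tB)x_N}$ and integrating. This expression is free of the $A-B$ singularity. Setting $M(t) := (1-t)A + tB$, I note that $M(t) \in \BM_1(\Lambda_{\epsilon,\nu_0})$ and $\mathrm{Re}\,M(t) \geq c(|\lambda|^{1/2}+|\xi'|)$, both uniformly in $t \in [0,1]$, so the argument of the previous paragraph applies to $e^{-M(t)x_N}$ uniformly in $t$. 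I would then differentiate
$$B\CM(x_N) = -Bx_N \int_0^1 e^{-M(t)x_N}\,dt$$
using the Leibniz rule, combine the resulting product $(D^{\beta'}_{\xi'}B)\cdot (D^{\alpha'-\beta'}_{\xi'} e^{-M(t)x_N})$ using Lemma \ref{lem:3.3} and the uniform-in-$t$ version of \eqref{lem.3.4.1}, and absorb the extra factor $x_N(|\lambda|^{1/2}+|\xi'|)$ into half of the exponential as before. This yields \eqref{lem.3.4.2} with a possibly smaller constant $c$.
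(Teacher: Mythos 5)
Your proof is correct and follows essentially the same route as the paper's: both apply Bell's (Faà di Bruno) formula to $e^{-Mx_N}$, use the multiplier bounds of Lemma \ref{lem:3.3} to control derivatives of $A$, $B$, and then remove the $A-B$ singularity in $\CM$ via the integral representation $\CM(x_N) = -x_N\int_0^1 e^{-((1-t)A+tB)x_N}\,dt$ before differentiating. The only cosmetic difference is that you make the exponential decay $\mathrm{Re}\,M \geq c(|\lambda|^{1/2}+|\xi'|)$ explicit via Lemma \ref{lem:3.1}, whereas the paper attributes it somewhat tersely to Lemma \ref{lem:3.3}; you also do the $\theta\in\{0,1\}$ cases first and then the general convex combination, while the paper handles general $\theta$ from the outset.
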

\begin{proof}
For any $\theta \in [0, 1]$, by Bell's formula we have
\begin{align*}
&|D^{\delta'}_{\xi'}e^{-((1-\theta)A + \theta B)x_N}|\\
&\quad\leq  C_{\delta'}\sum_{\ell=1}^{|\delta'|}x^\ell_N|e^{-((1-\theta)A + \theta B)x_N)}|
\Bigl(\sum_{\delta'_1+\cdots +\delta'_\ell = \delta' \atop |\delta'_\ell| \geq 1}
|D^{\delta'_1}((1-\theta)A + \theta B)|\cdots
|D^{\delta'_\ell}((1-\theta)A + \theta B)|\Bigr). 
\end{align*}
Using Lemma \ref{lem:3.3}, there exists a constant $c>0$ such that 

$$|e^{-((1-\theta)A + \theta B)x_N)}| \leq e^{-2c(|\lambda|^{1/2}+|\xi'|)x_N}.$$
Therefore, we have
\begin{equation}\label{kernel:3.3}
|D^{\delta'}_{\xi'}e^{-((1-\theta)A + \theta B)x_N}| \leq C_{\delta'}(|\lambda|^{1/2}+|\xi'|)^{-|\delta'|}
e^{-c(|\lambda|^{1/2}+|\xi'|)x_N}.
\end{equation}
Therefore, setting $\theta=0$ or $\theta = 1$, we have \eqref{lem.3.4.1}.

We write
$$B\CM(x_N) = Bx_N \int^1_0 e^{-((1-\theta)A + \theta B)x_N)}\,d\theta.$$
Applying \eqref{kernel:3.3} and Lemma \ref{lem:3.3} implies  \eqref{lem.3.4.2}.
This completes the proof of Lemma \ref{lem:3.4}. 
\end{proof}

In this section, we record the following proposition, which plays a crucial role in the proof of Theorem~\ref{thm:1}.
\begin{prop} \label{prop:2}
Let $1 < q < \infty$, $\epsilon \in (0, \pi/2)$, $\lambda_0>0$, and $\lambda\in\Lambda_{\epsilon, \lambda_0}$. Suppose that $m_0 \in \BM_{0}$. 
Define the integral operators $L_i$, $i=1,\cdots,6$, by the formula:
\begin{align*}
L_1(\lambda)f &= \int^\infty_0\CF^{-1}_{\xi'}\left[m_0(\lambda, \xi')
B^3\CM(x_N)\CM(y_N)\hat f(\xi', y_N)\right](x')\,dy_N,\\
L_2(\lambda)f &= \int^\infty_0\CF^{-1}_{\xi'}\left[m_0(\lambda, \xi')
B^2\CM(x_N)e^{-Ay_N}\hat f(\xi', y_N)\right](x')\,dy_N, \\
L_3(\lambda)f &= \int^\infty_0\CF^{-1}_{\xi'}\left[m_0(\lambda, \xi')
B^2\CM(x_N)e^{-By_N}\hat f(\xi', y_N)\right](x')\,dy_N, \\
L_4(\lambda)f &= \int^\infty_0\CF^{-1}_{\xi'}\left[m_0(\lambda, \xi')
B^2e^{-Ax_N}\CM(y_N)\hat f(\xi', y_N)\right](x')\,dy_N, \\ 
L_5(\lambda)f &= \int^\infty_0\CF^{-1}_{\xi'}\left[m_0(\lambda, \xi')
B^2e^{-Bx_N}\CM(y_N)\hat f(\xi', y_N)\right](x')\,dy_N,\\
L_6(\lambda)f &= \int^\infty_0\CF^{-1}_{\xi'}\left[m_0(\lambda, \xi')
Be^{-Jx_N}e^{-Qy_N}\hat f(\xi', y_N)\right](x')\,dy_N,
\end{align*}
respectively, where $(J,Q)$ stands for an element of $\{(A,A),(A,B),(B,A),(B,B)\}$ in the formula of $L_6$.
Then for every $f\in L_q(\HS)$, it holds
\begin{equation}
\|L_i(\lambda)f\|_{L_q(\HS)} \leq C_q\|f\|_{L_q(\HS)}\quad (i=1,2, 3, 4, 5,6).
\end{equation}
\end{prop}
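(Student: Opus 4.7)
The plan is the standard Shibata-type reduction: for each fixed pair $(x_N, y_N)$, identify the inner operation $\CF^{-1}_{\xi'}[\cdots \hat f(\cdot,y_N)]$ as a Fourier multiplier on $\BR^{N-1}$ whose operator norm on $L_q(\BR^{N-1})$ is at most $C/(x_N+y_N)$, and then close the argument with the $L_q$-boundedness of the Stieltjes kernel $1/(x_N+y_N)$ on $\BR_+$.

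\textbf{Factorization of the symbols.} For each of the six operators I would isolate a single power of $B$ and write the symbol in the form $m_0(\lambda,\xi')\,B(\lambda,\xi')\,\tilde k_i(\lambda,\xi',x_N,y_N)$, where $\tilde k_i$ is built from the factors $B\CM(x_N)$, $B\CM(y_N)$, and $e^{-Mz}$ with $M\in\{A,B\}$, $z\in\{x_N,y_N\}$. Lemma~\ref{lem:3.4} states that each such factor is a multiplier of order $0$ in $\xi'$ equipped with an exponential decay of type $e^{-c(|\lambda|^{1/2}+|\xi'|)z}$; the Leibniz rule therefore gives
$$|D^{\alpha'}_{\xi'}\tilde k_i(\lambda,\xi',x_N,y_N)|\le C_{\alpha'}(|\lambda|^{1/2}+|\xi'|)^{-|\alpha'|}\,e^{-c(|\lambda|^{1/2}+|\xi'|)(x_N+y_N)}.$$
Combined with $m_0\in\BM_0$ and $B\in\BM_1$ (Lemma~\ref{lem:3.3}), the total symbol $m_0 B\tilde k_i$ is of order $1$ in $|\lambda|^{1/2}+|\xi'|$ and retains the exponential decay in $x_N+y_N$.

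\textbf{Uniform Mikhlin bound and Stieltjes closure.} Using $|\xi'|\le |\lambda|^{1/2}+|\xi'|$ together with the elementary inequality $\sup_{u\ge0} u\,e^{-cus}\le C/s$ (valid on any subset of $(0,\infty)$, in particular on $u\ge|\lambda|^{1/2}$), I would bound the Mikhlin-H\"ormander seminorm of $\xi'\mapsto m_0 B\tilde k_i$ by $C/(x_N+y_N)$, \emph{uniformly} in $\lambda\in\Lambda_{\epsilon,\lambda_0}$ and $x_N,y_N>0$. The Mikhlin-H\"ormander multiplier theorem on $\BR^{N-1}$ then yields, for every $1<q<\infty$,
$$\bigl\|\CF^{-1}_{\xi'}[m_0 B\tilde k_i\,\hat g]\bigr\|_{L_q(\BR^{N-1})} \le \frac{C}{x_N+y_N}\,\|g\|_{L_q(\BR^{N-1})}.$$
Substituting $g=f(\cdot,y_N)$, applying Minkowski's inequality in $x'$, and integrating in $y_N$ reduces the desired estimate to the $L_q(\BR_+)$-boundedness of the Stieltjes transform $h\mapsto\int_0^\infty h(y)/(x+y)\,dy$, which is a consequence of the Hardy-Hilbert inequality (immediate from the substitution $y=xt$ and Minkowski, using $\int_0^\infty t^{-1/q}/(1+t)\,dt=\pi/\sin(\pi/q)<\infty$).

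\textbf{Main obstacle.} The argument is routine once one recognizes the correct factorization: exactly one power of $B$ must be left outside the decay-carrying factors so that the loss of order $1$ from $B$ is exactly absorbed by the exponential decay when taking the Mikhlin supremum, producing the integrable singularity $1/(x_N+y_N)$. The only bookkeeping issue is ensuring that the Mikhlin seminorm estimate is uniform in $\lambda$, which the bound $ue^{-cus}\le C/s$ guarantees even under the constraint $u\ge|\lambda|^{1/2}$. The same scheme applies verbatim to $L_1,\dots,L_6$, the choice of $\tilde k_i$ being the only thing that changes from one operator to the next.
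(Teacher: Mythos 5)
Your proposal is correct, but it proceeds along a genuinely different track than the paper. The paper first splits $B = \lambda^{1/2}\cdot(\lambda^{1/2}/B) + |\xi'|\cdot(|\xi'|/B)$, producing two pieces $L_1^{(1)}, L_1^{(2)}$ whose symbols carry, respectively, an explicit factor of $|\lambda|^{1/2}$ and an explicit factor of $|\xi'|$. For each piece the paper then computes a \emph{pointwise} bound on the inverse Fourier transform kernel $\ell(x')$ of the form $|\ell(x')|\le C(|x'|+x_N+y_N)^{-N}$: the $|x'|^{-N}$ decay is obtained by $N$ integrations by parts (for $L_1^{(1)}$) or by invoking the Shibata--Shimizu decay lemma (Proposition~\ref{lem:3}, for $L_1^{(2)}$), while the $(x_N+y_N)^{-N}$ decay is obtained directly from the exponential factor after rescaling $\xi'$. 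This pointwise bound yields $\|\ell\|_{L_1(\BR^{N-1})}\le C/(x_N+y_N)$, and Young's convolution inequality plus Proposition~\ref{lem:2} close the argument.

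You short-circuit the split entirely: you observe that $(|\lambda|^{1/2}+|\xi'|)^{-|\alpha'|}\le |\xi'|^{-|\alpha'|}$ together with $\sup_{u>0} u\,e^{-cus}\lesssim 1/s$ already delivers a Mikhlin--H\"ormander seminorm $\le C/(x_N+y_N)$ for the \emph{unsplit} symbol $m_0 B\tilde k_i$, and then apply the Fourier multiplier theorem on $\BR^{N-1}$. The subsequent Minkowski/Stieltjes closure is the same. This is leaner: it avoids both the $\lambda^{1/2}/|\xi'|$ bookkeeping and the explicit integration-by-parts kernel estimate, at the cost of producing only the operator norm $\|\CF^{-1}_{\xi'}[\cdot]\|_{L_q\to L_q}\le C/(x_N+y_N)$ rather than the pointwise kernel bound $C(|x'|+x_N+y_N)^{-N}$ that the paper obtains along the way. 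For the stated $L_q$ conclusion the operator-norm bound is all that is needed, so your argument is a complete and valid proof; the paper's longer route is, in effect, a hands-on rederivation of the multiplier theorem in the specific parameterized setting, and its by-product pointwise kernel estimate is not exploited elsewhere in the paper. One small point worth making explicit: the Fourier multiplier Theorem~\ref{FMT} is stated in the paper for $\BR^N$, so you should note that the identical statement holds on $\BR^{N-1}$ with $[(N-1)/2]+1$ derivatives, which is what you actually invoke.
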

\begin{prop} \label{prop:3}
	Let $1 < q < \infty$, $\epsilon \in (0, \pi/2)$, $\lambda_0>0$, and $\lambda\in
\Lambda_{\epsilon, \lambda_0}$. Suppose that $m_0 \in \BM_{0}$. 
	Define the integral operators $P_i$, $i=1,\cdots,6$, by the formula:
	\begin{align*}
		P_1(\lambda)f &= \int^\infty_0\CF^{-1}_{\xi'}\left[m_0(\lambda, \xi')
		B^2\pd_\lambda \left(B^3\CM(x_N)\CM(y_N)\right)\hat f(\xi', y_N)\right](x')\,dy_N,\\
		P_2(\lambda)f &= \int^\infty_0\CF^{-1}_{\xi'}\left[m_0(\lambda, \xi')
		B^2\pd_\lambda \left(B^2\CM(x_N)e^{-Ay_N}\right)\hat f(\xi', y_N)\right](x')\,dy_N, \\
		P_3(\lambda)f &= \int^\infty_0\CF^{-1}_{\xi'}\left[m_0(\lambda, \xi')
		B^2\pd_\lambda \left(B^2\CM(x_N)e^{-By_N}\right)\hat f(\xi', y_N)\right](x')\,dy_N, \\
		P_4(\lambda)f &= \int^\infty_0\CF^{-1}_{\xi'}\left[m_0(\lambda, \xi')
		B^2\pd_\lambda \left(B^2e^{-Ax_N}\CM(y_N)\right)\hat f(\xi', y_N)\right](x')\,dy_N, \\ 
		P_5(\lambda)f &= \int^\infty_0\CF^{-1}_{\xi'}\left[m_0(\lambda, \xi')
		B^2\pd_\lambda \left(B^2e^{-Bx_N}\CM(y_N)\right)\hat f(\xi', y_N)\right](x')\,dy_N,\\
		P_6(\lambda)f &= \int^\infty_0\CF^{-1}_{\xi'}\left[m_0(\lambda, \xi')
		B^2\pd_\lambda \left(Be^{-Jx_N}e^{-Qy_N}\right)\hat f(\xi', y_N)\right](x')\,dy_N,
	\end{align*}
	respectively, where $(J,Q)$ stands for an element of $\{(A,A),(A,B),(B,A),(B,B)\}$ in the formula of $P_6$.
	Then for every $f\in L_q(\HS)$, it holds
	\begin{equation}
		\|P_i(\lambda)f\|_{L_q(\HS)} \leq C_q\|f\|_{L_q(\HS)}\quad (i=1,2, 3, 4, 5,6).
	\end{equation}
\end{prop}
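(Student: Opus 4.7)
The plan is to reduce Proposition~3.6 to Proposition~3.5 by showing that the prefactor $B^2$ placed in front of $\pd_\lambda$ in each $P_i$ exactly compensates for the order lowering produced by the $\lambda$-derivative, so that every term arising from $\pd_\lambda$ fits again into the framework of $L_i$-type operators with a new, still-admissible multiplier $m_0 \in \BM_0$.

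\textbf{Step 1: derivatives of the basic symbols.} Since $A^2 = p(\lambda)+|\xi'|^2$ and $B^2 = \alpha^{-1}\lambda + |\xi'|^2$, one computes
\begin{equation*}
\pd_\lambda A = \frac{p'(\lambda)}{2A}, \qquad \pd_\lambda B = \frac{1}{2\alpha B}.
\end{equation*}
By Lemma~\ref{lem:3.1}~(3) and a routine calculation on $\Lambda_{\epsilon,\lambda_0}$, $p'(\lambda)$ is bounded, so $\pd_\lambda A$ and $\pd_\lambda B$ are multipliers in the class $\BM_{-1}$ by Lemma~\ref{lem:3.3}. Consequently $B^2\,\pd_\lambda A$ and $B^2\,\pd_\lambda B$ lie in $\BM_{1}$, and more importantly the ratios $B\,\pd_\lambda B$, $A\,\pd_\lambda A$, and $(A+B)^{-1}B^2\pd_\lambda B$ all lie in $\BM_0$. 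For the exponentials,
$\pd_\lambda e^{-Mx_N} = -x_N(\pd_\lambda M)e^{-Mx_N}$, $M\in\{A,B\}$,
and for $\CM$ we exploit the integral representation already used in Lemma~\ref{lem:3.4},
$\CM(x_N) = -x_N\int_0^1 e^{-((1-\theta)A+\theta B)x_N}\,\d\theta$,
to obtain
\begin{equation*}
\pd_\lambda\CM(x_N) = x_N^2\int_0^1\bigl((1-\theta)\pd_\lambda A+\theta\pd_\lambda B\bigr)e^{-((1-\theta)A+\theta B)x_N}\,\d\theta.
\end{equation*}

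\textbf{Step 2: Leibniz expansion and matching with $L_i$.} Applied to each $P_i$ the product rule produces a finite sum of terms in which $\pd_\lambda$ falls either on a power of $B$ (resp.\ of $A$), on an exponential $e^{-Jx_N}$ or $e^{-Qy_N}$, or on a factor $\CM(x_N)$ or $\CM(y_N)$. In every resulting summand the external $B^2$ is merged with the derivative as follows: (i) $B^2\pd_\lambda B^k = \frac{k}{2\alpha}B^k$, recovering exactly the structure of the corresponding $L_i$ kernel with an unchanged $m_0$; (ii) $B^2\pd_\lambda e^{-Mx_N} = -(B^2 x_N\,\pd_\lambda M)e^{-Mx_N}$, where $B^2\pd_\lambda M \in \BM_{1}$ together with the factor $x_N$ reproduces, via $B x_N\,e^{-cMx_N}\lesssim e^{-c'Mx_N}$ (Lemma~\ref{lem:3.4}), a kernel of the same structural type as in $L_i$ with a new $m_0 \in \BM_0$; (iii) when $\pd_\lambda$ falls on $\CM$, the $x_N^2$ factor combines with $B^2$ as $(Bx_N)^2$, and after using Lemma~\ref{lem:3.4} twice one again arrives at kernels of the form treated in Proposition~3.5.

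\textbf{Step 3: conclusion.} After this rewriting every $P_i(\lambda)f$ is expressed as a finite sum of operators of the form $L_j(\lambda)f$ (with possibly modified $m_0\in\BM_0$ depending on $\lambda$ and $\xi'$ only through the admissible symbols $B^2\pd_\lambda M$, $A/K$, $B/K$, etc.). Proposition~3.5 then yields
$\|P_i(\lambda)f\|_{L_q(\HS)} \leq C_q\|f\|_{L_q(\HS)}$
uniformly in $\lambda\in\Lambda_{\epsilon,\lambda_0}$.

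\textbf{Main obstacle.} The delicate point is the term coming from $\pd_\lambda\CM$: naively differentiating the quotient $(e^{-Ax_N}-e^{-Bx_N})/(A-B)$ produces an $(A-B)^{-2}$ singularity when $A\to B$. Using the convex-combination integral representation removes this false singularity at the cost of introducing an $x_N^2$ factor; verifying that this quadratic polynomial in $x_N$ is absorbed by $B^2$ (through two applications of the decay estimate of Lemma~\ref{lem:3.4}) without breaking the $\BM_0$ structure of the resulting multiplier is the only genuinely new bookkeeping needed beyond the proof of Proposition~3.5.
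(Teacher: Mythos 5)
Your proposal is correct and follows essentially the same route as the paper's own proof. The paper also computes $\pd_\lambda A = p'(\lambda)/2A$, $\pd_\lambda B = 1/(2\alpha B)$, uses the convex-combination representation $\CM(x_N) = x_N\int_0^1 e^{-(A+\theta(B-A))x_N}\,\d\theta$ to differentiate $\CM$ without creating an artificial $(A-B)^{-2}$ singularity, absorbs the $x_N^2$ factor through the exponential decay furnished by Lemma~\ref{lem:3.4}, and then derives the kernel bound
\begin{equation*}
\bigl|D^{\alpha'}_{\xi'}\bigl(B\,\pd_\lambda(B^3\CM(x_N)\CM(y_N))\bigr)\bigr|
\leq C_{\alpha'}(|\lambda|^{1/2}+|\xi'|)^{-|\alpha'|}e^{-c(|\lambda|^{1/2}+|\xi'|)(x_N+y_N)},
\end{equation*}
from which it concludes ``by the similar method as Proposition~\ref{prop:2}.'' (The paper only writes this out for $P_1$ and one of $P_4$/$P_5$.) One small imprecision worth flagging: in Step~3 you write that each $P_i$ is ``expressed as a finite sum of operators of the form $L_j(\lambda)f$ with possibly modified $m_0\in\BM_0$.'' This is not literally true for the terms where $\pd_\lambda$ falls on $\CM$ or on an exponential, because the resulting kernels carry polynomial-in-$x_N$ (or $y_N$) weights that are not of the form appearing in the $L_j$ definitions. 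What is true, and what the paper actually uses, is that these kernels satisfy the same pointwise estimates (the $|x'|^{-N}$ and $(x_N+y_N)^{-N}$ bounds obtained in the proof of Proposition~\ref{prop:2}), so the reduction to the Hilbert-type operator of Proposition~\ref{lem:2} goes through verbatim; phrasing it as a reduction to the kernel estimates, rather than to the operators $L_j$ themselves, would make Step~3 airtight.
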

To show Proposition \ref{prop:2}, we need the following propositions.
\begin{prop}\cite[A.3 p.271]{S70}.
\label{lem:2}
Let $1< q < \infty$. Define the integral operator $G$ by the formula:
\begin{equation}
Gf(x_N) = \int^\infty_0\frac{f(y_N)}{x_N+y_N}\,dy_N.
\end{equation}
Then, for every $f \in L_q(0,\infty)$ there exists a constant $A_q$ such that
\begin{equation}
\|Gf\|_{L_q((0, \infty))} \leq A_q\|f\|_{L_q((0, \infty))}.
\end{equation}
\end{prop}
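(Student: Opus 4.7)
The plan is to reduce the operator $G$ to a multiplicative convolution on $(0,\infty)$ and then apply Minkowski's integral inequality. The guiding observation is that the kernel $(x_N+y_N)^{-1}$ is homogeneous of degree $-1$, so after an appropriate change of variables the action of $G$ becomes convolution (in the sense of the multiplicative group) against a fixed nonnegative function on $(0,\infty)$, for which a norm bound is essentially immediate.

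First I would perform the substitution $y_N = x_N t$ in the defining integral, which gives
\begin{equation*}
Gf(x_N) = \int_0^\infty \frac{f(x_N t)}{1+t}\,dt.
\end{equation*}
Next I would apply Minkowski's integral inequality in $L_q((0,\infty), dx_N)$ to pull the $L_q$-norm inside the $t$-integral:
\begin{equation*}
\|Gf\|_{L_q((0,\infty))} \leq \int_0^\infty \frac{1}{1+t}\Bigl(\int_0^\infty |f(x_N t)|^q \,dx_N\Bigr)^{1/q}\,dt.
\end{equation*}
A further substitution $u = x_N t$ inside shows $\|f(\,\cdot\, t)\|_{L_q(dx_N)} = t^{-1/q}\|f\|_{L_q}$, and hence
\begin{equation*}
\|Gf\|_{L_q((0,\infty))} \leq \|f\|_{L_q((0,\infty))}\int_0^\infty \frac{t^{-1/q}}{1+t}\,dt.
\end{equation*}
The final scalar integral is a standard Beta integral: with $s = 1-1/q$ it evaluates to $\Gamma(s)\Gamma(1-s) = \pi/\sin(\pi s) = \pi/\sin(\pi/q)$. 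This yields the claim with explicit constant $A_q = \pi/\sin(\pi/q)$.

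There is no genuine obstacle in this argument. The only point requiring attention is the hypothesis $1 < q < \infty$: it is forced by the convergence of $\int_0^\infty t^{-1/q}/(1+t)\,dt$, whose integrand behaves like $t^{-1/q}$ at $0$ (integrable iff $q>1$) and like $t^{-1/q-1}$ at infinity (integrable iff $q<\infty$). An alternative route is Schur's test applied with the weight $w(x) = x^{-1/(qq')}$, which gives the same conclusion but obscures the optimal constant; I would prefer the substitution-plus-Minkowski proof since it is both shorter and sharp.
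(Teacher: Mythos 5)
Your proof is correct, and it is the standard argument: the paper gives no proof of its own but simply cites Stein \cite[A.3 p.271]{S70}, where the result is established (for a general class of kernels homogeneous of degree $-1$) by exactly this dilation-plus-Minkowski device. Your computation of the sharp constant $A_q=\pi/\sin(\pi/q)$ via the Beta integral is also correct, and the diagnosis of where $1<q<\infty$ enters (integrability of $t^{-1/q}/(1+t)$ near $0$ and near $\infty$) is accurate.
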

\begin{prop}\cite[Theorem 2.3]{SS01}.
\label{lem:3} Let $X$ be a Banach space,
$\sigma$ be a real number satisfying $0 < \sigma \leq 1$, and $m$ be a nonnegative integer. Set $\zeta = m+\sigma-(N-1)$.
In addition, let $\ell(\sigma)$ be an integer part of $\sigma$.
Suppose that a function $f(\xi') \in C^{m+1+\ell(\sigma)}(\BR^{N-1}\setminus\{0\}, X)$ satisfies the following conditions:
\begin{itemize}
\item[(1)] For every $\delta' \in \BN_0^{N-1}$ satisfying $|\delta'| \leq m$, it holds $D^{\delta'}_{\xi'}f(\xi') \in L_1(\BR^{N-1}, X)$.
\item[(2)] For every $\delta' \in \BN_0^{N-1}$ satisfying $|\delta'| \leq m+1+\ell(\sigma)$, there exists a constant $C_{\delta'}$ such that $\|D^{\delta'}_{\xi'}f(\xi')\|_X
\leq C_{\delta'}|\xi'|^{\zeta-|\delta'|}$ for all $\xi'\in\BR^{N-1}\setminus\{0\}$.
\end{itemize}
Then, there exists a constant $C_{N,\zeta}$ depending on $N$ and $\zeta$ such that
\begin{equation}
\left\|\CF^{-1}_{\xi'}[f](x')\right\|_X \leq C_{N,\zeta}
\left(\max_{|\delta'|\leq m+1+\ell(\sigma)}\,C_\gamma\right)
|x' |^{-N-1+\zeta}, \quad(x'\in\BR^{N-1}\setminus\{0\}).
\end{equation}	
\end{prop}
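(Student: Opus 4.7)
The plan is to derive the pointwise estimate on $\CF^{-1}_{\xi'}[f](x')$ by localizing in frequency at the natural scale $R:=|x'|^{-1}$ and exploiting the two complementary hypotheses: the $L_1$-integrability~(1) for derivatives of low order controls the low-frequency contribution near $\xi'=0$, while the pointwise decay~(2) for derivatives up to order $m+1+\ell(\sigma)$ enables integration by parts on the high-frequency region. The algebraic backbone is the identity $(ix'_j)\CF^{-1}_{\xi'}[f](x')=\CF^{-1}_{\xi'}[\partial_{\xi'_j}f](x')$, which converts every factor of $|x'|$ on the physical side into a $\xi'$-derivative falling on $f$.

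Concretely, I would fix $x'\neq 0$, set $R:=|x'|^{-1}$, choose $\chi\in C^\infty_0(\BR^{N-1})$ with $\chi\equiv 1$ on the unit ball and $\supp\chi\subset\{|\xi'|\le 2\}$, and split
\[
\CF^{-1}_{\xi'}[f](x')=\int\chi(\xi'/R)f(\xi')e^{ix'\cdot\xi'}\,d\xi'+\int(1-\chi(\xi'/R))f(\xi')e^{ix'\cdot\xi'}\,d\xi'=:I_{\mathrm{low}}(x')+I_{\mathrm{high}}(x').
\]
For $I_{\mathrm{low}}$, the pointwise bound $\|f(\xi')\|_X\le C_0|\xi'|^\alpha$ from~(2) (together with the Leibniz rule applied to $\chi(\xi'/R)f$) gives $\|I_{\mathrm{low}}\|_X\le C\int_{|\xi'|\le 2R}|\xi'|^\alpha\,d\xi'\le C' R^{N-1+\alpha}$; hypothesis~(1) ensures that the integrand is genuinely integrable near the origin so the scaling is legitimate. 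For $I_{\mathrm{high}}$, I would pick a coordinate direction $j$ with $|x'_j|\sim|x'|$, apply integration by parts $k:=m+1+\ell(\sigma)$ times in $\xi'_j$, using~(2) at orders $\le k$ to discard the boundary contributions at infinity, and bound the resulting integrand in $X$ by $|x'|^{-k}|\xi'|^{\alpha-k}$. Integrating over $\{|\xi'|\ge R\}$, which converges thanks to the choice $k>N-1+\alpha$, produces $\|I_{\mathrm{high}}\|_X\le C|x'|^{-k}R^{N-1+\alpha-k}$. Substituting $R=|x'|^{-1}$ into both estimates yields matching decay rates.

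The main obstacle I anticipate is exploiting the fractional parameter $\sigma$: an integer number of integrations by parts gains $|x'|^{-(m+1)}$ but misses the extra $\sigma$-gain promised by the statement, so one cannot simply iterate the above identity. My plan is to dyadically decompose the high-frequency region as $\{|\xi'|\ge R\}=\bigcup_{j\ge 0}\{2^j R\le|\xi'|<2^{j+1}R\}$, apply a mixed integration-by-parts estimate of orders $m$ and $m+1$ on each dyadic piece, and interpolate between the two with exponent $\sigma$; the resulting geometric series $\sum_{j\ge 0}2^{-j\sigma}$ converges precisely because $\sigma>0$, and this is where the fractional smoothness enters. The endpoint $\sigma=1$ (with $\ell(\sigma)=1$) is absorbed into the integer-order argument, while for $0<\sigma<1$ (with $\ell(\sigma)=0$) this interpolation step is essential. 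Combining $I_{\mathrm{low}}$ and $I_{\mathrm{high}}$ and tracking exponents then leads to the claimed pointwise bound, with the constant depending linearly on $\max_{|\gamma|\le m+1+\ell(\sigma)}C_\gamma$.
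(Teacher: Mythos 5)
This proposition is quoted from \cite[Theorem~2.3]{SS01}; the paper itself gives no proof, so there is nothing internal to compare against. Evaluating your proposal on its own:

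The core argument in your second paragraph is correct and is the standard one. Splitting at scale $R=|x'|^{-1}$, the low-frequency piece is bounded by $\int_{|\xi'|\le 2R}\lVert f(\xi')\rVert_X\,\d\xi'\lesssim \int_{|\xi'|\le 2R}|\xi'|^{\alpha}\,\d\xi'\sim R^{N-1+\alpha}$ (finite near $0$ since $\alpha=m+\sigma-(N-1)>-(N-1)$), while integrating by parts $k=m+1+\ell(\sigma)$ times on the high-frequency piece gives $|x'|^{-k}\int_{|\xi'|\ge R}|\xi'|^{\alpha-k}\,\d\xi'\sim |x'|^{-k}R^{N-1+\alpha-k}$, convergent at infinity because $k>N-1+\alpha$; this inequality is exactly what the extra derivative $\ell(\sigma)$ guarantees at $\sigma=1$, and for $0<\sigma<1$ it reads $m+1>m+\sigma$, which is automatic. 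Both pieces collapse to $|x'|^{-(N-1)-\alpha}$, which incidentally reveals a misprint in the paper's displayed exponent $-N-1+\alpha$ (the two expressions coincide only at $\alpha=1$, the single value used later in the paper).

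The ``main obstacle'' in your third paragraph, however, is a phantom. You assert that an integer number of integrations by parts ``gains $|x'|^{-(m+1)}$ but misses the extra $\sigma$-gain,'' yet your own high-frequency bound already contains the gain: since $\alpha$ carries $\sigma$, substituting $R=|x'|^{-1}$ into $|x'|^{-k}R^{N-1+\alpha-k}$ yields $|x'|^{-(N-1)-\alpha}=|x'|^{-(m+\sigma)}$ directly, with the fractional power coming from the lower limit $R$ of the frequency integral rather than from a fractional number of integrations by parts. No dyadic decomposition or interpolation in $\sigma$ is needed; that machinery would be required only if one integrated by parts on all of $\BR^{N-1}$ without the split (where $\int\lVert D^{m+1}f\rVert_X\,\d\xi'$ diverges near the origin), which is precisely what the low/high decomposition is there to avoid. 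A minor further correction: you attribute the integrability of $I_{\mathrm{low}}$ near the origin to hypothesis~(1), but that is supplied by~(2) with $|\gamma|=0$ together with $\alpha>-(N-1)$. Hypothesis~(1), giving $D^\gamma f\in L_1(\BR^{N-1},X)$ for $|\gamma|\le m$, is instead what makes $I_{\mathrm{high}}$ absolutely convergent and legitimizes the first $m$ integrations by parts over all of $\BR^{N-1}$, since the pointwise bound~(2) alone does not control $f$ at infinity once $\alpha\ge 0$.
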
	
\begin{proof}[Proof of Proposition \ref{prop:2}]
Here, we only consider the estimate for $ L_1(\lambda)f $, since the others may be proved in a similar way.
First, we rewrite $B$ as
\begin{equation}
B = \dfrac{B^2}{B} = \dfrac{\lambda^{1/2}}{B} \lambda^{1/2}+ \dfrac{|\xi'|}{B}|\xi'|.
\end{equation}
We set $m^1_0(\lambda, \xi') = m_0(\lambda, \xi')\lambda^{1/2}B^{-1}$ and
$m^2_0(\lambda, \xi') = m_0(\lambda, \xi)|\xi'|B^{-1}$.
Define
\begin{align*}
L_1^{(1)}(\lambda)f &:=  \int^\infty_0\CF^{-1}_{\xi'}\left[m^1_0(\lambda, \xi')
\lambda^{1/2}B^2\CM(x_N)\CM(y_N)\hat f(\xi', y_N)\right](x')\,dy_N, 
\\
L^{(2)}_1(\lambda)f &:= \int^\infty_0\CF^{-1}_{\xi'}\left[m^2_0(\lambda, \xi')
|\xi'|B^2\CM(x_N)\CM(y_N)\hat f(\xi', y_N)\right](x')\,dy_N.
 \end{align*}
Since for all $\delta'\in \BN_0^{N-1}$ we have
\begin{equation}\label{est:2}\begin{aligned}
\left|D^{\delta'}_{\xi'}m_0^1(\lambda, \xi')\right|
\leq C_{\delta'}\left(|\lambda|^{1/2}+|\xi'|\right)^{-|\delta'|}, \quad 
\left|D^{\delta'}_{\xi'}m_0^2(\lambda, \xi')\right|
\leq C_{\delta'}|\xi'|^{-|\delta'|}
\end{aligned}\end{equation}
as well as
\begin{align*}
\left|D^{\delta'}_{\xi'}B\CM(x_N)\right| & \leq C_{\delta'}e^{-c(|\lambda|^{1/2}+|\xi'|)x_N}
\left(|\lambda|^{1/2}+|\xi'|\right)^{-|\delta'|}, \\
	\left|D^{\delta'}_{\xi'}B\CM(y_N)\right| & \leq C_{\delta'}
e^{-c(|\lambda|^{1/2}+|\xi'|)y_N}
\left(|\lambda|^{1/2}+|\xi'|\right)^{-|\delta'|},
\\
\left|D^{\delta'}_{\xi'}\, B\right| & \leq C_{\delta'}\left(|\lambda|^{1/2}+|\xi'|\right)^{1-|\delta'|}
\end{align*}
as follows from Lemma \ref{lem:3.4}, 
we see that 
\begin{equation}\label{est:3}\begin{aligned}
&\left|D^{\delta'}_{\xi'}\left(m^1_0(\lambda, \xi')\lambda^{1/2}B^2\CM(x_N)\CM(y_N)\right)\right|
 \leq C_{\delta'}|\lambda|^{1/2}\left(|\lambda|^{1/2}+|\xi'|\right)^{-|\delta'|}
e^{-(c/2)(|\lambda|^{1/2}+|\xi'|)(x_N+y_N)}.
\end{aligned}\end{equation}
By virtue of the identity:
\begin{equation}
e^{ix'\cdot\xi'} = \sum_{j=1}^{N-1}\frac{x_j}{i|x'|^2}\frac{\pd}{\pd \xi_j}
e^{ix'\cdot\xi'},
\end{equation}
we may write
\begin{align*}
& \CF^{-1}_{\xi'}\left[m^1_0(\lambda, \xi')\lambda^{1/2}B^2\CM(x_N)\CM(y_N)\right](x')\\
&\quad = \frac{1}{(2\pi)^N}
\int_{\BR^{N-1}}\left(\sum_{|\delta'|=N}\left(\frac{x'}{i|x'|^2}\right)^{\delta'}
D^{\delta'}_{\xi'}e^{ix'\cdot \xi'}\right)
m^1_0(\lambda, \xi')\lambda^{1/2}B^2\CM(x_N)\CM(y_N)\,d\xi'\\
&\quad = \frac{1}{(2\pi)^N}\sum_{|\delta'|=N}\left(\frac{-x'}{i|x'|^2}\right)^{\delta'}
\int_{\BR^{N-1}}e^{ix'\cdot\xi'}
D^{\delta'}_{\xi'}\left(
m^1_0(\lambda, \xi')\lambda^{1/2}B^2\CM(x_N)\CM(y_N)\right)\,d\xi'.
\end{align*}
Hence, we obtain
\begin{equation}
\left|\CF^{-1}_{\xi'}\left[m^1_0(\lambda, \xi')\lambda^{1/2}B^2\CM(x_N)\CM(y_N)\right](x')\right|
\leq C|x'|^{-N}\int_{\BR^{N-1}}|\lambda|^{1/2}\left(|\lambda|^{1/2}+|\xi'|\right)^{-N}\,d\xi'.
\end{equation}
By changing of variables $\xi'=|\lambda|^{1/2}\eta'$, it follows that
\begin{equation}
\int_{\BR^{N-1}}|\lambda|^{1/2}\left(|\lambda|^{1/2}+|\xi'|\right)^{-N}\,d\xi'
= \int_{\BR^{N-1}}\left(1+|\eta'|\right)^{-N}\,d\eta' < \infty.
\end{equation}
Therefore, we have
\begin{equation}\label{est:4}
\left|\CF^{-1}_{\xi'}\left[m^1_0(\lambda, \xi')\lambda^{1/2}B^2\CM(x_N)\CM(y_N)\right](x')\right|
\leq C|x'|^{-N}.
\end{equation}
In addition, if we take $\delta'=0$ in \eqref{est:3}, it follows that
\begin{align*}
&\left|\CF^{-1}_{\xi'}\left[m^1_0(\lambda, \xi')\lambda^{1/2}B^2\CM(x_N)\CM(y_N)\right](x')\right| \\
&\quad  
\leq C\int_{\BR^{N-1}}|\lambda|^{1/2}e^{-(c/2)(|\lambda|^{1/2} + |\xi'|)(x_N+y_N)}\,
d\xi'\\
&\quad\leq \frac{C}{(x_N+y_N)^N}
\int_{\BR^{N-1}}\frac{|\lambda|^{1/2}}{\left(|\lambda|^{1/2}+|\xi'|\right)^{N}}\,d\xi'
\\ 
& \quad
= \frac{C}{(x_N+y_N)^N}\int_{\BR^{N-1}}\left(1+|\eta'|\right)^{-N}\,d\eta',
\end{align*}
which together with \eqref{est:4} implies
\begin{equation}\label{est:5}
\left|\CF^{-1}_{\xi'}\left[m^1_0(\lambda, \xi')\lambda^{1/2}B^2\CM(x_N)\CM(y_N)\right](x')\right| 
\leq C\left(|x'|+x_N+y_N\right)^{-N}.
\end{equation}
To shorten notation, set $\ell(x'):=\CF^{-1}_{\xi'}\left[m^1_0(\lambda, \xi')\lambda^{1/2}
B^2\CM(x_N)\CM(y_N)\right](x').$
Then $L^{(1)}_1(\lambda)f$ may be bounded by
\begin{align}
\|L^{(1)}_1(\lambda)f\|_{L_q(\BR^{N-1})}
& \le \int^\infty_0\|\ell*f(\,\cdot\,, y_N)\|_{L_q(\BR^{N-1})}\,dy_N \\
& \le \int^\infty_0\|\ell\|_{L_1(\BR^{N-1})}\|f(\,\cdot\,, y_N)\|_{L_q(\BR^{N-1})}\,dy_N.
\end{align}
Then \eqref{est:5} together with changing of variables $x' = (x_N+y_N) z'$ yields
\begin{align}
\|\ell\|_{L_1(\BR^{N-1})} & = \int_{\BR^{N-1}}\frac{C}{\left(|x'|+x_N+y_N\right)^N}\,dx' \\
& = \frac{C}{x_N+y_N}\int_{\BR^{N-1}}\frac{1}{\left(1+|z'|\right)^N}\,dz'.
\end{align}
Thus, we observe
\begin{equation}\label{est:6}
\|L^{(1)}_1(\lambda)f\|_{L_q(\BR^{N-1})} \leq C\int^\infty_0 
\frac{\|f(\,\cdot\,, y_N)\|_{L_q(\BR^{N-1})}}{x_N+y_N}\,dy_N.
\end{equation}
 From Proposition \ref{lem:2} and \eqref{est:6}, we have
\begin{equation}\label{est:7}
\begin{aligned}
&\|L^{(1)}_1(\lambda)f\|_{L_q(\HS)} \\
&\quad \leq C\left\|\int^\infty_0 
\frac{\|f(\,\cdot\,, y_N)\|_{L_q(\BR^{N-1})}}{(x_N+y_N)}\,dy_N
\right\|_{L_q((0, \infty))} 
\\
&\quad \leq C_q\left\|G	\|f\|_{L_q(\BR^{N-1})}\right\|_{L_q((0, \infty))}
\\
&\quad \leq C_q\|f\|_{L_q(\HS)}.
\end{aligned}
\end{equation}
\par
It remains to establish the estimate for $L^{(2)}_1(\lambda)$. 
In a similar way as in \eqref{est:3}, we obtain
\begin{align*}
&\left|D^{\delta'}_{\xi'}(B^2\CM(x_N)\CM(y_N))\right| \\
&\quad \leq C_{\delta'}e^{-c(|\lambda|^{1/2}+|\xi'|)x_N}
e^{-(c/2)(|\lambda|^{1/2}+|\xi'|)y_N}
\left(|\lambda|^{1/2}+|\xi'|\right)^{-|\delta'|} \\
&\quad \leq C_{\delta'}
e^{-(c/2)(|\lambda|^{1/2}+|\xi'|)(x_N+y_N)}
|\xi'|^{-|\delta'|}.
\end{align*}
Moreover,  by the second inequality in $\eqref{est:2}$, we have
\begin{equation}
\left|D^{\delta'}_{\xi'} \left(m^2_0(\lambda, \xi')|\xi'|\right)\right| \leq C_{\delta'}|\xi|^{1-|\delta'|},
\end{equation}
which yields
\begin{equation}\label{est:8}
\left|D^{\delta'}_{\xi'}\left(m^2_0(\lambda, \xi')|\xi'|B^2\CM(x_N)\CM(y_N)\right)\right|
\leq C_{\delta'}|\xi'|^{1-|\delta'|} e^{-(c/2)(|\lambda|^{1/2}+|\xi'|)(x_N+y_N)}.
\end{equation}
By \eqref{est:8} and Proposition \ref{lem:3} we obtain
\begin{equation}\label{est:9}
\left|\CF^{-1}_{\xi'}\left[m^2_0(\lambda, \xi')|\xi'|B^2\CM(x_N)\CM(y_N))\right](x')\right| \leq C|x'|^{-N}.
\end{equation}
 By \eqref{est:8}  we also obtain
\begin{align*}
\left|\int_{\BR^{N-1}} e^{ix'\cdot\xi'}
m^2_0(\lambda, \xi')|\xi'|B^2\CM(x_N)\CM(y_N))\,d\xi'\right| 
 &\leq C\int_{\BR^{N-1}}|\xi'|e^{-(c/2)|\xi'|(x_N+y_N)}\, d\xi'\\
&= \frac{C}{(x_N+y_N)^N} \int_{\BR^{N-1}}|\eta'|e^{-(c/2)|\eta'|}\,d\eta',
\end{align*}
where we have replaced $\eta'$ by $\xi'(x_N+y_N) = \eta'$.
Thus, we have
\begin{equation}\label{est:10}
\left|\CF^{-1}_{\xi'}\left[m^2_0(\lambda, \xi')|\xi'|B^2\CM(x_N)\CM(y_N))\right](x')\right| 
\leq \frac{C}{\left(x_N+y_N\right)^N}.
\end{equation}
From \eqref{est:9} and \eqref{est:10}, we deduce that
\begin{equation}
\left|\CF^{-1}_{\xi'}\left[m^2_0(\lambda, \xi')|\xi'|B^2\CM(x_N)\CM(y_N))\right](x')\right| 
\leq \frac{C}{\left(|x'|+x_N+y_N\right)^N}.
\end{equation}
In a similar way as in \eqref{est:6}, we arrive at
\begin{equation}
\|L^{(2)}_1(\lambda)f\|_{L_q(\BR^{N-1})} \leq C\int^\infty_0 
\frac{\|f(\,\cdot\,,  y_N)\|_{L_q(\BR^{N-1})}}{x_N+y_N}\,dy_N,
\end{equation}
which together with Proposition \ref{lem:2} implies that $L^{(2)}_1$ is a bounded linear operator on $L_q(\HS)$. The proof is complete.
 \end{proof}
\begin{proof}[Proof of Proposition \ref{prop:3}]
	Here, we only consider the estimate for $ P_1(\lambda)f $ and $ P_5(\lambda)f $, since the others may be proved in a similar way.
	First, by Taylor formula, we obtain
	$$\CM(x_N) = x_N\int^1_0 e^{-(A+\theta(B-A))x_N}\,d\theta.$$
	Thus, we have
	$$\pd_\lambda\CM(x_N) = -x_N^2\int^1_0 (\pd_\lambda A +
(\pd_\lambda B - \pd_\lambda A)\theta)
	e^{-(A+\theta(B-A))x_N}\,d\theta.$$
	Since we know that $\pd_\lambda B = 1/2\alpha B$, $\pd_\lambda A = p'(\lambda)/2A$, and $|p'(\lambda)| \leq C$ for $\lambda \in \Lambda_{\epsilon, \lambda_0}$, 
using Lemma \ref{halfsymbol:3.1} with $s=-1$, we see that for any $\delta' \in \BN_0^{N-1}$
\begin{equation}\begin{aligned}\label{est.3.6.1}
|D_{\xi'}^{\delta'}(B^3\pd_\lambda \CM(x_N))| &\leq C_{\delta'}x_N^2(|\lambda|^{1/2}+|\xi'|)^{2-|\delta'|}
e^{-2c(|\lambda|^{1/2}+|\xi'|)x_N} \\
&\leq C_{\delta'}(|\lambda|^{-1/2}+|\xi'|)^{-|\delta'|}e^{-c(|\lambda|^{1/2}+|\xi'|)x_N}
\end{aligned}\end{equation}
with some positive constants $C_{\delta'}$ and $c$.  Here and in the following
 $c$ denotes a constant independent of $\delta'$. 
Writing 
\begin{align*}
B\pd_\lambda(B^3 \CM(x_N)\CM(y_N)) 
&= 3 B^3(\pd_\lambda B) \CM(x_N) \CM(y_N) + B^4(\pd_\lambda \CM(x_N))\CM(y_N) \\
&+ B^4\CM(x_N)(\pd_\lambda \CM(y_N)),
\end{align*}
 and using Lemma \ref{lem:3.4} and \eqref{est.3.6.1},  
we see that for any $\delta' \in \BN_0^{N-1}$ 
	\begin{align*}
		\left|D^{\delta'}_{\xi'}\left(B\pd_\lambda(B^3 \CM(x_N)\CM(y_N))\right)\right| 
		\leq C_{\delta'}(|\lambda|+|\xi'|)^{-|\delta'|}e^{-c(|\lambda|^{1/2}+|\xi'|)(x_N+y_N)}.
	\end{align*}
By the similar method as Proposition \ref{prop:2}, we can derive $\|P_1(\lambda)f\|_{L_q(\HS)}
\leq C\|f\|_{L_q(\HS)}$.\par 
As for $P_5(\lambda)f$,  writing $\pd_\lambda e^{-Ax_N} = -(\pd_\lambda A)x_N e^{-Ax_N}$,
using Lemma \ref{lem:3.4}, we see that for any $\delta' \in \BN_0^{N-1}$  
\begin{equation}\label{est.3.6.2}
|D_{\xi'}^{\delta'}B^2(\pd_\lambda e^{-A_N})| \leq Ce^{-c(|\lambda|^{1/2}+|\xi'|)x_N}
\end{equation}
with some positive constants $C_{\delta'}$ and $c$. 
 Writing
\begin{align*}
B\pd_\lambda(B^2 e^{-Ax_N}\CM(y_N)) &= 2B^2(\pd_\lambda B)e^{-Ax_N}\CM(y_N) 
-B^3(\pd_\lambda e^{-AxN}) \CM(x_N)\\
& +B^3 e^{-Ax_N}(\pd_\lambda \CM(y_N))\end{align*}
using \eqref{est.3.6.1}, \eqref{est.3.6.2} and Lemma \ref{lem:3.4}, we see that 
 for any $\delta' \in \BN_0^{N-1}$,
$$\left|D_{\xi'}^{\delta'}\left(B\pd_\lambda (B^2e^{-Ax_N}\CM(y_N))\right)\right| \\
	\leq C_{\delta'}\left(|\lambda|^{1/2}+|\xi'|\right)^{-|\delta'|}e^{-c(|\lambda|^{1/2}+|\xi'|)x_N}
$$
with some positive constants $C_{\delta'}$ and $c$. 
By the similar method as Proposition \ref{prop:2} again, we can derive 
$\|P_5(\lambda)f\|_{L_q(\HS)} \leq C\|f\|_{L_q(\HS)}$. The proof is completed.
\end{proof}
\section{Solution formula} \label{sec:4}
In this section, we shall discuss solution formulas of equations \eqref{Eq:Resolvent}.
From the first equation in \eqref{Eq:Resolvent}, we have $\rho = \lambda^{-1}(f-\gamma\dv\bu)$, 
and inserting this formula into the second equation in \eqref{Eq:Resolvent} implies the complex 
Lam\'e equations
\begin{equation}\label{cl:0}
\lambda\bu-\alpha\Delta\bu - \eta_\lambda\nabla\dv\bu = \bg - \gamma\lambda^{-1}\nabla f
\quad\text{in $\HS$}, \quad \bu|_{\pd\HS} = 0.
\end{equation}
Here, we have set $\eta_\lambda = \beta + \gamma^2\lambda^{-1}.$  \par
If we find a solution $\bu$ of equations \eqref{cl:0} and if we set $\rho = \lambda^{-1}(f-\gamma\dv\bu)$,
then $\rho$ and $\bu$ are solutions of equations \eqref{Eq:Resolvent}.  Thus, in this section,    
we shall drive solution formulas of  the complex Lam\'e equations
\begin{equation}\label{cl:1} 
\lambda\bu-\alpha\Delta\bu - \eta_\lambda\nabla\dv\bu = \bg
\quad\text{in $\HS$}, \quad \bu|_{\pd\HS} = 0.
\end{equation}
\subsection{Whole space case}
For $\epsilon \in (0, \pi/2)$ and $\lambda_0 > 0$ let $\lambda\in \Lambda_{\epsilon,\lambda_0}$ be
the resolvent parameter, where $\lambda_0$ is assumed to be sufficiently large if necessary.
In this subsection, we derive the representation of the solution formula for the following model problem in $\BR^N$:
\begin{equation}\label{cl:2}
\lambda\bu - \alpha\Delta\bu - \eta_\lambda \nabla\dv\bu 
 =\bg\quad \text{in $\BR^N$},
\end{equation}
where  $\bg \in B^{s}_{q,1}(\BR^N)^N$,
with $1<q<\infty$ and $-1+1/q<s<1/q$. Applying the divergence to  equation \eqref{cl:2} yields
\begin{equation}\label{cl:3}
\lambda\dv \bu - (\alpha+\eta_\lambda)\Delta\dv\bu = \dv \bg \quad\text{in $\BR^N$}.
\end{equation}
Applying Fourier transform to \eqref{cl:3} yields
$$(\lambda + (\alpha+\eta_\lambda)|\xi|^2)\CF[\dv\bu](\xi) = i\xi\cdot\CF[\bg](\xi).
$$
Applying Fourier transform to equation \eqref{cl:2} yields
$$(\lambda+\alpha|\xi|^2)\hat\bu  -\eta_\lambda i\xi\CF[\dv\bu] = \hat \bg.
$$
Thus, 
\begin{align*}
\hat\bu(\xi) & = (\lambda+\alpha|\xi|^2)^{-1}(\hat \bg(\xi) +\eta_\lambda i\xi
(\lambda+(\alpha+\eta_\lambda)|\xi|^2)^{-1}i\xi\cdot\hat\bg(\xi))\\
& = \frac{\hat \bg(\xi)}{\lambda+\alpha|\xi|^2}
+ \frac{\eta_\lambda (i\xi\otimes i\xi)\hat \bg(\xi)}
{(\lambda+\alpha|\xi|^2)(\lambda+(\alpha+\eta_\lambda)|\xi|^2)} \\
& = \frac{1}{\alpha}\frac{\hat \bg(\xi)}{\alpha^{-1}\lambda + |\xi|^2}
+ \frac{\eta_\lambda}{\alpha(\alpha+\eta_\lambda)}\frac{(i\xi\otimes i\xi)\hat \bg(\xi)}
{(\alpha^{-1}\lambda+|\xi|^2)(p(\lambda) +|\xi|^2)},
\end{align*}
where we have set 
$$p(\lambda) = \frac{\lambda}{\alpha + \eta_\lambda} = \frac{\lambda^2}{(\alpha+\beta)\lambda
+\gamma^2}.
$$
Applying the Fourier inverse transform implies that 
$$\begin{aligned}
\bu &= \CF^{-1}\Biggl[\frac{\hat \bg(\xi)}{\lambda + \alpha|\xi|^2}\Biggr]
-\frac{\beta\lambda+\gamma^2}{(\alpha+\beta)\lambda+\gamma^2}
\CF^{-1}\Biggl[\frac{(\xi\otimes\xi)\hat \bg(\xi)}{(\lambda+\alpha|\xi|^2)
	(p(\lambda)+|\xi|^2)}\Biggr]. 
\end{aligned}$$
Thus, for the later use, we define an operator $\CS_0(\lambda)$ by 
\begin{equation}\label{solform:4.1}\begin{aligned}
\CS^0(\lambda)\bg & = 
\CF^{-1}\Biggl[\frac{\hat \bg(\xi)}{\lambda + \alpha|\xi|^2}\Biggr]
-\frac{\beta\lambda+\gamma^2}{(\alpha+\beta)\lambda+\gamma^2}
\CF^{-1}\Biggl[\frac{(\xi\otimes\xi)\hat \bg(\xi)}{(\lambda+\alpha|\xi|^2)
	(p(\lambda)+|\xi|^2)}\Biggr], 
\end{aligned}\end{equation}
which is a solution operator of equation \eqref{cl:2}. 

\subsection{Half space case}
Let $\epsilon \in (0, \pi/2)$ and $\nu_0 > 0$.  Let $\gamma_0 > 0$ be a large number such that
$\Sigma_\epsilon + \gamma_0 \subset K_\epsilon \cap \Sigma_\epsilon \cap \{\lambda \in \BC \mid
|\lambda| \geq \nu_0\}$. 
In this subsection, we derive the representation of the solution 
formula for equations \eqref{cl:1}.  To this end, 
 we extend $\bg=(g_1,\cdots ,g_N)$ by
$$g_j^e(x) = \begin{cases} g_j(x) &\quad \text{for $x_N > 0$}, \\  g_j(x',- x_N)& \quad \text{for $x_N < 0$}, 
\end{cases}\quad
g_N^o(x) = \begin{cases} g_{N}(x) &\quad \text{for $x_N > 0$}, \\  -g_{N}(x',- x_N)& \quad \text{for $x_N < 0$}.
\end{cases}
$$
Here and in the sequel  $j$ and $k$ run from $1$ through $N-1$. 
We now set  $\bG:=(g^e_1,\cdots, g^e_{N-1}, g^o_N)$.  Let $\bu$ be a solution of equations \eqref{cl:1} and let
$\bw = \bu - \CS^0(\lambda)\bG$, and then $\bw$ should satisfy the equations
\begin{equation}\label{cl:4}
\lambda\bw- \alpha \Delta \bw - \eta_\lambda\nabla\dv\bw = 0\quad\text{in $\HS$}, \quad
\bw|_{\pd\HS} = -\CS^0(\lambda)\bG|_{\pd\HS}.
\end{equation}
In view of \eqref{solform:4.1}, we may have 
\begin{equation}\label{sol:whole}\begin{aligned}
\CS^0(\lambda)\bG&= \CF^{-1}\Biggl[\frac{\hat \bG(\xi)}{\lambda + \alpha|\xi|^2}\Biggr]
-\frac{\beta\lambda+\gamma^2}{(\alpha+\beta)\lambda+\gamma^2}
\CF^{-1}\Biggl[\frac{(\xi\otimes\xi)\hat \bG(\xi)}{(\lambda+\alpha|\xi|^2)
(p(\lambda)+|\xi|^2)}\Biggr]. 
\end{aligned}\end{equation}	
Let $\bw = (w_1, \ldots, w_N)$, and  we shall investigate the formula of
the partial Fourier transform $\CF'[w_j](\xi', x_N)$ of $w_j$.  Applying the partial 
Fourier transform $\CF'$ to equations \eqref{cl:4}, we have the ordinary differential equations
in $x_N$ variable, which reads as
\begin{equation}\label{cl:5}\left\{\begin{aligned}
&(\lambda + \alpha|\xi'|^2)\CF'[w_j]( x_N) - \alpha \pd_N^2\CF'[w_j]( x_N)\\
&\phantom{(\lambda}- \eta_\lambda i\xi_j (i\xi'\cdot\CF'[\bw'](x_N) + \pd_N\CF'[w_N]( x_N)) = 0, &&\quad&\text
{for $x_N>0$}, \\
&(\lambda + \alpha|\xi'|^2)\CF'[w_N]( x_N) - \alpha \pd_N^2 \CF'[w_N]( x_N)\\
&\phantom{(\lambda}- \eta_\lambda \pd_N (i\xi'\cdot\CF'[\bw'](x_N) + \pd_N\CF'[w_N](x_N)) = 0,  
&&\quad&\text{for $x_N>0$}, \\
	&\CF'[\bw](0) = -\CF'[\CS^0(\lambda)\bG]( 0).
\end{aligned}\right.\end{equation}
Here, we have set $\CF'[f](\xi', x_N) = \CF'[f](x_N)$, $i\xi'\cdot\CF'[\bw'](x_N)
= \sum_{j=1}^{N-1}i\xi_j\CF'[w_j](x_N)$.\par 
To obtain $\CF'[w_j](\xi', x_N)$, 
first we  derive the representation of $\CF'[\CS^0(\lambda)\bG](0)$. Notice that 
\begin{align*}
&\CF'[\CS^0(\lambda)\bG](0) \\
&= \frac{1}{\alpha}\frac{1}{2\pi}\int_{\BR}\frac{\hat \bG(\xi)}{\lambda\alpha^{-1} + |\xi|^2}\,d\xi_N
-\frac{\beta\lambda+\gamma^2}{\alpha((\alpha+\beta)\lambda + \gamma^2)}
\frac{1}{2\pi}\int_{\BR} \frac{(\xi\otimes\xi)\hat \bG(\xi)}{(\lambda\alpha^{-1}+|\xi|^2)
(p(\lambda)+|\xi|^2)}\,d\xi_N.
\end{align*}
Notice  that $\alpha^{-1}\lambda+|\xi|^2 = (\xi_N + iB)(\xi_N-iB)$ and 
$p(\lambda) + |\xi|^2 =(\xi_N + iA)(\xi_N-iA)$. By the residue theorem in the
theory of one complex variable, we have 
\allowdisplaybreaks
\begin{align*}
h^{(1)}_j :&= \frac{1}{2\pi}\int_{\BR}\frac{\hat g_j^e(\xi)}{\alpha^{-1}\lambda+|\xi|^2}\,d\xi_N \\
&=
i\int^\infty_0 \CF'[g_j](\xi', y_N)
\Bigl( \frac{1}{2\pi i}\int_{\BR}\frac{ e^{-iy_N\xi_N} + e^{iy_N\xi_N}}{(\xi_N+iB)(\xi_N-iB)}\,d\xi_N
\Bigr)\,dy_N\\ 
&= i \int^\infty_0\CF'[g_j](\xi', y_N)\Bigl(-\frac{e^{-y_NB}}{-2i B} + \frac{e^{-y_NB}}{2iB}\Bigr)\,dy_N
\\
& = \int^\infty_0\frac{e^{-y_NB}}{B}\CF'[g_j](\xi', y_N)\,dy_N;\\
h^{(1)}_N : &= \frac{1}{2\pi}\int_{\BR}\frac{\hat g_N^o(\xi)}{\alpha^{-1}\lambda +|\xi|^2}\,d\xi_N \\
&= 
i\int^\infty_0 \CF'[g_N](\xi', y_N)
\Bigl( \frac{1}{2\pi i}\int_{\BR}\frac{ e^{-iy_N\xi_N} - e^{iy_N\xi_N}}{(\xi_N+iB)(\xi_N-iB)}\,d\xi_N
\Bigr)\,dy_N\\
&= i \int^\infty_0\CF'[g_N](\xi', y_N)\Bigl(-\frac{e^{-y_NB}}{-2i B} - \frac{e^{-y_NB}}{2iB}\Bigr)\,dy_N
=0.
\end{align*}
Likewise, for each $1\leq j,k\leq N-1$, we also have
\allowdisplaybreaks 
\begin{align*}
h^{(2)}_{jk}:&= \frac{1}{2\pi}\int_{\BR}\frac{\xi_j\xi_k\hat g^e_k(\xi)}{(\alpha^{-1}\lambda+|\xi|^2)
(p(\lambda)+|\xi|^2)}\,d\xi_N\\
&=i\int^\infty_0 \xi_j\xi_k\CF'[g_k](\xi', y_N)\Bigl(\frac{1}{2\pi i}
\int_{\BR} \frac{ e^{-iy_N\xi_N} + e^{iy_N\xi_N}}{(A^2+\xi_N^2)(B^2+\xi_N^2)}\,d\xi_N\Bigr)\,dy_N\\
&= i\int^\infty_0\CF'[g_k](\xi', y_N)\xi_j\xi_k\Bigl(\frac{e^{-y_NB}}{(A^2-B^2)iB} + 
\frac{e^{-y_NA}}{(B^2-A^2)iA}\Bigr)\,dy_N \\
&= \int^\infty_0\Bigl(\frac{e^{-Ay_N}}{A}-\frac{e^{-By_N}}{B}\Bigr)\frac{\xi_j\xi_k}{B^2-A^2}
\CF'[g_k](\xi', y_N)\,dy_N;
\end{align*}
while for each $1\leq j\leq N-1$, we obtain
\begin{align*}
h^{(2)}_{jN} & =\frac{1}{2\pi}\int_{\BR}\frac{\xi_j\xi_N\hat g^o_N(\xi)}{(\alpha^{-1}\lambda+|\xi|^2)
(p(\lambda)+|\xi|^2)}\, d\xi_N\\
&=i\int^\infty_0 \CF'[g_N](\xi', y_N)\xi_j\Bigl(\frac{1}{2\pi i}
\int_{\BR} \frac{ \xi_N(e^{-iy_N\xi_N}-e^{iy_N\xi_N})}{(A^2+\xi_N^2)(B^2+\xi_N^2)}\,d\xi_N\Bigr)\,dy_N\\
& =i\int^\infty_0 \CF'[g_N](\xi', y_N)\xi_j\Bigl(
-\frac{e^{-y_NB}(-iB)}{(A^2-B^2)(-2iB)}-\frac{e^{-By_N}(iB)}{(A^2-B^2)(2iB)} \\
&\hskip1cm- \frac{e^{-Ay_N}(-iA)}{(B^2-A^2)(-2iA)}-\frac{e^{-Ay_N}(iA)}{(B^2-A^2)(2iA)}\Bigr)\,dy_N\\
& = i\int^\infty_0 \frac{e^{-Ay_N} - e^{-By_N}}{A^2-B^2} \xi_j\CF'[g_N](\xi', y_N)\,dy_N.
\end{align*}
In addition, 
for the case $1\leq j\leq N-1$ we see that
\begin{align*}
h^{(2)}_{Nk} & =\frac{1}{2\pi}\int_{\BR}\frac{\xi_N\xi_k\hat g^e_k(\xi)}{(\alpha^{-1}\lambda+|\xi|^2)
(p(\lambda)+|\xi|^2)}\,d\xi_N\\
&=i\int^\infty_0 \CF'[g_k](\xi', y_N)\xi_k\Bigl(\frac{1}{2\pi i}
\int_{\BR} \frac{ \xi_N(e^{-iy_N\xi_N}+e^{iy_N\xi_N})}{(A^2+\xi_N^2)(B^2+\xi_N^2)}\,d\xi_N\Bigr)\,dy_N\\
& =i\int^\infty_0 \CF'[g_k](\xi', y_N)\xi_k\Bigl(
-\frac{e^{-y_NB}(-iB)}{(A^2-B^2)(-2iB)}+\frac{e^{-By_N}(iB)}{(A^2-B^2)(2iB)} \\
&\hskip1cm- \frac{e^{-Ay_N}(-iA)}{(B^2-A^2)(-2iA)}+\frac{e^{-Ay_N}(iA)}{(B^2-A^2)(2iA)}\Bigr)\,dy_N\\
&=0,
\end{align*}
as well as 
\begin{align*}
h^{(2)}_{NN} & = \frac{1}{2\pi}\int_{\BR}\frac{\xi_N^2\hat g^o_N(\xi)}{(\alpha^{-1}\lambda+|\xi|^2)
(p(\lambda)+|\xi|^2)}\, d\xi_N\\
&= i\int^\infty_0\CF'[g_N](\xi', y_N)\Bigl(\frac{1}{2\pi i}\int_{\BR}
\frac{\xi_N^2(e^{-iy_N\xi_N} - e^{iy_N\xi_N})}{(\xi_N^2+A^2)(\xi_N^2+B^2)}\Bigr)\,dy_N\\
& =i\int^\infty_0 \CF'[g_N](\xi', y_N)\Bigl(
-\frac{e^{-y_NB}(-iB)^2}{(A^2-B^2)(-2iB)}-\frac{e^{-By_N}(iB)^2}{(A^2-B^2)(2iB)} \\
&\hskip1cm- \frac{e^{-Ay_N}(-iA)^2}{(B^2-A^2)(-2iA)}-\frac{e^{-Ay_N}(iA)^2}{(B^2-A^2)(2iA)}\Bigr)\,dy_N\\
&=0.
\end{align*}
We write 
\begin{align*}
\Bigl(\frac{e^{-Ay_N}}{A}- \frac{e^{-By_N}}{B}\Bigr)\frac{1}{B^2-A^2} 
	= -\frac{\CM(y_N)}{A(A+B)} + \frac{e^{-By_N}}{AB(A+B)}, \quad 
\frac{e^{-Ay_N}-e^{-By_N}}{A^2-B^2} = \frac{\CM(y_N)}{A+B}.
\end{align*}
Let $h_j$ be the $j$-th component of $-\CF'[\CS^0(\lambda)\bG](0)$, and then we have
\begin{align*}
h_j &:=-\frac{1}{\alpha}h^{(1)}_j +\frac{\beta\lambda+\gamma^2}{\alpha((\alpha+\beta)\lambda+\gamma^2)}
\sum_{k=1}^Nh^{(2)}_{jk}\\
& =- \frac{1}{\alpha B}\int_{0}^{\infty}\CF'[g_j](\xi', y_N)e^{-y_NB}\,dy_N \\
&\quad -\frac{\beta\lambda+\gamma^2}{\alpha((\alpha+\beta)\lambda+\gamma^2)}
\int^\infty_0\CM(y_N)\frac{\xi_j}{A+B}
(\sum_{k=1}^{N-1}\frac{\xi_k}{A}\CF'[g_k](\xi', y_N)- i\CF'[g_N](\xi', y_N))\,dy_N \\
&\quad + \frac{\beta\lambda+\gamma^2}{\alpha((\alpha+\beta)\lambda+\gamma^2)}
\int^\infty_0e^{-By_N}\frac{i\xi_j}{B(A+B)}
\sum_{k=1}^{N-1}\frac{\xi_k}{A}\CF'[g_k](\xi', y_N)\,dy_N,
\end{align*}
for $j=1, \ldots, N-1$ and $h_N=0$. 
According to \cite[(4.9)]{ES13}, we have
\begin{equation}\label{4.9}
\CF'[w_j](\xi', x_N) = h_je^{-Bx_N} - \frac{i\xi_j\eta_\lambda}{K}\CM(x_N)i\xi'\cdot h', 
\quad \CF'[w_N](\xi', x_N) = \frac{A\eta_\lambda}{K}\CM(x_N)i\xi'\cdot h',
\end{equation}
where $K = (\alpha+\eta_\lambda)A + \alpha B$ and $i\xi'\cdot h' = \sum_{j=1}^{N-1}i\xi_j h_j$. \par
We calculate the right hand side. For notational simplicity, we write 
$\CF'[g_j](\xi', y_N) = \CF'[g_j]$, namely, $(\xi', y_N)$ is omitted. We have
\begin{align*}
h_j & = \frac{-1}{\alpha }\int_{0}^{\infty}\frac{e^{-y_NB}}{B}\CF'[g_j]\,dy_N\\
&- \frac{\beta\lambda+\gamma^2}{\alpha((\alpha+\beta)\lambda + \gamma^2)}
\int^\infty_0\CM(y_N)(\sum_{k=1}^{N-1}\frac{\xi_j\xi_k}{(A+B)A}\CF'[g_k] 
-\frac{i\xi_j}{A+B}\CF'[g_N])\,dy_N \\
&+  \frac{\beta\lambda+\gamma^2}{\alpha((\alpha+\beta)\lambda + \gamma^2)}\int^\infty_0
\frac{e^{-By_N}}{B}
\sum_{k=1}^{N-1}\frac{i\xi_j \xi_k}{(A+B)A}\CF'[g_k]\,dy_N; \\
i\xi'\cdot h' & = \frac{-1}{\alpha}\int_{0}^{\infty}\frac{e^{-By_N}}{B}\, i\xi'\cdot\CF'[\bg']e^{-y_NB}\,dy_N\\
&+ \frac{\beta\lambda+\gamma^2}{\alpha((\alpha+\beta)\lambda + \gamma^2)}
\int^\infty_0\CM(y_N)(\sum_{k=1}^{N-1}\frac{|\xi'|^2 \xi_k}{i(A+B)A}\CF'[g_k] 
-\frac{|\xi'|^2}{A+B}\CF'[g_N])\,dy_N \\
&- \frac{\beta\lambda+\gamma^2}{\alpha((\alpha+\beta)\lambda + \gamma^2)}\int^\infty_0\frac{e^{-By_N}}{B}
\sum_{k=1}^{N-1}\frac{|\xi'|^2 \xi_k}{(A+B)A}\CF'[g_k]\,dy_N. 
\end{align*}
Thus,  we have
\allowdisplaybreaks
\begin{align}
&\CF'[w_j](\xi', x_N)  =-\int_{0}^{\infty}Be^{-(x_N+y_N)B} \frac{1}{\alpha B^2}\CF'[g_j]\,dy_N
\nonumber\\
&- \int^\infty_0B^2e^{-Bx_N}\CM(y_N)\frac{\beta\lambda+\gamma^2}{\alpha((\alpha+\beta)\lambda + \gamma^2)}
(\sum_{k=1}^{N-1}\frac{\xi_j\xi_k}{(A+B)AB^2}\CF'[g_k] 
-\frac{i\xi_j}{(A+B)B^2}\CF'[g_N])\,dy_N \nonumber\\
&+ \int^\infty_0Be^{-B(x_N+y_N)}
\frac{\beta\lambda+\gamma^2}{\alpha((\alpha+\beta)\lambda + \gamma^2)}
\sum_{k=1}^{N-1}\frac{i\xi_j \xi_k}{(A+B)AB^2}\CF'[g_k]\,dy_N \\
& +\int^\infty_0B^2\CM(x_N)e^{-By_N}\frac{i\xi_j\eta_\lambda}{K} \frac{1}{\alpha B^3}
i\xi'\cdot \CF'[\bg']\,dy_N\nonumber \\
&- \int^\infty_0B^3\CM(x_N)\CM(y_N)
\frac{\xi_j\eta_\lambda}{K}\frac{\beta\lambda+\gamma^2}{\alpha((\alpha+\beta)\lambda + \gamma^2)}
(\sum_{k=1}^{N-1}\frac{|\xi'|^2 \xi_k}{(A+B)AB^3}\CF'[g_k] \nonumber\\
&\hskip9cm 
-\frac{i|\xi'|^2}{(A+B)B^3}\CF'[g_N])\,dy_N \nonumber\\
&+\int^\infty_0 B^2\CM(x_N)e^{-By_N}
\frac{i\xi_j\eta_\lambda}{K}\frac{\beta\lambda+\gamma^2}{\alpha((\alpha+\beta)\lambda + \gamma^2)}
\sum_{k=1}^{N-1}\frac{|\xi'|^2 \xi_k}{(A+B)AB^3}\CF'[g_k]\,dy_N, \nonumber\\
&\CF'[w_N ](\xi', x_N) =
 -\int^\infty_0B^2\CM(x_N)e^{-By_N}\frac{A\eta_\lambda}{K} \frac{1}{\alpha B^3}
i\xi'\cdot \CF'[\bg']\,dy_N \nonumber\\
&+\int^\infty_0B^3\CM(x_N)\CM(y_N)
\frac{A\eta_\lambda}{K}\frac{\beta\lambda+\gamma^2}{\alpha((\alpha+\beta)\lambda + \gamma^2)}
(\sum_{k=1}^{N-1}\frac{|\xi'|^2 \xi_k}{(A+B)AB^3}\CF'[g_k]\nonumber \\
&\hskip9cm 
-\frac{|\xi'|^2}{(A+B)B^3}\CF'[g_N])\,dy_N\nonumber\\
&-\int^\infty_0 B^2\CM(x_N)e^{-By_N}
\frac{A\eta_\lambda}{K}\frac{\beta\lambda+\gamma^2}{\alpha((\alpha+\beta)\lambda + \gamma^2)}
\sum_{k=1}^{N-1}\frac{|\xi'|^2 \xi_k}{(A+B)AB^3}\CF'[g_k]\,dy_N.
\label{sol:form5}
\end{align}

\section{Estimates of solution operators  in the whole space} \label{sec:5}
\label{sec-5}
In this section, we shall estimate the solution operator $\CS^0(\lambda)$ defined 
in \eqref{solform:4.1}.  To this end, we use the Fourier multiplier theorem of Mihlin-H\"ormander
type \cite{Min, Hor}. Let $m(\xi)$ be a complex-valued function defined on 
$\BR^N\setminus\{0\}$ which satisfies the multiplier conditions:
\begin{equation}\label{multiplier:5.1}
|D^{\delta}_{\xi} m(\xi)| \leq C_{\delta}|\xi|^{-|\delta|}
\end{equation}
for any multi-index $\delta \in \BN_0^N$ with some constant $C_\delta$ depending on
$\delta$. We say that $m(\xi)$ is a multiplier. Then, the Fourier multiplier operator
with kernel function $m(\xi)$ is defined by
\begin{equation}\label{multiplier:5.2}
T_m f = \CF^{-1}_\xi[m(\xi)\CF[f](\xi)] = \frac{1}{(2\pi)^N}\int_{\BR^N}
e^{ix\cdot \xi}m(\xi)\CF[f](\xi)\,d\xi \quad\text{for $f \in \CS(\BR^N)$}. 
\end{equation}
Then, we have the following theorem called  the Fourier multiplier theorem.
\begin{thm}\label{FMT} Let $1 < q < \infty$ and $m(\xi)$ be a multiplier.  Then,
the Fourier multiplier $T_m$ is an $L_q(\BR^N)$ bounded operator, that is
there exists a constant depending on $q$ and $N$ such that
$$\|T_m f\|_{L_q(\BR^N)} \leq C(\max_{|\delta| \leq [N/2]+1} C_\delta)\|f\|_{L_q(\BR^N)}.
$$
Here, $[N/2]$ denotes the integer part of $N/2$. \par
$T_m$ is extended uniquely to an operator on $L_q(\BR^N)$, which is also written
by $T_m$.
\end{thm}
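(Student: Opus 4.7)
The plan is to view $T_m$ as a Calder\'on--Zygmund operator and deduce $L_q$-boundedness for $1<q<\infty$ from the corresponding weak-type $(1,1)$ bound together with interpolation and duality. First I would establish $L_2$-boundedness directly by Plancherel: the multiplier hypothesis applied with $\alpha=0$ yields $|m(\xi)|\le C_0$, so $\|T_m f\|_{L_2(\BR^N)}\le C_0\|f\|_{L_2(\BR^N)}$. It then remains to verify that the convolution kernel $K=\CF^{-1}[m]$, understood in the sense of tempered distributions, satisfies the H\"ormander integral condition
\begin{equation*}
\sup_{y\ne 0}\int_{|x|>2|y|}|K(x-y)-K(x)|\,\d x\le C\max_{|\alpha|\le[N/2]+1}C_\alpha.
\end{equation*}

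\textbf{Dyadic decomposition.} To check this condition I would exploit the Littlewood--Paley partition $1=\sum_{k\in\BZ}\phi(2^{-k}\xi)$ on $\BR^N\setminus\{0\}$ already recalled in \eqref{little:1}: set $m_k(\xi):=m(\xi)\phi(2^{-k}\xi)$, supported in the annulus $\{2^{k-1}\le|\xi|\le 2^{k+1}\}$, and let $K_k:=\CF^{-1}[m_k]$. The Leibniz rule combined with the multiplier hypothesis gives, for any multi-index $\alpha$ with $|\alpha|\le [N/2]+1$,
\begin{equation*}
\|D^\alpha_\xi m_k\|_{L_2(\BR^N)}\le C\bigl(\max_{|\beta|\le|\alpha|}C_\beta\bigr)\,2^{k(N/2-|\alpha|)},
\end{equation*}
and Plancherel turns this into $\||x|^{|\alpha|}K_k\|_{L_2(\BR^N)}\le C\,2^{k(N/2-|\alpha|)}$. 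Cauchy--Schwarz on the ball $|x|\le 2^{-k}$ (using $\alpha=0$) and on its complement (using $|\alpha|=[N/2]+1$) then produces
\begin{equation*}
\int_{|x|\ge R}|K_k(x)|\,\d x\le C\min\bigl\{(2^kR)^{N/2},\,(2^kR)^{-1/2}\bigr\},\qquad R>0,
\end{equation*}
and the same argument applied to $\nabla K_k=\CF^{-1}[i\xi\, m_k]$ yields $\int_{\BR^N}|\nabla K_k(x)|\,|x|\,\d x\le C$.

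\textbf{Assembly and conclusion.} Splitting the sum $K=\sum_k K_k$ at the threshold $2^k|y|\sim 1$ and using the gradient bound on the low-frequency part together with the tail bound on the high-frequency part produces the H\"ormander condition with the constant claimed above. Hence $T_m$ is a Calder\'on--Zygmund operator; the Calder\'on--Zygmund decomposition gives the weak-type $(1,1)$ inequality, Marcinkiewicz interpolation with the $L_2$ bound yields $L_q$-boundedness for $1<q\le 2$, and duality applied to $T_m^\ast=T_{\overline m}$, which satisfies the same multiplier condition, handles the range $2\le q<\infty$. Finally, the density of $\CS(\BR^N)$ in $L_q(\BR^N)$ extends $T_m$ uniquely to the whole space. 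The main obstacle is the bookkeeping in the H\"ormander estimate: the exponent $[N/2]+1$ is dictated by the Cauchy--Schwarz loss on the annulus $\supp m_k$, whose measure is $\sim 2^{kN}$, and precisely this number of derivatives is required for the dyadic tails to sum geometrically; any weaker smoothness assumption would force a stronger localised $L^2$-version of the hypothesis.
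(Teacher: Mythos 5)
The paper does not actually prove this theorem: it is stated as Theorem~\ref{FMT} and used as a black box, with the proof delegated to the classical references of Mihlin \cite{Min} and H\"ormander \cite{Hor}. So there is no in-paper argument to compare against; what you are asked to supply is precisely the classical proof, and the Calder\'on--Zygmund route you sketch (Plancherel for $L_2$, dyadic Littlewood--Paley decomposition of the symbol, $L^2$-weighted estimates on the dyadic kernels $K_k$ via Cauchy--Schwarz, assembly into the H\"ormander integral condition, weak $(1,1)$ plus Marcinkiewicz plus duality) is indeed the standard one and is the right approach at the level of ideas.

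There is, however, a computational slip in the low-frequency estimate. You claim $\int_{\BR^N}|\nabla K_k(x)|\,|x|\,\d x\le C$, but with only $M:=[N/2]+1$ available $\xi$-derivatives the Cauchy--Schwarz split gives, on $|x|\ge 2^{-k}$, a factor $\bigl(\int_{|x|\ge 2^{-k}}|x|^{2-2M}\,\d x\bigr)^{1/2}$ which \emph{diverges}: the integrand decays like $|x|^{2-2M+N-1}$ and $2-2M+N\ge 0$ for $M=[N/2]+1$ (equality when $N$ is even, positive when $N$ is odd). The correct and sufficient bound is the unweighted one, $\int_{\BR^N}|\nabla K_k|\,\d x\le C\,2^{k}$, which does follow from the same Cauchy--Schwarz split (the tail now involves $\int_{|x|\ge 2^{-k}}|x|^{-2M}\,\d x\sim 2^{-k(N-2M)}$, which converges). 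Combined with the mean value theorem this yields $\int_{|x|>2|y|}|K_k(x-y)-K_k(x)|\,\d x\le C\,2^{k}|y|$, which sums geometrically over $2^{k}|y|\le1$ exactly as needed. Similarly, the small-radius entry $(2^kR)^{N/2}$ in your claimed bound for $\int_{|x|\ge R}|K_k|$ should simply be a constant ($\int_{\BR^N}|K_k|\,\d x\le C$ uniformly in $k$); the overclaim is harmless since you do not use it in the assembly. With these two corrections the argument goes through and establishes the theorem with the constant $C\max_{|\alpha|\le[N/2]+1}C_\alpha$ as stated.
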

To estimate solution operators, we use the following lemma.
\begin{lem}\label{lem:5.1}
 Let $1 < q < \infty$, $1 \leq r \leq \infty$ and $s$, $\sigma$ 
be two real numbers. Let $m(\xi)$ be a complex valued $C^\infty$ 
function defined on 
$\BR^N\setminus\{0\}$ satisfying \eqref{multiplier:5.1} and
let $T_m$ be an operator defined by \eqref{multiplier:5.2}.  Then, 
for any $f \in B^s_{q,r}(\BR^N)$, there holds 
\begin{equation}\label{multiplier:5.3}
\|T_mf\|_{B^s_{q,r}(\BR^N)} \leq C_{s,q,r}(\max_{|\delta| \leq [N/2]+1}C_\delta)
\|f\|_{B^s_{q,r}(\BR^N)}.
\end{equation}
Here, $C_\alpha$ are constants appearing in \eqref{multiplier:5.1}. 

Moreover, let 
$$\langle D \rangle^\sigma f = \CF^{-1}[(1+|\xi|^2)^{\frac{\sigma}{2}}\CF[f](\xi)]
=\frac{1}{(2\pi)^N} \int_{\BR^N}  e^{ix\cdot \xi}(1+|\xi|^2)^{\frac{\sigma}{2}}\CF[f](\xi)\,d\xi.
$$
Then, 
\begin{equation}\label{multiplier:5.4}
\|\langle D \rangle^\sigma f\|_{B^s_{q,r}(\BR^N)} \leq C\|f\|_{B^{s+\sigma}_{q,r}(\BR^N)}.
\end{equation}
\end{lem}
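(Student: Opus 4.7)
The plan is to reduce both estimates to the classical Mihlin--H\"ormander theorem (Theorem \ref{FMT}) via the Littlewood-Paley characterization of Besov spaces in \eqref{little:1}--\eqref{pld:2}.

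For \eqref{multiplier:5.3}, the crucial observation is that Fourier multipliers commute with convolution: for every $k \in \BN$,
\begin{equation*}
\phi_k * T_m f = \CF^{-1}[\CF[\phi_k](\xi)\,m(\xi)\,\hat f(\xi)] = T_m(\phi_k * f),
\end{equation*}
and similarly $\psi * T_m f = T_m(\psi * f)$. Applying Theorem \ref{FMT} to each dyadic block gives
\begin{equation*}
\|\phi_k * T_m f\|_{L_q(\BR^N)} \le C\Big(\max_{|\alpha|\le[N/2]+1}C_\alpha\Big)\|\phi_k * f\|_{L_q(\BR^N)},
\end{equation*}
with the analogous bound for the low-frequency piece. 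Multiplying by $2^{ks}$, taking the $\ell^r$-norm over $k\in\BN$, and collecting terms yields \eqref{multiplier:5.3}.

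For \eqref{multiplier:5.4}, the symbol $\langle\xi\rangle^\sigma := (1+|\xi|^2)^{\sigma/2}$ is not of order zero, hence does not satisfy \eqref{multiplier:5.1} globally, so the first part cannot be invoked directly. The plan is to localize it dyadically. Fix $\tilde\phi\in C^\infty_c(\BR^N\setminus\{0\})$ with $\tilde\phi\equiv 1$ on $\supp\phi$ and $\supp\tilde\phi\subset\{1/3\le|\xi|\le 3\}$, and set
\begin{equation*}
m_k(\xi) := 2^{-k\sigma}\,\tilde\phi(2^{-k}\xi)\,\langle\xi\rangle^\sigma, \qquad k\in\BN.
\end{equation*}
On $\supp\tilde\phi(2^{-k}\cdot)$ one has $|\xi|\sim 2^k$, so combining the bound $|D^\beta\langle\xi\rangle^\sigma|\le C_\beta\langle\xi\rangle^{\sigma-|\beta|}$ with the rescaling estimate $|D^\gamma[\tilde\phi(2^{-k}\cdot)]|\le C_\gamma 2^{-k|\gamma|}$ via the Leibniz rule shows that $m_k$ satisfies \eqref{multiplier:5.1} with constants independent of $k$. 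Because $\tilde\phi\equiv 1$ on $\supp\phi$, one then has
\begin{equation*}
\phi_k * \langle D\rangle^\sigma f = 2^{k\sigma}\,T_{m_k}(\phi_k * f),
\end{equation*}
and Theorem \ref{FMT} yields $\|\phi_k*\langle D\rangle^\sigma f\|_{L_q(\BR^N)}\le C\cdot 2^{k\sigma}\|\phi_k*f\|_{L_q(\BR^N)}$ uniformly in $k$. Multiplying by $2^{ks}$, taking $\ell^r$-norms, and handling the low-frequency piece $\psi*\langle D\rangle^\sigma f$ separately (using that $\psi\langle\cdot\rangle^\sigma$ is smooth and compactly supported, so only finitely many $\phi_k*f$ contribute after decomposing $f$ via the Littlewood-Paley partition) delivers \eqref{multiplier:5.4}.

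The principal technical step is verifying the uniform-in-$k$ Mihlin bounds for $m_k$. It is where the dyadic cancellation between the $2^{-k|\gamma|}$ arising from the rescaled cutoff and the $\sigma$-order growth of $\langle\xi\rangle^\sigma$ at scale $|\xi|\sim 2^k$ must be carefully tracked; the rest is routine bookkeeping with the Littlewood-Paley decomposition.
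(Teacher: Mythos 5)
Your proposal follows the paper's argument almost verbatim: both reduce each Littlewood--Paley block $\phi_k*T_mf$ (resp.\ $\phi_k*\langle D\rangle^\sigma f$) to the Mihlin--H\"ormander theorem (Theorem~\ref{FMT}), and for \eqref{multiplier:5.4} both insert a dyadically rescaled cutoff $\tilde\phi(2^{-k}\cdot)$ to verify uniform-in-$k$ Mihlin bounds for $2^{-k\sigma}\tilde\phi(2^{-k}\xi)\langle\xi\rangle^\sigma$. The only cosmetic divergence is in the low-frequency piece: the paper inserts a single fixed cutoff $\tilde\psi\equiv1$ on $\supp\psi$ so that $\langle\xi\rangle^\sigma\tilde\psi(\xi)$ is Mihlin and one gets directly $\|\psi*\langle D\rangle^\sigma f\|_{L_q}\le C_\sigma\|\psi*f\|_{L_q}$, whereas you decompose $f$ again into Littlewood--Paley pieces and observe that only finitely many survive under the compactly supported symbol $\psi\langle\cdot\rangle^\sigma$ --- both are correct, the paper's just being tidier.
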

\begin{proof}
Let $\psi$, $\phi$, and $\phi_k$ be functions given in \eqref{little:1}. Since $ m$ satisfies the condition 
\eqref{multiplier:5.1}, 
by Theorem \ref{FMT}  we have 
\begin{align*}
\|\psi * T_mf\|_{L_q(\BR^N)}
&= \|\CF^{-1}_\xi[m(\xi)
\psi(\xi)\CF[f](\xi)]\|_{L_q(\BR^N)} 
\leq CD\|\CF^{-1}_\xi[\psi(\xi)\CF[f](\xi)]\|_{L_q(\BR^N)},
\end{align*}
where $D=\max_{|\delta| \leq [N/2] + 1} C_\delta$. 
Likewise, we have
\begin{align*}
\|\phi_k * T_mf\|_{L_q(\BR^N)}
&= \|\CF^{-1}_\xi[m(\xi)
\phi_k(\xi)\CF[f](\xi)]\|_{L_q(\BR^N)} 
\leq CD\|\CF^{-1}_\xi[\phi_k(\xi)\CF[f](\xi)]\|_{L_q(\BR^N)}.
\end{align*}
Thus, from the definition \eqref{pld:2} we have
$$
\|T_mf\|_{B^s_{q,r}(\BR^N)} 
\leq CD\|f\|_{B^s_{q,r}(\BR^N)}.
$$
\par
To prove \eqref{multiplier:5.4}, we choose two $C^\infty_0(\BR^N)$
functions $\tilde\phi$ and $\tilde\psi$ such that 
$\tilde\phi(\xi) = 1$ on ${\rm supp}\, \psi$, $\tilde\psi(\xi)=1$ on
${\rm supp}\, \phi$, and $\tilde\phi$ vanishes outside of 
$\{\xi \in \BR^N \mid 1/4 \leq |\xi| \leq 4\}$. We see that 
\begin{align*}
|D^{\delta}_{\xi} ((1+|\xi|^2)^{\frac{\sigma}{2}}\tilde \psi)| &\leq C_\delta|\xi|^{-|\delta|},
\\
|D^{\delta}_{\xi} (2^{-\sigma k}((1+|\xi|^2)^{\frac{\sigma}{2}}\tilde\phi(2^{-k}\xi))
| &\leq C_\delta|\xi|^{-|\delta|}
\end{align*}
for any multi-index $\delta \in \BN_0^N$.  By Theorem \ref{FMT}, we have
\begin{align*}
\|\psi*\langle D \rangle^\sigma f\|_{L_q(\BR^N)} &= \|\CF^{-1}[\CF[\psi](\xi)(1+|\xi|^2)^{\frac{\sigma}{2}}
\tilde\psi(\xi)\CF[f](\xi)]\|_{L_q(\BR^N)} \\
&\leq C_\sigma \|\CF^{-1}[\CF[\psi](\xi)\CF[f](\xi)]\|_{L_q(\BR^N)}
=  C_\sigma \|\psi*f\|_{L_q(\BR^N)}.
\end{align*}
Likewise, 
\begin{align*}
2^{sk}\|\phi_k*\langle D \rangle^\sigma f\|_{L_q(\BR^N)} 
&=2^{sk} 2^{\sigma k}\|\CF^{-1}[\CF[\phi_k](\xi)(1+|\xi|^2)^{\frac{\sigma}{2}}
2^{-\sigma k}\tilde\phi(2^{-k}\xi)\CF[f](\xi)]\|_{L_q(\BR^N)} \\
&\leq C_\sigma 2^{(s+\sigma)k}\|\CF^{-1}[\CF[\phi_k](\xi)\CF[f](\xi)]\|_{L_q(\BR^N)}
=  C_\sigma 2^{(s+\sigma)k}\|\phi_k*f\|_{L_q(\BR^N)}.
\end{align*}
Thus, from the definition \eqref{pld:2} we have
$$
\|\langle D \rangle^\sigma f\|_{B^s_{q,r}(\BR^N)} 
\leq C_\sigma \|f\|_{B^{s+\sigma}_{q,r}(\BR^N)}.
$$
This completes the proof of Lemma \ref{lem:5.1}.
\end{proof}
Now, we shall prove the following theorem which is a main result of this section. 
\begin{thm}\label{thm:5.1} Let $1 < q <\infty$, $1 \leq r \leq \infty$, 
$-1+1/q < s < 1/q$, and $\epsilon \in (0, \pi/2)$. 
Let $\CS^0(\lambda)$ be the operator defined in \eqref{solform:4.1}.
Then, there exists a large constant $\omega_0 > 0$ such that 
for any $\lambda \in \Sigma_{\epsilon, \omega_0}$ and $\bg \in B^s_{q,r}(\BR^N)^N$,
 there hold
\begin{align}\label{thm:5.1.1}
\|(\lambda, \lambda^{\frac{1}{2}}\nabla, \nabla^2)\CS^0(\lambda)\bg\|_{B^s_{q,r}(\BR^N)}
&\leq C\|\bg\|_{B^s_{q,r}(\BR^N)}, \\
\label{thm:5.1.2}
\|(\lambda, \lambda^{\frac{1}{2}}\nabla, \nabla^2)\pd_\lambda \CS^0(\lambda)\bg\|_{B^s_{q,r}(\BR^N)}
&\leq C|\lambda|^{-1}\|\bg\|_{B^s_{q,r}(\BR^N)}.
\end{align}
\par
Moreover, let $\sigma>0$ be a small number such that $-1+1/q < s-\sigma < s < s+\sigma < 1/q$.
Then, there exist a large number $\omega_1 \geq\omega_0$ and 
two operators $\CT^0_1(\lambda)$ and $\CT^0_2(\lambda)$ which are holomorphic on
$\Lambda_{\epsilon, \omega_1}$ such that $\CS^0(\lambda) = \CT^0_1(\lambda)
+\CT^0_2(\lambda)$ and 
 for any $\bg \in C^\infty_0(\BR^N)$ and $\lambda \in \Lambda_{\epsilon, \omega_1}$, there hold
\begin{align}\label{thm:5.1.3}
\|(\lambda^{\frac{1}{2}}\nabla, \nabla^2)\CT^0_1(\lambda)\bg\|_{B^s_{q,r}(\BR^N)}
&\leq C|\lambda|^{-\frac{\sigma}{2}}\|\bg\|_{B^{s+\sigma}_{q,r}(\BR^N)}, \\
\label{thm:5.1.4}
\|(\lambda^{\frac{1}{2}}\nabla, \nabla^2)\pd_\lambda\CT^0_1(\lambda)\bg\|_{B^s_{q,r}(\BR^N)}
&\leq C|\lambda|^{-(1-\frac{\sigma}{2})}\|\bg\|_{B^{s-\sigma}_{q,r}(\BR^N)}
\end{align} as well as for any $\lambda \in \Lambda_{\epsilon, \omega_1}$ and 
$\bg \in B^s_{q,1}(\BR^N)$, there hold
\begin{align}
\label{thm:5.1.5}
\|(\lambda^{\frac{1}{2}}\nabla, \nabla^2)\CT^0_2(\lambda)\bg\|_{B^s_{q,r}(\BR^N)}
&\leq C|\lambda|^{-\frac{\sigma}{2}}\|\bg\|_{B^{s}_{q,r}(\BR^N)}, \\
\label{thm:5.1.6}
\|(\lambda^{\frac{1}{2}}\nabla, \nabla^2)\pd_\lambda\CT^0_2(\lambda)\bg\|_{B^s_{q,r}(\BR^N)}
&\leq C|\lambda|^{-(1-\frac{\sigma}{2})}\|\bg\|_{B^{s}_{q,r}(\BR^N)}.
\end{align}
\end{thm}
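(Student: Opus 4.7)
Plan. For (5.1.1) and (5.1.2), I apply the Fourier multiplier theorem (Lemma~\ref{lem:5.1}) directly to the explicit formula \eqref{solform:4.1}. The relevant multipliers are products of
\begin{equation*}
\frac{\lambda}{\lambda+\alpha|\xi|^2},\qquad \frac{\lambda^{1/2}\xi_j}{\lambda+\alpha|\xi|^2},\qquad \frac{\xi_j\xi_k}{(\lambda+\alpha|\xi|^2)(p(\lambda)+|\xi|^2)},
\end{equation*}
and the bounded coefficient $(\beta\lambda+\gamma^2)/((\alpha+\beta)\lambda+\gamma^2)$. By Lemma~\ref{lem:3.2}, each satisfies the Mihlin condition $|D^\alpha_\xi m|\le C_\alpha|\xi|^{-|\alpha|}$ uniformly in $\lambda\in\Sigma_{\epsilon,\gamma_0}$, so Lemma~\ref{lem:5.1} gives (5.1.1). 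For (5.1.2), $\pd_\lambda$ applied to $(\lambda+\alpha|\xi|^2)^{-1}$ or $(p(\lambda)+|\xi|^2)^{-1}$ produces an extra factor of the same denominator; applying Lemma~\ref{lem:3.2} to this squared denominator gives the desired $|\lambda|^{-1}$ improvement, while $\pd_\lambda$ of the scalar coefficient contributes $O(|\lambda|^{-2})$.

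For the decomposition I use a holomorphic soft frequency cutoff. Set
\begin{equation*}
\phi_\lambda(\xi):=\frac{|\xi|^2}{\lambda+|\xi|^2},\qquad \CT^0_1(\lambda)\bg:=\CF^{-1}\bigl[\phi_\lambda(\xi)\,\CF[\CS^0(\lambda)\bg]\bigr],\qquad \CT^0_2(\lambda):=\CS^0(\lambda)-\CT^0_1(\lambda),
\end{equation*}
so that both are holomorphic on $\Lambda_{\epsilon,\gamma_1}$ and sum to $\CS^0(\lambda)$. The factor $\phi_\lambda$ acts as a soft high-pass at scale $|\xi|\sim|\lambda|^{1/2}$, while $1-\phi_\lambda=\lambda/(\lambda+|\xi|^2)$ is the dual soft low-pass. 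For $\CT^0_1$, writing $x=|\xi|/|\lambda|^{1/2}$, the compound symbol for $\nabla^2\CT^0_1$ is a bounded profile in $x$ vanishing like $x^4$ at $x=0$; the pointwise inequality $\min(1,x^4)\le x^{2\sigma}$, valid for $\sigma\in(0,2)$, translates into $|m(\xi)|\le C|\lambda|^{-\sigma/2}|\xi|^\sigma$. Factoring the symbol as $|\xi|^\sigma\cdot m'$ with $m'$ a Mihlin multiplier of size $|\lambda|^{-\sigma/2}$, Lemma~\ref{lem:5.1} and the Littlewood--Paley characterization of Besov spaces produce (5.1.3); differentiating in $\lambda$ introduces an extra factor $(|\lambda|+|\xi|^2)^{-1}$, converting the symbol into one of order $-\sigma$ with norm $|\lambda|^{-(1-\sigma/2)}$ and hence (5.1.4). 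The estimates (5.1.5)--(5.1.6) for $\CT^0_2$ are to be handled by the symmetric analysis using $1-\phi_\lambda$.

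The routine part is (5.1.1)--(5.1.2). The hard part is the bookkeeping for the decomposition: each compound symbol is a product of five or six factors---two denominators from $\CS^0$, the soft cutoff $\phi_\lambda$ or its complement, possibly a $|\xi|^\sigma$ factor extracted to shift Besov indices, and the differential or polynomial factor from the derivatives on the left---and one must verify, via repeated application of Leibniz and Lemma~\ref{lem:3.2}, that the resulting Mihlin constants remain uniform in $\lambda$. The most delicate step is establishing (5.1.5): the symmetric split above produces only an $O(1)$ multiplier for $\nabla^2\CT^0_2$, so to extract the needed $|\lambda|^{-\sigma/2}$ decay from $B^s$ to $B^s$ one must either refine the cutoff (for instance by iterating $\phi_\lambda$ or replacing it with a higher-order variant such as $|\xi|^{2(1+\sigma/2)}/(\lambda+|\xi|^2)^{1+\sigma/2}$) or use a more asymmetric decomposition that isolates the leading $\lambda^{-1}$-piece of $\CS^0$ into $\CT^0_2$ and the remainder into $\CT^0_1$.
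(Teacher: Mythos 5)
Your treatment of \eqref{thm:5.1.1}--\eqref{thm:5.1.2} is essentially the same as the paper's: split $\CS^0$ into $\CS^0_1$ and $\CS^0_2$ via \eqref{solform:4.4}, check the Mihlin conditions on the resulting symbols with Lemma \ref{lem:3.2}, and invoke Lemma \ref{lem:5.1}. No objection there.

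Your decomposition, however, takes a genuinely different and ultimately insufficient route. You split $\CS^0$ by a frequency cutoff, $\CT^0_1=\phi_\lambda\CS^0$ and $\CT^0_2=(1-\phi_\lambda)\CS^0$ with $\phi_\lambda(\xi)=|\xi|^2/(\lambda+|\xi|^2)$. As you yourself observe, this fails for \eqref{thm:5.1.5}--\eqref{thm:5.1.6}: the symbol of $(\lambda^{1/2}\nabla,\nabla^2)\CT^0_2$ is $\sim\frac{\lambda}{\lambda+|\xi|^2}\cdot\frac{(\lambda^{1/2}\xi,\xi\otimes\xi)}{\lambda\alpha^{-1}+|\xi|^2}$, which is only $O(1)$ uniformly in $(\lambda,\xi)$ and exhibits no $|\lambda|^{-\sigma/2}$ decay when both source and target are $B^s_{q,r}$. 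The suggestion of a sharper or iterated cutoff cannot save this: any symbol supported where $|\xi|\lesssim|\lambda|^{1/2}$ that is applied to $\CS^0$ produces, at the transition scale $|\xi|\sim|\lambda|^{1/2}$, a multiplier of size $O(1)$ with no room for $\lambda$-decay unless you simultaneously gain derivatives in $\xi$ --- which is exactly what \eqref{thm:5.1.5} forbids by keeping $B^s$ on both sides. The obstruction is not in frequency space; it is in the $\lambda$-dependence of the scalar coefficient.

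The paper's decomposition exploits precisely that: it Taylor-expands the coefficient $\frac{\eta_\lambda}{\alpha(\alpha+\eta_\lambda)}$ in powers of $\lambda^{-1}$, writing (cf.\ \eqref{zeta:4.1})
\begin{equation*}
\frac{\eta_\lambda}{\alpha(\alpha+\eta_\lambda)}
= \frac{\beta}{\alpha(\alpha+\beta)} + \lambda^{-1}\zeta(\lambda^{-1}),
\end{equation*}
and sets $\CT^0_1(\lambda)=\CS^0_1(\lambda)+\frac{\beta}{\alpha(\alpha+\beta)}\CS^0_2(\lambda)$ and $\CT^0_2(\lambda)=\lambda^{-1}\zeta(\lambda^{-1})\CS^0_2(\lambda)$. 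Then $\CT^0_2$ inherits an explicit $|\lambda|^{-1}$ (resp.\ $|\lambda|^{-2}$ for $\pd_\lambda\CT^0_2$) decay from the scalar prefactor, and \eqref{thm:5.1.5}--\eqref{thm:5.1.6} follow trivially since $|\lambda|^{-1}\le\gamma_1^{-(1-\sigma/2)}|\lambda|^{-\sigma/2}$ on $\Lambda_{\epsilon,\gamma_1}$. For $\CT^0_1$ (now a constant-coefficient combination of $\CS^0_1,\CS^0_2$), the paper proves \eqref{thm:5.1.3}--\eqref{thm:5.1.4} by the $\langle D\rangle^{\pm\sigma}$ trick of Lemma \ref{lem:5.1}: multiply and divide by $(1+|\xi|^2)^{\pm\sigma/2}$, and check that the corrected symbol, e.g.\ $\frac{\lambda^{\sigma/2}(\lambda^{1/2}i\xi,(i\xi)^2)}{(\lambda\alpha^{-1}+|\xi|^2)(1+|\xi|^2)^{\sigma/2}}$, is a Mihlin multiplier with constant uniform in $\lambda\in\Lambda_{\epsilon,\gamma_1}$ --- this already works for the bare $\CS^0_i$ and needs no frequency cutoff at all. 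So your cutoff is both unnecessary for the $\CT^0_1$ estimates and insufficient for the $\CT^0_2$ estimates; the needed idea is the $\lambda^{-1}$-expansion of the coefficient, which you gesture at in your last sentence but do not carry out.
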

\begin{proof} To prove the theorem, we divide $\CS^0(\lambda)$ as 
\begin{equation}\label{solform:4.4} \CS^0(\lambda)=
\frac{1}{\alpha}\CS^0_1(\lambda) 
+  \frac{\eta_\lambda}{\alpha(\alpha+\eta_\lambda)}\CS^0_2(\lambda),
\end{equation}
where we have defined $\CS^0_j(\lambda)$ ($j=1,2$)
by
\begin{align*}
\CS^0_1(\lambda)\bg &= \frac{1}{\alpha}\CF^{-1}_\xi\Bigl[\frac{\CF[\bg](\xi)}
{\lambda\alpha^{-1}+|\xi|^2}\Bigr], \\
\CS^0_2(\lambda)\bg & =
\CF^{-1}_\xi\Bigl[\frac{(i\xi\otimes i\xi)\CF[\bg](\xi))}{(\lambda\alpha^{-1}+|\xi|^2)
(p(\lambda) + |\xi|^2)}\Bigr].
\end{align*}
By Lemma \ref{lem:3.2} with $s=-1$ we have for any multi-index $\delta \in \BN_0^N$ 
there exists a constant $C_\delta$ such that 
\begin{align}
\Bigl|D^{\delta}_{\xi} \frac{(\lambda, \lambda^{\frac{1}{2}}i\xi, (i\xi)^2)}
{\lambda\alpha^{-1}+|\xi|^2}\Bigr|
\leq C_\delta |\xi|^{-|\delta|},\label{pr:3.0} \\
\Bigl|D^{\delta}_\xi \frac{(i\xi\otimes i\xi)(\lambda, \lambda^{\frac{1}{2}}i\xi, (i\xi)^2)}
{(p(\lambda)+|\xi|^2)^{-1}(\lambda\alpha^{-1}+|\xi|^2)}\Bigr|
\leq C_\delta |\xi|^{-|\delta|}.  \label{pr:3-1}
\end{align}
Here and in the following, we denote $i\xi = (i\xi_1, \ldots, i\xi_N)$ ($N$-vector), and 
$(i\xi)^2 = (i\xi_j i \xi_k \mid j, k=1, \ldots, N)$ ($N^2$-vector). In particular, $i\xi$ and $(i\xi)^2$
are corresponding to $\nabla$ and $\nabla^2$ through the Fourier transform. 
By Lemma \ref{lem:5.1}, we have
\begin{align}\label{pr:3.1}
\|(\lambda, \lambda^{\frac{1}{2}}\nabla, \nabla^2)\CS^0_1(\lambda)\bg\|_{B^s_{q,r}(\BR^N)}
&=\Bigl\|\CF^{-1}_\xi\Bigl[\frac{(\lambda, \lambda^{\frac{1}{2}}i\xi, (i\xi)^2)\CF[\bg](\xi)}
{\lambda\alpha^{-1}+|\xi|^2}\Bigr]\Bigr\|_{B^s_{q,r}(\BR^N)} \\
&\leq C\|\bg\|_{B^s_{q,r}(\BR^N)}, \\
\label{pr:3.2}
\|(\lambda, \lambda^{\frac{1}{2}}\nabla, \nabla^2)\CS^0_2(\lambda)\bg\|_{B^s_{q,r}(\BR^N)}
&= \Bigl\|\CF^{-1}_\xi\Bigl[\frac{(i\xi\otimes i\xi)(\lambda, \lambda^{\frac{1}{2}}i\xi, (i\xi)^2)
\CF[\bg](\xi)}
{(p(\lambda)+|\xi|^2)^{-1}(\lambda\alpha^{-1}+|\xi|^2)}\Bigr]
\Bigr\|_{B^s_{q,r}(\BR^N)} \\
& \leq C\|\bg\|_{B^s_{q,r}(\BR^N)}.
\end{align}
Note that
$$\Bigl|\frac{\eta_\lambda}{\alpha(\alpha+\eta_\lambda)}\Bigr| \leq C
$$
for any $\lambda \in \Sigma_\epsilon + \omega_1$ as follows from 
Lemma \ref{lem:3.1} $(3)$.  Combining these estimates gives \eqref{thm:5.1.1}.\par

Now, we estimate $\pd_\lambda \CS^0(\lambda)$. 
Noting that 
\begin{align*}
\pd_\lambda \Bigl(\frac{\eta_\lambda}{\alpha(\alpha+\eta_\lambda)}\Bigr)
&= -\frac{\gamma^2\lambda^{-2}}{(\alpha+\eta_\lambda)^2}, \quad 
\pd_\lambda p(\lambda) = -\frac{\alpha+\beta+2\gamma^2\lambda^{-1}}
{(\alpha+\eta_\lambda)^2},
\end{align*}
 we have
\begin{equation}\label{solform:4.7}
\pd_\lambda \CS^0(\lambda)\bg = \frac1\alpha\pd_\lambda \CS^0_1(\lambda)\bg 
-\frac{\gamma^2\lambda^{-2}}{(\alpha+\eta_\lambda)^2} \CS^0_2(\lambda)\bg
+ \frac{\eta_\lambda}{\alpha(\alpha+\eta_\lambda)}
\pd_\lambda \CS^0_2(\lambda)\bg.
\end{equation}
Notice that
\begin{align*}
\pd_\lambda \CS^0_1(\lambda)\bg 
& = -\frac{1}{\alpha^2}\CF^{-1}_\xi\Bigl[\frac{\CF[\bg](\xi)}{(\lambda\alpha^{-1}+|\xi|^2)^2}\Bigr], \\
\pd_\lambda \CS^0_2(\lambda)\bg
& = -\pd_\lambda p(\lambda)\CF^{-1}_\xi\Bigl[\frac{\CF[\bg](\xi)}{(p(\lambda)+|\xi|^2)^2}\Bigr]
-\frac{1}{\alpha^2}\CF^{-1}_\xi\Bigl[\frac{(i\xi\otimes i\xi)\CF[\bg](\xi)}{
(p(\lambda)+|\xi|^2)(\lambda\alpha^{-1}+|\xi|^2)^2}\Bigr].
\end{align*}
By Lemma \ref{lem:3.2} with $s=-2$, we have 
\begin{align*}
\Bigl|D^{\delta}_\xi \frac{\lambda(\lambda, \lambda^{\frac{1}{2}}i\xi, (i\xi)^2)}
{(\lambda\alpha^{-1}+|\xi|^2)^2}\Bigr|
\leq C_\delta |\xi|^{-|\delta|}, \\
\Bigl|D^{\delta}_\xi \frac{(i\xi\otimes i\xi)\lambda(\lambda, \lambda^{\frac{1}{2}}i\xi, (i\xi)^2)}
{((p(\lambda)+|\xi|^2)^{-1})^2(\lambda\alpha^{-1}+|\xi|^2)}\Bigr|
\leq C_\delta |\xi|^{-|\delta|}, \\
\Bigl|D^{\delta}_\xi \frac{(i\xi\otimes i\xi)\lambda(\lambda, \lambda^{\frac{1}{2}}i\xi, (i\xi)^2)}
{(p(\lambda)+|\xi|^2)(\lambda\alpha^{-1}+|\xi|^2)^2}\Bigr|
\leq C_\delta |\xi|^{-|\delta|}.
\end{align*}
Thus, by Lemma \ref{lem:5.1}, we have
\begin{equation}\label{pr:3.3}
\|\lambda(\lambda, \lambda^{\frac{1}{2}}\nabla, \nabla^2)\pd_\lambda
\CS^0_\ell(\lambda)\bg\|_{B^s_{q,1}(\BR^N)}
 \leq C\|\bg\|_{B^s_{q,r}(\BR^N)}\quad\text{for $\ell=1,2$}.
\end{equation}
 \par
Moreover, by Lemma \ref{lem:3.1} $(3)$, we have
\begin{align*}
\Bigl|\frac{\gamma^2\lambda^{-2}}{(\alpha+\eta_\lambda)^2}\Bigr| \leq C|\lambda|^{-2}
\end{align*}
for any $\lambda \in \Lambda_{\epsilon, \lambda_0}$. Thus, by \eqref{pr:3.3} we have
\begin{align*}
\|\lambda(\lambda, \lambda^{\frac{1}{2}}\nabla, \nabla^2)\frac{\gamma^2\lambda^{-2}}{(\alpha+\eta_\lambda)}
\CS^0_2(\lambda)\bg\|_{B^s_{q,r}(\BR^N)}
&\leq C|\lambda|^{-1}\|(\lambda, \lambda^{\frac{1}{2}}\nabla, \nabla^2)\CS^0_2(\lambda)\bg\|_{B^s_{q,r}(\BR^N)}\\
&\leq C|\lambda|^{-1}\|\bg\|_{B^s_{q,r}(\BR^N)}.
\end{align*}
Combining these estimates gives \eqref{thm:5.1.2}. \par

Now, we shall prove \eqref{thm:5.1.3}-\eqref{thm:5.1.6}. To this end, we write
$$\frac{\eta_\lambda}{\alpha(\alpha+\eta_\lambda)} =\frac{\eta_\lambda}{\alpha(\alpha+\beta)}
(1+\gamma^2(\alpha+\beta)^{-1}\lambda^{-1})^{-1}
= \frac{\beta+\gamma^2\lambda^{-1}}{\alpha(\alpha+\beta)}\sum_{\ell=0}^\infty
\Bigl(-\frac{\gamma^2}{(\alpha+\beta)\lambda}\Bigr)^\ell.
$$
Thus, choosing $\lambda_0> 0$ in such a way that 
$\frac{\gamma^2}{(\alpha+\beta)\lambda_0} < 1$, we have
\begin{equation}\label{zeta:4.1}
\frac{\eta_\lambda}{\alpha(\alpha+\eta_\lambda)}
= \frac{\beta}{\alpha(\alpha+\beta)} + \lambda^{-1}\zeta(\lambda^{-1}),
\end{equation}
where $\zeta(\tau)$ be a $C^\infty$ function defined for $|\tau| <\tau_0 
:= (\alpha+\beta)\gamma^{-2}$. Thus, we set 
\begin{equation}\label{solform:4.5}
\CT^0_1(\lambda) = \CS^0_1(\lambda) + \frac{\beta}{\alpha(\alpha+\beta)}\CS^0_2(\lambda),
\quad
\CT^0_2(\lambda) = \lambda^{-1}\zeta(\lambda^{-1})\CS^0_2(\lambda).
\end{equation}
Obviously, $\CS^0(\lambda) = \CT^0_1(\lambda) + \CT^0_2(\lambda)$. 
By \eqref{pr:3.2}, we have
$$\|(\lambda, \lambda^{\frac{1}{2}}\nabla, \nabla^2)\CT^0_2(\lambda)\bg\|_{B^s_{q,r}(\BR^N)}
\leq C|\lambda|^{-1}\|\bg\|_{B^s_{q,r}(\BR^N)} 
\leq C\lambda_0^{-1+\frac{\sigma}{2}}|\lambda|^{-\frac{\sigma}{2}}\|\bg\|_{B^s_{q,r}(\BR^N)}.
$$
\par
We write 
$$\pd_\lambda \CT^0_2(\lambda)\bg = \{\pd_\lambda (\lambda^{-1}\zeta(\lambda^{-1}))\}
\CS^0_2(\lambda)\bg + \lambda^{-1}\zeta(\lambda^{-1})\pd_\lambda \CS^0_2(\lambda)$$
and then, applying \eqref{pr:3.2} and \eqref{pr:3.3} gives 
$$\|(\lambda, \lambda^{\frac{1}{2}}\nabla, \nabla^2)\pd_\lambda \CT^0_2(\lambda)\bg
\|_{B^s_{q,r}(\BR^N)} \leq C|\lambda|^{-1}\|\bg\|_{B^s_{q,r}(\BR^N)}
\leq C\lambda_0^{-\frac{\sigma}{2}}|\lambda|^{-(1-\frac{\sigma}{2})}\|\bg\|_{B^s_{q,r}(\BR^N)}.
$$
Combining these two estimates gives \eqref{thm:5.1.5} and \eqref{thm:5.1.6}. \par 
Finally, we shall prove \eqref{thm:5.1.3} and \eqref{thm:5.1.4}.  Let $\bg \in C^\infty_0(\BR^N)$. 
Writing
\begin{align*}\lambda^{\frac{\sigma}{2}}(\lambda^{\frac{1}{2}}\nabla, \nabla^2)\CS^0_1(\lambda)\bg
&= \CF^{-1}_{\xi}\Bigl[\frac{\lambda^{\frac{\sigma}{2}}(\lambda^{\frac{1}{2}}(i\xi), (i\xi)^2)}
{(\lambda\alpha^{-1}+|\xi|^2)(1+|\xi|^2)^{\frac{\sigma}{2}}}
(1+|\xi|^2)^{\frac{\sigma}{2}}\CF[\bg](\xi)\Bigr], \\
\lambda^{\frac{\sigma}{2}}(\lambda^{\frac{1}{2}}\nabla, \nabla^2)\CS^0_1(\lambda)\bg
&= \CF^{-1}_{\xi}\Bigl[\frac{\lambda^{\frac{\sigma}{2}}(\lambda^{\frac{1}{2}}(i\xi), (i\xi)^2)(i\xi\otimes i\xi)}
{(p(\lambda) + |\xi|^2)(\lambda\alpha^{-1}+|\xi|^2)(1+|\xi|^2)^{\frac{\sigma}{2}}}
(1+|\xi|^2)^{\frac{\sigma}{2}}\CF[\bg](\xi)\Bigr]
\end{align*}
and observing that 
\begin{align*}\Bigl|D^{\delta}_\xi\frac{\lambda^{\frac{\sigma}{2}}(\lambda^{\frac{1}{2}}(i\xi), (i\xi)^2)}
{(\lambda\alpha^{-1}+|\xi|^2)(1+|\xi|^2)^{\frac{\sigma}{2}}}\Bigr|
\leq C_\delta|\xi|^{-|\delta|}, \\
\Bigl|D^{\delta}_\xi\frac{\lambda^{\frac{\sigma}{2}}(\lambda^{\frac{1}{2}}(i\xi), (i\xi)^2)(i\xi\otimes i\xi)}
{(p(\lambda) + |\xi|^2)(\lambda\alpha^{-1}+|\xi|^2)(1+|\xi|^2)^{\frac{\sigma}{2}}}\Bigr|
\leq C_\delta|\xi|^{-|\delta|}
\end{align*}
for any multi-index $\delta \in \BN^N_0$, by Lemma \ref{lem:5.1} we have
\begin{align*}
\|\lambda^{\frac{\sigma}{2}}(\lambda^{\frac{1}{2}}\nabla, \nabla^2)
\CS^0_1(\lambda)\bg\|_{B^s_{q,r}(\BR^N)}
&\leq C\|\CF^{-1}_\xi[(1+|\xi|^2)^{\frac{\sigma}{2}}\CF[\bg]]\|_{B^s_{q,r}(\BR^N)}
\leq C\|\bg\|_{B^{s+\sigma}_{q,r}(\BR^N)}, \\
\|\lambda^{\frac{\sigma}{2}}(\lambda^{\frac{1}{2}}\nabla, \nabla^2)
\CS^0_2(\lambda)\bg\|_{B^s_{q,r}(\BR^N)}
&\leq C\|\CF^{-1}_\xi[(1+|\xi|^2)^{\frac{\sigma}{2}}\CF[\bg]]\|_{B^s_{q,r}(\BR^N)}
\leq C\|\bg\|_{B^{s+\sigma}_{q,r}(\BR^N)}.
\end{align*}
Combining these two estimates gives \eqref{thm:5.1.3}. \par 
Write 
$$\pd_\lambda \CT^0_1(\lambda)\bg = \pd_\lambda \CS^0_1(\lambda)\bg 
+ \frac{\beta}{\alpha(\alpha+\beta)}\pd_\lambda \CS^0_2(\lambda)\bg.
$$
Writing
\begin{align*}
&\lambda^{1-\frac{\sigma}{2}}(\lambda, \lambda^{\frac{1}{2}}\nabla, \nabla^2)
\pd_\lambda \CS^0_1(\lambda)\bg \\
&\quad = -\frac{2}{\alpha^2}\CF^{-1}_\xi\Bigl[\frac{\lambda^{1-\frac{\sigma}{2}}
(\lambda, \lambda^{\frac{1}{2}}(i\xi), (i\xi)^2)
(1+|\xi|^2)^{\frac{\sigma}{2}}}{(\lambda\alpha^{-1}+|\xi|^2)^2}
(1+|\xi|^2)^{-\frac{\sigma}{2}}\CF[\bg](\xi)\Bigr],
\end{align*}
and observing that
$$\Bigl|D^{\delta}_\xi\frac{\lambda^{1-\frac{\sigma}{2}}(\lambda, \lambda^{\frac{1}{2}}i\xi, (i\xi)^2)
(1+|\xi|^2)^{\frac{\sigma}{2}}}{(\lambda\alpha^{-1}+|\xi|^2)^2}\Bigr| \leq C|\xi|^{-|\delta|}
$$
for any multi-index $\delta \in \BN_0^N$, by Lemma \ref{lem:5.1} we have
\begin{align*}
\|\lambda^{1-\frac{\sigma}{2}}(\lambda, \lambda^{\frac{1}{2}}\nabla, \nabla^2)\pd_\lambda 
\CS^0_1(\lambda)\bg\|_{B^s_{q,r}(\BR^N)}
&\leq C\|\CF^{-1}_\xi[(1+|\xi|^2)^{-\frac{\sigma}{2}}\CF[\bg](\xi)]\|_{B^s_{q,r}(\BR^N)}\\
&\leq C\|\bg\|_{B^{s-\sigma}_{q,r}(\BR^N)}.
\end{align*}
Writing
\begin{align*}
&\lambda^{1-\frac{\sigma}{2}}(\lambda, \lambda^{\frac{1}{2}}\nabla, \nabla^2)
 \CS^0_2(\lambda)\bg \\
&\quad = -\pd_\lambda p(\lambda)\CF^{-1}_\xi\Bigl[\frac{\lambda^{1-\frac{\sigma}{2}}
(\lambda, \lambda^{\frac{1}{2}}(i\xi), (i\xi)^2)(i\xi\otimes i\xi)
(1+|\xi|^2)^{\frac{\sigma}{2}}}{(p(\lambda)+|\xi|^2)^2(\lambda\alpha^{-1}+|\xi|^2)}
(1+|\xi|^2)^{-\frac{\sigma}{2}}\CF[\bg](\xi)\Bigr] \\
&\quad  - \frac{1}{\alpha}\CF^{-1}_\xi\Bigl[\frac{\lambda^{1-\frac{\sigma}{2}}
(\lambda, \lambda^{\frac{1}{2}}(i\xi), (i\xi)^2)(i\xi\otimes i\xi)
(1+|\xi|^2)^{\frac{\sigma}{2}}}{(p(\lambda)+|\xi|^2)(\lambda\alpha^{-1}+|\xi|^2)^2}
(1+|\xi|^2)^{-\frac{\sigma}{2}}\CF[\bg](\xi)\Bigr],
\end{align*}
and observing that
$$\Bigl|D^{\delta}_\xi\frac{\lambda^{1-\frac{\sigma}{2}}(\lambda, \lambda^{\frac{1}{2}}i\xi, (i\xi)^2)
(i\xi\otimes i\xi)(1+|\xi|^2)^{\frac{\sigma}{2}}}
{(p(\lambda) + |\xi|^2)^{2-\ell}(\lambda\alpha^{-1}+|\xi|^2)^{1+\ell}}\Bigr| \leq C|\xi|^{-|\delta|}
\quad \text{for $\ell=0,1$}
$$ 
for  any multi-index $\delta \in \BN_0^N$, by Lemma \ref{lem:5.1} we have
\begin{align*}
\|\lambda^{1-\frac{\sigma}{2}}(\lambda, \lambda^{\frac{1}{2}}\nabla, \nabla^2)\pd_\lambda 
\CS^0_2(\lambda)\bg\|_{B^s_{q,r}(\BR^N)}
&\leq C\|\CF^{-1}_\xi[(1+|\xi|^2)^{-\frac{\sigma}{2}}\CF[\bg](\xi)]\|_{B^s_{q,r}(\BR^N)}\\
&\leq C\|\bg\|_{B^{s-\sigma}_{q,r}(\BR^N)}.
\end{align*}
Combining these two estimates gives \eqref{thm:5.1.4},  which completes the proof of 
Theorem \ref{thm:5.1}. 
\end{proof}

\section{Estimates of solution formulas of complex Lam\'e equations}
\label{sec:6}

Let $\CS^b(\lambda)=(\CS^b_1(\lambda), \ldots, \CS^b_N(\lambda))$ be the solution operator corresponding to
equations \eqref{cl:2} defined by 
\begin{equation}\label{defsolop.6.1} 
\CS^b_J(\lambda)\bg = w_J
\end{equation}
for $J=1, \ldots, N$, where the partial Fourier transform $\CF'[w_J]$ of $w_J$ are
defined by \eqref{sol:form5}. In this section, we shall estimate $\CS^b_J(\lambda)$. 
Namely, we shall prove the following theorem. 
\begin{thm}\label{thm:6.1} Let $1 < q <\infty$, $1 \leq r \leq \infty$, 
$-1+1/q < s < 1/q$, $\epsilon \in (0, \pi/2)$ and 
$\lambda_0 >0$. 
Then, for any $\lambda \in \Lambda_{\epsilon, \lambda_0}$ and $\bg \in B^s_{q,r}(\HS)^N$,
 there hold
\begin{align}\label{thm:6.1.1-1}
\|(\lambda, \lambda^{\frac{1}{2}}\nabla, \nabla^2)\CS^b(\lambda)\bg\|_{B^s_{q,r}(\HS)}
&\leq C\|\bg\|_{B^s_{q,r}(\HS)}, \\
\label{thm:6.1.1-2}
\|(\lambda, \lambda^{\frac{1}{2}}\nabla, \nabla^2)\pd_\lambda \CS^b(\lambda)\bg\|_{B^s_{q,r}(\HS)}
&\leq C|\lambda|^{-1}\|\bg\|_{B^s_{q,r}(\HS)}.
\end{align}
\par
Moreover, let $\sigma>0$ be a small number such that $-1+1/q < s-\sigma < s < s+\sigma < 1/q$.
Then, there exist a large number $\lambda_0>0$ and 
two operators $\CT^b_1(\lambda)$ and $\CT^b_2(\lambda)$ which are holomorphic on
$\Lambda_{\epsilon, \lambda_0}$ such that $\CS^b(\lambda) = \CT^b_1(\lambda)
+\CT^b_2(\lambda)$ and 
 for any $\bg \in C^\infty_0(\HS)^N$, there hold
\begin{align}\label{thm:6.1.3}
\|(\lambda, \lambda^{\frac{1}{2}}\bar\nabla, \bar\nabla^2)\CT^b_1(\lambda)\bg\|_{B^s_{q,r}(\HS)}
&\leq C|\lambda|^{-\frac{\sigma}{2}}\|\bg\|_{B^{s+\sigma}_{q,r}(\HS)}, \\
\label{thm:6.1.4}
\|(\lambda, \lambda^{\frac{1}{2}}\bar\nabla, \nabla^2)\pd_\lambda\CT^b_1(\lambda)\bg\|_{B^s_{q,r}(\HS)}
&\leq C|\lambda|^{-(1-\frac{\sigma}{2})}\|\bg\|_{B^{s-\sigma}_{q,r}(\HS)}, \\
\label{thm:6.1.5}
\|(\lambda, \lambda^{\frac{1}{2}}\bar\nabla, \bar\nabla^2)\CT^b_2(\lambda)\bg\|_{B^s_{q,r}(\HS)}
&\leq C|\lambda|^{-1}\|\bg\|_{B^{s}_{q,r}(\HS)}, \\
\label{thm:6.1.6}
\|(\lambda, \lambda^{\frac{1}{2}}\bar\nabla, \bar\nabla^2)\pd_\lambda\CT^b_2(\lambda)\bg\|_{B^s_{q,r}(\HS)}
&\leq C|\lambda|^{-2}\|\bg\|_{B^{s}_{q,r}(\HS)}.
\end{align}
\end{thm}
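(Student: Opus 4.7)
The plan is to expand $\bw = \CS^b(\lambda)\bg$ using the explicit formula \eqref{sol:form5} and identify each of its seven summands with an integral operator of the form handled by Proposition \ref{prop:2} (for $\CS^b$) or Proposition \ref{prop:3} (for $\pd_\lambda \CS^b$), after peeling off a scalar amplitude $m_0(\lambda, \xi')$ belonging to $\BM_0(\Lambda_{\epsilon, \lambda_0})$. The common amplitudes are products of Mihlin--H\"ormander-type factors --- $\eta_\lambda/K$, $(\beta\lambda+\gamma^2)/(\alpha((\alpha+\beta)\lambda+\gamma^2))$, $|\xi'|/(A+B)$, $i\xi_j/|\xi'|$, $|\xi'|/A$, and similar --- each confirmed to lie in the $\BM_0$ class via Lemmas \ref{lem:3.1}--\ref{lem:3.3}. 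The residual $x_N$-kernel of each summand is exactly one of $B^j\CM(x_N)\CM(y_N)$, $B^j\CM(x_N)e^{-My_N}$, $B^j e^{-Mx_N}\CM(y_N)$ or $Be^{-Jx_N}e^{-Qy_N}$ ($j\in\{1,2,3\}$), which is precisely the family of kernels covered by Propositions \ref{prop:2} and \ref{prop:3}.

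To accommodate the operators $\lambda$, $\lambda^{1/2}\nabla$, $\nabla^2$ applied to $\CS^b(\lambda)\bg$, one trades a factor of $\lambda$ against $\alpha B^2 - \alpha|\xi'|^2$ (redistributing the $B$-powers between kernel and amplitude), replaces a tangential $\pd_j$ by $i\xi_j$ which is absorbed into $m_0$ using $i\xi_j/|\xi'| \in \BM_0$, and evaluates $\pd_N$ via $\pd_N e^{-Mx_N} = -Me^{-Mx_N}$ together with the integral representation of $\pd_N \CM(x_N)$; in each case the output again fits the template of Proposition \ref{prop:2} with a modified $\BM_0$ amplitude. The $\pd_\lambda$-bounds combine the same manipulations with the derivative identities \eqref{est.3.6.1}--\eqref{est.3.6.2} and invoke Proposition \ref{prop:3} in place of Proposition \ref{prop:2}. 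Since each operator thus produced commutes with tangential translations and its amplitude satisfies the Mihlin--H\"ormander condition uniformly in $\lambda$, a tangential Littlewood--Paley dissection combined with Lemma \ref{lem:5.1} (applied in $x'$ fiberwise and integrated in $x_N$) lifts the $L_q(\HS)$ bounds of Propositions \ref{prop:2}--\ref{prop:3} to $B^s_{q,r}(\HS)$ bounds throughout $-1+1/q<s<1/q$; here Proposition \ref{prop:2.1} ensures no trace obstruction arises, while the exponential kernel decay of order $(|\lambda|^{1/2}+|\xi'|)x_N$ from Lemma \ref{lem:3.4} supplies the normal-direction regularity for free, yielding \eqref{thm:6.1.1-1} and \eqref{thm:6.1.1-2}.

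For the refined splitting $\CS^b = \CT^b_1 + \CT^b_2$, I would mirror the whole-space decomposition \eqref{solform:4.5} by substituting $\eta_\lambda/(\alpha(\alpha+\eta_\lambda)) = \beta/(\alpha(\alpha+\beta)) + \lambda^{-1}\zeta(\lambda^{-1})$ from \eqref{zeta:4.1}, together with the analogous Neumann-series expansion for $\eta_\lambda/K$, into every coefficient of \eqref{sol:form5}. Collecting the leading constants into $\CT^b_1$ and all $O(\lambda^{-1})$ corrections into $\CT^b_2$ provides an automatic $|\lambda|^{-1}$ gain for $\CT^b_2$, from which \eqref{thm:6.1.5}--\eqref{thm:6.1.6} follow at once via the bounds of the preceding paragraph. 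For $\CT^b_1$, the $\sigma$-regularization is obtained by inserting a factor $(|\lambda|^{1/2}+|\xi'|)^{-\sigma} \in \BM_{-\sigma}$ --- which costs $|\lambda|^{-\sigma/2}$ --- and pairing it against the Bessel factor $(1+|\xi'|^2)^{\sigma/2}\CF'[\bg]$, exactly as in the derivation of \eqref{thm:5.1.3}--\eqref{thm:5.1.4}, to convert the $L_q$-boundedness into the regularity-shifted Besov estimates with $\|\bg\|_{B^{s\pm\sigma}_{q,r}(\HS)}$ on the right.

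The main obstacle will be the rigorous passage from $L_q(\HS)$ to $B^s_{q,r}(\HS)$ for negative $s \in (-1+1/q, 0)$: the operators in \eqref{sol:form5} are not Fourier multipliers on $\BR^N$, and $x_N$ is not translation-invariant, so Lemma \ref{lem:5.1} cannot be invoked directly in the normal direction. The resolution is to work with tangential Fourier representations throughout and exploit that every kernel is pointwise dominated by $Ce^{-c(|\lambda|^{1/2}+|\xi'|)(x_N+y_N)}$ (Lemma \ref{lem:3.4}), which acts as a built-in cut-off of normal frequencies at scale $|\lambda|^{1/2}+|\xi'|$; the bookkeeping between tangential dyadic blocks and these normal cut-offs --- especially for negative $s$ where $B^s_{q,r}(\HS)$ is naturally defined by duality with $B^{-s}_{q',r',0}(\HS)$ --- is the delicate technical point of the proof.
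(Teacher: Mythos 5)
Your outline correctly identifies the building blocks (formula \eqref{sol:form5}, Propositions \ref{prop:2}--\ref{prop:3}, a half-space analogue of the whole-space splitting \eqref{solform:4.5}), but it misses the mechanism the paper actually uses to pass from $L_q$-bounds to Besov bounds, and the substitute you propose does not work as stated. You suggest inserting a Bessel factor $(1+|\xi'|^2)^{\sigma/2}$ against $(|\lambda|^{1/2}+|\xi'|)^{-\sigma}$, ``exactly as in the derivation of \eqref{thm:5.1.3}--\eqref{thm:5.1.4}.'' That trick is a whole-space argument: on $\BR^N$ one can freely apply $\langle D\rangle^{\pm\sigma}$ as a Fourier multiplier and invoke Lemma \ref{lem:5.1}. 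On $\HS$ the operator $\CS^b(\lambda)$ is given only by tangential Fourier integrals with $x_N$-dependent kernels, and $\langle D'\rangle^{\sigma}$ does not commute with restriction to the half space nor capture normal regularity. You correctly flag this as ``the delicate technical point,'' but you never supply the missing device; your proposed tangential Littlewood--Paley dissection with a ``built-in normal frequency cut-off'' remains a heuristic sketch, and it is precisely here that a genuine proof is needed.

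What the paper does instead is entirely different and is the crux of the section: it proves the quartet of $L_q$, $H^1_q$, $L_{q'}$, $H^1_{q'}$ estimates with the $|\lambda|^{-1/2}$ gain (Assumptions \ref{assump:4.1}--\ref{assump:4.2}), then invokes Theorem \ref{spectralthm:1} --- a pure interpolation/duality machine combining complex interpolation (to reach fractional $H^\mu_q$ for $0<\mu<1/q$), real interpolation (to reach $B^s_{q,r}$ and convert the $|\lambda|^{-1/2}$ gain into the $|\lambda|^{-\sigma/2}$ gain), and duality (to cover $-1+1/q<s<0$ via $H^{-\mu}_q = (H^{\mu}_{q',0})'$). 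This is the ingredient that actually produces both \eqref{thm:6.1.1-1}--\eqref{thm:6.1.1-2} and the shifted bounds \eqref{thm:6.1.3}--\eqref{thm:6.1.6}, and your proposal contains no analogue of it. Moreover, to get the $H^1_q$ input estimates the paper integrates by parts in $y_N$ via the identities \eqref{diffm:3}, \eqref{diffm.2} so that a full normal derivative lands on $\bg$; this step (crucial because $\pd_N$ does not commute with the $x_N$-kernel) is absent from your plan. Finally, the construction of $\CT^b_1$, $\CT^b_2$ on the half space requires the extra decomposition $\eta_\lambda/K = \beta/K_1 + \lambda^{-1}K_2(\lambda)$ with the accompanying estimates of Lemma \ref{lem:6.1}, Lemma \ref{lem:6.2}, and Corollary \ref{lem:cor:6.2}; you gesture at a ``Neumann-series expansion for $\eta_\lambda/K$'' but do not carry it out, and the verification that the correction terms $K_2$, $K_3$ lie in the right multiplier class with $\pd_\lambda$-gains is nontrivial and must be supplied.
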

To prove Theorem \ref{thm:6.1}, the argument based on interpolation theory due to 
Shibata \cite{SS23} and Shibata and Watanabe \cite{SK24} play an important role. We  quote this in the following subsection.
\subsection{Spectral analysis based on interpolation theory}\label{subsec.6.1.2}
Below, we assume that $1 < q < \infty$ and $-1+1/q < s < 1/q$, $\epsilon \in (0, \pi/2)$, 
and  $\omega>0$. 
Let $T(\lambda)$ be an operator valued holomorphic function acting on $f \in C^\infty_0(\HS)$ defined for 
 $\lambda \in \Lambda_{\epsilon, \omega}$.  In this subsection, we shall show our strategy how to obtain 
the estimates of $T(\lambda)$ as an operator from one Besov space into another Besov space.  
The following argument is due to Shibata \cite{SS23}, see also Shibata and Watanabe \cite{SK24}. Since this gives one of main ideas, 
for the convenience of readers we record  arguments there. 
\par

We consider  two operator valued holomorphic functions $T_i(\lambda)$ defined on
$\Lambda_{\epsilon, \lambda_0}$ acting on $f \in C^\infty_0(\HS)$. We denote the dual operator
of $T_i(\lambda)$ by $T_i(\lambda)^*$, namely, $T_i(\lambda)^*$ satisfies the equality:
$$(T_i(\lambda)f, \varphi)= (f, T_i(\lambda)^*\varphi) \quad (i=1,2)$$
for any $f$, $\varphi \in C^\infty_0(\HS)$.  Here, $(f, g) = \int_{\HS}f(x)g(x)\,dx$.  Namely, we do not 
take the complex conjugate. \par 
We consider the following two cases. 
\begin{assumption}\label{assump:4.1}
 Let $1 < q < \infty$, $\epsilon \in (0, \pi/2)$,
 and $\omega>0$.  We  assume that the starting evaluations hold as follows{\color{red}{.}} \par
For any $f \in C^\infty_0(\HS)$ and $\lambda \in \Lambda_{\epsilon, \omega}$, 
 the following estimates hold:
\begin{align}
\label{f1} \|T_1(\lambda)f\|_{H^1_q(\HS)} &\leq C\|f\|_{H^1_q(\HS)},  \\
\label{f2} \|T_1(\lambda)f\|_{L_q(\HS)} &\leq C\|f\|_{L_q(\HS)}, \\
\label{f3} \|T_1(\lambda)f\|_{L_q(\HS)} & \leq C|\lambda|^{-\frac{1}{2}}\|f\|_{H^1_q(\HS)},\\
\label{d1*} \|T_1(\lambda)^*f\|_{L_{q'}(\HS)} &\leq C\|f\|_{L_{q'}(\HS)},  \\
\label{d2*} \|T_1(\lambda)^*f\|_{H^1_{q'}(\HS)} &\leq C\|f\|_{H^1_{q'}(\HS)}, \\
\label{d3*} \|T_1(\lambda)^*f\|_{L_{q'}(\HS)} & \leq C|\lambda|^{-\frac{1}{2}}\|f\|_{H^1_{q'}(\HS)}.
\end{align}
\end{assumption}
\begin{assumption}\label{assump:4.2}
 Let $1 < q < \infty$, $\epsilon \in (0, \pi/2)$,
 and $\omega>0$.  We  assume that the starting evaluations hold as follows{\color{red}{.}} \par
For any $f \in C^\infty_0(\HS)$ and $\lambda \in \Lambda_{\epsilon, \omega}$, 
 the following estimates hold:
\begin{align}
\label{d1} \|T_2(\lambda)f\|_{H^1_q(\HS)} &\leq C|\lambda|^{-1}\|f\|_{H^1_q(\HS)},  \\
\label{d2} \|T_2(\lambda)f\|_{L_q(\HS)} &\leq C|\lambda|^{-1}\|f\|_{L_q(\HS)}, \\
\label{d3} \|T_2(\lambda)f\|_{H^1_q(\HS)} & \leq C|\lambda|^{-\frac{1}{2}}\|f\|_{L_q(\HS)}, \\
\label{e1} \|T_2(\lambda)^*f\|_{L_{q'}(\HS)} &\leq C|\lambda|^{-1}\|f\|_{L_{q'}(\HS)},  \\
\label{e2} \|T_2(\lambda)^*f\|_{H^1_{q'}(\HS)} &\leq C|\lambda|^{-1}\|f\|_{H^1_{q'}(\HS)}, \\
\label{e3} \|T_2(\lambda)^*f\|_{H^1_{q'}(\HS)} & \leq C|\lambda|^{-\frac{1}{2}}\|f\|_{L_{q'}(\HS)}.
\end{align}
\end{assumption}
Then, we have the following theorems.
\begin{thm}\label{spectralthm:1}
Let $1 < q < \infty$, $1 \leq r \leq \infty$,  $-1+1/q < s < 1/q$,  
 $\epsilon \in (0, \pi/2)$, and $\omega>0$.
Let $\sigma>0$ be a  small number such that $-1+1/q < s-\sigma < s < s+\sigma < 1/q$.
 Assume that Assumptions \ref{assump:4.1} and \ref{assump:4.2}
hold.  Then, for any $f \in C^\infty_0(\HS)$ and  $\lambda \in \Lambda_{\epsilon, \omega}$, there hold
\begin{align*}
\|T_1(\lambda)f\|_{B^s_{q,r}(\HS)} &\leq C\|f\|_{B^s_{q,r}(\HS)}, \\
\|T_1(\lambda)f\|_{B^s_{q,r}(\HS)} &\leq C|\lambda|^{-\frac{\sigma}{2}}\|f\|_{B^{s+\sigma}_{q,r}(\HS)}, \\
\|T_2(\lambda)f\|_{B^s_{q,r}(\HS)} & \leq C|\lambda|^{-1}\|f\|_{B^s_{q,r}(\HS)}, \\
\|T_2(\lambda)f\|_{B^s_{q,r}(\HS)} &\leq C|\lambda|^{-(1-\frac{\sigma}{2})}
\|f\|_{B^{s-\sigma}_{q,r}(\HS)}
\end{align*}
for some constant $C$. 
\end{thm}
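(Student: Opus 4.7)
The plan is to apply real interpolation to the three endpoint estimates available for each of $T_1(\lambda)$, $T_1(\lambda)^*$, $T_2(\lambda)$, and $T_2(\lambda)^*$, and then to combine this with the half-space duality (Proposition \ref{prop-duality}) and the identification $B^s_{q,r,0}(\HS)=B^s_{q,r}(\HS)$ from Proposition \ref{prop:2.1} to pass to the Besov scale and to negative regularities.

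For $T_1(\lambda)$ I would first observe that the three estimates \eqref{f1}--\eqref{f3} populate the corners $(s_0,s_1)=(1,1),(0,0),(0,1)$ of the triangle $\mathcal{T}=\{(s_0,s_1):0\le s_0\le s_1\le 1\}$, with the corner $(0,1)$ carrying the gain $|\lambda|^{-1/2}$. Iterated real interpolation (first with target $L_q$ fixed and varying the source, then with source $H^1_q$ fixed and varying the target, and finally interpolating the two resulting intermediate estimates against each other) yields
$$\|T_1(\lambda)f\|_{H^{s_0}_q(\HS)}\le C|\lambda|^{-(s_1-s_0)/2}\|f\|_{H^{s_1}_q(\HS)}\qquad\text{for every }(s_0,s_1)\in\mathcal{T}.$$
Invoking Proposition \ref{prop-duality}(3) to realise $B^s_{q,r}(\HS)$ as a real interpolation space between two Bessel potential spaces, a further real interpolation (separately in source and in target) upgrades this to the same bound with $H^{s_i}_q$ replaced by $B^{s_i}_{q,r}$; specialising $(s_0,s_1)=(s,s)$ and $(s,s+\sigma)$ gives the first two conclusions in the subrange $0\le s\le s+\sigma\le 1$.

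For the remaining range $-1+1/q<s<0$, I would repeat the argument above for the adjoint bounds \eqref{d1*}--\eqref{d3*}, obtaining the analogous Besov estimates for $T_1(\lambda)^*$ on $B^{t_i}_{q',r'}(\HS)$ with $(t_0,t_1)\in\mathcal{T}$. Dualising via Proposition \ref{prop-duality}(1)--(2) and then dropping the subscript $0$ by Proposition \ref{prop:2.1} (which is applicable precisely in $-1+1/q<s<1/q$) yields
$$\|T_1(\lambda)f\|_{B^{s_0}_{q,r}(\HS)}\le C|\lambda|^{-(s_1-s_0)/2}\|f\|_{B^{s_1}_{q,r}(\HS)}, \qquad -1+1/q<s_0\le s_1\le 0.$$
One final interpolation glues the positive and negative ranges at the origin, producing the two stated bounds for $T_1(\lambda)$ over the full admissible range. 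The estimates for $T_2(\lambda)$ follow from the same scheme applied to \eqref{d1}--\eqref{d3} and \eqref{e1}--\eqref{e3}; here the triangle is flipped (the operator now \emph{gains} regularity), and the two-parameter family that emerges reads
$$\|T_2(\lambda)f\|_{B^{s_0}_{q,r}(\HS)}\le C|\lambda|^{-1+(s_0-s_1)/2}\|f\|_{B^{s_1}_{q,r}(\HS)},\qquad 0\le s_1\le s_0\le 1,$$
which upon the specialisations $(s_0,s_1)=(s,s)$ and $(s,s-\sigma)$ (together with the dual-side extension to $s<0$) delivers the required prefactors $|\lambda|^{-1}$ and $|\lambda|^{-(1-\sigma/2)}$.

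The hard part will be the bookkeeping of the half-space duality rather than the interpolation itself: one must alternate between parts (1) and (2) of Proposition \ref{prop-duality} according to which side of the pairing carries the subscript $0$, and the constraint $s\le 1/p$ in part (2) is exactly what forces the admissible regularity to lie in $(-1+1/q,1/q)$ after invoking Proposition \ref{prop:2.1}. This is also why the hypothesis on $s$ in the statement takes this particular form.
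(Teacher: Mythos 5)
Your overall structure matches the paper's: obtain a two-parameter family of estimates for $T_1(\lambda)$ and $T_2(\lambda)$ on the Bessel potential scale for nonnegative regularity, dualize the adjoint estimates to cover negative regularity, pass to Besov spaces by one final real interpolation, and glue the two regimes at $s=0$. The observations about the two-parameter triangle families (with the $|\lambda|$-decay exponent linear in $(s_0,s_1)$), and about where Propositions~\ref{prop-duality} and~\ref{prop:2.1} intervene and why they force $-1+1/q<s<1/q$, are all correct and are exactly the mechanisms the paper uses.

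There is one genuine technical slip, though. You claim that ``iterated real interpolation'' of \eqref{f1}--\eqref{f3} yields the family
$\|T_1(\lambda)f\|_{H^{s_0}_q}\le C|\lambda|^{-(s_1-s_0)/2}\|f\|_{H^{s_1}_q}$ on the triangle $0\le s_0\le s_1\le 1$.
Real interpolation between $L_q(\HS)=H^0_q(\HS)$ and $H^1_q(\HS)$ lands you in $B^\theta_{q,r}(\HS)$, not $H^\theta_q(\HS)$ (this is exactly the content of Proposition~\ref{prop-duality}(3)), so a real-interpolation step cannot produce intermediate $H^{s_0}_q\to H^{s_1}_q$ bounds that you would then iterate against. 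The paper avoids this by using the \emph{complex} interpolation functor $(\cdot,\cdot)_{[\theta]}$ to stay on the Bessel potential scale through all the intermediate steps (see the phrase ``are interpolated with complex interpolation method'' in the proofs of Lemmas~\ref{lem.s.1}--\ref{lem.s.4}), and only a single real interpolation at the very end to land in $B^s_{q,r}(\HS)$. With that substitution, your derivation of the two-parameter family becomes rigorous and the rest of the argument (the dualization for $s<0$, the gluing at $s=0$, and the specializations $(s_0,s_1)=(s,s)$, $(s,s\pm\sigma)$) goes through as you describe.

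One further point worth recording: the paper does not assert the full two-parameter family as an intermediate lemma; it instead picks auxiliary indices $\mu,\mu'$ tuned to the target $s$ and $\sigma$ and interpolates directly along two specific segments. Your packaging of the result as a family over the whole triangle is tidier and makes the final specializations mechanical, so it is arguably a more transparent presentation of the same argument---but it does not shorten the proof, since the same interpolation steps must be carried out in either formulation.
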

 We divide the proof into the case $0 < s < 1/q$ and the case 
$-1+1/q < s < 0$. The $s=0$ case follows from the real interpolation between 
$B^{-\nu_1}_{q,r}$ and $B^{\nu_2}_{q,r}$ for some small $\nu_i>0$ ($i=1,2$). 
\begin{lem}\label{lem.s.1} Assume that Assumption \ref{assump:4.1} above holds. 
Let $q$, $\epsilon$,  and $\omega$ be the same as in Assumption \ref{assump:4.1}. Let $1 \leq r \leq \infty$. 
Let $0 < s < 1/q$ and let $\sigma>0$ be  numbers such that  $0 <  s+\sigma <1/q$. Then, for any 
$ \lambda \in \Lambda_{\epsilon, \omega}$ and 
$f \in C^\infty_0(\HS)$, there hold
\begin{align}
\|T_1(\lambda)f\|_{B^s_{q,r}(\HS)} & \leq C\|f\|_{B^s_{q,r}(\HS)}, 
\label{A.1}\\
\|T_1(\lambda)f\|_{B^s_{q,r}(\HS)} & \leq C|\lambda|^{-\frac{\sigma}{2}}
\|f\|_{B^{s+\sigma}_{q,r}(\HS)}. \label{A.2}
\end{align}
\end{lem}
\begin{proof} Below, we always assume that $f \in C^\infty_0(\HS)$ and  
$\lambda \in \Lambda_{\epsilon, \omega}$. 
Choose $\mu$ and  $\mu'$ in such a way that
$0 < s < s+\sigma < \mu' <\mu < 1/q$.  Estimates \eqref{f1}, \eqref{f2}, and \eqref{f3} are interpolated with
complex interpolation method to obtain  
\begin{align}
\label{1}
\|T_1(\lambda)f\|_{L_q(\HS)} & \leq C\|f\|_{L_q(\HS)}, \\
\label{2}
\|T_1(\lambda)f\|_{H^\mu_q(\HS)} & \leq C\|f\|_{H^\mu_q(\HS)},
\\
\label{2'}
\|T_1(\lambda)f\|_{H^{\mu'}_q(\HS)} 
& \leq C\|f\|_{H^{\mu'}_q(\HS)}, \\
\label{3}
\|T_1(\lambda)f\|_{L_q(\HS)} & \leq C|\lambda|^{-\frac{\mu}{2}}\|f\|_{H^\mu_q(\HS)}.
\end{align}
By interpolating \eqref{1} and  \eqref{2} with real interpolation method, 
\begin{equation}\label{4}
\|T_1(\lambda)f\|_{B^s_{q,r}(\HS)} \leq C\|f\|_{B^s_{q,r}(\HS)}.
\end{equation}
Choosing $\theta=s/\mu'$ and  setting $A = \mu(1-s/\mu')$, by \eqref{2'} and \eqref{3} with
real interpolation method, 
\begin{equation}\label{5}
\|T_1(\lambda)f\|_{B^s_{q,r}(\HS)} \leq C|\lambda|^{-\frac{A}{2}}
\|f\|_{B^{s+A}_{q,r}(\HS)}.
\end{equation}
Now, we choose $\mu$ and $\mu'$ in such a way that $s < s + \sigma < s+A$,
that is,  we choose $\mu$ and $\mu'$ in such a way that 
 $\sigma/\mu + s/\mu' < 1$ and 
$s+\sigma < \mu' <\mu < 1/q$. Thus, choosing  
$\theta \in (0,1)$ in such a way that  $s+\sigma = (1-\theta) s + \theta(s+A)$,
that is,  $\theta = \sigma/A$, by 
\eqref{4} and \eqref{5} we have 
\begin{equation}\label{6}
\|T_1(\lambda)f\|_{B^{s}_{q,r}(\HS)} \leq C|\lambda|^{-\frac{\sigma}{2}}
\|f\|_{B^{s+\sigma}_{q,r}(\HS)}.
\end{equation}
Therefore, we have \eqref{A.1} and \eqref{A.2}.  This completes the proof of Lemma  \ref{lem.s.1}.
\end{proof} 
\begin{lem}\label{lem.s.2} Assume that Assumption \ref{assump:4.1} above holds. 
Let $q$, $\epsilon$,  and $\omega$ be the same as in Assumption \ref{assump:4.1}. Let $1 \leq r \leq \infty$. 
Let $-1+1/q < s < 0$ and let $\sigma>0$ be  a number such that  $-1+1/q < s+\sigma <0$. 
Then, for any 
$ \lambda \in \Lambda_{\epsilon, \omega}$ and 
$f \in C^\infty_0(\HS)$, there hold
\begin{align}
\|T_1(\lambda)f\|_{B^s_{q,r}(\HS)} & \leq C\|f\|_{B^s_{q,r}(\HS)}, 
\label{A.1*}\\
\|T_1(\lambda)f\|_{B^s_{q,r}(\HS)} & \leq C|\lambda|^{-\frac{\sigma}{2}}
\|f\|_{B^{s+\sigma}_{q,r}(\HS)}. \label{A.2*}
\end{align}
\end{lem}
\begin{proof} Since $-1+1/q < s < 0$, we have $0 < |s| < 1-1/q = 1/q'$. 
Let $\mu$, $\mu'$ and $\sigma$ be positive numbers such that
\begin{equation}\label{number.1} 0 < \mu' < |s| -\sigma < |s|  < \mu < 1/q'.\end{equation}
Using the complex interpolation method, 
by \eqref{d1*}, \eqref{d2*}, and \eqref{d3*}, we have
\begin{align*}
\|T_1(\lambda)^*\varphi\|_{L_{q'}(\HS)} & \leq C\|\varphi\|_{L_{q'}(\HS)}, \\
\|T_1(\lambda)^*\varphi\|_{H^\mu_{q'}(\HS)} & \leq C\|\varphi\|_{H^\mu_{q'}(\HS)},
\\
\|T_1(\lambda)^*\varphi\|_{H^{\mu'}_{q'}(\HS)} 
& \leq C\|\varphi\|_{H^{\mu'}_{q'}(\HS)}, \\
\|T_1(\lambda)^*\varphi\|_{L_{q'}(\HS)} & \leq C|\lambda|^{-\frac{\mu}{2}}\|\varphi\|_{H^\mu_{q'}(\HS)}.
\end{align*}
By the duality argument, we have 
\begin{align}
\label{d4*}
\|T_1(\lambda)f\|_{L_q(\HS)} & \leq C\|f\|_{L_q(\HS)}, \\
\label{d5*}
\|T_1(\lambda)f\|_{H^{-\mu}_q(\HS)} & \leq C\|f\|_{H^{-\mu}_q(\HS)},
\\
\label{d10*}
\|T_1(\lambda)f\|_{H^{-\mu'}_q(\HS)} 
& \leq C\|f\|_{H^{-\mu'}_q(\HS)}, \\
\label{d6*}
\|T_1(\lambda)f\|_{H^{-\mu}_q(\HS)} & \leq C|\lambda|^{-\frac{\mu}{2}}\|f\|_{L_q(\HS)}.
\end{align}
In fact, note that $H^{-\mu}_q(\HS) = (H^\mu_{q', 0}(\HS))'$. 
For any $f$ and $\varphi \in C^\infty_0(\HS)$, by the dual argument we have 
\begin{align*}
|(T_1(\lambda)f, \varphi)| &= |(f, T_1(\lambda)^*\varphi)| \\
&\leq \|f\|_{H^{-\mu}_q(\HS)}\|T_1(\lambda)^*\varphi\|_{H^{\mu}_{q'}(\HS)}
\\
& \leq \|f\|_{H^{-\mu}_q(\HS)}C\|\varphi\|_{H^{\mu}_{q'}(\HS)},
\end{align*}
which implies \eqref{d5*}.  Likewise, we have \eqref{d10*} and \eqref{d4*}.  And also,  
\begin{align*}
|(T_1(\lambda)f, \varphi)| &= |(f, T_1(\lambda)^*\varphi)| \\
&\leq \|f\|_{L_q(\HS)}\|T_1(\lambda)^*\varphi\|_{L_{q'}(\HS)}
\\
& \leq \|f\|_{L_q(\HS)}C|\lambda|^{-\frac{\mu}{2}}\|\varphi\|_{H^{\mu}_{q'}(\HS)},
\end{align*}
which implies \eqref{d6*}.

Now, we shall prove \eqref{A.1*} and \eqref{A.2*}. 
Combining \eqref{d4*} and  \eqref{d5*} with real interpolation method, we have
\begin{equation}\label{d9*}
\|T_1(\lambda)f\|_{B^{-|s|}_{q,r}(\HS)} \leq C\|f\|_{B^{-|s|}_{q,r}(\HS)},
\end{equation}
which shows \eqref{A.1*}. \par
Next, recall that 
$0 < \mu' < |s|-\sigma < |s| < \mu <1/q'$ as follows from \eqref{number.1}. 
 Choose $\theta \in (0, 1)$ in such a way that 
$-|s|= -\mu(1-\theta) - \mu'\theta$, that is $\theta = \dfrac{\mu-|s|}{\mu-\mu'}$.
Combining 
\eqref{d10*} and \eqref{d6*} with real interpolation method which implies that
$$\|T_1(\lambda)f\|_{B^{-|s|}_{q,r}(\HS)}\leq
 C|\lambda|^{-\frac{\mu}{2}(1-\theta)}\|f\|_{B^{(-\mu')\theta}_{q,r}}.
$$
Therefore, we have
\begin{equation}\label{d11*}
\|T_1(\lambda)f\|_{B^{-|s|}_{q,1}(\HS)} \leq 
C|\lambda|^{-\frac{\mu}{2}\frac{|s|-\mu'}{\mu-\mu'}}
\|f\|_{B^{-\frac{\mu'(\mu-|s|)}{\mu-\mu'}}_{q,r}}.
\end{equation}
Since $0 < \mu' < |s|-\sigma$ and $0 < \mu-|s| < \mu-\mu'$,  we have  
$$-|s| < -|s| + \sigma < -\frac{\mu'(\mu-|s|)}{\mu - \mu'}.$$
Choose  $\theta \in (0, 1)$ in such a way that 
$$-|s| + \sigma =(1-\theta)(-|s|) + \theta(-\frac{\mu'(\mu-|s|)}{\mu-\mu'})
$$
Combining \eqref{d9*} and \eqref{d11*} with real interpolation method implies that 
$$\|T_1(\lambda)f\|_{B^{-|s|}_{q,r}(\HS)} 
\leq C|\lambda|^{-\frac{\mu}{2}\frac{|s|-\mu'}{\mu-\mu'}\theta}
\|f\|_{B^{-|s|+\sigma}_{q,r}(\HS)}.
$$
Inserting $\theta = \dfrac{(\mu-\mu')\sigma}{\mu(|s|-\mu')}$, we have 
$$
\|T_1(\lambda)f\|_{B^{-|s|}_{q,r}(\HS)}
\leq C{\color{red}{|}}\lambda|^{-\frac{\sigma}{2}}\|f\|_{B^{-|s|+\sigma}_{q,r}(\HS)},
$$
which shows \eqref{A.2*}.
\end{proof}
\begin{lem}\label{lem.s.3} Assume that Assumption \ref{assump:4.2} holds. 
Let $q$, $\epsilon$,  and $\omega$ be the same as in Assumption \ref{assump:4.2}. Let $1 \leq r \leq \infty$. 
Let $0< s < 1/q $ and let $\sigma>0$ be  numbers such that  $0  < s -\sigma <1/q$. 
Then, for any 
$ \lambda \in \Lambda_{\epsilon, \omega}$ and 
$f \in C^\infty_0(\HS)$, there hold
\begin{align}
\|T_2(\lambda)f\|_{B^s_{q,r}(\HS)} & \leq C|\lambda|^{-1}\|f\|_{B^s_{q,r}(\HS)}, 
\label{A.3}\\
\|T_2(\lambda)f\|_{B^s_{q,r}(\HS)} & \leq C|\lambda|^{-(1-\frac{\sigma}{2})}
\|f\|_{B^{s-\sigma}_{q,r}(\HS)}. \label{A.4}
\end{align}
\end{lem}
\begin{proof}
 Let $\mu$ be a number such that $0 < s < s+\sigma < \mu < 1/q$. 
Combining 
\eqref{d1} and \eqref{d2}, and \eqref{d1} and \eqref{d3} with complex interpolation method,
implies that 
\begin{align}
\label{d4} \|T_2(\lambda)f\|_{L_q(\HS)}& \leq C|\lambda|^{-1}
\|f\|_{L_q(\HS)}, \\
\label{d5} \|T_2(\lambda)f\|_{H^{\mu}_q(\HS)}
&
\leq C|\lambda|^{-1}\|f\|_{H^{\mu}_q(\HS)}, \\
\label{d6}
\|T_2(\lambda)f\|_{H^\mu_q(\HS)} & \leq 
C|\lambda|^{-(1-\frac{\mu}{2})}\|f\|_{L_q(\HS)}.
\end{align}
Combining \eqref{d4} and  \eqref{d5} with real interpolation method yields 
\begin{equation}\label{d7}
\|T_2(\lambda)f\|_{B^s_{q,r}(\HS)}
\leq C|\lambda|^{-1}\|f\|_{B^s_{q,r}(\HS)},
\end{equation}
which shows \eqref{A.3}. \par
Now, choosing $\mu'$  and $\theta$ in such a way that  $0 < \mu' < \mu$ and $\theta = \mu'/ \mu \in (0, 1)$
and combining \eqref{d5} and \eqref{d6} with complex interpolation, we have 
\begin{equation}\label{d6'}
\|T_2(\lambda)f\|_{H^{\mu}_{q}(\HS)}
\leq C|\lambda|^{-(1-\frac{1}{2}(\mu-\mu'))}\|f\|_{H^{\mu'}_{q}(\HS)},
\end{equation}
as follows from 
$\theta + (1-\mu/2)(1-\theta) = 1-(\mu/2)(1-\theta) = 
1-(\mu/2)(1-\mu'/\mu) = 1-(1/2)(\mu-\mu')$. 
\par
Next, we will combine \eqref{d4} and \eqref{d6'} with real interpolation method for  
$s= \theta \mu${\color{red}{.}} Namely, we choose  $\theta = s/\mu
\in (0, 1)$ and so  $\theta\mu' = (\mu'/\mu)s$, and so 
$$(1-\frac{1}{2}(\mu-\mu'))\theta + (1-\theta)=
1-\frac{\theta}{2}(\mu-\mu') =  (1-\frac{s}{2\mu}(\mu-\mu')).
$$
Thus, we have 
\begin{equation}\label{d8}
\|T_2(\lambda)f\|_{B^s_{q,r}(\HS)}
\leq C |\lambda|^{-(1-\frac{s}{2\mu}(\mu-\mu'))}
\|f\|_{B^{\frac{\mu'}{\mu}s}_{q,r}(\HS)} 
\end{equation}

Finally, we will combine \eqref{d7} and \eqref{d8} with real interpolation method. We choose 
$0 < \mu' < \mu$ in such a way that  $(\mu'/\mu)s < s-\sigma < s$, that is 
$0 < \mu' < (1-\sigma/s)\mu$.  And, we choose 
$\theta \in (0, 1)$ in such a way that 
$s-\sigma = (1-\theta)s + \theta(\mu'/\mu)s$, that is  $\theta = \sigma/A$
with $A = s(1-\mu'/\mu)$.  In this case, we have
$$(1-\theta) + \theta(1-\frac{s}{2\mu}(\mu-\mu'))
= 1-\frac{s}{2}(1-\frac{\mu'}{\mu})\theta
=1 - \frac{s}{2}\frac{A}{s}\frac{\sigma}{A} = 1-\frac{\sigma}{2}.
$$
Thus, by \eqref{d7} and \eqref{d8}, we have 
$$
\|T_2(\lambda)f\|_{B^s_{q,r}(\HS)}
	\leq C|\lambda|^{-(1-\frac{\sigma}{2})}\|f\|_{B^{s-\sigma}_{q,r}
(\HS)},
$$
which shows \eqref{A.4}. 
Therefore, we have proved Lemma \ref{lem.s.3}.  
\end{proof}
\begin{lem}\label{lem.s.4} Assume that Assumption \ref{assump:4.2} holds. 
Let $q$, $\epsilon$,  and $\omega$ be the same as in Assumption \ref{assump:4.2}. Let $1 \leq r \leq \infty$. 
Let $-1+1/q < s < 0$ and let $\sigma>0$ be  numbers such that  $-1+1/q <  s-\sigma  < 0$. 
Then, for any 
$ \lambda \in \Lambda_{\epsilon, \omega}$ and 
$f \in C^\infty_0(\HS)$, there hold
\begin{align}
\|T_2(\lambda)f\|_{B^s_{q,r}(\HS)} & \leq C|\lambda|^{-1}\|f\|_{B^s_{q,r}(\HS)}, 
\label{A.3*}\\
\|T_2(\lambda)f\|_{B^s_{q,r}(\HS)} & \leq C|\lambda|^{-(1-\frac{\sigma}{2})}
\|f\|_{B^{s-\sigma}_{q,r}(\HS)}. \label{A.4*}
\end{align}
\end{lem}
\begin{proof}
Combining \eqref{e1}, \eqref{e2} and \eqref{e3} with complex interpolation method
for $|s| < \mu, \mu' < 1-1/q=1/q'$,  we have 
\begin{align}
\label{g1}
\| T_2(\lambda)^*\varphi\|_{L_{q'}(\HS)} 
& \leq C|\lambda|^{-1}\|\varphi\|_{L_{q'}(\HS)}, \\
\label{g2}
\|T_2(\lambda)^*\varphi\|_{H^\mu_{q'}(\HS)} 
& \leq C|\lambda|^{-1}\|\varphi\|_{H^\mu_{q'}(\HS)},
\\
\label{g2*}
\|T_2(\lambda)^*\varphi\|_{H^{\mu'}_{q'}(\HS)} 
& \leq C|\lambda|^{-1}\|\varphi\|_{H^{\mu'}_{q'}(\HS)}, \\
\label{g3}
\|T_2(\lambda)^*\varphi\|_{H^\mu_{q'}(\HS)} 
& \leq C{\color{red}{|}}\lambda|^{-(1-\frac{\mu}{2})}\|\varphi\|_{L_{q'}(\HS)}.
\end{align}
Thus, by the duality argument, we have 
\begin{align}
\label{h1}
\|T_2(\lambda)f\|_{L_{q}(\HS)} 
& \leq C|\lambda|^{-1}\|f\|_{L_{q}(\HS)}, \\
\label{h2}
\|T_2(\lambda)f\|_{H^{-\mu}_q(\HS)}
&\leq C|\lambda|^{-1}\|f\|_{H^{-\mu}_q(\HS)}, \\
\label{h2*} 
\|T_2(\lambda)f\|_{H^{-\mu'}_{q}(\HS)} 
& \leq C|\lambda|^{-1}\|f\|_{H^{-\mu'}_{q}(\HS)},
\\
\label{h3}
\| T_2(\lambda)f\|_{L_q(\HS)} 
& \leq C|\lambda|^{-(1-\frac{\mu}{2})}\|f\|_{H^{-\mu}_{q}(\HS)}.
\end{align}
Noting that $-1 +1/q < -\mu < -|s| < 0$ and combining \eqref{h1} and \eqref{h2} with real interpolation method implies 
\begin{equation}\label{h4} 
\|T_2(\lambda)f\|_{B^{-|s|}_{q,1}(\HS)}
\leq C|\lambda|^{-1}\|f\|_{B^{-|s|}_{q,1}(\HS)},
\end{equation}
which shows \eqref{A.3*}. \par
Choosing $\theta \in (0, 1)$ in such a way that $|s| = \mu'\theta$
and combining \eqref{h2*} and \eqref{h3} with real interpolation method, 
we have 
$$
\|T_2(\lambda)f\|_{B^{-|s|}_{q,r}(\HS)}
\leq C|\lambda|^{-a}
\|f\|_{B^{c}_{q,r}(\HS)}.
$$
Here, 
\begin{align*}
a& = -\theta - (1-\theta)(1-\frac{\mu}{2})
= -1+\frac{\mu}{2}(1-\frac{|s|}{\mu'}),\\
c&=-\mu'\theta - \mu(1-\theta)
= -\mu'\frac{|s|}{\mu'}-\mu(1-\frac{|s|}{\mu'})
= -|s|-\mu(1-\frac{|s|}{\mu'})
= -(|s|+\mu(1-\frac{|s|}{\mu'})).  
\end{align*}
Thus, we have obtained
\begin{equation}\label{h5}
\| T_2(\lambda)f\|_{B^{-|s|}_{q,r}(\HS)}
\leq C|\lambda|^{-(1-\frac{\mu}{2}(1-\frac{|s|}{\mu'}))}
\|f\|_{B^{-(|s|+\mu(1-\frac{|s|}{\mu'}))}_{q,r}(\HS)}.
\end{equation}
Now, we choose $\mu' \in (0, 1)$ in such a way that 
$$-|s| > -|s|-\sigma > -|s|-\mu(1-\frac{|s|}{\mu'}),$$
that is 
\begin{equation}\label{const:1} \frac{\mu|s|}{\mu-\sigma} < \mu' < 1-\frac{1}{q}. \end{equation}
Since $\sigma>0$ may be chosen so small that $\mu/(\mu-\sigma)$
is very close to $1$,  we can choose $\mu'$ in such a way that  $|s| < \mu'$ and \eqref{const:1} holds. \par

We choose $\theta \in (0, 1)$ in such a way that 
$$-|s|-\sigma = -|s|\theta  -(|s|+\mu(1-\frac{|s|}{\mu'}))(1-\theta).$$
Combining \eqref{h4} and \eqref{h5} with real interpolation method implies that 
$$\| T_2(\lambda)f\|_{B^{-|s|}_{q,r}(\HS)}
\leq C|\lambda|^{-d}\|f\|_{B^{-|s|-\sigma}_{q,r}(\HS)},
$$
where 
$$d = \theta + (1-\theta)(1-\frac{\mu}{2}(1-\frac{|s|}{\mu'})) = 1-\frac{\sigma}{2}.$$
Thus, we have 
$$\|T_2(\lambda)f\|_{B^{-|s|}_{q,r}(\HS)}
\leq C|\lambda|^{-(1-\frac{\sigma}{2})}\|f\|_{B^{-|s|-\sigma}_{q,r}(\HS)}.
$$
Namely, we have  \eqref{A.4*}, which completes the proof of Lemma \ref{lem.s.4}.
\end{proof}
\begin{proof}[The end of the proof of Theorem \ref{spectralthm:1}.] In view of Lemmas \ref{lem.s.1}--\ref{lem.s.4}, it suffices to prove the case $s=0$. 
Let  $0 < \sigma < 1/q$ and let $\nu_1$ and $\nu_2$ be positive numbers such that  
$-1+1/q < -\nu_1 < -\nu_1 + \sigma < 0 <  \nu_2 < \sigma + \nu_2  < 1/q$. 
 By Lemmas \ref{lem.s.1}
and \ref{lem.s.2}, we have
$$\|T_1(\lambda)f\|_{B^{-\nu_1}_{q,r}(\HS)} \leq C\|f\|_{B^{ -\nu_1}_{q,r}(\HS)},
\quad \|T_1(\lambda)f\|_{B^{\nu_2}_{q,r}(\HS)} \leq C\|f\|_{B^{ \nu_2}_{q,r}(\HS)}.$$
Let $\theta$ be a number  $\in (0, 1)$ such that $0 = (1-\theta)(-\nu_1) + \theta \nu_2$. 
Since 
$$B^0_{q,r}(\HS) = (B^{-\nu_1}_{q,r}(\HS), B^{\nu_2}_{q,r}(\HS))_{\theta, r}, $$
by  real interpolation we have 
$$\|T_1(\lambda)f\|_{B^{0}_{q,r}(\HS)} \leq C\|f\|_{B^{0}_{q,r}(\HS)}. $$
Moreover,  by Lemmas \ref{lem.s.1}
and \ref{lem.s.2}, we have
$$\|T_1(\lambda)f\|_{B^{-\nu_1}_{q,r}(\HS)} \leq C|\lambda|^{-\frac{\sigma}{2}}
\|f\|_{B^{-\nu_1+\sigma}_{q,r}(\HS)}, \quad
\|T_1(\lambda)f\|_{B^{\nu_2}_{q,r}(\HS)} \leq C|\lambda|^{-\frac{\sigma}{2}}
\|f\|_{B^{\nu_2+\sigma}_{q,r}(\HS)}.
$$
Since $B^\sigma_{q,r}(\HS) = (B^{-\nu_1+\sigma}_{q,r}(\HS), B^{\nu_2+\sigma}_{q,r}(\HS))_{\theta, r}$, 
by real interpolation, we have 
$$\|T_1(\lambda)f\|_{B^{0}_{q,r}(\HS)} \leq C|\lambda|^{-\frac{\sigma}{2}}\|f\|_{B^{\sigma}_{q,r}(\HS)}. $$
Analogously, we have 
\begin{align*}
\|T_2(\lambda)f\|_{B^{0}_{q,r}(\HS)} &\leq C|\lambda|^{-1}\|f\|_{B^{0}_{q,r}(\HS)}, \\
\|T_2(\lambda)f\|_{B^{0}_{q,r}(\HS)} &\leq C|\lambda|^{-(1-\frac{\sigma}{2})}\|f\|_{B^{-\sigma}_{q,r}(\HS)}. 
\end{align*}
This completes the proof of Theorem \ref{spectralthm:1}.
\end{proof}

\subsection{A proof of Theorem \ref{thm:6.1}.}

In this subsection, we shall prove Theorem \ref{thm:6.1}.
First, we divide $\dfrac{\eta_\lambda}{K}$ and 
$\dfrac{\eta_\lambda}{K}\dfrac{\beta \lambda+\gamma^2}{(\alpha+\beta)\lambda+\gamma^2}$
appearing in \eqref{sol:form5}. To this end, we start with the following 
lemma.	
\begin{lem}\label{lem:6.1} Let $\epsilon \in (0, \pi/2)$ and $\lambda_0 > 0$.  Set 
$K_1 = (\alpha+\beta)A + \alpha B$.  Then, there exists a constant $c_3 > 0$ depending on
$\alpha$, $\beta$, $\epsilon$ and $\epsilon'$ appearing in Lemma \ref{lem:3.1} such that 
for any $\lambda \in \Lambda_{\epsilon, \lambda_0}$ and $\xi' \in \BR^{N-1}\setminus\{0\}$, 
there holds 
\begin{equation}\label{lem:6.1.1} |K_1| \geq c_3(|\lambda|^{1/2} + |\xi'|).
\end{equation}
Moreover, for any multi-index $\delta' \in \BN_0^{N-1}$,  
$\lambda \in \Lambda_{\epsilon, \lambda_0}$ and $\xi' \in \BR^{N-1}\setminus\{0\}$
there holds
\begin{equation}\label{lem:6.1.2} \begin{aligned}
|D^{\delta'}_{\xi'}K_1^{-1}| &\leq C_{\delta'}(|\lambda|^{1/2} + |\xi'|)^{-1-|\delta'|}, \\
|D^{\delta'}_{\xi'}\pd_\lambda K_1^{-1}| &\leq C_{\delta'}(|\lambda|^{1/2} + |\xi'|)^{-3-|\delta'|}.
\end{aligned}\end{equation}
\end{lem}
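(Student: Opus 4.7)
The plan is to combine the sector information for $A$ and $B$ provided by Lemma \ref{lem:3.1} with the multiplier calculus from Lemma \ref{lem:3.3}.

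For the lower bound \eqref{lem:6.1.1}, I will use Lemma \ref{lem:3.1}(2) to see that $p(\lambda)+|\xi'|^2$ lies in the sector $\{|\arg z|\leq\pi-\omega\}$ and has modulus at least $c_1(|\lambda|+|\xi'|^2)$. Taking the principal square root yields $|\arg A|\leq (\pi-\omega)/2$ and $|A|\geq c(|\lambda|^{1/2}+|\xi'|)$, so $\mathrm{Re}\,A \geq \sin(\omega/2)|A| \geq c'(|\lambda|^{1/2}+|\xi'|)$. The identical argument based on Lemma \ref{lem:3.1}(1) produces the analogous lower bound for $\mathrm{Re}\,B$. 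Since $\alpha>0$ and $\alpha+\beta>0$ by the paper's standing hypothesis, adding these gives $|K_1|\geq\mathrm{Re}\,K_1\geq c_3(|\lambda|^{1/2}+|\xi'|)$.

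For the first inequality in \eqref{lem:6.1.2}, Lemma \ref{lem:3.3} shows $A,B\in\BM_1$, hence $K_1\in\BM_1$; that is $|D^{\alpha'}_{\xi'}K_1|\leq C_{\alpha'}(|\lambda|^{1/2}+|\xi'|)^{1-|\alpha'|}$. I will then apply Fa\`a di Bruno (``Bell's formula'') to $K_1^{-1}$: each term is of the shape $K_1^{-\ell-1}\prod_{j=1}^{\ell} D^{\alpha'_j}_{\xi'}K_1$ with $\alpha'_1+\cdots+\alpha'_\ell=\alpha'$ and $|\alpha'_j|\geq 1$. Plugging the lower bound from the previous step into the $K_1^{-\ell-1}$ factor and the multiplier estimate into the remaining factors, the exponents add to $-\ell-1+(\ell-|\alpha'|)=-1-|\alpha'|$, as required.

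For the $\pd_\lambda$ estimate I will write $\pd_\lambda K_1^{-1}=-K_1^{-2}\pd_\lambda K_1$ with $\pd_\lambda K_1=(\alpha+\beta)p'(\lambda)/(2A)+1/(2B)$. The boundedness $|p'(\lambda)|\leq C$ on $\Lambda_{\epsilon,\lambda_0}$ (already used in the proof of Proposition \ref{prop:3}), combined with $A^{-1},B^{-1}\in\BM_{-1}$ from Lemma \ref{lem:3.3}, forces $\pd_\lambda K_1\in\BM_{-1}$. Together with $K_1^{-2}\in\BM_{-2}$ (an immediate Fa\`a di Bruno repetition of the previous step) and the composition rule $\BM_{\kappa_1}\cdot\BM_{\kappa_2}\subset\BM_{\kappa_1+\kappa_2}$ noted just after the definition of multipliers, the Leibniz rule then gives $\pd_\lambda K_1^{-1}\in\BM_{-3}$, which is exactly the second inequality in \eqref{lem:6.1.2}. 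No step looks difficult; the only mildly delicate point is propagating the angles $\omega$ (from Lemma \ref{lem:3.1}(2)) and $\epsilon$ (from Lemma \ref{lem:3.1}(1)) through the real-part bounds for $A$ and $B$ so that $c_3$ depends only on $\alpha,\beta,\epsilon$, and $\omega$, and not on the particular $\lambda\in\Lambda_{\epsilon,\lambda_0}$.
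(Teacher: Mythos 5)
Your proof is correct and follows essentially the same route as the paper: the same sector/modulus estimates from Lemma \ref{lem:3.1} for the lower bound on $|K_1|$, the same Bell/Fa\`a di Bruno computation for $|D^{\alpha'}_{\xi'}K_1^{-1}|$, and the same factorization $\pd_\lambda K_1^{-1}=-K_1^{-2}\pd_\lambda K_1$ with $\pd_\lambda K_1\in\BM_{-1}$ for the derivative bound. Your passage through $\mathrm{Re}\,K_1$ is a slightly more explicit rendering of what the paper calls the ``geometric interpretation of the sum of complex numbers,'' and in the final step it is the multiplication rule $\BM_{\kappa_1}\cdot\BM_{\kappa_2}\subset\BM_{\kappa_1+\kappa_2}$ (rather than Leibniz) that is being invoked, but these are cosmetic; the substance matches the paper's proof.
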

\begin{proof}
By Lemma \ref{lem:3.1}, we have 
$$|\arg(\alpha + \beta) A| \leq \frac{\pi-\epsilon'}{2}, \quad |\arg \alpha B| \leq \frac{\pi-\epsilon}{2}.
$$
Moreover,  we see that 
$$(\alpha+\beta)|A| \geq \frac{\alpha+\beta}{\sqrt{2}}\sqrt{c_1}(|\lambda|^{1/2}+|\xi'|),\quad
\alpha |B| \geq \frac{\alpha}{\sqrt{2}}
\sqrt{\sin(\epsilon/2)}(|\lambda|^{1/2}\alpha^{-1/2}+|\xi'|).
$$
From geometric interpretation of the sum of complex numbers 
 we see that 
\eqref{lem:6.1.1} holds with 
$$c_3 = (\sin\frac{\min(\sigma, \epsilon)}{2})\min(\frac{(\alpha+\beta)}{\sqrt{2}}\sqrt{c_1},
\frac{\alpha}{\sqrt{2}}\sqrt{\sin(\epsilon/2)}\min(\alpha^{-1/2}, 1))$$
\par
By Bell's formula, 
\begin{equation}\label{lem:6.1.3}
|D^{\delta'}_{\xi'}K_1^{-1}| \leq C_{\delta'}\sum_{\ell=1}^{|\delta'|}
|K_1|^{-(\ell+1)}\sum_{|\delta'_1| + \cdots+|\delta'_\ell| = |\delta'|  \atop |\delta'_i| \geq 1}
|D^{\delta'_1}_{\xi'}K_1|\cdots |D^{\delta'_\ell}_{\xi'}K_1|.
\end{equation}
By Lemma \ref{lem:3.2} with $s=1$, we have
$$|D^{\delta'_1}_{\xi'}K_1|\cdots |D^{\delta'_\ell}_{\xi'}K_1| \leq C_{\delta'}(|\lambda|^{1/2}
+|\xi'|)^{\ell-|\delta'|},$$
which, combined with \eqref{lem:6.1.3}, implies \eqref{lem:6.1.2}. \par
Writing 
$\pd_\lambda K_1 = (\alpha+\beta)\pd_\lambda A + \alpha \pd_\lambda B$,
 by Lemma \ref{lem:3.3} we have 
$$
|D^{\delta'}_{\xi'} (\pd_\lambda K_1 )| \leq C_{\delta'}(|\lambda|^{1/2}+|\xi'|)^{-1 -|\delta'|}.
$$
Since $\pd_\lambda K_1^{-1} = -(\pd_\lambda K_1)K_1^{-2}$,  \eqref{lem:6.1.2}
follows from $\pd_\lambda K_1 \in \BM_{-1}$ and 
$K_1^{-1} \in \BM_{-1}$.
This completes the proof of Lemma \ref{lem:6.1}. 
\end{proof}
Recall that $K = (\alpha+\eta_\lambda)A  + \alpha B = K_1 + \gamma^2\lambda^{-1} A$
 (cf. \eqref{halfsymbol:3.1}).  In particular, 
$$|A| \leq \max(c_2^{-1/2}, 1)(|\lambda|^{1/2} + |\xi'|).$$ 
By \eqref{lem:6.1.1} $|\gamma^2AK_1^{-1}| \leq \gamma^2\max(c_2^{-1/2},1)c_3^{-1}$. 
Setting $\lambda_1 = 2\gamma^2\max(c_2^{-1/2},1)c_3^{-1}$, 
we have
\begin{equation}\label{6.1.4} |\gamma^2AK_1^{-1}\lambda^{-1}| \leq 1/2
\end{equation}
for any $\lambda \in \Lambda_{\epsilon, \lambda_1}$ and $\xi' \in \BR^{N-1}\setminus\{0\}$. 
Thus, we have
\begin{equation}\label{6.1.5}
K^{-1} = \frac{1}{K_1}-\frac{1}{K_1}\frac{\gamma^2AK_1^{-1}\lambda^{-1}}{1+\gamma^2AK_1^{-1}\lambda^{-1}}.
\end{equation}
From this observation, it follows that
$$\frac{\eta_\lambda}{K} = \frac{\beta}{K_1} -\frac{1}{\lambda}\frac{\beta}{K_1}\frac{\gamma^2 AK_1^{-1}}
{1 + \gamma^2 AK_1^{-1}\lambda^{-1}} + \frac{\gamma^2}{\lambda}\frac{\beta}{K}.
$$
Thus, setting 
\begin{equation}\label{6.1.6}
K_2(\lambda) =-\frac{\beta}{K_1}\frac{\gamma^2 AK_1^{-1}}
{1 + \gamma^2 AK_1^{-1}\lambda^{-1}} + \frac{\gamma^2 \beta}{K},
\end{equation}
we may write
\begin{equation}\label{6.1.7}
\frac{\eta_\lambda}{K} = \frac{\beta}{K_1} + \lambda^{-1}K_2(\lambda).
\end{equation}
\begin{lem}\label{lem:6.2} Let $\epsilon \in (0, \pi/2)$ and let $\lambda_1$ be a positive number
defined in \eqref{6.1.4}.  Let $K_2$ be the function defined in \eqref{6.1.6}.
Then, for any $\lambda \in \Lambda_{\epsilon, \lambda_1}$, $\xi' \in \BR^{N-1}\setminus\{0\}$,
and multi-index $\delta' \in \BN^{N-1}_0$, there hold
\begin{align}
\label{lem:6.2.1}
|D^{\delta'}_{\xi'}K_2(\lambda)| &\leq C_{\delta'}(|\lambda|^{1/2}+|\xi'|)^{-1-|\delta'|}, \\
\label{lem:6.2.2}
|D^{\delta'}_{\xi'}(\pd_\lambda K_2(\lambda))| &\leq C_{\delta'}(|\lambda|^{1/2}+|\xi'|)^{-1-|\delta'|}
|\lambda|^{-1}
\end{align}
with some constant $C_{\delta'}$. 
\end{lem}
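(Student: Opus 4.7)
The plan is to first simplify $K_2$ using the factorisation $K=K_1+\gamma^2A\lambda^{-1}=K_1(1+\gamma^2AK_1^{-1}\lambda^{-1})$, which gives $(1+\gamma^2AK_1^{-1}\lambda^{-1})^{-1}=K_1/K$ and reduces \eqref{6.1.6} to
\[
K_2(\lambda)=-\frac{\beta\gamma^2A}{K_1K}+\frac{\gamma^2\beta}{K}=\frac{\gamma^2\beta(K_1-A)}{K_1K},
\]
a plain rational function in the basic symbols $A$, $B$, $K_1$, $K$. This eliminates any need to expand the Neumann-type denominator term by term.

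For \eqref{lem:6.2.1} I would invoke the multiplier calculus already in place: by Lemma \ref{lem:3.3} with $s=1$, $A,B,K_1\in\BM_{1}$; by Lemma \ref{lem:6.1}, $K_1^{-1}\in\BM_{-1}$; and by Lemma \ref{lem:3.3} with $M=K$ and $s=-1$, $K^{-1}\in\BM_{-1}$ (the lower bound $|K|\ge|K_1|/2\ge c(|\lambda|^{1/2}+|\xi'|)$ follows from \eqref{6.1.4} combined with \eqref{lem:6.1.1}). Since $\BM_{\kappa_1}\cdot\BM_{\kappa_2}\subset\BM_{\kappa_1+\kappa_2}$ and $\BM_{\kappa}$ is closed under sums of the same order, the displayed expression for $K_2$ lies in $\BM_{1}\cdot\BM_{-1}\cdot\BM_{-1}=\BM_{-1}$, giving \eqref{lem:6.2.1} immediately.

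For \eqref{lem:6.2.2} I would differentiate via Leibniz and use the observation that every $\partial_\lambda$ applied to a basic symbol either lowers its multiplier order by two or introduces an explicit factor of $|\lambda|^{-2}$. Concretely, the explicit formulas $\partial_\lambda A=p'(\lambda)/(2A)$ and $\partial_\lambda B=1/(2\alpha B)$, together with boundedness of $|p'(\lambda)|$ on $\Lambda_{\epsilon,\lambda_1}$, put $\partial_\lambda A,\partial_\lambda B\in\BM_{-1}$, hence $\partial_\lambda K_1\in\BM_{-1}$; differentiating $\eta_\lambda=\beta+\gamma^2\lambda^{-1}$ gives $\partial_\lambda K=-\gamma^2\lambda^{-2}A+(\alpha+\eta_\lambda)\partial_\lambda A+\alpha\partial_\lambda B\in\BM_{-1}+\lambda^{-2}\BM_{1}$. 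Therefore $\partial_\lambda K_1^{-1}=-K_1^{-2}\partial_\lambda K_1\in\BM_{-3}$ and $\partial_\lambda K^{-1}\in\BM_{-3}+\lambda^{-2}\BM_{-1}$; both classes sit inside $|\lambda|^{-1}\BM_{-1}$, because $(|\lambda|^{1/2}+|\xi'|)^{-2}\le|\lambda|^{-1}$ and $|\lambda|^{-2}\le|\lambda|^{-1}$ on $\Lambda_{\epsilon,\lambda_1}$. Assembling these pieces through the Leibniz expansion of $\partial_\lambda[\gamma^2\beta(K_1-A)/(K_1K)]$, every resulting summand lands in $|\lambda|^{-1}\BM_{-1}$, which is exactly \eqref{lem:6.2.2}.

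The main obstacle is not conceptual but purely the simultaneous bookkeeping of two scales: the multiplier order in $(|\lambda|^{1/2}+|\xi'|)$ and the explicit negative power of $|\lambda|$. The only piece that looks threatening is the contribution $-\gamma^2\lambda^{-2}A$ in $\partial_\lambda K$, whose multiplier order is $+1$ rather than $-1$; however, the compensating $|\lambda|^{-2}$ is strictly stronger than the $|\lambda|^{-1}$ demanded, and after division by $K^{2}\in\BM_{2}$ the resulting contribution still fits within $|\lambda|^{-1}\BM_{-1}$. No ingredient beyond Lemmas \ref{lem:3.1}, \ref{lem:3.3} and \ref{lem:6.1} is needed.
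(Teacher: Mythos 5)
Your proof is correct, and it takes a genuinely cleaner algebraic route than the paper. You start by noting the exact factorisation $K=K_1(1+\gamma^2AK_1^{-1}\lambda^{-1})$, which collapses $(1+\gamma^2AK_1^{-1}\lambda^{-1})^{-1}$ to $K_1/K$ and turns $K_2$ into the plain rational symbol $\gamma^2\beta(K_1-A)/(K_1K)$; from there \eqref{lem:6.2.1} and \eqref{lem:6.2.2} drop out of the multiplier calculus (Lemmas~\ref{lem:3.3} and~\ref{lem:6.1}, plus the elementary observations $\partial_\lambda A,\partial_\lambda B\in\BM_{-1}$ and $(|\lambda|^{1/2}+|\xi'|)^{-2}\leq|\lambda|^{-1}$). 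The paper instead keeps the composite denominator and estimates $D^{\alpha'}_{\xi'}(1+\gamma^2AK_1^{-1}\lambda^{-1})^{-1}$ directly via Bell's formula and a geometric-series bound (see \eqref{6.1.10}), then differentiates the resulting four-term expression term by term; your identity removes that layer of bookkeeping entirely and avoids the more error-prone Leibniz expansion the paper carries out. Both routes rest on the same underlying symbol lemmas, so the difference is purely in economy, not in the ingredients. (One cosmetic remark: the constant $\gamma^2\beta$ in the second summand of \eqref{6.1.6} does not match the derivation of \eqref{6.1.7} — the $\beta$ in $\gamma^2\beta/K$ appears to be spurious — but this has no effect on either proof, since $K_1-A$ and $K_1-\beta A$ are both constant-coefficient combinations of $A$ and $B$ and hence both lie in $\BM_1$.)
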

\begin{proof} In what follows, we assume that  $\lambda \in \Lambda_{\epsilon, \lambda_1}$ and $\xi' \in 
\BR^{N-1}\setminus\{0\}$.  By \eqref{lem:6.1.2} and Lemma \ref{lem:3.2}, we have 
\begin{equation}\label{6.1.8*}
|D_{\xi'}^{\delta'}(AK_1^{-1})| \leq C_{\delta'}(|\lambda|^{1/2}+|\xi'|)^{-|\delta'|}
\end{equation}
for any multi-index $\delta' \in \BN_0^{N-1}$. We may assume that 
$|1+\gamma^2AK_1^{-1}\lambda^{-1}|^{-1} \leq 2$ from \eqref{6.1.4}, and so 
by  Bell's formula, \eqref{6.1.4},  and \eqref{6.1.8*} 
\begin{align}
&|D^{\delta'}_{\xi'}(1+ \gamma^2AK_1^{-1}\lambda^{-1})^{-1}| \nonumber\\
&\leq C_{\delta'}\sum_{\ell=1}^{|\delta'|}|1+\gamma^2AK_1^{-1}\lambda^{-1}|^{-(\ell+1)}
\sum_{\delta'_1+ \cdots+\delta'_\ell = \delta' \atop |\delta'_i| \geq 1}
|D_{\xi'}^{\delta'_1}(\gamma^2AK_1^{-1}\lambda^{-1})|\cdots 
|D_{\xi'}^{\delta'_\ell}(\gamma^2AK_1^{-1}\lambda^{-1})| \nonumber \\
&\leq C_{\delta'}\big\{\sum_{\ell=1}^{|\delta'|} (\gamma^2|\lambda|^{-1})^\ell 2^{\ell+1}\big\}(|\lambda|^{1/2}
+|\xi'|)^{-|\delta'|} \leq C_{\delta'}\frac{4\gamma^2|\lambda|^{-1}}{(1-2\gamma^2|\lambda|^{-1})}
(|\lambda|^{1/2}+|\xi'|)^{-|\delta'|}
\label{6.1.10}
\end{align}
provided that $2\gamma^2|\lambda|^{-1} < 1$. 
Combining Lemma \ref{lem:3.3} with $M=K$, \eqref{lem:6.1.2}, \eqref{6.1.8*} and \eqref{6.1.10}
gives \eqref{lem:6.2.1}. \par
To prove \eqref{lem:6.2.2}, we write 
\begin{align*}
\pd_\lambda K_2(\lambda)& = -\beta\pd_\lambda K_1^{-1} \frac{\gamma^2 AK_1^{-1}}{1 + \gamma^2 A K_1^{-1}\lambda^{-1}}
\\
&+\frac{\beta}{K_1}\frac{\gamma^2\lambda^{-1}\pd_\lambda(AK_1^{-1})
\gamma^2 A K_1^{-1}}{(1 + \gamma^2 A K_1^{-1}\lambda^{-1})^2}
- \frac{\gamma^2\pd_\lambda(AK_1^{-1})}{1+\gamma^2A K_1^{-1}\lambda^{-1}}
-\pd_\lambda K^{-1}.
\end{align*}
By \eqref{lem:6.1.2}, we have $\pd_\lambda K_1^{-1} \in \BM_{-3}$. 
Since $\pd_\lambda K = (\alpha + \beta + \gamma^2\lambda^{-1})\pd_\lambda A
+\beta \pd_\lambda B -\gamma^2\lambda^{-2}A$, by  
Lemma \ref{lem:3.3}, we have
\begin{equation}\label{6.1.15}
|D^{\delta'}_{\xi'} (\pd_\lambda K )| \leq C_{\delta'}\{(|\lambda|^{1/2}+|\xi'|)^{-1 -|\delta'|}
+ |\lambda|^{-2}(|\lambda|^{1/2}+|\xi'|)^{1-|\delta'|}\}.
\end{equation}
Writing $\pd_\lambda K^{-1} = -K^{-2}\pd_\lambda K$ and using Lemma \ref{lem:3.3} and \eqref{6.1.15},
we have 
\begin{equation}\label{6.1.13}
|D^{\delta'}_{\xi'} (\pd_\lambda K^{-1} )| \leq C_{\delta'}|\lambda|^{-1}(|\lambda|^{1/2}+|\xi'|)^{-1 -|\delta'|}.
\end{equation}
Writing 
$\pd_\lambda(AK_1^{-1}) = (\pd_\lambda A)K_1^{-1} - AK_1^{-2}\pd_\lambda K_1$, 
by \eqref{6.1.10}, Lemma \ref{lem:3.3} and \eqref{lem:6.1.2} we have
\begin{equation}\label{6.1.14}
|D^{\delta'}_{\xi'}\pd_\lambda(AK_1^{-1})| \leq C_{\delta'}(|\lambda|^{1/2} + |\xi'|)^{-2 -|\delta'|}.
\end{equation}
Thus, by \eqref{lem:6.1.2}, \eqref{6.1.10}, \eqref{6.1.8}, \eqref{lem:6.1.2}, 
\eqref{6.1.13}, and \eqref{6.1.14},  we have \eqref{lem:6.2.2}.
This completes the proof of Lemma \ref{lem:6.2}.
\end{proof}
\par
We write 
\begin{equation}\label{div:6.1}\begin{aligned}
&\frac{\beta\lambda+\gamma^2}{(\alpha+\beta)\lambda+ \gamma^2} = \frac{\beta}{\alpha+\beta}
+ \frac{q(\lambda)}{\lambda}, \quad
\frac{\eta_\lambda}{K} = \frac{\beta}{K_1} + \frac1\lambda K_2(\lambda),\\
&\frac{\eta_\lambda}{K}\frac{\beta\lambda+\gamma^2}{(\alpha+\beta)\lambda+\gamma^2}
= \frac{\beta^2}{\alpha+\beta}\frac{1}{K_1} + \frac1\lambda K_3(\lambda)
\end{aligned}\end{equation}
where 
\begin{equation}\label{6.1.8}\begin{aligned}
q(\lambda) &=  \frac{\alpha\gamma^2 \lambda}{(\alpha+\beta)((\alpha+\beta)\lambda+ \gamma^2)}, \\
 K_3(\lambda)& = \frac{\alpha\beta\gamma^2}{(\alpha+\beta)K_1}
\frac{\lambda}{(\alpha+\beta)\lambda+\gamma^2}+ K_2(\lambda)\frac{\beta\lambda+\gamma^2}
{(\alpha+\beta)\lambda+\gamma^2}.
\end{aligned}\end{equation}
Notice that $|(\alpha+\beta)\lambda + \gamma^2| \geq (1/2)(\alpha+\beta)|\lambda|$ for 
$|\lambda| \geq 2\gamma^2(\alpha+\beta)^{-1}$. 
\begin{cor}\label{lem:cor:6.2}
Let $\epsilon \in (0, \pi/2)$ and let $\lambda_1$ be a positive number
defined in \eqref{6.1.4}. Set $\lambda_2= \max(\lambda_1, 2\gamma^2/(\alpha+\beta))$. 
Let $K_3$ be the function defined in \eqref{6.1.8}.
Then, for any $\lambda \in \Lambda_{\epsilon, \lambda_2}$, $\xi' \in \BR^{N-1}\setminus\{0\}$,
and multi-index $\delta' \in \BN^{N-1}_0$, there hold
\begin{align}
\label{lem:6.2.4}
|D^{\delta'}_{\xi'}K_3(\lambda)| &\leq C_{\delta'}(|\lambda|^{1/2}+|\xi'|)^{-1-|\delta'|}, \\
\label{lem:6.2.4}
|D^{\delta'}_{\xi'}(\pd_\lambda K_3(\lambda))| &\leq C_{\delta'}(|\lambda|^{1/2}+|\xi'|)^{-1-|\delta'|}
|\lambda|^{-1}
\end{align}
with some constant $C_{\delta'}$. 
\end{cor}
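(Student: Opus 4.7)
The plan is to decompose $K_3(\lambda)$ into its two summands and estimate each separately via the Leibniz rule, reducing everything to the bounds already obtained for $K_1^{-1}$ in Lemma \ref{lem:6.1} and for $K_2(\lambda)$ in Lemma \ref{lem:6.2}. The only truly new ingredient needed is uniform control of the two scalar (i.e., $\xi'$-independent) factors
\[
r_1(\lambda):=\frac{\lambda}{(\alpha+\beta)\lambda+\gamma^2}, \qquad
r_2(\lambda):=\frac{\beta\lambda+\gamma^2}{(\alpha+\beta)\lambda+\gamma^2},
\]
together with their $\lambda$-derivatives. The choice $\lambda_2\geq 2\gamma^2/(\alpha+\beta)$ forces $|(\alpha+\beta)\lambda+\gamma^2|\geq (1/2)(\alpha+\beta)|\lambda|$ for all $\lambda\in\Lambda_{\epsilon,\lambda_2}$, from which a direct computation gives $|r_j(\lambda)|\leq C$ and $|\pd_\lambda r_j(\lambda)|\leq C|\lambda|^{-2}$ for $j=1,2$ (note the cancellation $\pd_\lambda r_2 = -\alpha\gamma^2/((\alpha+\beta)\lambda+\gamma^2)^2$).

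For the first bound, I would apply $D^{\alpha'}_{\xi'}$ to the product $K_1^{-1}r_1$ and to $K_2(\lambda)r_2$ separately. Since $r_1$ and $r_2$ are independent of $\xi'$, the derivatives fall entirely on $K_1^{-1}$ and on $K_2(\lambda)$, and Lemmas \ref{lem:6.1} and \ref{lem:6.2} supply the bound $C_{\alpha'}(|\lambda|^{1/2}+|\xi'|)^{-1-|\alpha'|}$ in each case. Summing yields the first inequality.

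For the second bound, I apply $\pd_\lambda$ via the product rule: on the first summand,
\[
\pd_\lambda\Bigl(\frac{\alpha\beta\gamma^2}{(\alpha+\beta)K_1}r_1(\lambda)\Bigr)
=\frac{\alpha\beta\gamma^2}{\alpha+\beta}\bigl((\pd_\lambda K_1^{-1})r_1(\lambda)+K_1^{-1}\pd_\lambda r_1(\lambda)\bigr),
\]
and analogously for $K_2(\lambda)r_2(\lambda)$. The term involving $\pd_\lambda K_1^{-1}$ is estimated by Lemma \ref{lem:6.1}, giving $C_{\alpha'}(|\lambda|^{1/2}+|\xi'|)^{-3-|\alpha'|}$; since $(|\lambda|^{1/2}+|\xi'|)^{-2}\leq |\lambda|^{-1}$, this is absorbed into the desired bound. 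The term $K_1^{-1}\pd_\lambda r_1$ is even better because $|\pd_\lambda r_1|\leq C|\lambda|^{-2}$. The $\pd_\lambda K_2$ piece is handled by the second estimate of Lemma \ref{lem:6.2}, which already carries the factor $|\lambda|^{-1}$, and $K_2\pd_\lambda r_2$ is similarly of lower order.

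I do not anticipate a genuine obstacle: the corollary is a bookkeeping consequence of Lemmas \ref{lem:6.1}--\ref{lem:6.2} once the scalar rational functions $r_1, r_2$ are checked to be multipliers in $\BM_0$ in the $\lambda$-variable. The only mild subtlety is ensuring $\lambda_2$ is chosen large enough so that both the denominator $(\alpha+\beta)\lambda+\gamma^2$ is bounded from below (giving the $r_j$ estimates) and the hypothesis $\lambda\in\Lambda_{\epsilon,\lambda_1}$ of Lemma \ref{lem:6.2} is satisfied; the definition $\lambda_2=\max(\lambda_1,2\gamma^2/(\alpha+\beta))$ does precisely this.
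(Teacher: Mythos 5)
Your proposal is correct and is essentially the argument the paper intends. The corollary is stated without an explicit proof precisely because it is the bookkeeping exercise you describe, and the paper's remark immediately preceding it — that $|(\alpha+\beta)\lambda+\gamma^2|\geq \tfrac12(\alpha+\beta)|\lambda|$ once $|\lambda|\geq 2\gamma^2/(\alpha+\beta)$ — is exactly the ingredient you single out for controlling $r_1$, $r_2$ and their $\lambda$-derivatives. Your decomposition into $K_1^{-1}r_1$ and $K_2 r_2$, the use of Lemmas \ref{lem:6.1} and \ref{lem:6.2} for the $\xi'$-derivatives, the observation that $(|\lambda|^{1/2}+|\xi'|)^{-2}\leq|\lambda|^{-1}$ to absorb the extra decay from $\pd_\lambda K_1^{-1}$, and the explicit cancellation in $\pd_\lambda r_2$ are all exactly what is needed, and the checks of the scalar bounds $|r_j|\leq C$, $|\pd_\lambda r_j|\leq C|\lambda|^{-2}$ are correct.
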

Applying the decompositions \eqref{div:6.1} to the formulas in  \eqref{sol:form5}, we define $\CT^b_{1J}(\lambda)$ and 
$\CT^b_{2J}(\lambda)$ by 
\allowdisplaybreaks
\begin{align*}
&\CT^b_{1j}(\lambda)\bg   =\int_{0}^{\infty}\CF^{-1}_{\xi'}\Bigl[Be^{-(x_N+y_N)B} \frac{1}{\alpha B^2}\CF[g_j]
(\xi', y_N)\Bigr](x')\,dy_N
\nonumber\\
&+ \int^\infty_0\CF^{-1}_{\xi'}\Bigl[B^2e^{-Bx_N}\CM(y_N)\frac{\beta}{\alpha+\beta}
(\sum_{k=1}^{N-1}\frac{i\xi_j \xi_k}{(A+B)AB^2}\CF'[g_k](\xi', y_N) \\
&\hskip6cm-\frac{i\xi_j}{(A+B)B^2}\CF'[g_N](\xi', y_N))\Bigr](x')\,dy_N \\
&-  \int^\infty_0\CF^{-1}_{\xi'}\Bigl[Be^{-B(x_N+y_N)}
\frac{\beta}{\alpha+\beta}
\sum_{k=1}^{N-1}\frac{i\xi_j \xi_k}{AB^2(A+B)}\CF'[g_k](\xi', y_N)\Bigr](x')\,dy_N \\
& -\int^\infty_0\CF^{-1}_{\xi'}\Bigl[B^2\CM(x_N)e^{-By_N}
\sum_{k=1}^{N-1}\frac{\beta i\xi_j i\xi_k}{\alpha K_1B^3} 
\CF[\bg_k](\xi', y_N)\Bigr](x')\,dy_N  \\
&+ \int^\infty_0\CF^{-1}_{\xi'}\Bigl[B^3\CM(x_N)\CM(y_N)
\frac{\beta^2 i\xi_j}{(\alpha+\beta)K_1}
(\sum_{k=1}^{N-1}\frac{|\xi'|^2 \xi_k}{(A+B)AB^3}\CF'[g_k](\xi', y_N) \\
&\hskip6cm -\frac{|\xi'|^2}{(A+B)B^3}\CF'[g_N](\xi', y_N))\Bigr](x')\,dy_N \\
&-\int^\infty_0\CF^{-1}_{\xi'}\Bigl[ B^2\CM(x_N)e^{-By_N}
\frac{\beta^2 i\xi_j}{(\alpha+\beta)K_1}
\sum_{k=1}^{N-1}\frac{|\xi'|^2 \xi_k}{AB^3(A+B)}\CF'[g_k](\xi', y_N)\Bigr](x')\,dy_N, \\
&\CT^b_{1N}(\lambda)\bg =
 \int^\infty_0\CF^{-1}_{\xi'}\Bigl[B^2\CM(x_N)e^{-By_N}
\sum_{k=1}^{N-1}\frac{\beta A i\xi_k}{\alpha K_1B^3} 
\CF[\bg_k](\xi', y_N)\Bigr](x')\,dy_N \\
&-\int^\infty_0\CF^{-1}_{\xi'}\Bigl[B^3\CM(x_N)\CM(y_N)
\frac{\beta^2A}{(\alpha+\beta)K_1}
(\sum_{k=1}^{N-1}\frac{|\xi'|^2 \xi_k}{(A+B)AB^3}\CF'[g_k](\xi', y_N) \\
&\hskip6cm-\frac{|\xi'|^2}{(A+B)B^3}\CF'[g_N](\xi', y_N)\Bigr](x')\,dy_N\nonumber\\
&+\int^\infty_0\CF^{-1}_{\xi'}\Bigl[ B^2\CM(x_N)e^{-By_N}
\frac{\beta^2A}{(\alpha+\beta)K_1}
\sum_{k=1}^{N-1}\frac{|\xi'|^2 \xi_k}{AB^3(A+B)}\CF'[g_k](\xi', y_N)\Bigr](x')\,dy_N, \\
&\CT^b_{2j}(\lambda)\bg \\
&=\int^\infty_0\CF^{-1}_{\xi'}\Bigl[B^2e^{-Bx_N}\CM(y_N)\frac{q(\lambda)}{\lambda}
(\sum_{k=1}^{N-1}\frac{i\xi_j \xi_k}{(A+B)AB^2}\CF'[g_k](\xi', y_N)\\
&\hskip7cm -\frac{i\xi_j}{(A+B)B^2}\CF'[g_N](\xi',y_N))\Bigr](x')\,dy_N \nonumber\\
&-  \int^\infty_0\CF^{-1}_{\xi'}\Bigl[Be^{-B(x_N+y_N)}
\frac{q(\lambda)}{\lambda}
\sum_{k=1}^{N-1}\frac{i\xi_j \xi_k}{AB^2(A+B)}\CF'[g_k](\xi', y_N)\Bigr](x')\,dy_N \\
& -\int^\infty_0\CF^{-1}_{\xi'}\Bigl[B^2\CM(x_N)e^{-By_N}
\frac{K_2(\lambda)}{\lambda}\frac{i\xi_j}{\alpha B^3}
i\xi'\cdot \CF[\bg'](\xi', y_N)\Bigr](x')\,dy_N\\
&+ \int^\infty_0\CF^{-1}_{\xi'}\Bigl[B^3\CM(x_N)\CM(y_N)
(\sum_{k=1}^{N-1}\frac{K_3(\lambda)}{\lambda}
\frac{i\xi_j \xi_k|\xi'|^2}{(A+B)AB^3}
\CF'[g_k](\xi', y_N)\nonumber\\
&\hskip7cm 
-\frac{K_3(\lambda)}{\lambda}\frac{i\xi_j |\xi'|^2 }{(A+B)B^3}\CF'[g_N](\xi', y_N))
\Bigr](x')\,dy_N \nonumber\\
&-\int^\infty_0\CF^{-1}_{\xi'}\Bigl[ B^2\CM(x_N)e^{-By_N}
\sum_{k=1}^{N-1}\frac{K_3(\lambda)}{\lambda}
\frac{i\xi_j |\xi'|^2 \xi_k}{AB^3(A+B)}\CF'[g_k](\xi', y_N)\Bigr](x')\,dy_N, \\
&\CT^b_{2N}(\lambda)\bg =
 \int^\infty_0\CF^{-1}_{\xi'}\Bigl[B^2\CM(x_N)e^{-By_N}\sum_{k=1}^{N-1}
\frac{K_2(\lambda)}{\lambda}\frac{i\xi_k A}{\alpha B^3}
\CF[\bg_k](\xi', y_N)\Bigr](x')\,dy_N \\
&-\int^\infty_0\CF^{-1}_{\xi'}\Bigl[B^3\CM(x_N)\CM(y_N)
(\sum_{k=1}^{N-1}\frac{K_3(\lambda)}{\lambda}
\frac{|\xi'|^2 \xi_k}{(A+B)B^3}\CF'[g_k](\xi', y_N)  \\
&\hskip7cm 
-\frac{K_3(\lambda)}{\lambda}\frac{|\xi'|^2A}{(A+B)B^3}\CF'[g_N](\xi', y_N)\Bigr](x')\,dy_N\nonumber\\
&+\int^\infty_0 \CF^{-1}_{\xi'}\Bigl[B^2\CM(x_N)e^{-By_N}
\sum_{k=1}^{N-1}\frac{K_3(\lambda)}{\lambda}\frac{|\xi'|^2 \xi_k}{B^3(A+B)}
\CF'[g_k](\xi', y_N)\Bigr](x')\,dy_N.
\end{align*}
And then, we have
\begin{equation}
\CS^b(\lambda)\bg = \CT^b_1(\lambda)\bg + \CT^b_2(\lambda)\bg
\end{equation}
where we have set $\CT^b_i(\lambda) = (\CT^b_{i1}(\lambda), \ldots, \CT^b_{iN}(\lambda))$
$(i=1,2)$. \par

To estimate $\CT^b_1(\lambda) = (\CT^b_{11}(\lambda), \ldots, \CT^b_{1N}(\lambda))$, we introduce
the multiplier class $\BN_k$ defined by 
$$\BN_k = \{m(\lambda, \xi') \in \BM_{k}(\Lambda_{\epsilon, \lambda_0}) \mid 
\pd_\lambda m(\lambda, \xi') \in \BM_{k-2}(\Lambda_{\epsilon, \lambda_0})\}.
$$
By Lemmas \ref{lem:3.3} and \ref{lem:6.1}, all the following symbols appearing in the definition of
$\CT^b_{1J}(\lambda)$ ($J=1, \ldots, N-1, N$):
\begin{gather*} 
\frac{1}{B^2}, \quad  \frac{i\xi_ji\xi_k}{(A+B)AB^2}, \quad  \frac{i\xi_j}{(A+B)B^2}, 
\quad \frac{i\xi_ji\xi_k}{K_1B^3}, \quad \frac{i\xi_j|\xi'|^2\xi_k}{K_1(A+B)AB^3}, \\
\frac{i\xi_j|\xi'|^2}{K_1(A+B)B^3}, \quad 
\frac{Ai\xi_k}{K_1B^3}, \quad  \frac{A|\xi'|^2\xi_k}{K_1(A+B)AB^3}, \quad \frac{A|\xi'|^2}{K_1(A+B)B^3}
\end{gather*}
 belong to $\BN_{-2}$. To represent 
$\CT^b_1(\lambda)$ in a little bit simple way, we define symbols $\CP_i$ ($i=1,2,3,4$) by 
\begin{gather*}
\CP_1(x_N, y_N) =B e^{-B(x_N+y_N)}, \quad \CP_2(x_N, y_N) = B^2 e^{-Bx_N}\CM(y_N), \\
\CP_3(x_N, y_N) = B^2 \CM(x_N)e^{-By_N}, \quad \CP_4(x_N, y_N) =B^3 \CM(x_N)\CM(y_N).
\end{gather*}
Then, we may assert that 
there exist four $N\times N$ matrices of $\BN_{-2}$ symbols $\bT^{b, 0}_{1j}(\lambda, \xi')$ 
such that 
$\CT^b_1(\lambda)$ is represented by
\begin{equation}\label{repr:6.1}
\CT^b_1(\lambda)\bg = \int^\infty_0 \CF^{-1}_{\xi'}\Bigl[ \Bigl(\sum_{j=1}^4 \CP_j(x_N, y_N) 
\bT^{b, 0}_{1j}(\lambda, \xi')\Bigr)\CF'[\bg](\xi', y_N)\Bigr](x')\, dy_N.
\end{equation}
First, we shall prove the $L_q$-$L_q$ estimate. 
Below, we write $\nabla' = (\pd_1, \ldots, \pd_{N-1})$, 
$\nabla'' = (\pd_j\pd_k \mid j, k=1, \ldots, N-1)$, and $\nabla''' = (\pd^\delta \mid
|\delta|=3)$.  Corresponding symbols are written 
by $\xi' = (\xi_1, \ldots, \xi_{N-1})$, $(i\xi')^{2} = (i\xi_j i\xi_k \mid j,k=1, \ldots, N-1)$,
and $(\xi')^3=(i\xi_ji\xi_ki\xi_\ell \mid j,k,\ell=1, \ldots, N-1)$.  Using the formulas:
\begin{equation}\label{diffm.1}
\pd_N^\ell \CM(x_N) = (-1)^\ell(A^\ell \CM(x_N) + \frac{A^\ell-B^\ell}{A-B}e^{-Bx_N})\quad(\ell \geq 1),
\end{equation}
we write 
\begin{equation}\label{repr:6.3}
\pd_N^\ell \CT^b_1(\lambda)\bg =  (-1)^\ell
\int^\infty_0 \CF^{-1}_{\xi'}\Bigl[ \Bigl(\sum_{j=1}^4 \CP_j(x_N, y_N) 
\bT^{b,\ell}_{1j}(\lambda, \xi')\Bigr)\CF'[\bg](\xi', y_N)\Bigr](x')\, dy_N
\end{equation}
for $\ell=1,2$, where we have set 
\begin{align*}
\bT^{b, \ell}_{1k}(\lambda, \xi')& = B^{\ell}\bT^{b, 0}_{1k}(\lambda, \xi')+\frac{A^\ell-B^{\ell}}{A-B}B
\bT^{b, 0}_{1k+2}, \quad \bT^{b, \ell}_{1m}(\lambda, \xi') = A^\ell \bT^{b,0}_{1m}(\lambda, \xi')
\end{align*}
for $k=1,2$ and $m=3,4$. We see that $\bT^{b, \ell}_{1j}(\lambda, \xi') \in \BN_{-2+\ell}$
for $\ell=1,2, 3$. 
Then, for $(\lambda, \lambda^{1/2}\nabla, \nabla^2)$, 
using  \eqref{repr:6.1} and \eqref{repr:6.3}, we may write
\begin{align}
&(\lambda, \lambda^{\frac{1}{2}}\nabla', \nabla'')\CT^{b}_1(\lambda)\bg \nonumber \\
&\quad = \int^\infty_0 \CF^{-1}_{\xi'}\Bigl[ \Bigl(\sum_{j=1}^4 \CP_j(x_N, y_N) 
(\lambda, \lambda^{\frac{1}{2}}i\xi', (i\xi')^2)\bT^{b, 0}_{1j}(\lambda, \xi')\Bigr)\CF'[\bg](\xi', y_N)\Bigr](x')\, dy_N,
\\
&(\lambda^{\frac{1}{2}}, \nabla')\pd_N\CT^b_1(\lambda)\bg  \nonumber\\
&\quad  =  (-1)
\int^\infty_0 \CF^{-1}_{\xi'}\Bigl[ \Bigl(\sum_{j=1}^4 \CP_j(x_N, y_N) 
(\lambda^{\frac{1}{2}}, i\xi')\bT^{b, 1}_{1j}(\lambda, \xi')\Bigr)\CF'[\bg](\xi', y_N)\Bigr](x')\, dy_N,  \nonumber\\
&\pd_N^2 \CT^b_1(\lambda)\bg 
=
\int^\infty_0 \CF^{-1}_{\xi'}\Bigl[ \Bigl(\sum_{j=1}^4 \CP_j(x_N, y_N) 
\bT^{b, 2}_{1j}(\lambda, \xi')\Bigr)\CF'[\bg](\xi', y_N)\Bigr](x')\, dy_N.
\label{repdiff.0}
\end{align}
Since $(\lambda, \lambda^{1/2}i\xi', (i\xi')^2)\bT^b_{1j}(\lambda, \xi')$, 
$(\lambda^{1/2}, i\xi')\bT^{b,1}_{1j}$ and $\bT^{b,2}_{1j}(\lambda, \xi')$
belong to $\BM_0$, by Proposition \ref{prop:2}, we have
\begin{equation}\label{est:6.1}
\|(\lambda, \lambda^{\frac{1}{2}}\nabla, \nabla^2)\CT^b_1(\lambda)\bg\|_{L_q(\HS)}
\leq C\|\bg\|_{L_q(\HS)}. 
\end{equation}
\par
Next, we consider $H^1_q$-$H^1_q$ estimate.   To this end, 
we use the formulas:
\begin{equation}\label{diffm:3}\begin{aligned}
&\CP_1(x_N, y_N) = -B^{-1}\pd_{y_N}\CP(x_N, y_N), 
\quad
\CP_2(x_N, y_N) = -A^{-1}\pd_{y_N}(\CP_2(x_N, y_N) - \CP_1(x_N, y_N)),\\
&\CP_3(x_N, y_N) = -B^{-1}\CP_3(x_N, y_N), \quad
\CP_4(x_N, y_N) = A^{-1}\pd_{y_N}(\CP_4(x_N, y_N) - \CP_3(x_N, y_N)),
\end{aligned}\end{equation}
which follows from 
\begin{equation}\label{diffm.2}
e^{-Bt_N} = \frac{1}{B}\pd_N e^{-B(y_N)}, \quad
\CM(y_N) = \pd_N\Bigl\{\frac{-1}{A} \CM(y_N) + \frac{1}{AB}e^{-By_N}\Bigr\}.
\end{equation}
Since  $\bg \in C^\infty_0(\HS)^N$, by integration by parts, 
we rewrite the formulas in \eqref{repr:6.1} and \eqref{repr:6.3} as follows: 
\begin{equation}
\label{repdiff.1}
\pd_{N}^\ell\CT^b_1(\lambda)\bg
= \int^\infty_0 \CF^{-1}_{\xi'}\Bigl[\Bigl(\sum_{j=1}^4 \CP_j(x_N, y_N)\tilde\bT^{b, \ell}_{1j}(\lambda, \xi')\Bigr)
\CF'[\pd_{N}\bg](\xi', y_N)\Bigr](x')\,dy_N.
\end{equation}
Here, we have set
\begin{align*}
&\tilde\bT^{b,\ell}_{11} = B^{-1}\bT^{b,\ell}_{11} - A^{-1}\bT^{b, \ell}_{12}, \quad
\tilde \bT^{b, \ell}_{12} = A^{-1}\bT^{b,2}_{12}, \nonumber \\
&\tilde\bT^{b,\ell}_{13} = B^{-1}\bT^{b, \ell}_{13} + A^{-1}\bT^{b, \ell}_{1 4}, \quad 
\tilde \bT^{b, \ell}_{14} = -A^{-1}\bT^{b, \ell}_{14}.
\end{align*}
Since $\bT^{b, \ell}_{1j} \in \BN_{-2+\ell}$, we see that $\tilde \bT^{b, \ell}_{1j} \in 
\BN_{-3+\ell}$ for $\ell=0,1,2,3$. \par
Using \eqref{repdiff.1}, we may write 
\allowdisplaybreaks
\begin{align}
&\nabla'(\lambda, \lambda^{\frac{1}{2}}\nabla', \nabla'')\CT^{b,0}_1(\lambda)\bg \nonumber \\
&= \int^\infty_0 \CF^{-1}_{\xi'}\Bigl[ \Bigl(\sum_{j=1}^4 \CP_j(x_N, y_N) 
i\xi'(\lambda, \lambda^{\frac{1}{2}}i\xi', (i\xi')^2)\tilde\bT^{b, 0}_{1j}(\lambda, \xi')\Bigr)\CF'[\pd_N\bg](\xi', y_N)\Bigr](x')\, dy_N,
\nonumber \\
&\pd_N(\lambda, \lambda^{\frac{1}{2}}\nabla', \nabla'')\CT^{b,0}_1(\lambda)\bg \nonumber \\
&= \int^\infty_0 \CF^{-1}_{\xi'}\Bigl[ \Bigl(\sum_{j=1}^4 \CP_j(x_N, y_N) 
(\lambda, \lambda^{\frac{1}{2}}i\xi', (i\xi')^2)\tilde\bT^{b, 1}_{1j}(\lambda, \xi')\Bigr)\CF'[\pd_N\bg](\xi', y_N)\Bigr](x')\, dy_N,\\
&\nabla'(\lambda^{\frac{1}{2}}, \nabla')\pd_N\CT^b_1(\lambda)\bg  \nonumber\\
& =  -
\int^\infty_0 \CF^{-1}_{\xi'}\Bigl[ \Bigl(\sum_{j=1}^4 \CP_j(x_N, y_N) 
i\xi'(\lambda^{\frac{1}{2}}, i\xi')\tilde\bT^{b, 1}_{1j}(\lambda, \xi')\Bigr)
\CF'[\pd_N\bg](\xi', y_N)\Bigr](x')\, dy_N,  \nonumber\\
&\pd_N(\lambda^{\frac{1}{2}}, \nabla')\pd_N\CT^b_1(\lambda)\bg  \nonumber\\
& =  -
\int^\infty_0 \CF^{-1}_{\xi'}\Bigl[ \Bigl(\sum_{j=1}^4 \CP_j(x_N, y_N) 
(\lambda^{\frac{1}{2}}, i\xi')\tilde\bT^{b, 2}_{1j}(\lambda, \xi')\Bigr)\CF'[\pd_N\bg](\xi', y_N)\Bigr](x')\, dy_N,  \nonumber\\
&\nabla'\pd_N^2 \CT^b_1(\lambda)\bg 
=
\int^\infty_0 \CF^{-1}_{\xi'}\Bigl[ \Bigl(\sum_{j=1}^4 \CP_j(x_N, y_N) 
i\xi'\tilde\bT^{b, 2}_{1j}(\lambda, \xi')\Bigr)\CF'[\pd_N\bg](\xi', y_N)\Bigr](x')\, dy_N,
\nonumber \\
&\pd_N\pd_N^2 \CT^b_1(\lambda)\bg 
=
\int^\infty_0 \CF^{-1}_{\xi'}\Bigl[ \Bigl(\sum_{j=1}^4 \CP_j(x_N, y_N) 
\tilde\bT^{b, 3}_{1j}(\lambda, \xi')\Bigr)\CF'[\pd_N\bg](\xi', y_N)\Bigr](x')\, dy_N.
\label{repdiff.2}
\end{align}
Since the following symbols:
\begin{gather*}
i\xi'(\lambda, \lambda^{\frac{1}{2}}i\xi', (i\xi')^2)\tilde\bT^{b, 0}_{1j}(\lambda, \xi'), \quad 
(\lambda, \lambda^{\frac{1}{2}}i\xi', (i\xi')^2)\tilde\bT^{b, 1}_{1j}(\lambda, \xi'), \quad 
i\xi'(\lambda^{\frac{1}{2}}, i\xi')\tilde\bT^{b, 1}_{1j}(\lambda, \xi'), \\
(\lambda^{\frac{1}{2}}, i\xi')\tilde\bT^{b, 2}_{1j}(\lambda, \xi'), \quad
i\xi'\tilde\bT^{b, 2}_{1j}(\lambda, \xi'), \quad \tilde\bT^{b, 3}_{1j}(\lambda, \xi')
\end{gather*}
belong to $\BM_0(\Lambda_{\epsilon, \lambda_0})$, by Proposition \ref{prop:2}, 
we have
\begin{equation}\label{est:6.2}
\|\nabla (\lambda, \lambda^{\frac{1}{2}}\nabla, \nabla^2)\CT^b_1(\lambda)\bg\|_{L_q(\HS)}
\leq C\|\pd_N\bg\|_{L_q(\HS)} \leq C\|\nabla \bg\|_{L_q(\HS)}
\end{equation}
for any $\lambda \in \Lambda_{\epsilon, \lambda_0}$ and $\bg \in C^\infty_0(\HS)^N$. \par
We also have
\begin{align}
&\lambda^{\frac{1}{2}}(\lambda, \lambda^{\frac{1}{2}}\nabla', \nabla'')\CT^{b,0}_1(\lambda)\bg \nonumber \\
&\quad = \int^\infty_0 \CF^{-1}_{\xi'}\Bigl[ \Bigl(\sum_{j=1}^4 \CP_j(x_N, y_N) 
\lambda^{\frac{1}{2}}(\lambda, \lambda^{\frac{1}{2}}i\xi', (i\xi')^2)\tilde\bT^{b, 0}_{1j}(\lambda, \xi')\Bigr)
\CF'[\pd_N\bg](\xi', y_N)\Bigr](x')\, dy_N,
\nonumber \\
&\lambda^{\frac{1}{2}}(\lambda^{\frac{1}{2}}, \nabla')\pd_N\CT^b_1(\lambda)\bg  \nonumber\\
&\quad  =  -
\int^\infty_0 \CF^{-1}_{\xi'}\Bigl[ \Bigl(\sum_{j=1}^4 \CP_j(x_N, y_N) 
\lambda^{\frac{1}{2}}(\lambda^{\frac{1}{2}}, i\xi')\tilde\bT^{b, 1}_{1j}(\lambda, \xi')\Bigr)
\CF'[\pd_N\bg](\xi', y_N)\Bigr](x')\, dy_N,  \nonumber\\
&\lambda^{\frac{1}{2}}\pd_N^2 \CT^b_1(\lambda)\bg 
=
\int^\infty_0 \CF^{-1}_{\xi'}\Bigl[ \Bigl(\sum_{j=1}^4 \CP_j(x_N, y_N) 
\lambda^{\frac{1}{2}}\tilde\bT^{b, 2}_{1j}(\lambda, \xi')\Bigr)\CF'[\pd_N\bg](\xi', y_N)\Bigr](x')\, dy_N. 
\label{repdiff.3}
\end{align}
Since the following symbols:
\begin{gather*}
\lambda^{\frac{1}{2}}(\lambda, \lambda^{\frac{1}{2}}i\xi', (i\xi')^2)\tilde\bT^{b, 0}_{1j}(\lambda, \xi'), \quad 
\lambda^{\frac{1}{2}}(\lambda^{\frac{1}{2}}, i\xi')\tilde\bT^{b, 1}_{1j}(\lambda, \xi'), \quad 
\lambda^{\frac{1}{2}}\tilde\bT^{b, 2}_{1j}(\lambda, \xi')
\end{gather*}
belong to $\BM_0(\Lambda_{\epsilon, \lambda_0})$, by Proposition \ref{prop:2}, 
we have
\begin{equation}\label{est:6.3}
\|(\lambda, \lambda^{\frac{1}{2}}\nabla, \nabla^2)\CT^b_1(\lambda)\bg\|_{L_q(\HS)}
 \leq C|\lambda|^{-\frac{1}{2}}\|\nabla \bg\|_{L_q(\HS)}
\end{equation}
for any $\lambda \in \Lambda_{\epsilon, \lambda_0}$ and $\bg \in C^\infty_0(\HS)^N$. \par 


Now, we consider the dual operator $\pd_N^\ell \CT^b_1(\lambda)^*$
 of $\pd_N^\ell\CT^b_1(\lambda)$ acting on
$\bh \in C^\infty_0(\HS)^N$, which satisfies the equality:
$|(\pd_N^\ell\CT^b_1(\lambda)^*\bg, \bh)| = |(\bg, \CT^b_1(\lambda)^*\bh)|$. In fact, 
from \eqref{repr:6.1} and \eqref{repr:6.3} by Fubini's theorem and 
Plancherel's theorem, we have
\begin{align*}
&(-1)^\ell(\pd_N^\ell \CT^b_1(\lambda)\bg, \bh)\\
& = \int^\infty_0\int_{\BR^{N-1}}
\Bigl(\int^\infty_0 \CF^{-1}_{\xi'}\Bigl[ \Bigl(\sum_{j=1}^4 \CP_j(x_N, y_N) 
\bT^{b, \ell}_{1j}(\lambda, \xi')\Bigr)\CF'[\bg](\xi', y_N)\Bigr](x')\, dy_N\Bigr)\bh(x', x_N)\,dx'dx_N\\
&=\int^\infty_0\int^\infty_0(\int_{\BR^{N-1}}\Bigl(\sum_{j=1}^4 \CP_j(x_N, y_N) 
\bT^{b, \ell}_{1j}(\lambda, \xi')\Bigr)\CF'[\bg](\xi', y_N)\CF^{-1}_{\xi'}[\bh](\xi', x_N)\,d\xi'\Bigr)\,dy_Ndx_N
\\
& = \int^\infty_0\int_{\BR^{N-s}} \bg(y', y_N)\Bigl(
\int^\infty_0\CF'\Bigl[\Bigl(\sum_{j=1}^4 \CP_j(x_N, y_N) 
\bT^{b, \ell}_{1j}(\lambda, \xi')\Bigr)\CF^{-1}_{\xi'}[\bh](\xi', x_N)\Bigr](y')\,dx_N\Bigr)\,dy'dy_N,
\end{align*}
which yields 
\begin{equation}\label{dual:6.1}\pd_N^\ell\CT^b_1(\lambda)^*\bh = 
\int^\infty_0\CF'\Bigl[\Bigl(\sum_{j=1}^4 \CP_j(x_N, y_N) 
\bT^{b, \ell}_{1j}(\lambda, \xi')\Bigr)\CF^{-1}_{\xi'}[\bh](\xi', x_N)\Bigr](y')\,dx_N.
\end{equation}
Namely, $\pd_N^\ell\CT^b_1(\lambda)^*\bh$ is obtained by exchanging $\CF^{-1}_{\xi'}$ and 
$\CF'$ in the representation of $\pd_N^\ell\CT^b_1(\lambda)$.  Thus, employing the completely 
same argument as in proving \eqref{est:6.1}, \eqref{est:6.2}, and \eqref{est:6.3}, 
we have 
\begin{equation}\label{est:6.4}\begin{aligned}
\|(\lambda, \lambda^{\frac{1}{2}}\nabla, \nabla^2)\CT^b_1(\lambda)^*\bh\|_{L_{q'}(\HS)}
&\leq C\|\bh\|_{L_{q'}(\HS)}, \\
\|(\lambda, \lambda^{\frac{1}{2}}\nabla, \nabla^2)\CT^b_1(\lambda)^*\bh\|_{H^1_{q'}(\HS)}
&\leq C\|\bh\|_{H^1_{q'}(\HS)}, \\
\|(\lambda, \lambda^{\frac{1}{2}}\nabla, \nabla^2)\CT^b_1(\lambda)^*\bh\|_{L_{q'}(\HS)}
&\leq C|\lambda|^{-\frac{1}{2}}\|\bh\|_{H^1_{q'}(\HS)}.
\end{aligned}\end{equation}
From \eqref{est:6.1}, \eqref{est:6.2}, \eqref{est:6.3}, and \eqref{est:6.4} it follows that 
$\bT^b_1(\lambda)$ satisfies Assumption \ref{assump:4.1}, and so by
Theorem \ref{spectralthm:1} we have obtained 
\begin{equation}\label{thm:6.1.7}\begin{aligned}
\|(\lambda, \lambda^{\frac{1}{2}}\nabla, \nabla^2)\CT^b_1(\lambda)\bg\|_{B^s_{q,r}(\HS)}
&\leq C\|\bg\|_{B^s_{q,r}(\HS)}, \\
\|(\lambda, \lambda^{\frac{1}{2}}\nabla, \nabla^2)\CT^b_1(\lambda)\bg\|_{B^s_{q,r}(\HS)}
&\leq C|\lambda|^{-\frac{\sigma}{2}}\|\bg\|_{B^{s+\sigma}_{q,r}(\HS)}
\end{aligned}\end{equation}
for any $\lambda \in \Lambda_{\epsilon, \lambda_0}$ and $\bg \in C^\infty_0(\HS)^N$. 
In particular, we have obtained \eqref{thm:6.1.3}. \par

Now, we consider $\pd_\lambda \bT^b_1(\lambda)$, which is represented by
\begin{align*}
\pd_\lambda\CT^b_1(\lambda)\bg &= \int^\infty_0 \CF^{-1}_{\xi'}\Bigl[ \Bigl(\sum_{j=1}^4
B^2(\pd_\lambda \CP_j(x_N, y_N) )
B^{-2}\bT^{b,0}_{1j}(\lambda, \xi')\Bigr)\CF'[\bg](\xi', y_N)\Bigr](x')\, dy_N \\
& + 
\int^\infty_0 \CF^{-1}_{\xi'}\Bigl[ \Bigl(\sum_{j=1}^4 \CP_j(x_N, y_N) 
(\pd_\lambda\bT^{b, 0}_{1j}(\lambda, \xi'))\Bigr)\CF'[\bg](\xi', y_N)\Bigr](x')\, dy_N
\end{align*}
as follows from \eqref{repr:6.1}.  Moreover, from \eqref{repr:6.3} and \eqref{repdiff.1}, 
we have
\begin{align*}
\pd_N^\ell\pd_\lambda  \CT^b_1(\lambda)\bg& =  (-1)^\ell
\int^\infty_0 \CF^{-1}_{\xi'}\Bigl[ \Bigl(\sum_{j=1}^4B^2(\pd_\lambda \CP_j(x_N, y_N) )
B^{-2}\bT^{b,\ell}_{1j}(\lambda, \xi')\Bigr)\CF'[\bg](\xi', y_N)\Bigr](x')\, dy_N \\
&+ (-1)^\ell
\int^\infty_0 \CF^{-1}_{\xi'}\Bigl[ \Bigl(\sum_{j=1}^4 \CP_j(x_N, y_N) )
(\pd_\lambda \bT^{b,\ell}_{1j}(\lambda, \xi'))\Bigr)\CF'[\bg](\xi', y_N)\Bigr](x')\, dy_N;
\\
\pd_{N}^\ell\pd_\lambda\CT^b_1(\lambda)\bg
&= \int^\infty_0 \CF^{-1}_{\xi'}\Bigl[\Bigl(\sum_{j=1}^4 B^2(\pd_\lambda\CP_j(x_N, y_N))
B^{-2}\tilde\bT^{b, \ell}_{1j}(\lambda, \xi')\Bigr)
\CF'[\pd_{N}\bg](\xi', y_N)\Bigr](x')\,dy_N \\
&+  \int^\infty_0 \CF^{-1}_{\xi'}\Bigl[\Bigl(\sum_{j=1}^4 
\CP_j(x_N, y_N)(\pd_\lambda \tilde\bT^{b, \ell}_{1j}(\lambda, \xi'))\Bigr)
\CF'[\pd_{N}\bg](\xi', y_N)\Bigr](x')\,dy_N.
\end{align*}
If we write
\begin{align*}
\lambda \pd_N^\ell\pd_\lambda  \CT^b_1(\lambda)\bg& =  (-1)^\ell
\int^\infty_0 \CF^{-1}_{\xi'}\Bigl[ \Bigl(\sum_{j=1}^4B^2(\pd_\lambda \CP_j(x_N, y_N) )
\lambda B^{-2}\bT^{b,\ell}_{1j}(\lambda, \xi')\Bigr)\CF'[\bg](\xi', y_N)\Bigr](x')\, dy_N \\
&+ (-1)^\ell
\int^\infty_0 \CF^{-1}_{\xi'}\Bigl[ \Bigl(\sum_{j=1}^4 \CP_j(x_N, y_N) )
\lambda (\pd_\lambda \bT^{b,\ell}_{1j}(\lambda, \xi'))\Bigr)\CF'[\bg](\xi', y_N)\Bigr](x')\, dy_N;\\
\lambda \pd_{N}^\ell\pd_\lambda \CT^b_1(\lambda)\bg
&= \int^\infty_0 \CF^{-1}_{\xi'}\Bigl[\Bigl(\sum_{j=1}^4 B^2(\pd_\lambda\CP_j(x_N, y_N))
\lambda B^{-2}\tilde\bT^{b, \ell}_{1j}(\lambda, \xi')\Bigr)
\CF'[\pd_{N}\bg](\xi', y_N)\Bigr](x')\,dy_N \\
&+  \int^\infty_0 \CF^{-1}_{\xi'}\Bigl[\Bigl(\sum_{j=1}^4 
\CP_j(x_N, y_N)\lambda (\pd_\lambda \tilde\bT^{b, \ell}_{1j}(\lambda, \xi'))\Bigr)
\CF'[\pd_{N}\bg](\xi', y_N)\Bigr](x')\,dy_N
\end{align*}
for $\ell=0, \ldots, 3$, then using the facts that 
$\lambda B^{-2}\bT^{b,\ell}_{1j}(\lambda, \xi') \in \BM_{-2+\ell}(\Lambda_{\epsilon, \lambda_0})$,  
$\lambda (\pd_\lambda \bT^{b, \ell}_{1j}(\lambda, \xi'))\in \BM_{-2+\ell}(\Lambda_{\epsilon, \lambda_0})$,
$\lambda B^{-2}\tilde \bT^{b,\ell}_{1j}(\lambda, \xi') \in \BM_{-3+\ell}(\Lambda_{\epsilon, \lambda_0})$,
and $\lambda (\pd_\lambda\tilde\bT^{b,\ell}_{1j}(\lambda, \xi'))
 \in \BM_{-3+\ell}(\Lambda_{\epsilon, \lambda_0})$
for $\ell=0,1,2,3$ and employing the same argument as in the proof of 
\eqref{est:6.1}, \eqref{est:6.2}, \eqref{est:6.3}, and \eqref{est:6.4},
by  Propositions \ref{prop:2} and \ref{prop:3}, 
we have
\begin{equation}\label{est:6.5}\begin{aligned}
\|(\lambda, \lambda^{\frac{1}{2}}\nabla, \nabla^2)\pd_\lambda\CT^b_1(\lambda)\bg\|_{L_q(\HS)}
\leq C|\lambda|^{-1}\|\bg\|_{L_q(\HS)}, \\
\|(\lambda, \lambda^{\frac{1}{2}}\nabla, \nabla^2)\pd_\lambda\CT^b_1(\lambda)\bg\|_{H^1_q(\HS)}
\leq C|\lambda|^{-1}\|\bg\|_{H^1_q(\HS)}.
\end{aligned}\end{equation}
Moreover, writing 
\begin{align*}
&\lambda^{\frac{1}{2}} \pd_N^\ell\pd_\lambda  \CT^b_1(\lambda)\bg \\
&\quad =  (-1)^\ell
\int^\infty_0 \CF^{-1}_{\xi'}\Bigl[ \Bigl(\sum_{j=1}^4B^2(\pd_\lambda \CP_j(x_N, y_N) )
\lambda^{\frac{1}{2}} B^{-2}\bT^{b,\ell}_{1j}(\lambda, \xi')\Bigr)\CF'[\bg](\xi', y_N)\Bigr](x')\, dy_N \\
&\quad + (-1)^\ell
\int^\infty_0 \CF^{-1}_{\xi'}\Bigl[ \Bigl(\sum_{j=1}^4 \CP_j(x_N, y_N) )
\lambda^{\frac{1}{2}} (\pd_\lambda \bT^{b,\ell}_{1j}(\lambda, \xi'))\Bigr)\CF'[\bg](\xi', y_N)\Bigr](x')\, dy_N
\end{align*}
and using the facts that 
$\lambda^{1/2} B^{-2}\bT^{b,\ell}_{1j}(\lambda, \xi') \in \BM_{-3+\ell}(\Lambda_{\epsilon, \lambda_0})$
and $\lambda^{1/2} (\pd_\lambda \bT^{b, \ell}_{1j}(\lambda, \xi'))
\in \BN_{-3+\ell}(\Lambda_{\epsilon, \lambda_0})$ for $\ell=0, 1, 2, 3$,
by Propositions \ref{prop:2} and \ref{prop:3}, we have
\begin{equation}\label{est:6.6} 
\|(\lambda, \lambda^{\frac{1}{2}}\nabla, \nabla^2)\pd_\lambda\CT^b_1(\lambda)\bg\|_{H^1_q(\HS)}
\leq C|\lambda|^{-\frac{1}{2}}\|\bg\|_{L_q(\HS)},
\end{equation}
for any $\lambda \in \Lambda_{\epsilon, \lambda_0}$ and $\bg \in C^\infty_0(\HS)^N$.
\par
Employing the same argument as in the proof of \eqref{dual:6.1}, we see that 
the dual operators $\pd_N^\ell\pd_\lambda \CT^b_1(\lambda)^*$  
of $\pd_N^\ell\pd_\lambda \CT^b_1(\lambda)$ are defined by 
\begin{align*}
 \pd_N^\ell\pd_\lambda  \CT^b_1(\lambda)^*\bh& = 
\int^\infty_0 \CF'\Bigl[ \Bigl(\sum_{j=1}^4B^2(\pd_\lambda \CP_j(x_N, y_N) )
 B^{-2}\bT^{b,\ell}_{1j}(\lambda, \xi')\Bigr)\CF^{-1}_{\xi'}[\bh](\xi', y_N)\Bigr](x')\, dy_N \\
&+ 
\int^\infty_0 \CF'\Bigl[ \Bigl(\sum_{j=1}^4 \CP_j(x_N, y_N) )
 (\pd_\lambda \bT^{b,\ell}_{1j}(\lambda, \xi'))\Bigr)\CF^{-1}_{\xi'}[\bh](\xi', y_N)\Bigr](x')\, dy_N.
\end{align*}
Employing the same argument as in the proof of \eqref{repdiff.1}, we have 
\begin{align*}\pd_{N}^\ell\pd_\lambda \CT^b_1(\lambda)^*\bh
&= \int^\infty_0 \CF'\Bigl[\Bigl(\sum_{j=1}^4 B^2(\pd_\lambda\CP_j(x_N, y_N))
B^{-2}\tilde\bT^{b, \ell}_{1j}(\lambda, \xi')\Bigr)
\CF^{-1}_{\xi'}[\pd_{N}\bh](\xi', y_N)\Bigr](x')\,dy_N \\
&+  \int^\infty_0 \CF'\Bigl[\Bigl(\sum_{j=1}^4 
\CP_j(x_N, y_N) (\pd_\lambda \tilde\bT^{b, \ell}_{1j}(\lambda, \xi'))\Bigr)
\CF^{-1}_{\xi'}[\pd_{N}\bh](\xi', y_N)\Bigr](x')\,dy_N.
\end{align*}
Thus, we have
\begin{equation}\label{est:6.7}\begin{aligned}
\|(\lambda, \lambda^{\frac{1}{2}}\nabla, \nabla^2)\pd_\lambda \CT^b_1(\lambda)^*\bh\|_{L_{q'}(\HS)}
\leq C|\lambda|^{-1}\|\bh\|_{L_{q'}(\HS)}, \\
\|(\lambda, \lambda^{\frac{1}{2}}\nabla, \nabla^2)\pd_\lambda \CT^b_1(\lambda)^*\bh\|_{H^1_{q'}(\HS)}
\leq C|\lambda|^{-1}\|\bh\|_{H^1_{q'}(\HS)}, \\
\|(\lambda, \lambda^{\frac{1}{2}}\nabla, \nabla^2)\pd_\lambda \CT^b_1(\lambda)^*\bh\|_{H^1_{q'}(\HS)}
\leq C|\lambda|^{-\frac{1}{2}}\|\bh\|_{L_{q'}(\HS)}.
\end{aligned}\end{equation}
From \eqref{est:6.5}, \eqref{est:6.6}, and \eqref{est:6.7} it follows that 
$\pd_\lambda \CT^b_1(\lambda)$ satisfies Assumption \ref{assump:4.2}, 
and so by Theorem \ref{spectralthm:1}, we have
\begin{equation}\label{thm:6.1.8}\begin{aligned}
\|\pd_\lambda \CT^b_1(\lambda)\bg\|_{B^s_{q,r}(\HS)} &\leq C|\lambda|^{-1}\|\bg\|_{B^s_{q,r}(\HS)},
\\
\|\pd_\lambda \CT^b_1(\lambda)\bg\|_{B^s_{q,r}(\HS)} 
&\leq C|\lambda|^{-(1-\frac{\sigma}{2})}\|\bg\|_{B^{s-\sigma}_{q,r}(\HS)}
\end{aligned}\end{equation}
for any $\lambda \in \Lambda_{\epsilon, \lambda_0}$ and $\bg \in C^\infty_0(\HS)^N$.
In particular, we have \eqref{thm:6.1.4}. \par

Now, we consider $\CT^b_2(\lambda)$ and we 
shall prove \eqref{thm:6.1.5} and \eqref{thm:6.1.6}. To this end, we introduce
the class of multipliers $\BN^d_k$ defined by 
\begin{align*} 
\BN_k^d  = &\bigl\{m(\lambda, \xi') \in \BM_k(\Lambda_{\epsilon, \lambda_0})
\mid \text{there hold} \\
&\hskip1cm
|D_{\xi'}^{\delta'} m(\lambda, \xi')| \leq C|\lambda|^{-1}(|\lambda|^{\frac{1}{2}}
+|\xi'|)^{k-|\delta'|}
\\
&\hskip1cm
|D_{\xi'}^{\delta'} (\pd_\lambda m(\lambda, \xi'))| \leq C|\lambda|^{-2}
(|\lambda|^{\frac{1}{2}}+|\xi'|)^{k-|\delta'|} 
\\
&\text{for any multi-index $\delta' \in \BN_0^{N-1}$, 
$\lambda \in \Lambda_{\epsilon, \lambda_0}$
and $\xi' \in \BR^{N-1}$} \bigr\}.
\end{align*}
For $m_1(\lambda, \xi') \in \BN_k$
and $m_2(\lambda, \xi') \in \BN^d_\ell$, we have $m_1(\lambda, \xi')m_2(\lambda, \xi')
\in \BN^d_{k+\ell}$. For $m(\lambda, \xi') \in \BN_{-2}$, we have 
$q(\lambda)\lambda^{-1}m(\lambda, \xi') \in \BN^d_{-2}$.  
From Lemma \ref{lem:6.2} and Corollary \ref{lem:cor:6.2}, it follows that 
$K_2(\lambda)\lambda^{-1}\in \BN^d_{-1}
$, $K_3(\lambda)\lambda^{-1} \in \BN^d_{-1}$, and so 
$K_2(\lambda)\lambda^{-1}m(\lambda, \xi') \in \BN^d_{-2}$ and 
$K_3(\lambda)\lambda^{-1}m(\lambda, \xi') \in \BN^d_{-2}$ for 
$m(\lambda, \xi') \in \BN_{-1}$. From these observations, 
we see that all the following symbols appearing in the definition of $\CT^b_{2J}(\lambda)$: 
\begin{gather*}
\frac{q(\lambda)}{\lambda}\frac{i\xi_ji\xi_k}{(A+B)AB^2}, \quad \frac{q(\lambda)}{\lambda}\frac{i\xi_j}{(A+B)B^2},
\quad \frac{K_2(\lambda)}{\lambda}\frac{i\xi_j}{B^3}, \quad \frac{K_3(\lambda)}{\lambda}
\frac{i\xi_j i\xi_k|\xi'|^2}{ (A+B)AB^3},
\\
\frac{K_3(\lambda)}{\lambda}\frac{i\xi_j|\xi'|^2}{ (A+B)B^3}, 
\quad \frac{K_2(\lambda)}{\lambda} \frac{i\xi_k A}{ B^3},
\quad 
\frac{K_3(\lambda)}{\lambda}\frac{|\xi'|^2A}{(A+B)B^3}
\end{gather*}
belong to $\BN^d_{-2}$. Thus, we may assert that 
there exist
four $N\times N$ matrices of $\BN^d_{-2}$ symbols $\bT^{b,0}_{2j}$ ($j=1, 2, 3, 4$)
such that $\CT^b_2(\lambda)$ is represented by 
\begin{equation}
\CT^b_2(\lambda)\bg = \int^\infty_0\CF^{-1}_{\xi'}\Bigl[\Bigl(
\sum_{j=1}^4 \CP_j(x_N, y_N)\bT^{b,0}_{2j}(\lambda, \xi')\Bigr)
\CF'[\bg](\xi', y_N)\Bigr](x')\,dy_N.
\label{repr:6.2}
\end{equation}
Employing the same arguments as in \eqref{repr:6.3} and \eqref{repdiff.1}, we have
\begin{equation}\label{repr:6.3.2}
\pd_N^\ell \CT^b_2(\lambda)\bg =  (-1)^\ell
\int^\infty_0 \CF^{-1}_{\xi'}\Bigl[ \Bigl(\sum_{j=1}^4 \CP_j(x_N, y_N) 
\bT^{b,\ell}_{2 j}(\lambda, \xi')\Bigr)\CF'[\bg](\xi', y_N)\Bigr](x')\, dy_N
\end{equation}
for $\ell=1,2$, where we have set 
\begin{align*}
\bT^{b, \ell}_{2k}(\lambda, \xi')& = B^{\ell}\bT^{b}_{1k}(\lambda, \xi')+\frac{A^\ell-B^{\ell}}{A-B}B
\bT^{b}_{2k+2}, \quad \bT^{b, \ell}_{2\ell}(\lambda, \xi') = A^\ell \bT^b_{2\ell}(\lambda, \xi')
\end{align*}
for $k=1,2$ and $\ell=3,4$,  and 
\begin{equation}
\label{repdiff.1.2}
\pd_{N}^\ell\CT^b_2(\lambda)\bg
= \int^\infty_0 \CF^{-1}_{\xi'}\Bigl[\Bigl(\sum_{j=1}^4 \CP_j(x_N, y_N)\tilde\bT^{b, \ell}_{2j}(\lambda, \xi')\Bigr)
\CF'[\pd_{N}\bg](\xi', y_N)\Bigr](x')\,dy_N.
\end{equation}
Here, we have set
\begin{align*}
&\tilde\bT^{b,\ell}_{21} = B^{-1}\bT^{b,\ell}_{21} - A^{-1}\bT^{b, \ell}_{22}, \quad
\tilde \bT^{b, \ell}_{22} = A^{-1}\bT^{b,2}_{22}, \nonumber \\
&\tilde\bT^{b,\ell}_{23} = B^{-1}\bT^{b, \ell}_{23} + A^{-1}\bT^{b, \ell}_{24}, \quad 
\tilde \bT^{b, \ell}_{24} = -A^{-1}\bT^{b, \ell}_{24}.
\end{align*}
Since $\bT^{b,\ell}_{2k}(\lambda, \xi') \in \BN^d_{-2+\ell}$ ($\ell=0,1,2,3$), applying  Proposition \ref{prop:2}
to the formulas in \eqref{repr:6.3.2} yields 
\begin{align}
\|(\lambda, \lambda^{\frac{1}{2}}\nabla', \nabla'')\CT^b_2(\lambda)\bg\|_{L_q(\HS)}
& \leq C|\lambda|^{-1}\|\bg\|_{L_q(\HS)}, \nonumber\\
\|(\lambda^{\frac{1}{2}}, \nabla')\pd_N\CT^b_2(\lambda)\bg\|_{L_q(\HS)}
& \leq C|\lambda|^{-1}\|\bg\|_{L_q(\HS)}, \nonumber \\
\|\pd_N^2\CT^b_2(\lambda)\bg\|_{L_q(\HS)}
& \leq C|\lambda|^{-1}\|\bg\|_{L_q(\HS)}. \label{est:6.1.2*}
\end{align}
Since $\tilde\bT^{b,\ell}_{2k}(\lambda, \xi') \in \BN^d_{-3+\ell}$ ($\ell=0,1,2,3$), 
applying Proposition \ref{prop:2} to the formulas in \eqref{repdiff.1.2} yields 
\begin{align}
\|\nabla'(\lambda, \lambda^{\frac{1}{2}}\nabla', \nabla'')\CT^b_2(\lambda)\bg\|_{L_q(\HS)}
& \leq C|\lambda|^{-1}\|\pd_N\bg\|_{L_q(\HS)}, \nonumber \\
\|\pd_N(\lambda, \lambda^{\frac{1}{2}}\nabla', \nabla'')\CT^b_2(\lambda)\bg\|_{L_q(\HS)}
& \leq C|\lambda|^{-1}\|\pd_N\bg\|_{L_q(\HS)}, \nonumber \\
\|\nabla'(\lambda^{\frac{1}{2}}, \nabla')\pd_N\CT^b_2(\lambda)\bg\|_{L_q(\HS)}
& \leq C|\lambda|^{-1}\|\pd_N\bg\|_{L_q(\HS)}, \nonumber \\
\|\pd_N(\lambda^{\frac{1}{2}}, \nabla')\pd_N\CT^b_2(\lambda)\bg\|_{L_q(\HS)}
& \leq C|\lambda|^{-1}\|\pd_N\bg\|_{L_q(\HS)}, \nonumber\\
\|\nabla'\pd_N^2\CT^b_2(\lambda)\bg\|_{L_q(\HS)}
& \leq C|\lambda|^{-1}\|\pd_N\bg\|_{L_q(\HS)}, \nonumber \\
\|\pd_N^3\CT^b_2(\lambda)\bg\|_{L_q(\HS)}
& \leq C|\lambda|^{-1}\|\pd_N\bg\|_{L_q(\HS)}.\label{est:6.1.3*}
\end{align}
Combining these estimates yields
\begin{align}
\|(\lambda, \lambda^{\frac{1}{2}}\nabla, \nabla^2)\CT^b_2(\lambda)\bg\|_{L_q(\HS)}
& \leq C|\lambda|^{-1}\|\bg\|_{L_q(\HS)}, \label{est:6.1.2} \\
\|(\lambda, \lambda^{\frac{1}{2}}\nabla, \nabla^2)\CT^b_2(\lambda)\bg\|_{H^1_q(\HS)}
& \leq C|\lambda|^{-1}\|\bg\|_{H^1_q(\HS)}. \label{est:6.1.3}
\end{align}
When $0 < s < 1/q$, applying real interpolation to \eqref{est:6.1.2} and \eqref{est:6.1.3}
yields
\begin{equation}\label{est:6.1.4}
\|(\lambda, \lambda^{\frac{1}{2}}\nabla, \nabla^2)\CT^b_2(\lambda)\bg\|_{B^s_{q,r}(\HS)}
\leq C|\lambda|^{-1}\|\bg\|_{B^s_{q,r}(\HS)}.
\end{equation}\par
The dual operator $(\lambda, \lambda^{1/2}\nabla, \nabla^2)\CT^b_2(\lambda)^*$ of 
$(\lambda, \lambda^{1/2}\nabla, \nabla^2)\CT^b_2(\lambda)$ is obtained by exchanging
$\CF^{-1}_{\xi'}$ and $\CF'$ in \eqref{repr:6.2} and  \eqref{repr:6.3.2}.  Thus, 
employing the same argument as in  \eqref{est:6.1.2} and \eqref{est:6.1.3}, we have
\begin{align*}
\|((\lambda, \lambda^{\frac{1}{2}}\nabla, \nabla^2)\CT^b_2(\lambda))^*\bh\|_{L_{q'}(\HS)}
& \leq C|\lambda|^{-1}\|\bh\|_{L_{q'}(\HS)}, \\
\|((\lambda, \lambda^{\frac{1}{2}}\nabla, \nabla^2)\CT^b_2(\lambda))^*\bh\|_{H^1_{q'}(\HS)}
& \leq C|\lambda|^{-1}\|\bh\|_{H^1_{q'}(\HS)}
\end{align*}
for any $\lambda \in \Lambda_{\epsilon, \lambda_0}$ and $\bh \in C^\infty_0(\HS)^N$. 
Thus, by duality argument, we have
\begin{align}
\|(\lambda, \lambda^{\frac{1}{2}}\nabla, \nabla^2)\CT^b_2(\lambda)\bg\|_{L_q(\HS)}
& \leq C|\lambda|^{-1}\|\bg\|_{L_q(\HS)}, \label{est:6.1.5} \\
\|(\lambda, \lambda^{\frac{1}{2}}\nabla, \nabla^2)\CT^b_2(\lambda)\bg\|_{H^{-1}_{q}(\HS)}
& \leq C|\lambda|^{-1}\|\bg\|_{H^{-1}_{q}(\HS)}. \label{est:6.1.6}
\end{align}
Applying real interpolation \eqref{est:6.1.5} and \eqref{est:6.1.6} yields 
\begin{equation}\label{est:6.1.7}
\|(\lambda, \lambda^{\frac{1}{2}}\nabla, \nabla^2)\CT^b_2(\lambda)\bg\|_{B^s_{q,r}(\HS)}
\leq C|\lambda|^{-1}\|\bg\|_{B^s_{q,r}(\HS)},
\end{equation}
provided that $-1+1/q < s < 0$. Finally, interpolating \eqref{est:6.1.4} and \eqref{est:6.1.7} 
yields
\begin{equation}\label{est:6.1.8}
\|(\lambda, \lambda^{\frac{1}{2}}\nabla, \nabla^2)\CT^b_2(\lambda)\bg\|_{B^0_{q,r}(\HS)}
\leq C|\lambda|^{-1}\|\bg\|_{B^0_{q,r}(\HS)}.
\end{equation}
Thus, we have obtained \eqref{thm:6.1.5}.\par
Now, we consider $\pd_\lambda \bT^b_2(\lambda)$, which is represented by
\begin{align*}
\pd_\lambda\CT^b_2(\lambda)\bg &= \int^\infty_0 \CF^{-1}_{\xi'}\Bigl[ \Bigl(\sum_{j=1}^4
B^2(\pd_\lambda \CP_j(x_N, y_N) )
B^{-2}\bT^{b,0}_{2j}(\lambda, \xi')\Bigr)\CF'[\bg](\xi', y_N)\Bigr](x')\, dy_N \\
& + 
\int^\infty_0 \CF^{-1}_{\xi'}\Bigl[ \Bigl(\sum_{j=1}^4 \CP_j(x_N, y_N) 
(\pd_\lambda\bT^{b, 0}_{2j}(\lambda, \xi'))\Bigr)\CF'[\bg](\xi', y_N)\Bigr](x')\, dy_N
\end{align*}
as follows from \eqref{repr:6.2}. Moreover, from \eqref{repr:6.3} and \eqref{repdiff.1}, 
we have 
\begin{align}
\pd_N^\ell\pd_\lambda  \CT^b_2(\lambda)\bg& =  (-1)^\ell
\int^\infty_0 \CF^{-1}_{\xi'}\Bigl[ \Bigl(\sum_{j=1}^4B^2(\pd_\lambda \CP_j(x_N, y_N) )
B^{-2}\bT^{b,\ell}_{2j}(\lambda, \xi')\Bigr)\CF'[\bg](\xi', y_N)\Bigr](x')\, dy_N \nonumber \\
&+ (-1)^\ell
\int^\infty_0 \CF^{-1}_{\xi'}\Bigl[ \Bigl(\sum_{j=1}^4 \CP_j(x_N, y_N) )
(\pd_\lambda \bT^{b,\ell}_{2j}(\lambda, \xi'))\Bigr)\CF'[\bg](\xi', y_N)\Bigr](x')\, dy_N;
\nonumber \\
\pd_{N}^\ell\pd_\lambda\CT^b_2(\lambda)\bg
&= \int^\infty_0 \CF^{-1}_{\xi'}\Bigl[\Bigl(\sum_{j=1}^4 B^2(\pd_\lambda\CP_j(x_N, y_N))
B^{-2}\tilde\bT^{b, \ell}_{2j}(\lambda, \xi')\Bigr)
\CF'[\pd_{N}\bg](\xi', y_N)\Bigr](x')\,dy_N \nonumber \\
&+  \int^\infty_0 \CF^{-1}_{\xi'}\Bigl[\Bigl(\sum_{j=1}^4 
\CP_j(x_N, y_N)(\pd_\lambda \tilde\bT^{b, \ell}_{2j}(\lambda, \xi'))\Bigr)
\CF'[\pd_{N}\bg](\xi', y_N)\Bigr](x')\,dy_N. \label{repr:6.2.1}
\end{align}
If we write
\begin{align}
\lambda \pd_N^\ell\pd_\lambda  \CT^b_2(\lambda)\bg& =  (-1)^\ell
\int^\infty_0 \CF^{-1}_{\xi'}\Bigl[ \Bigl(\sum_{j=1}^4B^2(\pd_\lambda \CP_j(x_N, y_N) )
\lambda B^{-2}\bT^{b,\ell}_{2j}(\lambda, \xi')\Bigr)\CF'[\bg](\xi', y_N)\Bigr](x')\, dy_N 
\nonumber \\
&+ (-1)^\ell
\int^\infty_0 \CF^{-1}_{\xi'}\Bigl[ \Bigl(\sum_{j=1}^4 \CP_j(x_N, y_N) )
\lambda (\pd_\lambda \bT^{b,\ell}_{2j}(\lambda, \xi'))\Bigr)\CF'[\bg](\xi', y_N)\Bigr](x')\, dy_N;
\nonumber \\
\lambda \pd_{N}^\ell\pd_\lambda \CT^b_2(\lambda)\bg
&= \int^\infty_0 \CF^{-1}_{\xi'}\Bigl[\Bigl(\sum_{j=1}^4 B^2(\pd_\lambda\CP_j(x_N, y_N))
\lambda B^{-2}\tilde\bT^{b, \ell}_{2j}(\lambda, \xi')\Bigr)
\CF'[\pd_{N}\bg](\xi', y_N)\Bigr](x')\,dy_N  \nonumber \\
&+  \int^\infty_0 \CF^{-1}_{\xi'}\Bigl[\Bigl(\sum_{j=1}^4 
\CP_j(x_N, y_N)\lambda (\pd_\lambda \tilde\bT^{b, \ell}_{2j}(\lambda, \xi'))\Bigr)
\CF'[\pd_{N}\bg](\xi', y_N)\Bigr](x')\,dy_N \label{repr:6.2.2}
\end{align}
for $\ell=0,1,2, 3$, then using the facts that 
$\lambda B^{-2}\bT^{b,\ell}_{2j}(\lambda, \xi') \in \BN^d_{-2+\ell}$,  
$\lambda (\pd_\lambda \bT^{b, \ell}_{2j}(\lambda, \xi'))\in \BN^d_{-2+\ell}$,
$\lambda B^{-2}\tilde \bT^{b,\ell}_{2j}(\lambda, \xi') \in \BN^d_{-3+\ell}$,
and $\lambda (\pd_\lambda\tilde\bT^{b,\ell}_{2j}(\lambda, \xi'))
 \in \BN^d_{-3+\ell}$
for $\ell=0,1,2,3$ and employing the same argument as in the proof of 
\eqref{est:6.1.2*} and \eqref{est:6.1.3*},
by  Propositions \ref{prop:2} and \ref{prop:3}, 
we have
\begin{align}\label{est:6.5.1}
\|(\lambda, \lambda^{\frac{1}{2}}\nabla, \nabla^2)\pd_\lambda\CT^b_2(\lambda)\bg\|_{L_q(\HS)}
\leq C|\lambda|^{-2}\|\bg\|_{L_q(\HS)}, \\
\|(\lambda, \lambda^{\frac{1}{2}}\nabla, \nabla^2)\pd_\lambda\CT^b_2(\lambda)\bg\|_{H^1_q(\HS)}
\leq C|\lambda|^{-2}\|\bg\|_{H^1_q(\HS)}. \label{est:6.5.2}
\end{align}
If $0 < s < 1/q$, interpolating \eqref{est:6.5.1} and \eqref{est:6.5.2} yields
\begin{equation}\label{est:6.5.3}
\|(\lambda, \lambda^{\frac{1}{2}}\nabla, \nabla^2)\pd_\lambda\CT^b_2(\lambda)\bg\|_{B^s_{q,r}(\HS)}
\leq C|\lambda|^{-2}\|\bg\|_{B^s_{q,r}(\HS)}.
\end{equation}
To consider the case where $-1+1/q < s <0$, we consider the dual operator
$(\lambda, \lambda^{1/2}\nabla, \nabla^2)\pd_\lambda\CT^b_2(\lambda)^*$ of 
$(\lambda, \lambda^{1/2}\nabla, \nabla^2)\pd_\lambda\CT^b_2(\lambda)$, which is 
obtained by exchanging  $\CF^{-1}_{\xi'}$ and $\CF'$ in \eqref{repr:6.2.1}.
Then, from \eqref{repr:6.2.2} we have 
\begin{align}
\lambda (\pd_N^\ell\pd_\lambda  \CT^b_2(\lambda))^*\bh& =  (-1)^\ell 
\int^\infty_0 \CF'\Bigl[ \Bigl(\sum_{j=1}^4B^2(\pd_\lambda \CP_j(x_N, y_N) )
\lambda B^{-2}\bT^{b,\ell}_{2j}(\lambda, \xi')\Bigr)\CF^{-1}_{\xi'}[\bh](\xi', y_N)\Bigr](x')\, dy_N 
\nonumber \\ 
&+ (-1)^\ell
\int^\infty_0 \CF'\Bigl[ \Bigl(\sum_{j=1}^4 \CP_j(x_N, y_N) )
\lambda (\pd_\lambda \bT^{b,\ell}_{2j}(\lambda, \xi'))\Bigr)\CF^{-1}_{\xi'}[\bh](\xi', y_N)\Bigr](x')\, dy_N;
\nonumber \\
\lambda (\pd_{N}^\ell\pd_\lambda \CT^b_2(\lambda))^*\bh
&= \int^\infty_0 \CF'\Bigl[\Bigl(\sum_{j=1}^4 B^2(\pd_\lambda\CP_j(x_N, y_N))
\lambda B^{-2}\tilde\bT^{b, \ell}_{2j}(\lambda, \xi')\Bigr)
\CF^{-1}_{\xi'}[\pd_{N}\bg](\xi', y_N)\Bigr](x')\,dy_N  \nonumber \\
&+  \int^\infty_0 \CF'\Bigl[\Bigl(\sum_{j=1}^4 
\CP_j(x_N, y_N)\lambda (\pd_\lambda \tilde\bT^{b, \ell}_{2j}(\lambda, \xi'))\Bigr)
\CF^{-1}_{\xi'}[\pd_{N}\bg](\xi', y_N)\Bigr](x')\,dy_N. \label{repr:6.2.3}
\end{align}
Since
\begin{gather*}
\lambda B^{-2}\bT^{b,\ell}_{2j}(\lambda, \xi') \in \BN^d_{-2+\ell}, \quad 
\lambda (\pd_\lambda \bT^{b, \ell}_{2j}(\lambda, \xi'))\in \BN^d_{-2+\ell}, \\
\lambda B^{-2}\tilde \bT^{b,\ell}_{2j}(\lambda, \xi') \in \BN^d_{-3+\ell}, \quad 
\lambda (\pd_\lambda\tilde\bT^{b,\ell}_{2j}(\lambda, \xi'))
 \in \BN^d_{-3+\ell}
\end{gather*}
for $\ell=0,1,2,3$,  employing the same argument as in the proof of 
\eqref{est:6.1.2*} and \eqref{est:6.1.3*},
by  Propositions \ref{prop:2} and \ref{prop:3}, 
we have
\begin{align*}
\|((\lambda, \lambda^{\frac{1}{2}}\nabla, \nabla^2)\pd_\lambda\CT^b_2(\lambda))^*\bh\|_{L_{q'}(\HS)}
\leq C|\lambda|^{-2}\|\bh\|_{L_{q'}(\HS)}, \\
\|((\lambda, \lambda^{\frac{1}{2}}\nabla, \nabla^2)\pd_\lambda\CT^b_2(\lambda))^*\bh\|_{H^1_q(\HS)}
\leq C|\lambda|^{-2}\|\bh\|_{H^1_{q'}(\HS)}.
\end{align*}
By duality argument, we have 
\begin{align*}
\|(\lambda, \lambda^{\frac{1}{2}}\nabla, \nabla^2)\pd_\lambda\CT^b_2(\lambda)\bg\|_{L_q(\HS)}
&\leq C|\lambda|^{-2}\|\bg\|_{L_q(\HS)}, \\
\|(\lambda, \lambda^{\frac{1}{2}}\nabla, \nabla^2)\pd_\lambda\CT^b_2(\lambda)\bg\|_{H^{-1}_q(\HS)}
&\leq C|\lambda|^{-2}\|\bg\|_{H^{-1}_q(\HS)}.
\end{align*}
Thus, by real interpolation, we have
\begin{equation}\label{est:6.5.4}
\||(\lambda, \lambda^{\frac{1}{2}}\nabla, \nabla^2)\pd_\lambda\CT^b_2(\lambda)\bg\|_{B^s_{q,r}(\HS)}
\leq C|\lambda|^{-2}\|\bg\|_{B^s_{q,r}(\HS)}
\end{equation}
provided that $-1+1/q < s < 0$.  Combining \eqref{est:6.5.3} and \eqref{est:6.5.4} 
yields 
\begin{equation}\label{est:6.5.5}
\|(\lambda, \lambda^{\frac{1}{2}}\nabla, \nabla^2)\pd_\lambda\CT^b_2(\lambda)\bg\|_{B^0_{q,r}(\HS)}
\leq C|\lambda|^{-2}\|\bg\|_{B^0_{q,r}(\HS)}
\end{equation}
Therefore, from \eqref{est:6.1.4}, \eqref{est:6.1.7}, \eqref{est:6.1.8}, \eqref{est:6.5.3},
\eqref{est:6.5.4}, and \eqref{est:6.5.5}, we have obtained \eqref{thm:6.1.5} and \eqref{thm:6.1.6}.
This completes the proof of Theorem \ref{thm:6.1}.
\section{Proof of Main Results} \label{sec:7}

In this section, first of all we construct solution operators of equations: 
\begin{equation}\label{eq:4}\left\{ \begin{aligned}
\lambda\rho + \gamma \dv\bu = f&&\quad&\text{in $\HS$}, \\
\lambda\bu - \alpha\Delta \bu - \beta\nabla \dv\bu + \gamma \nabla\rho
= \bg&&\quad&\text{in $\HS$}, \\
\bu=0&&\quad&\text{on $\pd\HS$}.
\end{aligned}\right.\end{equation}
First, from the first equation in \eqref{eq:4}, we set $\rho = \lambda^{-1}(f- \gamma\dv\bu)$,
and inserting this formula into the second equation in \eqref{eq:4}, we have the complex
Lam\'e equation:
\begin{equation}\label{eq:5} 
\lambda\bu - \alpha\Delta \bu - \eta_\lambda \nabla\dv\bu
= \bg- \gamma\lambda^{-1}\nabla f \quad\text{in $\HS$}, \quad
\bu|_{\pd\HS}=0.
\end{equation}
From Theorems \ref{thm:5.1} and  \ref{thm:6.1}, we have
$$\bu = \CS^0(\lambda)(\bg- \gamma\lambda^{-1}\nabla f) 
- \CS^b(\lambda)(\bg-\gamma\lambda^{-1}\nabla f).$$
Thus, defining $\rho$ by
$$\rho = \lambda^{-1}(f-\gamma\dv\bu) = 
\lambda^{-1}f -\gamma \lambda^{-1}\dv(\CS^0(\lambda)(\bg- \gamma\lambda^{-1}\nabla f) 
- \CS^b(\lambda)(\bg-\gamma\lambda^{-1}\nabla f)),$$
we see that $\bu$ and $\rho$ are solutions of equations \eqref{eq:4}. 
In view of Theorems \ref{thm:5.1} and \ref{thm:6.1}, we decompose $\bu$ as 
\begin{align*}
\bu = \CT^0_1(\lambda)\bg - \CT^b_1(\lambda)\bg 
+\CT^0_2(\lambda)\bg - \CT^b_2(\lambda)\bg -
\gamma\lambda^{-1}\CS^0(\lambda)\nabla f + \gamma\lambda^{-1}
\CS^b(\lambda)\nabla f.
\end{align*}
Summing up, there exist solution operators $\CS(\lambda)$, $\CS^1(\lambda)$, $\CS^2(\lambda)$ such
that $\bu=\CS(\lambda)(f, \bg)$, $\rho = \CR(\lambda)(f, \nabla\bg)$, and 
\begin{equation}\label{solop.7.1}\begin{aligned}
\CS^1(\lambda)\bg &= \CT^0_1(\lambda)\bg - \CT^b_1(\lambda)\bg, \\
\CS^2(\lambda)(f, \bg) & = \CT^0_2(\lambda)\bg -\CT^b_2(\lambda)\bg
-\gamma\lambda^{-1}\CS^0(\lambda)\nabla f 
+ \gamma\lambda^{-1}\CS^b(\lambda)\nabla f, \\
\CS(\lambda)(f, \bg) &= \CS^1(\lambda)\bg + \CS^2(\lambda)(f, \bg), \\
\CR(\lambda)(f, \bg) & =  
\lambda^{-1}f -\gamma \lambda^{-1}\dv\CS^0(\lambda)\bg
+ \gamma^2\lambda^{-2}\dv\CS^0(\lambda)\nabla f\\
&\quad - \gamma \lambda^{-1}\dv\CS^b(\lambda)\bg + \gamma^2\lambda^{-2}\dv\CS^b(\lambda)\nabla f.
\end{aligned}\end{equation}
We see easily that 
\begin{equation}\label{hol:7.1}\begin{aligned}
\CS(\lambda)& \in {\rm Hol}\, (\Lambda_{\epsilon, \lambda_0}, 
\CL(B^{s+1}_{q,r}(\HS)\times B^s_{q,r}(\HS), B^{s+2}_{q,r}(\HS))), \\
\CS^1(\lambda) &\in {\rm Hol}\, (\Lambda_{\epsilon, \lambda_0}, 
\CL(B^s_{q,r}(\HS), B^{s+2}_{q,r}(\HS))), \\
\CS^2(\lambda)& \in {\rm Hol}\, (\Lambda_{\epsilon, \lambda_0}, 
\CL(B^{s+1}_{q,r}(\HS)\times B^s_{q,r}(\HS), B^{s+2}_{q,r}(\HS))), \\
\CR(\lambda) & \in {\rm Hol}\, (\Lambda_{\epsilon, \lambda_0}, 
\CL(B^{s+1}_{q,r}(\HS)\times B^s_{q,r}(\HS), B^{s+1}_{q,r}(\HS))).
\end{aligned}\end{equation}
Moreover, by Theorems \ref{thm:5.1} and \ref{thm:6.1}, we see the following theorem.
\begin{thm}\label{thm:7.1} 
Let $1 < q <\infty$, $1 \leq r < \infty$, 
$-1+1/q < s < 1/q$, and $\epsilon \in (0, \pi/2)$.
Then, there exists a large number $\lambda_0 >0$ such that 
for any $\lambda \in \Lambda_{\epsilon, \lambda_0}$, $f \in 
B^{s+1}_{q,r}(\HS)$, and $\bg \in C^\infty_0(\HS)^N$,
 there hold
\begin{align*}
\|(\lambda, \lambda^{\frac{1}{2}}\nabla, \nabla^2)\CS(\lambda)(f, \bg)\|_{B^s_{q,r}(\HS)}
&\leq C\|(f, \bg)\|_{\CH^s_{q,r}(\HS)}, \\
\|(\lambda^{\frac{1}{2}}\nabla, \nabla^2)\CS^1(\lambda)\bg\|_{B^s_{q,r}(\HS)} 
&\leq C|\lambda|^{-\frac{\sigma}{2}}\|\bg\|_{B^{s+\sigma}_{q,r}(\HS)}, \\
\|(\lambda^{\frac{1}{2}}\nabla, \nabla^2)\pd_\lambda \CS^1(\lambda)\bg\|_{B^s_{q,r}(\HS)} 
&\leq C|\lambda|^{-(1-\frac{\sigma}{2})}\|\bg\|_{B^{s-\sigma}_{q,r}(\HS)}, \\
\|(\lambda^{\frac{1}{2}}\nabla, \nabla^2)\CS^2(\lambda)(f, \bg)\|_{ B^s_{q,r}(\HS)} &\leq C|\lambda|^{-1}\|(f, \bg)\|_{\CH^s_{q,r}(\HS)},\\
\|(\lambda^{\frac{1}{2}}\nabla, \nabla^2)\pd_\lambda\CS^2(\lambda)(f, \bg)\|_{ B^s_{q,r}(\HS)} &\leq C|\lambda|^{-2}\|(f, \bg)\|_{\CH^s_{q,r}(\HS)}, \\
\|\CR(\lambda)f\|_{B^{s+1}_{q,r}(\HS)} & \leq C|\lambda|^{-1}\|(f, \bg)
\|_{\CH^s_{q,r}(\HS)}, \\
\|\pd_\lambda \CR(\lambda)f\|_{B^{s+1}_{q,r}(\HS)} & \leq C|\lambda|^{-2}\|(f, \bg)
\|_{\CH^s_{q,r}(\HS)}.
\end{align*}
\end{thm}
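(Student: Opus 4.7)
The plan is to obtain Theorem \ref{thm:7.1} by combining the whole-space bounds of Theorem \ref{thm:5.1} with the half-space bounds of Theorem \ref{thm:6.1} through the explicit decompositions \eqref{solop.7.1}. Once those two theorems are in hand, the rest is essentially bookkeeping: matching powers of $|\lambda|$ and translating norms via $\|\nabla f\|_{B^s_{q,r}(\HS)} \leq C\|f\|_{B^{s+1}_{q,r}(\HS)}$ together with the inhomogeneous Besov equivalence $\|\phi\|_{B^{s+1}_{q,r}} \simeq \|\phi\|_{B^s_{q,r}} + \|\nabla\phi\|_{B^s_{q,r}}$.

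For the first estimate on $\CS(\lambda)$, I will write $\bu = \CS^0(\lambda)\tilde\bg - \CS^b(\lambda)\tilde\bg$ with $\tilde\bg = \bg - \gamma\lambda^{-1}\nabla f$. Since $|\lambda|\geq \lambda_0$, we have $\|\tilde\bg\|_{B^s_{q,r}(\HS)} \leq C\|(f,\bg)\|_{\CH^s_{q,r}(\HS)}$, and then \eqref{thm:5.1.1} and \eqref{thm:6.1.1-1} give the claim. The two $\CS^1(\lambda)$ bounds are immediate from $\CS^1 = \CT^0_1 - \CT^b_1$ together with \eqref{thm:5.1.3}--\eqref{thm:5.1.4} and \eqref{thm:6.1.3}--\eqref{thm:6.1.4}.

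For $\CS^2(\lambda)(f,\bg)$, I will split the four pieces in \eqref{solop.7.1}. The $\CT^0_2\bg$ and $\CT^b_2\bg$ terms yield the required $|\lambda|^{-1}\|\bg\|_{B^s_{q,r}}$ via the strong form of \eqref{thm:5.1.5} (the proof of Theorem \ref{thm:5.1} in fact establishes a $|\lambda|^{-1}$ bound before weakening to $|\lambda|^{-\sigma/2}$) and \eqref{thm:6.1.5}, while the $\lambda^{-1}\CS^0(\lambda)\nabla f$ and $\lambda^{-1}\CS^b(\lambda)\nabla f$ pieces gain the $|\lambda|^{-1}$ from the explicit prefactor and \eqref{thm:5.1.1}, \eqref{thm:6.1.1-1} applied to the datum $\nabla f\in B^s_{q,r}$. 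The $\pd_\lambda \CS^2$ bound follows by differentiating term by term: the $\CT_2$ derivatives are handled by \eqref{thm:5.1.6} and \eqref{thm:6.1.6}, and the product rule $\pd_\lambda(\lambda^{-1}\CS^\sharp(\lambda)\nabla f) = -\lambda^{-2}\CS^\sharp(\lambda)\nabla f + \lambda^{-1}\pd_\lambda\CS^\sharp(\lambda)\nabla f$ combined with \eqref{thm:5.1.1}, \eqref{thm:5.1.2} and their half-space analogues \eqref{thm:6.1.1-1}, \eqref{thm:6.1.1-2} produces the desired $|\lambda|^{-2}$ decay.

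Finally, for $\CR(\lambda)$ I proceed term by term along \eqref{solop.7.1}. The $\lambda^{-1}f$ piece is trivial. For $\lambda^{-1}\dv\CS^\sharp(\lambda)\bg$ with $\sharp\in\{0,b\}$, the Besov equivalence above reduces matters to $B^s_{q,r}$ bounds on $\nabla\CS^\sharp(\lambda)\bg$ and $\nabla^2\CS^\sharp(\lambda)\bg$, both supplied by \eqref{thm:5.1.1} and \eqref{thm:6.1.1-1} (the $|\lambda|^{-1/2}$ weight on the first-order piece being absorbed using $|\lambda|\geq\lambda_0$); multiplying by $\lambda^{-1}$ produces the $|\lambda|^{-1}$ decay, and the $\lambda^{-2}\dv\CS^\sharp(\lambda)\nabla f$ terms are treated identically with $\nabla f$ as datum. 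The $\pd_\lambda\CR$ estimate uses the same decomposition combined with $\pd_\lambda\lambda^{-k} = -k\lambda^{-k-1}$ for the extra $|\lambda|^{-1}$ and the $\pd_\lambda$-bounds \eqref{thm:5.1.2}, \eqref{thm:6.1.1-2}. No step presents a genuine obstacle; the only care needed is not to lose a power of $|\lambda|$ when differentiating products.
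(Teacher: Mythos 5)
Your proposal is correct and takes essentially the same approach as the paper, which states Theorem \ref{thm:7.1} as a direct consequence of Theorems \ref{thm:5.1} and \ref{thm:6.1} via the decompositions in \eqref{solop.7.1} without spelling out the bookkeeping. You correctly notice the one point that requires care: the stated form of \eqref{thm:5.1.5} only gives $|\lambda|^{-\sigma/2}$ decay, so the $|\lambda|^{-1}$ bound for $\CT^0_2(\lambda)$ must be taken from the intermediate step in the proof of Theorem \ref{thm:5.1} (where $\|(\lambda,\lambda^{1/2}\nabla,\nabla^2)\CT^0_2(\lambda)\bg\|_{B^s_{q,r}}\leq C|\lambda|^{-1}\|\bg\|_{B^s_{q,r}}$ is established before being weakened), matching the $|\lambda|^{-1}$ already present in \eqref{thm:6.1.5}.
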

Theorem \ref{thm:1} follows from Theorem \ref{thm:7.1} immediately. 
\par
Now, we consider an initial value problem:
\begin{equation}\label{eq:5}\left\{ \begin{aligned}
\pd_t \Pi + \gamma \dv\bU = 0&&\quad&\text{in $\HS\times \BR_+$}, \\
\pd_t\bU - \alpha\Delta \bU - \beta\nabla \dv\bU + \gamma \nabla \Pi
=0 &&\quad&\text{in $\HS\times \BR_+$}, \\
\bU=0&&\quad&\text{on $\pd\HS\times\BR_+$}, \\
(\Pi, \bU)|_{t=0} = (\Pi_0, \bU_0)&&\quad&\text{in $\HS$}.
\end{aligned}\right.\end{equation}
To formulate problem \eqref{eq:5} in the semigroup setting, we introduce spaces $\CH^s_{q,r}(\HS)$,
$\CD^s_{q,r}(\HS)$ and an operator $\CA_{q,r}$ defined in \eqref{fundspace:1} and  \eqref{Def:A}, 
respectively.  Then, as was seen in \eqref{semigroup:1}, equations \eqref{eq:5} are written as 
$$\pd_t(\Pi,  \bU) + \CA^s_{q,r}(\Pi, \bU) = (0, 0) \quad\text{for $t > 0$}, \quad
(\Pi, \bU)|_{t=0}=(\Pi_0, \bU_0) \in \CH^s_{q,r}.$$
And, the corresponding resolvent problem \eqref{eq:4} is written as
$$\lambda(\rho, \bu) + \CA^s_{q,r}(\rho, \bu) = (f, \bg)$$
for $(f, \bg) \in \CH^s_{q,r}(\HS)$ and $(\rho, \bu) \in \CD^s_{q,r}(\HS)$. 
From Theorem \ref{thm:7.1} it follows that the resolvent operator $(\lambda+\CA^s_{q,r})^{-1}$ 
exists for any $\lambda \in \Lambda_{\epsilon, \lambda_0}$ for sufficient large $\lambda_0>0$.
In fact, $(\lambda+\CA^s_{q,r})^{-1}(f, \bg) = (\CR(\lambda),\CS(\lambda))(f, \bg)$
for $(f, \bg) \in \CH^s_{q,r}(\HS)$. Thus, 
 the resolvent estimate: $\|\lambda(\lambda+\CA^s_{q,r})^{-1}\|_{\CL(\CH^s_{q,r})} \leq C$ holds
for any $\lambda \in \Lambda_{\epsilon , \lambda_0}$. \par
From these observations, 
 by theory of $C_0$ analytic semigroup (\cite{Yosida}), there exists a $C_0$ analytic
semigroup $\{T(t)\}_{t\geq0}$ associated with \eqref{eq:5} and $(\Pi, \bU) = T(t)(\Pi_0, \bU_0)$
is a unique solution of \eqref{eq:5}, which satisfies the regularity condition:
$$(\Pi, \bU) \in C^0([0, \infty), \CH^s_{q,r}(\HS))
\cap C^0((0, \infty), \CD^s_{q,r}(\HS)) \cap C^1((0, \infty), \CH^s_{q,r}(\HS))$$
as well as 
$$\lim_{t\to0}\|(\Pi(\cdot, t), \bU(\cdot, t))-(\Pi_0, \bU_0)\|_{\CH^s_{q,r}(\HS)} = (0, 0).
$$
Finally, we shall show the following theorem about the  maximal $L_1$ regularity of $\{T(t)\}_{t\geq0}$.
Obviously, combining the results about continuous analytic semigroup theory  mentioned above 
and the following theorem completes  the proof of Theorem \ref{thm:0}.
\begin{thm}\label{thm:7.2} Let $1 < q < \infty$ and $-1+1/q < s < 1/q$.  Then,
there exists $\omega>0$ such that for any $(\Pi_0, \bU_0) \in 
\CH^s_{q,1}(\HS)$, there holds
$$\int^\infty_0 e^{-\omega t}(\|\pd_tT(t)(\Pi_0, \bU_0)\|_{\CH^s_{q,1}(\HS)}
+ \|T(t)(\Pi_0, \bU_0)\|_{\CD^s_{q,1}(\HS)})\,dt \leq C\|(\Pi_0, \bU_0)\|_{\CH^s_{q,1}(\HS)}.
$$
\end{thm}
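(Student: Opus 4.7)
The plan is to represent the semigroup by a Dunford-type integral along a contour in the resolvent set and then to treat separately the pieces of the resolvent produced by Theorem~\ref{thm:7.1}. Fix $\gamma_1>\lambda_0$ large enough and take the contour $\Gamma=\{\gamma_1+re^{\pm i(\pi-\epsilon)}:r\ge 0\}\subset\Lambda_{\epsilon,\lambda_0}$, oriented so that
\[
T(t)(\Pi_0,\bU_0)=\frac{1}{2\pi i}\int_\Gamma e^{\lambda t}(\lambda+\CA^s_{q,1})^{-1}(\Pi_0,\bU_0)\,\d\lambda=(\Pi(t),\bU(t)).
\]
Using the decomposition of Theorem~\ref{thm:7.1} I split $\bU(t)=\bU_1(t)+\bU_2(t)$ with $\bU_1(t)=(2\pi i)^{-1}\int_\Gamma e^{\lambda t}\CS^1(\lambda)\bU_0\,\d\lambda$, $\bU_2(t)=(2\pi i)^{-1}\int_\Gamma e^{\lambda t}\CS^2(\lambda)(\Pi_0,\bU_0)\,\d\lambda$, and $\Pi(t)=(2\pi i)^{-1}\int_\Gamma e^{\lambda t}\CR(\lambda)(\Pi_0,\bU_0)\,\d\lambda$. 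The bounds on $\pd_tT(t)(\Pi_0,\bU_0)$ in $\CH^s_{q,1}$ will then be recovered at the end from $\pd_t\Pi=-\gamma\dv\bU$ and $\pd_t\bU=\alpha\Delta\bU+\beta\nabla\dv\bU-\gamma\nabla\Pi$, so it is enough to produce the required $L_1(e^{-\gamma t}\,\d t)$ bounds for $\|\Pi(t)\|_{B^{s+1}_{q,1}(\HS)}$ and $\|\bU(t)\|_{B^{s+2}_{q,1}(\HS)}$.

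The $\Pi$-piece and the $\bU_2$-piece are the easy cases, handled by a direct contour computation. On $\Gamma$ one has $|e^{\lambda t}|=e^{\gamma_1 t-rt\cos\epsilon}$ and $|\lambda|\simeq\gamma_1+r$, while Theorem~\ref{thm:7.1} gives both $\|\CR(\lambda)(\Pi_0,\bU_0)\|_{B^{s+1}_{q,1}}$ and $\|(\lambda^{1/2}\nabla,\nabla^2)\CS^2(\lambda)(\Pi_0,\bU_0)\|_{B^s_{q,1}}$ bounded by $C|\lambda|^{-1}\|(\Pi_0,\bU_0)\|_{\CH^s_{q,1}}$. Applying Fubini and then integrating in $t$ first yields
\[
\int_0^\infty e^{-\gamma t}\bigl(\|\Pi(t)\|_{B^{s+1}_{q,1}}+\|\bU_2(t)\|_{B^{s+2}_{q,1}}\bigr)\,\d t\le C\|(\Pi_0,\bU_0)\|_{\CH^s_{q,1}}\int_0^\infty\frac{\d r}{(\gamma_1+r)((\gamma-\gamma_1)+r\cos\epsilon)},
\]
and the $r$-integral is finite for any $\gamma>\gamma_1$.

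The only genuinely delicate piece is $\bU_1(t)$, because Theorem~\ref{thm:7.1} provides only $\|\nabla^2\CS^1(\lambda)\bg\|_{B^s_{q,1}}\le C|\lambda|^{-\sigma/2}\|\bg\|_{B^{s+\sigma}_{q,1}}$ (losing $\sigma$ derivatives) together with the dual-type estimate $\|\nabla^2\pd_\lambda\CS^1(\lambda)\bg\|_{B^s_{q,1}}\le C|\lambda|^{-(1-\sigma/2)}\|\bg\|_{B^{s-\sigma}_{q,1}}$. Here I would exploit the two equivalent representations
\[
\bU_1(t)=\frac{1}{2\pi i}\int_\Gamma e^{\lambda t}\CS^1(\lambda)\bU_0\,\d\lambda=-\frac{1}{2\pi it}\int_\Gamma e^{\lambda t}\pd_\lambda\CS^1(\lambda)\bU_0\,\d\lambda,
\]
the second one coming from integration by parts in $\lambda$ (the boundary contributions vanish by analyticity of $\CS^1$ on $\Lambda_{\epsilon,\lambda_0}$ and the decay of the two above estimates as $|\lambda|\to\infty$). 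Combined with the real interpolation identity $B^s_{q,1}(\HS)=(B^{s-\sigma}_{q,1}(\HS),B^{s+\sigma}_{q,1}(\HS))_{1/2,1}$, I would pick for each $t>0$ a $K$-functional decomposition $\bU_0=\bV_0(t)+\bW_0(t)$ realising $K(t^\sigma,\bU_0)$ at the scale $\tau=t^\sigma$, apply the second representation to $\bV_0(t)\in B^{s-\sigma}_{q,1}$ and the first to $\bW_0(t)\in B^{s+\sigma}_{q,1}$, and combine the resulting contour estimates to obtain
\[
e^{-\gamma t}\|\nabla^2\bU_1(t)\|_{B^s_{q,1}}\le Ce^{(\gamma_1-\gamma)t}\bigl(t^{-1-\sigma/2}\|\bV_0(t)\|_{B^{s-\sigma}_{q,1}}+t^{-1+\sigma/2}\|\bW_0(t)\|_{B^{s+\sigma}_{q,1}}\bigr)\le Ce^{(\gamma_1-\gamma)t}t^{-1-\sigma/2}K(t^\sigma,\bU_0).
\]
The change of variables $u=t^\sigma$ then turns the $t$-integral into $\int_0^\infty u^{-3/2}K(u,\bU_0)\,\d u\simeq\|\bU_0\|_{B^s_{q,1}}$ (taking $\gamma>\gamma_1$ so that the exponential weight kills the tail), which is exactly the desired bound.

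I expect the main obstacle to be precisely this last step: justifying the vanishing of the boundary terms in the $\lambda$-integration by parts (requiring a density argument from $C^\infty_0(\HS)^N$, where the estimates of Theorem~\ref{thm:7.1} on $\CS^1$ are stated, to $B^s_{q,1}(\HS)^N$ via the density result in the remark after Theorem~\ref{thm:1}), and choosing the scale $\tau=t^\sigma$ correctly so that the two contour estimates balance exactly at the power of $t$ that triggers the $K$-functional characterisation of $B^s_{q,1}$. The rest is a routine Dunford-calculus computation that relies on the $|\lambda|^{-1}$ decay built into $\CR$ and $\CS^2$.
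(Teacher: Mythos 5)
Your overall strategy coincides with the paper's: represent $T(t)$ by a Dunford contour integral, split it according to the decomposition $\CS(\lambda)=\CS^1(\lambda)+\CS^2(\lambda)$ (with $\CR(\lambda)$ handling the density component), and then turn the two resolvent estimates at rates $|\lambda|^{-\sigma/2}$ and $|\lambda|^{-(1-\sigma/2)}$ into $L_1$-in-time integrability by real interpolation. Where you differ is in how the interpolation step is executed. The paper isolates this step as Proposition~\ref{prop-real-interpolation}, a \emph{discrete} lemma: it sets $b_k(f)=\sup_{t\in[2^k,2^{k+1}]}e^{-\gamma t}\|T(t)f\|_Y$, shows $(b_k)\in\ell^{1-\sigma_0}_\infty$ when $f\in X_0$ and $(b_k)\in\ell^{1+\sigma_1}_\infty$ when $f\in X_1$, and then invokes $(\ell^{1-\sigma_0}_\infty,\ell^{1+\sigma_1}_\infty)_{\theta,1}=\ell^1_1$. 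You instead run the continuous $K$-functional argument directly: choosing, for each $t$, a decomposition $\bU_0=\bV_0(t)+\bW_0(t)$ near-optimal for $K(t^\sigma,\bU_0)$ and applying the undifferentiated representation to $\bW_0$ and the $\lambda$-integrated-by-parts representation to $\bV_0$, so that the bound becomes $t^{-1-\sigma/2}K(t^\sigma,\bU_0)$ and the time integral reduces, after $u=t^\sigma$, to $\int_0^\infty u^{-3/2}K(u,\bU_0)\,\d u\simeq\|\bU_0\|_{B^s_{q,1}}$. These are two standard realisations of the same real-interpolation fact and are mathematically equivalent; the paper's dyadic packaging is reusable (it is also applied to $T_2$, $T_3$), while your direct route is slightly shorter once the $K$-functional characterisation of $B^s_{q,1}$ is taken for granted. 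A second, minor difference is that for $T_2$ and $T_3$ you estimate the double integral directly by Fubini using the clean $|\lambda|^{-1}$ decay, whereas the paper artificially degrades $|\lambda|^{-1}$ to both $\lambda_0^{-(1-\sigma/2)}|\lambda|^{-\sigma/2}$ and $\lambda_0^{-(1+\sigma/2)}|\lambda|^{-(1-\sigma/2)}$ so that the same Proposition~\ref{prop-real-interpolation} applies with $X_0=X_1=\CH^s_{q,1}$; your version is more elementary for these pieces. One small point worth making explicit in a final write-up (both arguments share it): to upgrade the $(\lambda^{1/2}\nabla,\nabla^2)$-control of Theorem~\ref{thm:7.1} to the full $B^{s+2}_{q,1}$-norm you also need the low-order bound $\|\CS^1(\lambda)\bg\|_{B^s_{q,1}}\le C|\lambda|^{-1}\|\bg\|_{B^s_{q,1}}$, which comes from the $\lambda\CS(\lambda)$-estimate together with $|\lambda|\ge\lambda_0$; and the extension of the $C^\infty_0$-estimates to the full Besov classes $B^{s\pm\sigma}_{q,1}$ (needed so the $K$-functional decomposition of a general $\bU_0\in B^s_{q,1}$ is admissible) is the density step you correctly flag.
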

In the sequel, we shall prove Theorem \ref{thm:7.2}. We start with the following lemma.
\begin{prop}
\label{prop-real-interpolation}
Let $X_0$ and $X_1$ be Banach spaces which are an  interpolation couple,
and $Y$ be another Banach space. 
Assume that $0 < \sigma_0, \sigma_1, \theta < 1$ satisfy
$1 = (1 - \theta)(1- \sigma_0) + \theta(1+ \sigma_1)$. Let $\omega \ge 0$.
For $t > 0$ let $T (t) \colon Y \to  X_0+X_1$ 
be a bounded linear operator such that
\begin{equation}
\label{assumption-decay}
\begin{split}
\lVert T (t) f \rVert_Y 
& \le C e^{\omega t} t^{- 1 + \sigma_0} \lVert f \rVert_{X_0}, 
\qquad f \in X_0, \\
\lVert T (t) f \rVert_Y
& \le C e^{\omega t} t^{- 1 - \sigma_1} \lVert f \rVert_{X_1}, 
\qquad f \in X_1.
\end{split}
\end{equation}
Then, there holds
\begin{equation}
\int^\infty_0 e^{-\omega t}\lVert T (t) f \rVert_Y\,d t
\le C \lVert f \rVert_{(X_0, X_1)_{\theta, 1}}
\end{equation}
with a constant $C > 0$ independent of $\omega$.
\end{prop}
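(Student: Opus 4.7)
The plan is to invoke the $K$-functional characterization of the real interpolation space
$$\|f\|_{(X_0,X_1)_{\theta,1}} \sim \int_0^\infty s^{-\theta}\, K(s,f)\, \frac{ds}{s}, \qquad K(s,f) := \inf_{f=f_0+f_1}\bigl(\|f_0\|_{X_0} + s\,\|f_1\|_{X_1}\bigr),$$
together with the fact that for each $s > 0$ one may select a near-optimal decomposition $f = f_0^s + f_1^s$ satisfying $\|f_0^s\|_{X_0} + s\,\|f_1^s\|_{X_1} \leq 2\, K(s,f)$. The compatibility condition $1 = (1-\theta)(1-\sigma_0) + \theta(1+\sigma_1)$ rearranges to the identity $\theta = \sigma_0/(\sigma_0+\sigma_1)$, which will drive the change of variables below.

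For each $t > 0$ I would decompose $f$ at the scale $s(t) := t^{-(\sigma_0+\sigma_1)}$ and apply the two hypotheses to the respective pieces; linearity of $T(t)$ gives
\begin{align*}
e^{-\gamma t}\|T(t)f\|_Y
&\leq C\, t^{-1+\sigma_0}\|f_0^{s(t)}\|_{X_0} + C\, t^{-1-\sigma_1}\|f_1^{s(t)}\|_{X_1} \\
&\leq 2C \bigl[\, t^{-1+\sigma_0} + s(t)^{-1}\, t^{-1-\sigma_1}\,\bigr]\, K(s(t), f).
\end{align*}
With this choice of $s(t)$ the product $s(t)^{-1}\, t^{-1-\sigma_1}$ equals $t^{\sigma_0-1}$, so the bracket collapses to $2\, t^{-1+\sigma_0}$ and only a single $K$-functional integral remains to be estimated.

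Integrating in $t \in (0,\infty)$ and substituting $u = s(t) = t^{-(\sigma_0+\sigma_1)}$, a short calculation exploiting $\theta = \sigma_0/(\sigma_0+\sigma_1)$ produces
$$t^{-1+\sigma_0}\, dt = \frac{1}{\sigma_0+\sigma_1}\, u^{-\theta-1}\, du,$$
so that
$$\int_0^\infty e^{-\gamma t}\|T(t)f\|_Y\, dt \leq \frac{4C}{\sigma_0+\sigma_1} \int_0^\infty u^{-\theta-1}\, K(u,f)\, du \leq C'\, \|f\|_{(X_0,X_1)_{\theta,1}},$$
with the constant $C'$ independent of $\gamma$ (the weight $e^{-\gamma t}$ was used only to cancel the $e^{\gamma t}$ factors from the two hypotheses pointwise in $t$).

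The one conceptual obstacle is locating the correct scaling $s(t) = t^{-(\sigma_0+\sigma_1)}$, noting in particular that $s(t)$ must \emph{decrease} with $t$, and then recognising that the algebraic constraint between $\theta, \sigma_0, \sigma_1$ is precisely what makes both pieces yield the same power of $t$ and, after the change of variables, match the $K$-method norm. Once this scaling is identified, the remainder is a routine calculation.
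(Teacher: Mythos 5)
Your proof is correct, but it takes a genuinely different route from the paper's. The paper discretizes the time axis into dyadic blocks $[2^k, 2^{k+1}]$, defines $b_k(f) := \sup_{t\in[2^k,2^{k+1}]} e^{-\gamma t}\|T(t)f\|_Y$, observes that the target integral is controlled by $\sum_k 2^k b_k(f)$, and then applies the sequence-space real interpolation identity $(\ell^{1-\sigma_0}_\infty(\BZ), \ell^{1+\sigma_1}_\infty(\BZ))_{\theta,1} = \ell^1_1(\BZ)$ to conclude; the constraint $(1-\theta)(1-\sigma_0)+\theta(1+\sigma_1)=1$ is exactly what makes the interpolated exponent equal to $1$. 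You instead work with the continuous $K$-functional directly: you pick, for each $t$, a near-optimal decomposition of $f$ at the scale $s(t)=t^{-(\sigma_0+\sigma_1)}$, apply the two hypotheses separately, check that the constraint (equivalently $\theta=\sigma_0/(\sigma_0+\sigma_1)$) makes both terms collapse to the same power $t^{-1+\sigma_0}$, and then change variables $u=s(t)$ to land precisely on $\int_0^\infty u^{-\theta}K(u,f)\,du/u$. The paper's proof is slightly shorter because it delegates the interpolation to a cited theorem; yours is more self-contained and makes the role of the exponent constraint transparent through the scaling choice. One small observation that you may want to make explicit: the near-optimal decomposition $f=f_0^{s(t)}+f_1^{s(t)}$ only needs to exist for each fixed $t$ and need not be selected measurably in $t$, because it serves only to establish the pointwise bound $e^{-\gamma t}\|T(t)f\|_Y \le 4C\,t^{-1+\sigma_0}K(s(t),f)$, whose right-hand side is continuous in $t$ since $K(\cdot,f)$ is continuous.
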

\begin{proof}
The proof is based on real interpolation. For $k \in \BZ$ set
\begin{equation}
b_k (f) := \sup_{t \in [2^k, 2^{k + 1}]} e^{- \omega t} 
\lVert T (t) f \rVert_Y.
\end{equation}
We observe that
\begin{equation}
\label{est-B2}
\int^\infty_0 e^{-\omega t}\lVert T (t) f \rVert_Y\,d t
= \sum_{k \in \BZ} \int_{2^k}^{2^{k + 1}} e^{- \omega t} 
\lVert T (t) f \rVert_Yd t 
\le \sum_{k \in \BZ} 2^k b_k (f).
\end{equation}
Then we infer from the assumptions \eqref{assumption-decay} that
\begin{alignat}2
b_k (f) & \le C \sup_{t \in [2^k, 2^{k + 1}]} t^{- 1 + \sigma_0} 
\lVert f \rVert_{X_0} \le C 2^{- k (1 - \sigma_0)} \lVert f \rVert_{X_0}, 
& \qquad & f \in X_0, \\
b_k (f) & \le C \sup_{t \in [2^k, 2^{k + 1}]} t^{- 1 - \sigma_1} 
\lVert f \rVert_{X_1} \le C 2^{- k (1 + \sigma_1)} \lVert f \rVert_{X_1}, 
& \qquad & f \in X_1.
\end{alignat}
Namely, there hold
\begin{alignat}2
\lVert (b_k)_{k \in \BZ} \rVert_{\ell^{1 - \sigma_0}_\infty (\BZ)}
& \le C \lVert f \rVert_{X_0},
& \qquad & f \in X_0, \\
\lVert (b_k)_{k \in \BZ} \rVert_{\ell^{1 + \sigma_1}_\infty (\BZ)}
& \le C \lVert f \rVert_{X_1},
& \qquad & f \in X_1.
\end{alignat}
Since $(\ell^{1 - \sigma_0}_\infty (\BZ), \ell^{1 + \sigma_1}_\infty (\BZ))_{\theta, 1}
= \ell^1_1 (\BZ)$ due to \cite[Thm. 5.6.1]{BLbook}, it follows that
\begin{equation}
\label{est-B3}
\sum_{k \in \BZ} 2^k b_k (f) = \lVert b_k (f)_{k \in \BZ} \rVert_{\ell^1_1 (\BZ)}
\le C \lVert f \rVert_{(X_0, X_1)_{\theta, 1}}.
\end{equation}
Thus, the desired estimates follows from \eqref{est-B2} and \eqref{est-B3}.
\end{proof}
\begin{proof}[A Proof of Theorem \ref{thm:7.2}.] Let $\omega>0$ be a large number such that
$\Sigma_\epsilon + \omega \subset \Lambda_{\epsilon, \lambda_0}$. 
Let $\Gamma$ be a contour in $\BC$ defined by $\Gamma=\Gamma_+ \cup \Gamma_-$
with 
$$\Gamma_\pm = \{\lambda = re^{\pm(\pi-\epsilon)} \mid r \in (0, \infty)\}.$$
As was well-known in theory of $C_0$ analytic semigroup (cf. \cite{Yosida}), we have
$$T(t)(\Pi_0, \bU_0) = \frac{1}{2\pi i}\int_{\Gamma+\omega}
(\CS(\lambda), \CR(\lambda))(\Pi_0, \bU_0)\,d\lambda \quad\text{for $t > 0$}.
$$
To show the $L_1$ integrability of $T(t)$, we use Theorem \ref{thm:7.1}. According to 
the formulas in \eqref{solop.7.1}, we divide $T(t)$ into the following three parts:
\begin{align}
T_1(t)\bU_0 &= \frac{1}{2\pi i}\int_{\Gamma+\omega} \CS^1(\lambda)\bU_0\,d\lambda, 
\label{t.3.6.2}\\
T_2(t)(\Pi_0, \bU_0) & =
\frac{1}{2\pi i}\int_{\Gamma+\omega} \CS^2(\lambda)(\Pi_0, \bU_0)\,d\lambda, 
\label{t.3.6.3}\\
T_3(t)(\Pi_0, \bU_0) & = 
\frac{1}{2\pi i}\int_{\Gamma+\omega} \CR(\lambda)(\Pi_0, \bU_0)\,d\lambda.
\label{t.3.6.4}
\end{align}
We have $T(t)(\Pi_0, \bU_0) = (T_3(t)(\Pi_0, \bU_0), T_1(t)\bU_0 + T_2(t)(\Pi_0, \bU_0)).$
\par

We first show that 
\begin{equation}\label{L1T_1}
\int^\infty_0 e^{-\omega t}\|T_1(t)\bU_0\|_{B^{s+2}_{q,1}(\HS)}\,d t \leq C\|\bU_0\|_{B^s_{q,1}(\HS)}.
\end{equation}
To this end, 
in view of Proposition \ref{prop-real-interpolation}, we first prove that  
for every $t > 0$ and $\bU_0 \in C^\infty_0(\HS)^N$ there hold
\begin{align}
\label{L1est.0-1}
\|T_1(t)\bU_0\|_{B^{s+2}_{q,1}(\HS)}
\le Ce^{\omega t}t^{-1+\frac{\sigma}{2}}\|\bU_0\|_{B^{s+\sigma}_{q,1}(\HS)}, \\
\label{L1est.0-2}
\|T_1(t)\bU_0\|_{B^{s+2}_{q,1}(\HS)}
\le Ce^{\omega t}t^{-1-\frac{\sigma}{2}}\|\bU_0\|_{B^{s-\sigma}_{q,1}(\HS)}.
\end{align}
Notice that $\lambda = \omega + re^{\pm i(\pi-\epsilon)}$ for $\lambda
\in \Gamma_\pm + \omega$, and thus $|e^{\lambda t}| = e^{\omega t}
e^{\cos(\pi-\epsilon)rt} =e^{\omega t} e^{-rt\cos\epsilon}$
for $\lambda \in \Gamma_\pm + \omega$. Since 
$\| \CS^1(\lambda)\bU\|_{B^{s+2}_{q,1}(\HS)} \le C|\lambda|^{-\frac{\sigma}{2}}
\|\bU\|_{B^{s+\sigma}_{q,1}(\HS)}$ as follows from  Theorem \ref{thm:7.1}, 
using \eqref{t.3.6.2}, for $t>0$ we have
\begin{align}
\| T_1(t)\bU_0\|_{B^{s+2}_{q,1}(\HS)}
&\le Ce^{\omega t}\int^\infty_0 e^{-rt\cos\epsilon }r^{-\frac{\sigma}{2}}d r 
\|\bU_0\|_{B^{s+\sigma}_{q, 1}(\HS)}\\
&= Ce^{\omega t}t^{-1+\frac\sigma2}\int^\infty_0e^{-\ell \cos\epsilon}\ell^{-\frac{\sigma}{2}}d\ell\,
\|\bU_0\|_{B^{s+\sigma}_{q, 1}(\HS)},
\end{align}
which yields \eqref{L1est.0-1}. To prove \eqref{L1est.0-2}, 
by integration by parts we write 
\begin{equation}
T_1(t)\bU_0 = -\frac{1}{2\pi i t}\int_{\Gamma + \omega} e^{\lambda t}\pd_\lambda  \CS^1(\lambda)
\bU_0 d\lambda.
\end{equation}
Since $\|\pd_\lambda\CS^1(\lambda)\bU_0\|_{B^{s+2}_{q,1}(\HS)} 
\le C|\lambda|^{-(1-\frac{\sigma}{2})}
\|\bU_0\|_{B^{s-\sigma}_{q,1}(\HS)}$ as follows from  Theorem \ref{thm:7.1}, we have
\begin{align}
\|T_1(t)\bU_0\|_{B^{s+2}_{q,1}(\HS)}
&\le Ct^{-1}e^{\omega t}\int^\infty_0 e^{-rt\cos\epsilon }r^{-(1-\frac\sigma2)} d r \;
\|\bU_0\|_{B^{s-\sigma}_{q, 1}(\HS)}\\
&= Ce^{\omega t}t^{-1-\frac\sigma2}\int^\infty_0e^{-\ell \cos\epsilon}  
\ell^{-1+\frac\sigma2}d\ell \;
\|\bU_0\|_{B^{s-\sigma}_{q, 1}(\HS)},
\end{align}
which yields \eqref{L1est.0-2}.  
Choosing $\theta=1/2$ in Proposition \ref{prop-real-interpolation} 
and using the fact that 
$$(B^{s+\sigma}_{q, 1}(\HS), B^{s-\sigma}_{q,1}(\HS))_{1/2,1}
= B^s_{q,1}(\HS),$$ by Proposition \ref{prop-real-interpolation}, we have \eqref{L1T_1}
for $\bU_0 \in C^\infty_0(\HS)^N$.  But, since $C^\infty_0(\HS)^N$ is dense in 
$B^s_{q,r}(\HS)^N$, the estimate \eqref{L1T_1} holds for any $\bU_0 \in 
B^s_{q,1}(\HS)^N$. 
\par

We now show that 
\begin{equation}\label{L1T.23}\begin{aligned}
\int^\infty_0 e^{-\omega t}\|T_2(t)(\Pi_0, \bU_0)\|_{B^{s+2}_{q,1}}\,d t
\leq C\|(\Pi_0, \bU_0)\|_{\CH^s_{q,1}(\HS)}, \\
\int^\infty_0 e^{-\omega t}\|T_3(t)(\Pi_0, \bU_0)\|_{B^{s+1}_{q,1}}\,d t
\leq C\|(\Pi_0, \bU_0)\|_{\CH^s_{q,1}(\HS)}.
\end{aligned}\end{equation}
In fact, using Theorem \ref{thm:7.1} and $|\lambda| \geq \lambda_0$, we have
\begin{align*}
\|(\lambda^{1/2}\nabla, \nabla^2)\CS^2(\lambda)(f, \bg)\|_{B^{s}_{q,r}(\HS)} 
&\leq C|\lambda|^{-1}\|(f, \bg)\|_{\CH^s_{q,r}(\HS)}\\
&\leq C\lambda_0^{-(1-\frac{\sigma}{2})}|\lambda|^{-\frac{\sigma}{2}}
\|(f, \bg)\|_{\CH^s_{q,r}(\HS)}, \\
\|(\lambda^{1/2}\nabla, \nabla^2)\pd_\lambda\CS^2(\lambda)(f, \bg)\|_{B^s_{q,r}(\HS)} 
&\leq C|\lambda|^{-2}\|(f, \bg)\|_{\CH^s_{q,r}(\HS)} \\
&\leq C\lambda_0^{-(1+\frac{\sigma}{2})}|\lambda|^{-(1-\frac{\sigma}{2})}
\|(f, \bg)\|_{\CH^s_{q,r}(\HS)}, \\
\|\CR(\lambda)f\|_{B^{s+1}_{q,r}(\HS)} & \leq C|\lambda|^{-1}\|(f, \bg)
\|_{\CH^s_{q,r}(\HS)} \\
&\leq C\lambda_0^{-(1-\frac{\sigma}{2})}
|\lambda|^{-\frac{\sigma}{2}}\|(f, \bg)\|_{\CH^s_{q,r}(\HS)},\\
\|\pd_\lambda \CR(\lambda)f\|_{B^{s+1}_{q,r}(\HS)} & \leq C|\lambda|^{-2}\|(f, \bg)
\|_{\CH^s_{q,r}(\HS)} \\
& \leq C\lambda_0^{-(1+\frac{\sigma}{2})}|\lambda|^{-(1-\frac{\sigma}{2})}\|(f, \bg)
\|_{\CH^s_{q,r}(\HS)} 
\end{align*}
for any $\lambda \in \Sigma_\epsilon + \omega$ and $(f, \bg) \in \CH^s_{q,1}$.
In view of \eqref{t.3.6.3} and \eqref{t.3.6.4}, employing the same argument as in
the proof of \eqref{L1est.0-1} and \eqref{L1est.0-2}, we have
\begin{align*}
\|T_2(t)(\Pi_0, \bU_0)\|_{B^{s+2}_{q,1}(\HS)}
&\le C\lambda_0^{-\frac{\sigma}{2}}e^{\omega t}t^{-1+\frac{\sigma}{2}}
\|(\Pi_0, \bU_0)\|_{\CH^s_{q,1}}, \\
\|T_2(t)(\Pi_0, \bU_0)\|_{B^{s+2}_{q,1}(\HS)}
&\le C\lambda_0^{-(1+\frac{\sigma}{2})}
e^{\omega t}t^{-1-\frac{\sigma}{2}}\|(\Pi_0, \bU_0)\|_{\CH^s_{q,1}}, \\
\|T_3(t)(\Pi_0, \bU_0)\|_{B^{s+1}_{q,1}(\HS)}
&\le C\lambda_0^{-\frac{\sigma}{2}}e^{\omega t}t^{-1+\frac{\sigma}{2}}
\|(\Pi_0, \bU_0)\|_{\CH^s_{q,1}}, \\
\|T_3(t)(\Pi_0, \bU_0)\|_{B^{s+1}_{q,1}(\HS)}
&\le C\lambda_0^{-(1+\frac{\sigma}{2})}
e^{\omega t}t^{-1-\frac{\sigma}{2}}\|(\Pi_0, \bU_0)\|_{\CH^s_{q,1}}.
\end{align*}
Thus, using Proposition \ref{prop-real-interpolation} and noting that
$(\CH^s_{q,1}, \CH^s_{q,1})_{1/2,1} = \CH^s_{q,1} = B^{s+1}_{q,1}(\HS)\times B^s_{q,1}(\HS)^N$, 
we have \eqref{L1T.23}.  This completes the proof of Theorem \ref{thm:7.2}.
\end{proof}
\section{Acknowledgments}
First and foremost, I would like to express my sincere gratitude to my research supervisor, Professor Yoshihiro Shibata. He has given me kindly guidance and valuable advice all along.
I would also like to thank Dr. Keiichi Watanabe and Dr. Kenta Oishi, who guided me with solving difficult estimates and writing this paper. 
\begin{bibdiv}
\begin{biblist}
\bib{AFbook}{book}{
	author={Adams, R. A.},
	author={Fournier, J. J. F.},
	title={Sobolev spaces},
	series={Pure and Applied Mathematics (Amsterdam)},
	volume={140},
	edition={2},
	publisher={Elsevier/Academic Press, Amsterdam},
	date={2003},
}

\bib{BLbook}{book}{
	author={J.~Bergh and J.~L\"{o}fstr\"{o}m},
	title={Interpolation spaces. {A}n introduction},
	series={Grundlehren der Mathematischen Wissenschaften, No. 223},
	publisher={Springer-Verlag, Berlin-New York},
	date={1976},
	pages={x+207},
}
\bib{CD10}{article}{
	author={Charve, F.},
	author={Danchin, R.},
	title={A global existence result for the compressible Navier-Stokes
		equations in the critical $L^p$ framework},
	journal={Arch. Ration. Mech. Anal.},
	volume={198},
	date={2010},
	number={1},
	pages={233--271},
}
\bib{D00}{article}{
	author={Danchin, R.},
	title={Global existence in critical spaces for compressible Navier-Stokes
		equations},
	journal={Invent. Math.},
	volume={141},
	date={2000},
	number={3},
	pages={579--614},
}
\bib{D10}{article}{
	author={Danchin, R.},
	title={On the solvability of the compressible Navier-Stokes system in
		bounded domains},
	journal={Nonlinearity},
	volume={23},
	date={2010},
	number={2},
	pages={383--407},
}

\bib{DM15}{article}{
	author={Danchin, R.},
	author={Mucha, P. B.},
	title={Critical functional framework and maximal regularity in action on
		systems of incompressible flows},
	journal={M\'{e}m. Soc. Math. Fr. (N.S.)},
	number={143},
	date={2015},
}
\bib{DT22}{article}{
	author={Danchin, R.},
	author={Tolksdorf, P.},
	title={Critical regularity issues for the compressible Navier--Stokes system in bounded domains},
	journal={Math. Ann.}, 
	volume={387},
	date={2023},
	pages={1903--1959},
}
\bib{DG75}{article}{
	author={Da Prato, G.},
	author={Grisvard, P.},
	title={Sommes d'op\'erateurs, lin\'eaires et 
           \'equations diff\'erentielles op\'erationelles},
	journal={J. Math. Pures Appl. (9).},
	volume={54},
	date={1975},
	number={3},
pages={305--387},
}

\bib{ES13}{article}{
	author={Enomoto, Y.},
	author={Shibata, Y.},
	title={On the $\scr R$-sectoriality and the initial boundary value
		problem for the viscous compressible fluid flow},
	journal={Funkcial. Ekvac.},
	volume={56},
	date={2013},
	number={3},
	pages={441--505},
}
\bib{G53}{article}{
	author={Graffi, D.},
	title={Il teorema di unicit\`a nella dinamica dei fluidi compressibili},
	journal={J. Rational Mech. Anal.},
	volume={2},
	date={1953},
	pages={99--106},
}

\bib{Hor}{article}{
	author={H\"ormander, L.},
	title={Estimates for translation invariant operators in $L^p$ spaces},
	journal={Acta Math.},
	volume={104},
	date={1960},
	pages={93--140},
}

\bib{I71}{article}{
	author={Itaya, N.},
	title={On the Cauchy problem for the system of fundamental equations
		describing the movement of compressible viscous fluid},
	journal={K\={o}dai Math. Sem. Rep.},
	volume={23},
	date={1971},
	pages={60--120},
}
\bib{I76a}{article}{
	author={Itaya, N.},
	title={On the initial value problem of the motion of compressible viscous
		fluid, especially on the problem of uniqueness},
	journal={J. Math. Kyoto Univ.},
	volume={16},
	date={1976},
	number={2},
	pages={413--427},
}
\bib{KK05}{article}{
	author={Kagei, Y.},
	author={Kobayashi, T.},
	title={Asymptotic behavior of solutions of the compressible Navier-Stokes
		equations on the half space},
	journal={Arch. Ration. Mech. Anal.},
	volume={177},
	date={2005},
	number={2},
	pages={231--330},
}
\bib{Ka02}{article}{
	author={Kawashita, M.},
	title={On global solutions of Cauchy problems for compressible
		Navier-Stokes equations},
	journal={Nonlinear Anal.},
	volume={48},
	date={2002},
	number={8, Ser. A: Theory Methods},
}
\bib{KS99}{article}{
	author={Kobayashi, T.},
	author={Shibata, Y.},
	title={Decay estimates of solutions for the equations of motion of
		compressible viscous and heat-conductive gases in an exterior domain in
		${\bf R}^3$},
	journal={Comm. Math. Phys.},
	volume={200},
	date={1999},
	number={3},
	pages={621--659},
}
\bib{K14}{article}{
	author={Kotschote, M.},
	title={Dynamical stability of non-constant equilibria for the
		compressible Navier-Stokes equations in Eulerian coordinates},
	journal={Comm. Math. Phys.},
	volume={328},
	date={2014},
	number={2},
	pages={809--847},
}
\bib{KSprep}{article}{
	author={Kuo, J.-C. },
	author={Shibata, Y. },
	title={$L_1$ approach to the compressible viscous fluid flows in the half-space},
	journal={Algebra i Analiz.},
	volume={36:3},
	date={2024},
	pages={103--151},
}
\bib{MN80}{article}{
	author={Matsumura, A.},
	author={Nishida, T.},
	title={The initial value problem for the equations of motion of viscous
		and heat-conductive gases},
	journal={J. Math. Kyoto Univ.},
	volume={20},
	date={1980},
	number={1},
	
}

\bib{MN83}{article}{
	author={Matsumura, A.},
	author={Nishida, T.},
	title={Initial-boundary value problems for the equations of motion of
		compressible viscous and heat-conductive fluids},
	journal={Comm. Math. Phys.},
	volume={89},
	date={1983},
	number={4},
	pages={445--464},
}
\bib{Min}{article}{
	author={Mihlin, S.~G.},
	title={On the multipliers of Fourier integrals},
	journal={Dokl. Akad. Nauk SSSR},
	volume={109},
	date={1956},
	number={1},
	pages={701--703},
}
\bib{MZ04}{article}{
	author={Mucha, P. B.},
	author={Zaj\polhk aczkowski, W. M.},
	title={Global existence of solutions of the Dirichlet problem for the
		compressible Navier-Stokes equations},
	journal={Z. Angew. Math. Mech.},
	volume={84},
	date={2004},
	number={6},
	pages={417--424},
}
\bib{M73}{article}{
	author={Muramatu, T.},
	title={On Besov spaces and Sobolev spaces of generalized functions
		defined on a general region},
	journal={Publ. Res. Inst. Math. Sci.},
	volume={9},
	date={1973/74},
	pages={325--396},
}


\bib{N62}{article}{
	author={Nash, J.},
	title={Le probl\`eme de Cauchy pour les \'{e}quations diff\'{e}rentielles d'un
		fluide g\'{e}n\'{e}ral},
	journal={Bull. Soc. Math. France},
	volume={90},
	date={1962},
	pages={487--497},
}
\bib{S59}{article}{
	author={Serrin, J.},
	title={On the uniqueness of compressible fluid motions},
	journal={Arch. Rational Mech. Anal.},
	volume={3},
	date={1959},
	pages={271--288 (1959)},
}
\bib{S22}{article}{
	author={Shibata, Y.},
	title={New thought on Matsumura-Nishida theory in the $L_p$-$L_q$ maximal
		regularity framework},
	journal={J. Math. Fluid Mech.},
	volume={24},
	date={2022},
	number={3},
	pages={Paper No. 66, 23},
}
\bib{SS23}{article}{
	author={Shibata, Y.},
	title={Spectral analysis approach to the maximal regularity for the Stokes 
equations and free boundary problem for the Navier-Stokes equations},
	journal={RIMS Kôkyûroku.},
	volume={2266},
	date={2023},
	pages={1--47},
}
\bib{SE18}{article}{
	author={Shibata, Y.},
	author={Enomoto, Y.},
	title={Global existence of classical solutions and optimal decay rate for
		compressible flows via the theory of semigroups},
	conference={
		title={Handbook of mathematical analysis in mechanics of viscous
			fluids},
	},
	book={
		publisher={Springer, Cham},
	},
	date={2018},
	pages={2085--2181},
}
\bib{SS01}{article}{
	author={Shibata, Y.},
	author={Shimizu, S.},
	title={A decay property of the Fourier transform and its application to
		the Stokes problem},
	journal={J. Math. Fluid Mech.},
	volume={3},
	date={2001},
	number={3},
	pages={213--230},
}
\bib{SK24}{article}{
	author={Shibata, Y.},
	author={Watanabe, K.},
	title={Maximal $L_1$-regularity of the Navier-Stokes equations with free boundary conditions via a generalized semigroup theory},
	journal={J. Differential Equations,},
	volume={426},
	date={2025},
	pages={495--605},
}
\bib{Sol}{article}{
	author={Solonnikov, V. A.},
	title={The solvability of the initial-boundary value problem for the
		equations of motion of a viscous compressible fluid},
	journal={Zap. Nau\v{c}n. Sem. Leningrad. Otdel. Mat. Inst. Steklov. (LOMI)},
	volume={56},
	date={1976},
	pages={128--142, 197},
}
\bib{S70}{book}{
	author={Stein, E. M.},
	title={Singular integrals and differentiability properties of functions},
	series={Princeton Mathematical Series, No. 30},
	publisher={Princeton University Press, Princeton, N.J.},
	date={1970},
}
\bib{St90}{article}{
	author={Str\"{o}hmer, G.},
	title={About compressible viscous fluid flow in a bounded region},
	journal={Pacific J. Math.},
	volume={143},
	date={1990},
	number={2},
	pages={359--375},
}
\bib{T77}{article}{
	author={Tani, A.},
	title={On the First Initial-Boundary Value Problem of Compressible 		
		Viscous Fluid Motion},
	journal={Publ. Res. Inst. Math. Sci.},
	volume={13},
	date={1977},
	number={1},
	pages={193--253},
}
\bib{Tbook2}{book}{
	author={Triebel, H.},
	title={Interpolation theory, function spaces, differential operators},
	series={North-Holland Mathematical Library},
	volume={18},
	publisher={North-Holland Publishing Co., Amsterdam-New York},
	date={1978},
	pages={528},
}
\bib{Tbook}{book}{
	author={Triebel, H.},
	title={Theory of function spaces},
	series={Monographs in Mathematics},
	volume={78},
	publisher={Birkh\"{a}user Verlag, Basel},
	date={1983},
	pages={284},
}
\bib{Yosida}{book}{
	author={Yosida, K.},
	title={Functional Analysis},
	series={Sixth Edition, Classics in Mathmatics},
	publisher={Springer},
	date={1995},
}
\end{biblist}
\end{bibdiv}

\end{document}